
\documentclass[11pt]{amsart}
\usepackage{amsmath, amssymb}
\usepackage{amsfonts}
\usepackage{mathrsfs}
\usepackage[arrow,matrix,curve,cmtip,ps]{xy}

\usepackage{amsthm}
\usepackage{mathtools} 
\usepackage{array}
\usepackage{mathdots}

\usepackage[top= 2.25cm, bottom=2cm, left = 2cm, right= 2cm]{geometry}
\usepackage{hyperref}
\usepackage{setspace}

\allowdisplaybreaks

\makeatletter
\@namedef{subjclassname@2020}{%
  \textup{2020} Mathematics Subject Classification}
\makeatother



\numberwithin{equation}{section}


\newtheorem{theorem}{Theorem}[section]
\newtheorem{lemma}[theorem]{Lemma}
\newtheorem{proposition}[theorem]{Proposition}
\newtheorem{corollary}[theorem]{Corollary}

\newtheorem{conjecture}[theorem]{Conjecture}

\newtheorem*{theorem*}{Theorem}
\theoremstyle{remark}
\newtheorem{remark}[theorem]{Remark}


\numberwithin{equation}{section}


\newcommand{\lif}[1]{\widetilde{#1}}  







\renewcommand{\P}[1]{\Phi(#1)}

\newcommand{\Q}[1]{\Psi(#1)}

\newcommand{\Pbd}[1]{\Phi_{bdd}(#1)}



\newcommand{\Pdt}[1]{\Phi_{2}(#1)}






\newcommand{\cP}[1]{\bar{\Phi}(#1)}

\newcommand{\cQ}[1]{\bar{\Psi}(#1)}

\newcommand{\cPbd}[1]{\bar{\Phi}_{bdd}(#1)}

\newcommand{\cPdt}[1]{\bar{\Phi}_{2}(#1)}

\newcommand{\cQdt}[1]{\bar{\Psi}_{2}(#1)}

\newcommand{\cPsm}[1]{\bar{\Phi}_{sim}(#1)}





\newcommand{\p}{\phi}

\newcommand{\q}{\psi}

\newcommand{\lp}{\tilde{\phi}}

\renewcommand{\lq}{\tilde{\psi}}

\newcommand{\Pkt}[1]{\Pi_{#1}}

\newcommand{\lPkt}[1]{\tilde{\Pi}_{#1}}

\newcommand{\cPkt}[1]{\bar{\Pi}_{#1}}

\newcommand{\clPkt}[1]{\tilde{\bar{\Pi}}_{#1}}

\renewcommand{\r}{\pi}

\newcommand{\lr}{\tilde{\pi}}


\newcommand{\sH}{\bar{\mathcal{H}}}






\newcommand{\cS}[1]{\bar{S}_{#1}}

\renewcommand{\S}[1]{\mathcal{S}_{#1}}

\newcommand{\+}{\oplus}


\renewcommand{\c}{\lambda}

\newcommand{\lG}{\widetilde{G}}

\newcommand{\lM}{\widetilde{M}}

\newcommand{\lP}{\widetilde{P}}

\newcommand{\x}{\omega}

\renewcommand{\L}[1]{{}^L#1}

\newcommand{\D}[1]{\widehat{#1}}


\newcommand{\Gal}[1]{\Gamma_{#1}}

\renewcommand{\a}{\alpha}

\renewcommand{\Im}{\text{Im}\,}

\newcommand{\lZ}{Z_{\widetilde{G}}}

\newcommand{\Z}{Z_{G}}




\newcommand{\Ind}{\text{Ind}}


\newcommand{\Two}{\mathbb{Z}/2\mathbb{Z}}

\newcommand{\C}{\mathbb{C}}


\newcommand{\A}{\mathbb{A}}






















\newcommand{\lf}{\tilde{f}}














\newcommand{\Hom}{\text{Hom}}


\newcommand{\e}{\varepsilon}

\newcommand{\Rep}{\text{Rep}}

\newcommand{\Jac}{\text{Jac}}

\newcommand{\ul}{\underline{l}}

\newcommand{\ueta}{\underline{\eta}}



\begin{document}
\title{Arthur packets for quasisplit $GSp(2n)$ and $GO(2n)$ over a $p$-adic field}

\author{Bin Xu}
\thanks{Supported by NSFC No. 20191300979 and Tsinghua University Initiative Scientific Research Program No. 2019Z07L02016}

\address{Yau Mathematical Sciences Center and Department of Mathematics \\  Tsinghua University, Beijing, China}
\email{binxu@tsinghua.edu.cn}

\subjclass[2020]{22E50 (primary); 11F70 (secondary)}
\keywords{similitude group, twisted endoscopic transfer, Arthur packet}

\begin{abstract}
We construct the Arthur packets for symplectic and even orthogonal similitude groups over a $p$-adic field and show that they are stable and satisfy the twisted endoscopic character relations.
\end{abstract}

\maketitle

\tableofcontents

\section{Introduction}
\label{sec: introduction}

Let $F$ be a number field and $G$ a quasisplit symplectic or special even orthogonal group over $F$. Let $\mathbb{A}_{F}$ be the ad\`ele ring of $F$. We fix an automorphism $\theta_{0}$ of $G$ preserving an $F$-splitting. It induces a dual automorphism $\D{\theta}_0$ on the dual group $\D{G}$: when $G$ is symplectic, $\theta_{0}$ is trivial and when $G$ is special even orthogonal, we require $\theta_{0}$ to be the unique nontrivial outer automorphism induced from the conjugation of the full orthogonal group. We say two irreducible admissible representations of $G(\A_{F})$ are $\theta_0$-conjugate if they are $\theta_0$-conjugate at every place. In \cite{Arthur:2013} Arthur proved that the discrete automorphic spectrum of $G(\A_{F})$ can be decomposed as follows
\begin{align}
\label{eq: discrete spectral decomposition classical group}
L^{2}_{disc}(G(F) \backslash G(\mathbb{A}_{F})) = \bigoplus_{[\psi] \in \cQdt{G}} \bigoplus_{[\pi] \in \cPkt{\q}} m_{\psi}(\r) \, \pi
\end{align}
modulo $\theta_0$-conjugation. Here $\cQdt{G}$ is the set of $\D{\theta}_0$-conjugacy classes of the discrete Arthur parameters of $G$, which can be understood in terms of automorphic representations of general linear groups. For $\q \in \cQdt{G}$, Arthur associated a multi-set $\cPkt{\q}$ of isomorphism classes of irreducible admissible representations of $G(\A_{F})$ modulo $\theta_0$-conjugation and $m_{\q}(\r)$ is the multiplicity of $\r$ contributed by $[\q]$ in the discrete spectrum modulo $\theta_0$-conjugation. More precisely, 
\[
\cPkt{\q} = \otimes'_{v} \cPkt{\q_{v}}.
\]
Here $\cPkt{\q_{v}}$ is a finite multi-set of $\theta_0$-conjugacy classes of isomorphism classes of irreducible admissible representations of $G(F_{v})$. It is equipped with a map
\[
\cPkt{\q_{v}} \rightarrow \D{\S{\q_v}}, \quad [\r_v] \mapsto \langle \cdot, \r_v \rangle
\]
where $\S{\q_{v}} = \pi_0(Z_{\D{G}}(\q_{v})/Z(\D{G}_v)^{\Gamma_v})$ and $\D{\S{\q_v}}$ is the set of irreducible characters of $\S{\q_v}$. If $\r_{v}$ is unramified, then $\langle \cdot, \r_v \rangle$ is trivial. For $[\r] \in \cPkt{\q}$, let
\[
\langle s, \r \rangle := \prod_{v} \langle s_v, \r_{v} \rangle
\]
through a natural homomorphism
\[
\S{\q} \rightarrow  \S{\q_v}, \quad s \mapsto s_{v}.
\]
Then
\[
m_{\q}(\r) = m_{\q} |\S{\q}|^{-1} \sum_{s \in \S{\q}} \langle s, \r \rangle \e_{\q}(s),
\]
where $\e_{\q}$ is some linear character of $\S{\q}$ and $m_{\q} = 1$ or $2$.

Let $\lG$ be the group of symplectic or orthogonal similitudes over $F$, whose derived group is $G$. Then $\theta_0$ extends to an automorphism of $\lG$. We have a short exact sequence
\[
\xymatrix{1 \ar[r] & G \ar[r] & \lG \ar[r]^{\lambda_{\lG}} & \mathbb{G}_{m} \ar[r]  &  1,
}
\]
where $\lambda_{\lG}$ is the similitude character. On the dual side, we have
\[
\xymatrix{
1 \ar[r] &  \C^{\times} \ar[r] &  \D{\lG} \ar[r]^{{\bf p}}  &  \D{G} \ar[r] & 1.
}
\]
Let $\tilde{\zeta}$ be a character of $Z_{\lG}(\mathbb{A}_{F})/Z_{\lG}(F)$ and $\zeta$ its restriction to $Z_{G}(\mathbb{A}_{F})$. We will parametrize the $\D{\theta}_0$-conjugacy classes of the discrete Arthur parameters of $\lG$ associated with central character $\tilde{\zeta}$ by pairs $([\q], \x)$, where $[\q] \in \cQdt{G, \zeta}$ and $\x \in Y := \Hom(\lG(\A_{F}) / \lG(F) \lZ(\A_{F})G(\A_{F}), \C^{\times})$. There is a commutative diagram with exact rows
\begin{align*}
\xymatrix{1 \ar[r] &  \S{\lq} \ar[r]^{\iota} \ar[d] & \S{\q} \ar[r]^{\a \quad \quad \quad \quad \quad \quad} \ar[d] & \Hom(\lG(\A_{F})/ \lG(F)G(\A_{F}), \C^{\times}) \ar[d] \\
1 \ar[r] &  \S{\lq_v} \ar[r]^{\iota_v} & \S{\q_v} \ar[r]^{\a_v \quad \quad \quad \quad \quad} & \Hom(\lG(F_{v})/G(F_{v}), \C^{\times}).           
}               
\end{align*}
(cf. \cite[2.18]{Xu:Lpacket}). Then Arthur's conjectural description of the discrete automorphic spectrum of $\lG(\A_{F})$ (cf. \eqref{eq: discrete spectral decomposition classical group}) can be reformulated as follows.

\begin{conjecture}
\label{conj: global}
For any $[\q] \in \cQdt{G}$, there exists a multi-set $\cPkt{\lq}$ of irreducible admissible representations of $\lG(\A_{F})$ modulo $\theta_0$-conjugation, which is unique up to twist by $Y$, such that
\[
L^{2}_{disc}(\lG(F) \backslash \lG(\mathbb{A}_{F}), \tilde{\zeta}) = \bigoplus_{[\q] \in \cQdt{G, \zeta}} \bigoplus_{\omega \in Y/\a(\S{\q})} \bigoplus_{[\lr] \in \cPkt{\lq} \otimes \omega} m_{\lq}(\lr) \, \lr.
\]
Here
\(
\cPkt{\lq} = \otimes'_{v} \cPkt{\lq_v},
\)
and the correspondence
\[
\cPkt{\lq_v} \longrightarrow \cPkt{\q_{v}}, \quad [\lr_v] \mapsto [\r_v]
\]
by requiring $\r_{v} \subseteq \lr_{v}|_{G(F_v)}$ fits into a commutative diagram
\[
\xymatrix{\cPkt{\lq_v} \ar[r]  \ar[d] & \D{\S{\lq_v}}, & [\lr_v] \mapsto  \langle s,  \lr_v \rangle \\
\cPkt{\q_v} \ar[r] & \D{\S{\q_v}}. \ar[u] &
}
\]
At last,
\[
m_{\lq}(\lr) = m_{\q} |\S{\lq}|^{-1} \sum_{s \in \S{\lq}} \langle s, \lr \rangle \e_{\lq}(s)
\]
where $\e_{\lq} = \e_{\q}|_{\S{\lq}}$.

\end{conjecture}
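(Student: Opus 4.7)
The plan is to build the local packets $\cPkt{\lq_v}$ from $\cPkt{\q_v}$ via the interplay between restriction from $\lG(F_v)$ to $G(F_v)$ and the finite character group $\Hom(\lG(F_v)/G(F_v)\lZ(F_v), \C^\times)$, and then to upgrade Arthur's decomposition \eqref{eq: discrete spectral decomposition classical group} for $G$ to the conjectured one for $\lG$ using the exact sequences for $\S{\lq}$ and $Y$ displayed just before the statement. The overall strategy follows the pattern of the author's earlier work on $L$-packets for similitude groups, now adapted to the Arthur setting and combined with the local constructions established in the body of the present paper.

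\textbf{Step 1: Local packets and the pairing with $\S{\lq_v}$.} For each $[\q_v]$ I would declare $\cPkt{\lq_v}$ to consist of the irreducible constituents of $\Ind_{G(F_v)\lZ(F_v)}^{\lG(F_v)}(\rep{\r}_v \otimes \tilde{\zeta}_v)$ as $[\rep{\r}_v]$ runs over $\cPkt{\q_v}$, taken modulo the natural twist action of $\Hom(\lG(F_v)/G(F_v)\lZ(F_v),\C^\times)$. Clifford theory together with the injectivity of $\iota_v$ and the identification of its cokernel inside $\Hom(\lG(F_v)/G(F_v),\C^\times)$ then lets one attach a canonical character $\langle\cdot,\rep{\lr}_v\rangle$ of $\S{\lq_v}$ to each $[\rep{\lr}_v] \in \cPkt{\lq_v}$ so that the commutative square in the statement of the conjecture holds by construction; at unramified places this specializes to the usual normalization via the Satake parameters of $\lG$.

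\textbf{Step 2: Spectral decomposition for $\lG$.} I would start from \eqref{eq: discrete spectral decomposition classical group} for $G$ and compute the $\tilde{\zeta}$-isotypic part of $L^{2}_{disc}(\lG(F)\backslash \lG(\A_{F}))$ by restricting to $G(\A_{F})\lZ(\A_{F})$. Each $[\rep{\r}]$ with positive multiplicity contributes, via global Clifford--Mackey theory, a family of irreducible constituents of $\lG(\A_{F})$ indexed by the quotient of the global character group stabilizing $\rep{\r}$. Because two discrete parameters of $\lG$ lying over the same $[\q]$ differ by an element of $Y/\a(\S{\q})$, the parameters and the twist bookkeeping fit together to produce precisely the triple direct sum on the right-hand side of the conjectured formula.

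\textbf{Step 3: Multiplicity formula.} To match $m_{\lq}(\lr)$ with $m_{\q}(\r)$, one applies Fourier inversion on the finite abelian group $\S{\q}$ using the top row $1 \to \S{\lq} \to \S{\q} \to Y$ of the displayed diagram: the sum over $s \in \S{\q}$ defining $m_{\q}(\r)$ splits as a sum over $\iota(\S{\lq})$ weighted by the local pairings, plus an orthogonality contribution from $\a(\S{\q}) \subset Y$ that exactly matches the $Y$-orbit of twists of $\lr$ appearing in the conjecture. The sign $\e_{\lq} = \e_{\q}|_{\S{\lq}}$ is then forced by Arthur's canonical definition of $\e_{\q}$ together with the functoriality of $\iota$.

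\textbf{Main obstacle.} The most delicate point is to show that the locally constructed packets are genuinely global, in the sense that the product pairing $\langle s, \lr\rangle = \prod_{v}\langle s_v, \lr_v\rangle$ is well-defined and that the resulting multiplicity is what a twisted trace formula comparison for $\lG$ would actually yield. This requires the full strength of the local twisted endoscopic character relations for $GSp(2n)$ and $GO(2n)$ established in the main body of the paper, together with careful handling of $\theta_{0}$-conjugation, which is where the outer automorphism in the even orthogonal case introduces additional subtleties beyond the symplectic case.
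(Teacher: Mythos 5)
This statement is labeled \emph{Conjecture}~\ref{conj: global} precisely because the paper does not prove it. Immediately after stating it, the author writes that the conjecture ``has been proved in the tempered case (cf.~[Theorem 1.1, Xu:Lpacket])'' and that ``the main goal of this paper is to construct $\cPkt{\lq_v}$ at the nonarchimedean places, as the first step to deal with the nontempered case.'' So there is no paper-internal proof of Conjecture~\ref{conj: global} for you to be compared against: the body of the paper builds the local packets and proves local stability and endoscopic character identities (Theorem~\ref{thm: Apacket} and its consequences), and these are only part of the input that a proof of the global conjecture would eventually require.

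Beyond the category error, the proposal itself has a genuine gap at Step 2. You cannot obtain $L^2_{\mathrm{disc}}(\lG(F)\backslash\lG(\A_F),\tilde\zeta)$ from $L^2_{\mathrm{disc}}(G(F)\backslash G(\A_F))$ by ``global Clifford--Mackey theory'' over $G(\A_F)\lZ(\A_F)\subset\lG(\A_F)$, because this subgroup has infinite index in $\lG(\A_F)$ and the automorphic quotients $\lG(F)\backslash\lG(\A_F)$ and $G(F)\backslash G(\A_F)$ are not related by a finite covering: the map $\lG(F)\backslash\lG(\A_F)\to Y$ has infinite fibers, so restriction/induction does not behave as in the finite-index Clifford setting, and the multiplicity with which a given $[\lr]$ occurs in the discrete spectrum of $\lG$ is a separate global input (in the tempered case this is the content of the stabilized trace formula comparison in [Xu:Lpacket], not a Clifford-theoretic bookkeeping step). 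Also, in Step 1 you declare $\cPkt{\lq_v}$ to be \emph{all} constituents of the full induction of $\cPkt{\q_v}$ from $G(F_v)\lZ(F_v)$; but the actual local packet is a proper subset of $\clPkt{\q_v,\tilde\zeta_v}$ that is only well-defined up to twist by $X$, and singling it out (the content of Theorem~\ref{thm: Apacket} and Sections~\ref{sec: construction}--\ref{sec: uniqueness}) is the hard part of the paper, not something one can postulate by inducing everything. Your Step~3 Fourier inversion is plausible once the pairing is available, but the assertion that $\e_{\lq}=\e_{\q}|_{\S{\lq}}$ ``is forced'' would again require a global trace formula argument, not just functoriality of $\iota$.
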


This conjecture has been proved in the tempered case (cf. \cite[Theorem 1.1]{Xu:Lpacket}). The main goal of this paper is to construct $\cPkt{\lq_v}$ at the nonarchimedean places, as the first step to deal with the nontempered case.

Let $F$ be a $p$-adic field, $\Gamma_{F}$ the absolute Galois group of $F$ and $W_{F}$ the Weil group of $F$. Let $\L{G} := \D{G} \rtimes W_{F}$ be the Langlands dual group of $G$. An Arthur parameter of $G$ is a $\D{G}$-conjugacy class of admissible homomorphisms 
\[
\q: W_{F} \times SL(2, \C) \times SL(2, \C) \longrightarrow \L{G}
\]
such that $\q(W_{F})$ is bounded. We denote the set of $\D{\theta}_0$-conjugacy classes of Arthur parameters of $G$ (resp. $\lG$) by $\cQ{G}$ (resp. $\cQ{\lG}$). For $[\q] \in \cQ{G}$, M{\oe}glin \cite{Moeglin1:2011} showed that $\cPkt{\q}$ is multiplicity free. Let $\tilde{\zeta}$ be a character of $\lZ(F)$ extending the central character of $\cPkt{\q}$ and $\clPkt{\q, \tilde{\zeta}}$ the subset of $\theta_0$-conjugacy classes of isomorphism classes of irreducible admissible representations of $\lG(F)$ with central character $\tilde{\zeta}$, such that their restrictions to $G(F)$ have irreducible constituents in $\cPkt{\q}$. Suppose $\q = {\bold p} \circ \lq$ for $[\lq] \in \cQ{\lG}$, we can define a map $\clPkt{\q, \tilde{\zeta}} \rightarrow \D{\S{\lq}}$ uniquely determined through the following diagram 
\[
\xymatrix{\clPkt{\q, \tilde{\zeta}} \ar[r] \ar[d] & \D{\S{\lq}} \\
\cPkt{\q} \ar[r] & \D{\S{\q}} \ar[u]
}
\]
(cf. Corollary~\ref{cor: Whittaker pairing}). Let $\sH(\lG(F))$ be the space $\theta_0$-invariant smooth compactly supported functions on $\lG(F)$. Let 
\(
X := \Hom(\lG(F)/G(F) \lZ(F) , \C^{\times}).
\)
For $\q = \p$ trivial on the second $SL(2, \mathbb{C})$, we have shown in \cite[Theorem 4.6]{Xu:2018} that there exists a subset $\cPkt{\lp}$ of $\clPkt{\p, \tilde{\zeta}}$ unique up to twists by $X$ such that
\begin{enumerate}

\item
\[
\bigoplus_{[\lr] \in \cPkt{\lp}} [\lr|_{G(F)}] = \bigoplus_{[\r] \in \cPkt{\p}} [\r],
\]

\item
\[
\lf(\lp) := \sum_{[\lr] \in \cPkt{\lp}} \lf_{\lG}(\lr), \quad \lf \in \sH(\lG(F))
\]
is stable, where $\lf_{\lG}(\lr)$ is the Harish-Chandra character of $\lr$.

\item for any semisimple $s \in \cS{\lp}$ and $(G' ,\p') \rightarrow (\p, s)$ (see \eqref{eq: endoscopy} for the notation), we have
\[
\lf'(\lp') = \sum_{[\lr] \in \cPkt{\lp}} \langle s, \lr \rangle \lf_{\lG}(\lr), \quad \lf \in \sH(\lG(F)),
\]
where $\lf'$ is the Langlands-Shelstad transfer of $\lf$ and $\lf'(\lp')$ is defined as in (2) with respect to some choice of $\cPkt{\lp'}$.
\end{enumerate}
Our main result generalizes this to $\cQ{G}$.

\begin{theorem}
For $[\q] \in \cQ{G}$, there exists a subset $\cPkt{\lq}$ of $\clPkt{\q, \tilde{\zeta}}$ such that
\begin{enumerate}

\item
\[
\bigoplus_{[\lr] \in \cPkt{\lq}} [\lr|_{G(F)}] = \bigoplus_{[\r] \in \cPkt{\q}} [\r],
\]

\item
\[
\lf(\lq) := \sum_{[\lr] \in \cPkt{\lq}} \langle s_{\lq}, \lr \rangle \,  \lf_{\lG}(\lr), \quad \lf \in \sH(\lG(F))
\]
is stable, where $s_{\lq}$ is the image of $\lq(1, 1, -1)$ in $\S{\lq}$,

\item for any semisimple $s \in \cS{\lq}$ and $(G', \q') \rightarrow (\q, s)$ (see \eqref{eq: endoscopy} for the notation), we have
\[
\lf'(\lq') = \sum_{[\lr] \in \cPkt{\lq}}  \langle ss_{\lq}, \lr \rangle \,  \lf_{\lG}(\lr), \quad \lf \in \sH(\lG(F)).
\]
where $\lf'$ is the Langlands-Shelstad transfer of $\lf$ and $\lf'(\lq')$ is defined as in (2) with respect to some choice of $\cPkt{\lq'}$.
\end{enumerate}
\end{theorem}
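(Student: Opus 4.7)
The strategy is to bootstrap from the tempered analogue \cite[Theorem 4.6]{Xu:2018} together with Arthur's construction of $\cPkt{\q}$ \cite{Arthur:2013} and M{\oe}glin's multiplicity-one theorem \cite{Moeglin1:2011}, via the restriction-extension formalism between $G$ and $\lG$ developed in \cite{Xu:Lpacket}. Fix an extension $[\lq] \in \cQ{\lG}$ with $\q = {\bf p} \circ \lq$. For each $[\r] \in \cPkt{\q}$, Clifford theory produces extensions $[\lr] \in \clPkt{\q, \tilde{\zeta}}$, unique up to twisting by $X$, and I would define $\cPkt{\lq}$ by selecting one such $[\lr]$ over each $[\r]$ with a coherent choice across $\theta_0$-orbits. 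Property (1) then holds by construction. The pairing $[\lr] \mapsto \langle \cdot, \lr \rangle \in \D{\S{\lq}}$ is induced from $[\r] \mapsto \langle \cdot, \r \rangle \in \D{\S{\q}}$ via the commutative diagram of Corollary~\ref{cor: Whittaker pairing}; explicitly, $\langle s, \lr \rangle = \langle \iota(s), \r \rangle$ for any $s \in \S{\lq}$ and any irreducible constituent $\r$ of $\lr|_{G(F)}$.

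For (2), the key observation is that $\iota(s_\lq) = s_\q$, since ${\bf p} \circ \lq = \q$ and $\lq(1,1,-1)$ maps to $\q(1,1,-1)$ under ${\bf p}$. By the Whittaker-pairing compatibility above, $\langle s_\lq, \lr \rangle = \langle s_\q, \r \rangle$ for every $\r \subset \lr|_{G(F)}$. Descending $\lf \in \sH(\lG(F))$ to $G(F)$ as in the tempered case of \cite{Xu:Lpacket} (by expressing $\lf_\lG(\lr)$ on $G(F)$-regular elements in terms of Harish-Chandra characters of the constituents of $\lr|_{G(F)}$), the distribution $\lf(\lq)$ on $\sH(\lG(F))$ transports to Arthur's distribution
\[
f(\q) = \sum_{[\r] \in \cPkt{\q}} \langle s_\q, \r \rangle f_G(\r)
\]
on $\H(G(F))$. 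The latter is stable by \cite[Theorem 2.2.1]{Arthur:2013}, so (2) follows once one shows that stability descends, i.e.\ that a $\theta_0$-invariant distribution on $\lG(F)$ is stable if and only if its descent to $G(F)$ is, a fact already established for tempered parameters in \cite{Xu:Lpacket}.

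For (3), a semisimple $s \in \cS{\lq}$ together with an endoscopic datum $(G', \q') \to (\lq, s)$ for $\lG$ projects, via ${\bf p}$, to a semisimple $s_G \in \cS{\q}$ and a datum $(G'_G, \q'_G) \to (\q, s_G)$ for $G$. The twisted Langlands-Shelstad transfer $\lf \mapsto \lf'$ descends compatibly to the transfer $f \mapsto f'_G$, and by the same descent argument, (3) reduces to Arthur's endoscopic character identity $f'_G(\q'_G) = \sum_{[\r]} \langle s_G s_\q, \r \rangle f_G(\r)$ on $G(F)$, using $\langle s s_\lq, \lr \rangle = \langle \iota(s)\iota(s_\lq), \r \rangle = \langle s_G s_\q, \r \rangle$. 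The main obstacle, and the real substance of the proof, is verifying that the descent of twisted endoscopic transfers across Arthur parameters is compatible with the Whittaker-normalized pairings uniformly — that is, that $\cPkt{\lq'}$ can be chosen coherently with $\cPkt{\lq}$ so that the sign $\langle s s_\lq, \lr \rangle$ is produced correctly across the entire sum. The necessary Clifford-theoretic and Whittaker-normalization machinery is in place from \cite{Xu:Lpacket} for tempered $L$-packets; extending it to handle the extra $s_\lq$-factor for Arthur parameters, and to absorb M{\oe}glin's inductive construction of $\cPkt{\q}$ on $G$ into the $\lG$-equivariant setting, is the main technical content to be carried out.
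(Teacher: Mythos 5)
Your proposal correctly identifies the high-level strategy (bootstrap from the $G$-case via restriction/extension as in the tempered setting), but it leaves the central difficulty entirely unaddressed, and that difficulty is precisely the content of Sections~\ref{sec: construction} and~\ref{sec: uniqueness}. You write that one should define $\cPkt{\lq}$ ``by selecting one such $[\lr]$ over each $[\r]$ with a coherent choice across $\theta_0$-orbits,'' but there is no canonical recipe for that coherence, and making it precise is the whole problem. Concretely: Clifford theory produces, over each $[\r] \in \cPkt{\q}$, a set of lifts that is a torsor under $X/X(\lp_\r)$. By Lemma~\ref{lemma: twist} one has only $X^{\Sigma_0}(\lq) \subseteq X^{\Sigma_0}(\lp_\r)$, so for different $[\r]$ the groups $X(\lp_\r)$ differ and can be strictly larger than $X(\lq)$. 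Unlike the tempered case, where all of $\cPkt{\p}$ has a common Langlands parameter $\p$ and one can anchor the choice on a single lift $\lp$, here the elements of $\cPkt{\q}$ have many different Langlands parameters, and the lifts genuinely live in different places. One cannot ``coherently'' pick them without a mechanism that ties them together.

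The paper's mechanism is the infinitesimal character $\tilde\lambda$: all elements of $\cPkt{\lq}$ are forced to share the $\Sigma_0$-infinitesimal character of $\lq$. In the special case $X^{\Sigma_0}(\tilde\lambda)=X^{\Sigma_0}(\lq)$ this pins down each lift uniquely (Theorem~\ref{thm: special case}), but in general $X^{\Sigma_0}(\tilde\lambda)$ can be strictly larger than $X^{\Sigma_0}(\lp_\r)$ for some constituents, so the infinitesimal character still does not determine the lift. That is where M{\oe}glin's recursive construction via Jacquet modules (Theorems~\ref{thm: discrete diagonal restriction}, \ref{thm: discrete endoscopy}, \ref{thm: general endoscopy}) is required, and nothing in your proposal substitutes for it. Relatedly, your ``stability descends'' step is not a theorem one can simply cite: a $\theta_0$-invariant distribution on $\lG(F)$ is not determined by its restriction to $\lif{Z}_F G(F)$, since $\lG(F)/\lif{Z}_F G(F)$ is a nontrivial finite group and the behavior on the other cosets is exactly what the choice of lifts controls. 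The paper instead establishes stability directly on $\lG(F)$ by exhibiting $\lf_W(\lq)$ as a sum of the (known-to-be-stable) standard characters $\lf(\lp)$ with the correct multiplicities (equation~\eqref{eq: stable expansion 1} in the special case, and the recursive formula plus stability of parabolic induction and Jacquet functors in general). Without that construction your argument is circular: it assumes a $\cPkt{\lq}$ for which the descent works, which is what one needs to prove exists.
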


This result is contained in Theorem~\ref{thm: Apacket}, where we have also shown the twisted character relations under which $\cPkt{\lq}$ would be uniquely determined up to twist by $X$. One of our principles for constructing $\cPkt{\lq}$, inspired by Conjecture~\ref{conj: ABV}, is that its elements should have the same infinitesimal character. We follow the notion of infinitesimal character introduced by Vogan in \cite{Vogan:1993}, which depends on the local Langlands correspondence. In Section~\ref{sec: infinitesimal character}, we review this notion and study how the lift of an infinitesimal character would determine the lifts of Langlands parameters of elements in $\cPkt{\q}$, through the geometry of Langlands parameters (cf. \cite{CFMMX}). In Section~\ref{sec: LLC}, we construct a candidate for the local Langlands correspondence of $\lG$, through which we could define the infinitesimal characters for representations of $\lG(F)$ modulo $\D{\theta}_0$-conjugation. In Section~\ref{sec: main results}, we formulate the main result (cf. Theorem~\ref{thm: Apacket}) and treat the special case when the lift of infinitesimal character determine the lift of Langlands parameters of elements in $\cPkt{\q}$. The key input of the proof is that the parabolic induction preserves the infinitesimal character (cf. Proposition~\ref{prop: induction preserves infinitesimal character}). In Section~\ref{sec: construction}, we treat the general case following M{\oe}glin's strategy \cite{Moeglin:2009} for constructing $\cPkt{\q}$. It reduces to the case of discrete $L$-packets, which has already been treated in our earlier work \cite{Xu:2018}. This is the most difficult part of the paper. In Section~\ref{sec: uniqueness}, we address the uniqueness part of the main result and complete its proof. In the last section, we prove that $\cPkt{\lq}$ contains an $L$-packet $\cPkt{\p_{\lq}}$, where $\p_{\lq}$ is the associated $L$-parameter of $\lq$. Based on this result, we extend the local Langlands correspondence constructed in Section~\ref{sec: LLC} to a correspondence between Arthur parameters and Arthur packets.

\section{Index of notation}

We list some of the notations with a brief description and references to precise definitions.

\begin{spacing}{1.5}

\begin{itemize}

\item $G$ is $Sp(2n), SO(2n)$ and  $\lG$ is $GSp(2n), GSO(2n)$ in most cases (Section~\ref{sec: infinitesimal character}, ~\ref{sec: Jac}).

\item $G^{\Sigma_0}$ is $Sp(2n), O(2n)$ and $\lG^{\Sigma_0}$ is $GSp(2n), GO(2n))$ in most cases (Section~\ref{sec: infinitesimal character}, ~\ref{sec: Jac}).

\item $\lambda_{\lG}$ is the similitude character of $\lG$ (Section~\ref{sec: infinitesimal character}).

\item $\lambda, \lambda_{\q}, \tilde{\lambda}, \lambda_{\lM}, \lambda_{\widetilde{L}}$ are infinitesimal characters (Section~\ref{sec: infinitesimal character}).

\item $\bar{\mathcal{H}}(G(F)), \bar{\mathcal{H}}(G(F), \chi), \bar{\mathcal{H}}(\lG(F)), \bar{\mathcal{H}}(\lG(F), \tilde{\chi})$ are spaces of functions (Section~\ref{sec: introduction}, ~\ref{sec: main results})

\item $\Pi(G^{\Sigma_0}(F)), \bar{\Pi}(G(F)), \Pi(\lG^{\Sigma_0}(F)), \bar{\Pi}(\lG(F))$ are sets of isomorphism classes of irreducible representations (Section~\ref{sec: infinitesimal character}).

\item $\P{G}, \cP{G}, \cP{\lG}$ are sets of Langlands parameters (Section~\ref{sec: infinitesimal character}).

\item $\p, \p_a, \p_{\r}, \p_{\q}$ are Langlands parameters (Section~\ref{sec: infinitesimal character}, ~\ref{sec: LLC}).

\item $\q, \q_d, \q_p, \q_{np}$ are Arthur parameters (Section~\ref{sec: infinitesimal character}, ~\ref{subsec: Arthur parameters}).

\item $\cS{\q}, \cS{\lq}, \S{\q}, \S{\lq}, \S{\q}^{\Sigma_0}, \S{\q}^{\theta}$ are variants of centralizer groups of parameters (Section~\ref{sec: infinitesimal character}).

\item $\D{\S{\q}}, \D{\S{\q}^{\Sigma_0}}, \D{\S{\lq}}, \D{\S{\lq}^{\Sigma_0}}$ are characters of centralizer groups, their elements are denoted by $\bar{\e}, \e, \tilde{\bar{\e}}, \tilde{\e}$ respectively (Section~\ref{sec: main results}).

\item $\r, \r^{\Sigma_0}, \lr, \lr^{\Sigma_0}$, $\lr_{?}(\lq, \bar{\e}), \lr^{\Sigma_0}_{?}(\lq, \e)$ ($? = W, M, MW$) are representations of $G, G^{\Sigma_0}, \lG, \lG^{\Sigma_0}$ respectively (Section~\ref{sec: main results}, ~\ref{subsec: elementary case}, ~\ref{subsubsec: construction}).

\item $\lr_1 \tilde{\otimes} \lr_2$ is the restriction of $\lr_1 \otimes \lr_2$ to $\{(g_1, g_2) \in \lG_1 \times \lG_2 \, | \, \lambda_{\lG_1}(g_1) = \lambda_{\lG_2}(g_2)\}$ (Section~\ref{sec: infinitesimal character}).

\item $\cPkt{\q}, \cPkt{\q}^{\Sigma_0}, \cPkt{\lq}, \Pkt{\lq}^{\Sigma_0}, \clPkt{\q,\tilde{\zeta}}, \lPkt{\q, \tilde{\zeta}}^{\Sigma_0}$ are packets of representations of $G, G^{\Sigma_0}, \lG, \lG^{\Sigma_0}$ respectively (Section~\ref{sec: main results}).

\item $X = \Hom(\lG(F)/G(F) \lZ(F) , \C^{\times})$ (Section~\ref{sec: introduction}).

\item $X(\lp), X^{\Sigma_0}(\lp), X(\lr), X(\lr^{\Sigma_0})$ are stabilizers in $\bar{H}^1(W_F, \mathbb{C}^{\times}) \cong {\rm Hom}(\lG(F)/G(F), \mathbb{C}^{\times})$ (Section~\ref{sec: infinitesimal character}, ~\ref{sec: LLC}).

\item $\alpha: \S{\q}^{\Sigma_{0}} \rightarrow {\rm Hom}(\lG(F)/G(F), \mathbb{C}^{\times})$ (Lemma~\ref{lemma: twist invariant}, \eqref{eq: combinatorial description}).

\item ${\rm Jac}^{\rho}_{x}, \overline{{\rm Jac}}^{\rho}_{x}, {\rm Jac}^{\rho}_{X^{T}_{(\rho, A, B, \zeta)}}, {\rm Jac}_{(\rho, A+T, B+T, \zeta) \mapsto (\rho, A, B, \zeta)}$ are variants of Jacquet modules (Section~\ref{sec: Jac}).

\item ${\rm Rep}(\lG^{\Sigma_0}(F))$, $\overline{{\rm Rep}}(\lG(F))$, ${\rm Rep}(\lG^{\Sigma_0}(F), \omega)$, $\overline{{\rm Rep}}(\lG(F), \theta, \omega)$ are category of representations, we add $``K"$ in front to denote the corresponding Grothendieck groups, and $R(\lG^{\Sigma_0}, \omega), \bar{R}(\lG, \theta, \omega)$ are spaces of twisted characters (Section~\ref{sec: Jac}).

\item $Jord(\q)$, $Jord_{\rho}(\q)$, $Jord(\q)_p$, $Jord_{\pm}$ are sets of Jordan blocks,  $J(\q)$ is the set of $\rho$ appearing in $Jord(\q)$ (Section~\ref{subsec: Arthur parameters}, ~\ref{subsubsec: construction}).

\item $f_{G}(\r), \lf_{\lG}(\lr),  f_{G^{\theta}}(\r), \lf_{\lG^{\theta}, W}(\lr, \omega), f_{W}(\q), \lf_{W}(\lq)$ are (twisted) Harish-Chandra characters (Section~\ref{sec: main results}).

\item $\langle \cdot, \cdot \rangle_{?}$ $(? = W, M, MW)$ are pairings between the centralizer groups (e.g. $\S{\q}$) and the packets (e.g. $\cPkt{\q}$) (Section~\ref{sec: main results}).

\item $X^{T}_{(\rho, A, B, \zeta)}$ is a matrix of half-integral entries, and $|X^{T}_{(\rho, A, B, \zeta)}|$ is the sum of its entries (Section~\ref{sec: Jac}).

\item $\widetilde{X}_{C}, \widetilde{X}_{\eta}, \widetilde{X}_{\eta, C}, \widetilde{X}_{\eta, \eta''}$ are representations of $\lG^{\Sigma_0}$ (Section~\ref{subsubsec: construction}).

\item $\tilde{\zeta}, \chi, \tilde{\chi}, \chi_{\tilde{\rho}}, \chi_{\lp_a}, \eta_{\rho}$ are central characters (Section~\ref{sec: main results}, ~\ref{subsubsec: construction}, Lemma~\ref{lemma: Jacquet module}).

\item $\zeta, \zeta_{a,b}, \zeta_{\rho}$, $\eta$, $\eta_{0}, , \eta_{T,0}$ are signs (Section~\ref{subsec: Arthur parameters}, ~\ref{subsubsec: construction}, Theorem~\ref{thm: uniqueness}).

\item $\iota, \iota_{G}, \lif{\iota}, \iota_{\lif{H}}, \iota^{\lif{H}'}_{\lif{H}}$ are (twisted) endoscopic embeddings (Section~\ref{sec: infinitesimal character}, ~\ref{subsec: Arthur parameters}, ~\ref{sec: Arthur LLC}).

\item ${\rm Tran}(\cdot)$ is the transfer of packets defined through the twisted character relation (Lemma~\ref{lemma: cuspidal support}, Proposition~\ref{prop: compatible with endoscopic transfer}).

\end{itemize}
\end{spacing}

The following rules have been adapted to the notations. Overhead $``\sim"$ refer to objects defined for $\lG$ (e.g. $\lr, \lp$). Superscript $``\Sigma_0"$ refer to objects defined for $G^{\Sigma_0}$ (e.g. $\S{\q}^{\Sigma_0}$). Subscript $``\lambda"$ means fixing an infinitesimal character (e.g. $\Pi(G(F))_{\lambda}, \P{G}_{\lambda}$). Overhead $``-"$ means modulo conjugation by $\Sigma_0$ or $\D{\Sigma}_0$ (e.g. $\bar{\Pi}(G(F)), \cP{G}$). We add $[\,]$ to mean equivalence class under conjugation by $\Sigma_0$ or $\D{\Sigma}_0$ (e.g. $[\p], [\r]$). We add subscripts $? = W, M, MW$ to indicate the choice of normalization: $``W"$ for Whittaker, $``M"$ for M{\oe}glin, $``MW"$ for M{\oe}glin-Waldspurger (e.g. $\langle \cdot, \cdot \rangle_{?}$).


\section{Infinitesimal character}
\label{sec: infinitesimal character}


Let $G$ be a quasisplit connected reductive algebraic group over a $p$-adic field $F$. Let $\Pi(G(F))$ be the set of isomorphism classes of irreducible admissible representations of $G(F)$ and $\Phi(G)$ the set of Langlands parameters of $G$, i.e., $\D{G}$-conjugacy classes of admissible homomorphisms from $W_{F} \times SL(2, \mathbb{C})$ to $\L{G}$. We assume the local Langlands correspondence for $G$, i.e., there is a surjection
\[
\Pi(G(F)) \rightarrow \Phi(G), \quad \pi \mapsto \phi_{\pi}
\]
whose fibers are called $L$-packets. Then the infinitesimal character of $\r \in \Pi(G(F))$ is defined to be that of $\p_{\r}$, namely the $\D{G}$-conjugacy class of
\[
\lambda: W_{F} \rightarrow \L{G}, \quad w \mapsto \phi_{\pi} \Big(w, \begin{pmatrix} |w|^{1/2} & 0 \\ 0 & |w|^{-1/2} \end{pmatrix} \Big).
\]
Fix an infinitesimal character $\lambda$. Let $Z_{\D{G}}(\lambda)$ be the centralizer of $\lambda$ in $\D{G}$. The set $\Phi(G)_{\lambda}$ of Langlands parameters with infinitesimal character $\lambda$ is in bijection with $Z_{\D{G}}(\lambda)$-orbits in 
\[
V_{\lambda} := \Big\{x \in \widehat{\mathfrak{g}}^{\lambda(I_{F})} | \text{Ad}(\lambda(\text{Fr}))x = q x \Big\}
\]
through the map 
\[
\p \mapsto d\phi|_{SL(2, \mathbb{C})} \Big( \begin{pmatrix} 0 & 1 \\ 0 & 0 \end{pmatrix} \Big),
\]
where $\widehat{\mathfrak{g}}$ is the Lie algebra of $\D{G}$ and $q$ is the order of the residue field of $F$ (cf. \cite[Proposition 4.2.2]{CFMMX}). The number of orbits is finite and there is a unique open orbit (cf. \cite[Proposition 5.6.1]{CFMMX}). For $\p \in \Phi(G)_{\lambda}$, let us denote the corresponding orbit by $C_{\p}$. We can also view $\lambda$ as a Langlands parameter, which is trivial on $SL(2, \C)$. It corresponds to the origin in $V_{\lambda}$. For $\p \in \P{G}$, we say it is bounded (resp. discrete) if $\p(W_F)$ is bounded (resp. $Z_{\D{G}}(\p)$ is finite). Denote the set of bounded (resp. discrete) parameters by $\Pbd{G}$ (resp. $\Pdt{G}$). We have $\Pdt{G} \subseteq \Pbd{G}$.

Let $\Q{G}$ be the set of Arthur parameters of $G$, i.e., $\D{G}$-conjugacy classes of admissible homomorphisms from $W_{F} \times SL(2, \C) \times SL(2, \mathbb{C})$ to $\L{G}$ such that $\q(W_{F})$ is bounded. Then we can view $\Pbd{G} \subseteq \Q{G}$. There is an inclusion   
\[
\Q{G} \hookrightarrow \P{G}, \quad \q \mapsto \p_{\q}(w, x) := \q \Big( w, x, \begin{pmatrix} |w|^{\frac{1}{2}} & \\ & |w|^{-\frac{1}{2}}  \end{pmatrix}\Big)
\]
(cf. \cite[Lemma 3.6.1]{CFMMX}). We define the infinitesimal character of $\q$ to be that of $\p_{\q}$, denoted by $\lambda_{\q}$. Let $\Q{G}_{\lambda}$ be the subset of $\Q{G}$ with infinitesimal character $\lambda$. For $\q \in \Q{G}$, let $\q_{d} := \q \circ \Delta \in \Pbd{G}$, where
\[
\Delta: W_{F} \times SL(2, \C) \rightarrow W_{F} \times SL(2, \C) \times SL(2, \C), \quad (w, x) \mapsto (w, x, x).
\]
We can also view $\q_{d}$ as an Arthur parameter, which is trivial on the second $SL(2, \C)$. Then $\q_{d}$ corresponds to the unique open orbit in $V_{\lambda}$ (cf. \cite[Proposition 6.1.1]{CFMMX}). Let $\Pkt{}(G(F))_{\lambda}$ be the subset of $\Pkt{}(G(F))$ with infinitesimal character $\lambda$. 
The following conjecture is suggested by \cite{ABV:1992} \cite{Vogan:1993} \cite{CFMMX}.

\begin{conjecture}
\label{conj: ABV}
For $\q \in \Q{G}$, $\Pkt{\q} \subseteq \Big\{\r \in \Pi(G(F))_{\lambda} \, : \, \overline{C}_{\p_{\r}} \supseteq C_{\p_{\q}} \Big\}$.
\end{conjecture}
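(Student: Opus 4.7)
The conjecture has two parts: the statement $\r \in \Pi(G(F))_\lambda$ about infinitesimal characters, and the geometric containment $\overline{C}_{\p_\r} \supseteq C_{\p_\q}$. The first part is essentially built into the construction of $\Pkt{\q}$: it is one of the guiding principles articulated in Section~\ref{sec: main results}, and the key technical input is Proposition~\ref{prop: induction preserves infinitesimal character} (parabolic induction preserves $\lambda$), which reduces the claim to the discrete case where it holds by construction. So the substance lies in the orbit-closure condition.

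For the geometric containment the plan is induction on the ``complexity'' of $\q$, measured say by $\dim \overline{C}_{\p_\q}$ inside $V_\lambda$. The base case is $\q$ tempered: then $\p_\q = \q_d$ corresponds to the unique open orbit of $V_\lambda$, the packet $\Pkt{\q}$ coincides with the $L$-packet $\Pkt{\p_\q}$, and every $\r \in \Pkt{\p_\q}$ has $\p_\r = \p_\q$, so $C_{\p_\r} = C_{\p_\q}$ trivially. If $\q$ is non-discrete it factors through a proper Levi $M \subsetneq G$, and each $\r \in \Pkt{\q}$ occurs as an irreducible subquotient of a parabolic induction from some $\r_M \in \Pkt{\q_M}$. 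The geometric counterpart of parabolic induction on $V_\lambda$ is the Richardson-type saturation map $V_\lambda^M \hookrightarrow V_\lambda$, which preserves orbit-closure containment; combining this with the inductive hypothesis applied to $(\q_M, \r_M)$ delivers the desired containment for $\r$.

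The remaining and hardest case is $\q$ discrete but non-tempered. Here, for classical $G$, I would invoke M{\oe}glin's construction recalled in Section~\ref{sec: construction}, which exhibits each $\r \in \Pkt{\q}$ from some $\r' \in \Pkt{\q'}$ (with $\q'$ of smaller Jordan-block complexity) via a chain of $\mathrm{Jac}^{\rho}_{x}$-type operations, providing a recursive foothold. The \textbf{main obstacle} is translating these combinatorial moves on Jordan blocks $(\rho, A, B, \zeta)$ into a geometric specialization statement on $V_\lambda$. Two difficulties intervene: the closure order on $V_\lambda$ for classical groups is not the naive dominance order on partitions but is refined by the type-$B/C/D$ combinatorics of nilpotent orbits (cf.~\cite{CFMMX}); and one must verify that each individual pull operation, not merely the two endpoints of the recursion, respects the closure order. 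A cleaner route would identify $C_{\p_\r}$ with (a component of) the wave-front set of $\r$ and appeal to known results on wave-front sets of A-packets for classical groups, but the required compatibility between the Langlands-parameter orbit and the wave-front set is itself a nontrivial and not fully settled input, which is why this part is left as a conjecture here.
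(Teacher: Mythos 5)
The statement you set out to prove is explicitly labeled as a \emph{conjecture} in the paper, attributed to the circle of ideas around \cite{ABV:1992}, \cite{Vogan:1993}, \cite{CFMMX}; the paper offers no proof, and there is therefore no argument to compare yours against. Your parsing of the conjecture into two halves is nonetheless accurate and matches the paper's own use of it. The infinitesimal-character half, $\Pkt{\q} \subseteq \Pi(G(F))_\lambda$, is indeed not conjectural: in Section~\ref{sec: main results} the paper observes that it follows from M{\oe}glin's multiplicity-one theorem together with Arthur's stable expansion (2.2.12), exactly the pieces you cite. The genuinely open content is the orbit-closure inclusion $\overline{C}_{\p_\r} \supseteq C_{\p_\q}$, and you correctly isolate the discrete non-tempered case as the obstruction. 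It is worth noting that the paper does prove a \emph{cognate} but weaker dominance statement: Lemma~\ref{lemma: estimate of exponent} shows $\cPkt{\p_\q} \subseteq \cPkt{\q}$ and $\Xi_{[\r]} \leqslant_N \Xi_{[\q]}$ for all $[\r] \in \cPkt{\q}$, with equality forcing $[\r] \in \cPkt{\p_\q}$. This is a partial order on Langlands exponents in $\mathfrak{a}_{\mathcal{B}_N}$, not the closure order on $Z_{\D{G}}(\lambda)$-orbits in $V_\lambda$; the two would coincide for $GL(N)$ but diverge for classical groups precisely because of the refined type-$B/C/D$ closure relations you flag. Upgrading the exponent estimate to the orbit-closure statement is the missing step, and your instinct that M{\oe}glin's $\mathrm{Jac}$-moves would have to be tracked geometrically through $V_\lambda$ (or replaced by a wave-front-set argument whose foundational inputs are themselves not settled) is a fair assessment of why the paper stops short of a proof.
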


Suppose $G$ is a quasisplit symplectic or special even orthogonal group over $F$ and $\lG$ is the corresponding similitude group. We have
\(
\lG \cong (\mathbb{G}_{m} \times G) / (\Two),
\) 
where $\Two$ is embedded diagonally into the centre of each factor. The similitude character $\c_{\lG}$ is square on $\mathbb{G}_{m}$ and trivial on the other factor. We fix an automorphism $\theta_{0}$ of $G$ preserving an $F$-splitting: when $G$ is symplectic, we require $\theta_{0}$ to be trivial and when $G$ is special even orthogonal, we require $\theta_{0}$ to be the unique nontrivial outer automorphism induced from the conjugation of the full orthogonal group. Clearly, $\theta_{0}^{2} = 1$, $\theta_{0}$ extends to $\lG$ by acting trivially on $\lZ$, and $\c_{\lG}$ is $\theta_{0}$-invariant. It induces a dual automorphism $\D{\theta}_0$. Let $\Sigma_{0} = \langle \theta_{0} \rangle$ and $\D{\Sigma}_0 = \langle \D{\theta}_0 \rangle$. More generally, we can consider 
\begin{align}
\label{eq: product}
G = G_{1} \times \cdots \times G_{q}
\end{align}
where $G_i$ is a quasisplit symplectic or special even orthogonal group. We define
\[
\lG = (\mathbb{G}_{m} \times G_{1} \times G_{2} \times \cdots \times G_{q}) / (\Two),
\]
where $\Two$ is embedded diagonally into the center of each factor. We also define a character $\c_{\lG}$ of $\lG$, which is square on $\mathbb{G}_{m}$ and trivial on the other factors. Then there is an exact sequence
\[
\xymatrix{1 \ar[r] & G \ar[r] & \lG \ar[r]^{\lambda_{\lG}} & \mathbb{G}_{m} \ar[r]  &  1.
}
\]
On the dual side, we have
\[
\xymatrix{
1 \ar[r] &  \C^{\times} \ar[r] &  \D{\lG} \ar[r]^{{\bf p}}  &  \D{G} \ar[r] & 1.
}
\]
We can also view $\lG$ as a subgroup of $\lG_1 \times \cdots \times \lG_q$ by 
\[
\lG \cong \Big\{ (g_i) \in \prod_{i} \lG_{i} \, | \, \lambda_{\lG_{1}}(g_{1}) = \cdots = \lambda_{\lG_{q}}(g_{q}) \Big\}.
\] 

We define a group of automorphisms of $G$ by taking the product of $\Sigma_{0}$ on each factor, and we denote this group again by $\Sigma_{0}$. We can extend $\Sigma_0$ to $\lG$ by the trivial action on $Z_{\lG}$. It induces a group $\D{\Sigma}_0$ of automorphisms of $\D{\lG}$. Let 
\begin{align*}
& G^{\Sigma_0} = G \rtimes \Sigma_0, \quad \lG^{\Sigma_0} = \lG \rtimes \Sigma_0, \\
& \D{G}^{\Sigma_0} = \D{G} \rtimes \D{\Sigma}_0, \quad \D{\lG}^{\Sigma_0} = \D{\lG} \rtimes \D{\Sigma}_0.
\end{align*}

For admissible representations $\lr_i$ of $\lG_i(F)$, we define the restriction of $\otimes_{i} \, \lr_{i}$ to $\lG(F)$ by $\tilde{\otimes}_i \, \lr_i$. Let $\cPkt{}(G(F))$ be the set of $\Sigma_{0}$-orbits in $\Pkt{}(G(F))$ and $\cP{G}$ the set of $\D{\Sigma}_{0}$-orbits in $\P{G}$. We will also consider $\Pkt{}(G^{\Sigma_0}(F))$ the set of isomorphism classes of irreducible admissible representation of $G^{\Sigma_0}(F)$. There is a natural surjection from $\Pkt{}(G^{\Sigma_0}(F))$ to $\cPkt{}(G(F))$. 

For $G$ in \eqref{eq: product}, the local Langlands correspondence is known modulo $\Sigma_0$-conjugation (cf. \cite[Theorem 1.5.1]{Arthur:2013}), namely we have a surjection with finite fibers
\[
\cPkt{}(G(F)) \rightarrow \cP{G}, \quad [\r] \mapsto [\p_{\r}].
\]
So we can define the $\D{\Sigma}_0$-conjugacy classes of infinitesimal characters for $\cPkt{}(G(F))$, which will be called $\Sigma_0$-infinitesimal characters. For $\r^{\Sigma_0} \in \Pkt{}(G^{\Sigma_0}(F))$, we define its $\Sigma_0$-infinitesimal character to that of its restriction to $G(F)$. We fix an infinitesimal character $\tilde{\lambda}$ of $\lG$ and let $\lambda = {\bold p} \circ \tilde{\lambda}$. There is a surjection 
\(
\P{\lG}_{\tilde{\lambda}} \rightarrow \P{G}_{\lambda}
\) 
by the composition with ${\bf p}$ (cf. \cite[Theorem 2.9]{Xu:2018}). This can also be seen from the natural isomorphism $V_{\tilde{\lambda}} \xrightarrow{\cong} V_{\lambda}$ under ${\bf p}$, which is compatible with the actions by $Z_{\D{\lG}}(\tilde{\lambda}) \rightarrow Z_{\D{G}}(\lambda)$. Let $\cP{G}_{\lambda}$ (resp. $\cP{\lG}_{\tilde{\lambda}}$) be the subset of $\cP{G}$ (resp. $\cP{\lG}$) with $\Sigma_0$-infinitesimal character $[\lambda]$ (resp. $[\tilde{\lambda}]$). Then we have the following commutative diagram.
\[
\xymatrix{& \P{G}_{\lambda} \ar[rr] \ar[d]^{\simeq} && \cP{G}_{\lambda} \ar[d]^{\simeq} \\
& V_{\lambda}/Z_{\D{G}}(\lambda) \ar[rr] && V_{\lambda}/Z_{\D{G}^{\Sigma_0}}(\lambda) \\
\P{\lG}_{\tilde{\lambda}} \ar[rr]  \ar[ruu] \ar[d]^{\simeq} && \cP{\lG}_{\tilde{\lambda}} \ar[uur] \ar[d]^{\simeq} & \\
V_{\tilde{\lambda}}/Z_{\D{\lG}}(\tilde{\lambda}) \ar[rr] \ar[uur] && V_{\tilde{\lambda}}/Z_{\D{\lG}^{\Sigma_0}}(\tilde{\lambda}) \ar[uur] & .
}
\]
Let $\bar{H}^{1}(W_{F}, \mathbb{C}^{\times}) := {\rm Im}\{H^{1}(W_{F}, \mathbb{C}^{\times}) \rightarrow H^{1}(W_{F}, Z(\D{\lG}))\}$. For $\lp \in \P{\lG}_{\tilde{\lambda}}$, let
\[
X(\lp) := \Big\{\omega \in \bar{H}^{1}(W_{F}, \mathbb{C}^{\times}) \, : \, \lp \otimes \omega = \lp \Big\}, \quad X^{\Sigma_0}(\lp) := \Big\{\omega \in \bar{H}^{1}(W_{F}, \mathbb{C}^{\times}) \, : \, [\lp \otimes \omega] = [\lp] \Big\},
\]
which only depend on the image of $\lp$ in $\cP{G}_{\lambda}$. Then
\[
\P{\lG}_{\tilde{\lambda}} / X(\tilde{\lambda}) \cong \P{G}_{\lambda}, \quad \cP{\lG}_{\tilde{\lambda}} / X^{\Sigma_0}(\tilde{\lambda}) \cong \cP{G}_{\lambda}.
\]
So the fiber of
\[
\P{\lG}_{\tilde{\lambda}} \rightarrow \P{G}_{\lambda} \quad \text{ \Big(resp. $\cP{\lG}_{\tilde{\lambda}} \rightarrow \cP{G}_{\lambda}$\Big)}
\]
over $\p$ (resp. $[\p]$) is in bijection with
\[
X(\tilde{\lambda}) / X(\lp) \quad \text{ \Big(resp. $X^{\Sigma_0}(\tilde{\lambda}) / X^{\Sigma_0}(\lp)$\Big). }
\]

Let us identify $V_{\lambda}$ with $V_{\tilde{\lambda}}$. Suppose $\lp_o \in \P{\lG}_{\tilde{\lambda}}$ corresponds to the unique open $Z_{\D{\lG}}(\tilde{\lambda})$-orbit in $V_{\tilde{\lambda}}$, then this orbit must also be the unique open $Z_{\D{G}}(\lambda)$-orbit in $V_{\lambda}$. Let $\p_{o} \in \P{G}_{\lambda}$ be the corresponding parameter. So $\tilde{\lambda}$ determines a unique lift $\lp_o$ of $\p_{o}$, hence
\(
X(\tilde{\lambda}) = X(\lp_o).
\)
By the uniqueness, we can also conclude that this orbit is invariant under $Z_{\D{G}^{\Sigma_0}}(\lambda)$. So $[\tilde{\lambda}]$ determines a unique lift $[\lp_o]$ of $[\p_{o}]$, hence $X^{\Sigma_0}(\tilde{\lambda}) = X^{\Sigma_0}(\lp_o)$. 

By the same argument as for Langlands parameters, one can show that there is a surjection $\Q{\lG}_{\tilde{\lambda}} \rightarrow \Q{G}_{\lambda}$ through the composition with ${\bf p}$. For $\lq \in \Q{\lG}_{\tilde{\lambda}}$, let
\[
X(\lq) := \Big\{\omega \in \bar{H}^{1}(W_{F}, \mathbb{C}^{\times}) \, : \, \lq \otimes \omega = \lq \Big\}, \quad X^{\Sigma_0}(\lq) := \Big\{\omega \in \bar{H}^{1}(W_{F}, \mathbb{C}^{\times}) \, : \, [\lq \otimes \omega] = [\lq] \Big\},
\]
which only depend on the image of $\lq$ in $\cQ{G}_{\lambda}$. The fiber of
\[
\Q{\lG}_{\tilde{\lambda}} \rightarrow \Q{G}_{\lambda} \quad \text{ \Big(resp. $\cQ{\lG}_{\tilde{\lambda}} \rightarrow \cQ{G}_{\lambda}$\Big)}
\]
over $\q$ (resp. $[\q]$) is in bijection with
\[
X(\tilde{\lambda}) / X(\lq) \quad \text{ \Big(resp. $X^{\Sigma_0}(\tilde{\lambda}) / X^{\Sigma_0}(\lq)$\Big). }
\]
From the inclusion $\Q{\lG} \hookrightarrow \P{\lG}$, we have $X(\lq) = X(\lp_{\q})$ and $X^{\Sigma_0}(\lq) = X^{\Sigma_0}(\lp_{\q})$. 

For $\lq \in \Q{\lG}$ and $\q = {\bf p} \circ \lq \in \Q{G}$, we define 
\[
S^{\Sigma_0}_{\q} = Z_{\D{G}^{\Sigma_0}}(\q), \quad S_{\lq}^{\Sigma_0} = Z_{\D{\lG}^{\Sigma_0}}(\lq).
\] 
The short exact sequence 
\[
\xymatrix{1 \ar[r] & \mathbb{C}^{\times} \ar[r] & \D{\lG}^{\Sigma_0} \ar[r]  & \D{G}^{\Sigma_0} \ar[r] & 1}
\]
induces a long exact sequence
\begin{align*}
\xymatrix{1 \ar[r] &  \mathbb{C}^{\times} \ar[r] & S_{\lq}^{\Sigma_0} \ar[r] & S_{\q}^{\Sigma_0} \ar[r]^{\delta \quad \quad} & H^{1}(W_{F}, \mathbb{C}^{\times}),}
\end{align*}
and hence 
\begin{align*}
\xymatrix{1 \ar[r] &  S_{\lq}^{\Sigma_0}/\mathbb{C}^{\times} \ar[r]^{\quad \iota} & S_{\q}^{\Sigma_0} \ar[r]^{\delta \quad \quad} & H^{1}(W_{F}, \mathbb{C}^{\times}).}
\end{align*}
Taking quotient by $Z(\D{G})^{\Gal{F}}$, we get
\begin{align*}
\xymatrix{1 \ar[r] &  \cS{\lq}^{\Sigma_0} \ar[r]^{\iota} & \cS{\q}^{\Sigma_0} \ar[r]^{\bar{\delta} \quad \quad \quad \quad }  & H^{1}(W_{F}, \mathbb{C}^{\times})/\delta(Z(\D{G})^{\Gamma{}}),}                 
\end{align*}
where 
\[
\cS{\lq}^{\Sigma_0} =  S_{\lq}^{\Sigma_0}/Z(\D{\lG})^{\Gal{F}}, \quad \cS{\q}^{\Sigma_0} =  S_{\q}^{\Sigma_0}/Z(\D{G})^{\Gal{F}}
\] 
One can show that $\Im \delta$ is finite as in \cite[Lemma 2.1]{Xu:2018}, then $\cS{\lq}^{0} = \cS{\q}^{0}$. After taking the quotients by the identity components, we get
\begin{align}
\label{eq: twisted endoscopic sequence}
\xymatrix{1 \ar[r] &  \S{\lq}^{\Sigma_0} \ar[r]^{\iota} & \S{\q}^{\Sigma_0} \ar[r]^{\bar{\delta} \quad \quad \quad \quad }  & H^{1}(W_{F}, \mathbb{C}^{\times})/\delta(Z(\D{G})^{\Gamma{}}),}                
\end{align}
where 
\[
\S{\lq}^{\Sigma_0} =  \cS{\lq}^{\Sigma_0} / \cS{\lq}^{0}, \quad \S{\q}^{\Sigma_0} =  \cS{\q}^{\Sigma_0} / \cS{\q}^{0}.
\]  
There are natural maps from $S^{\Sigma_0}_{\q}, \cS{\q}^{\Sigma_0}$, and $\S{\q}^{\Sigma_0}$ to $\D{\Sigma}_0$, and for $\theta \in \Sigma_0$, we denote the preimages of $\D{\theta} \in \D{\Sigma}_0$ by $S^{\theta}_{\q}, \cS{\q}^{\theta}$ and $\S{\q}^{\theta}$ respectively. The above discussion also applies to $\lp \in \P{\lG}$ and $\p = {\bf p} \circ \lp \in \P{G}$.

Next we will show $H^{1}(W_{F}, \mathbb{C}^{\times})/\delta(Z(\D{G})^{\Gamma{}}) \cong \bar{H}^{1}(W_{F}, \mathbb{C}^{\times})$. Consider
\[
\xymatrix{1 \ar[r] & \mathbb{C}^{\times} \ar[r] & Z(\D{\lG}) \ar[r]  & Z(\D{G}) \ar[r] & 1,}
\]
it induces a long exact sequence
\[
\xymatrix{Z(\D{G})^{\Gal{F}} \ar[r]^{\delta \quad} & H^{1}(W_{F}, \mathbb{C}^{\times}) \ar[r] & H^{1}(W_{F}, Z(\D{\lG})) \ar[r] & H^{1}(W_{F}, Z(\D{G})) \ar[r] & H^{2}(W_{F}, \mathbb{C}^{\times}) = 1.
}
\]
So
\[
H^{1}(W_{F}, \mathbb{C}^{\times})/\delta(Z(\D{G})^{\Gamma_{F}}) \cong \bar{H}^{1}(W_{F}, \mathbb{C}^{\times})
\]
and hence
\begin{align}
\label{eq: twist}
\xymatrix{1 \ar[r] & \bar{H}^{1}(W_{F}, \mathbb{C}^{\times}) \ar[r] & H^{1}(W_{F}, Z(\D{\lG})) \ar[r] & H^{1}(W_{F}, Z(\D{G})) \ar[r] & 1
}
\end{align}
is exact. Through this, we will identify 
\(
\bar{H}^{1}(W_{F}, \mathbb{C}^{\times}) \cong \Hom(\lG(F)/G(F), \C^{\times})
\)
(cf. \cite[Section 2.2]{Xu:2018}) and denote $\bar{\delta}$ by $\a$. Indeed, the image of $\a$ lies in $X := \Hom(\lG(F)/G(F) \lZ(F) , \C^{\times})$. Moreover, one can show the following result as in \cite[Lemma 3.3]{Xu:2016}.

\begin{lemma}
\label{lemma: twist invariant}
\begin{align*}
& X(\lq) = \a(\S{\q}), \quad X^{\Sigma_0}(\lq) = \a(\S{\q}^{\Sigma_0}), \\
& X(\lp) = \a(\S{\p}), \quad  X^{\Sigma_0}(\lp) = \a(\S{\p}^{\Sigma_0}).
\end{align*}
\end{lemma}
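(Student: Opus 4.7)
My plan is to establish all four identities in parallel, following the template of \cite[Lemma 3.3]{Xu:2016}. The arguments for $\lp$ and $\lq$ are formally identical, since both are admissible homomorphisms into $\L{\lG}$ and the centralizer formalism of \eqref{eq: twisted endoscopic sequence} is insensitive to whether a second $SL(2,\C)$-factor is present. Likewise, the only difference between the untwisted statements $X(\lq)=\a(\S{\q})$, $X(\lp)=\a(\S{\p})$ and their $\Sigma_0$-twisted analogues is whether the lifts we manufacture are required to lie in $\D{\lG}$ itself or are allowed to lie in the disconnected extension $\D{\lG}^{\Sigma_0}$. I will therefore describe the $\Sigma_0$-twisted case for $\lq$ in detail and indicate the modifications needed for the other three.

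For $\a(\S{\q}^{\Sigma_0}) \subseteq X^{\Sigma_0}(\lq)$, I would start from $s \in S_{\q}^{\Sigma_0}$ and pick any lift $\tilde{s} \in \D{\lG}^{\Sigma_0}$ via the surjection ${\bf p}$. The condition that $s$ centralises $\q$ in $\D{G}^{\Sigma_0}$, combined with the fact that $\theta_0$ acts trivially on $\lZ$ (so that $\D{\theta}_0$ acts trivially on the central $\C^{\times} \subset \D{\lG}$), forces the defect measuring the failure of $\tilde{s}$ to centralise $\lq$ in $\D{\lG}^{\Sigma_0}$ to be a $1$-cocycle $c_{s} \in Z^{1}(W_{F}, \C^{\times})$. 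A snake-lemma chase through \eqref{eq: twist} identifies the class $[c_{s}] \in \bar{H}^{1}(W_{F}, \C^{\times})$ with $\bar{\delta}(s) = \a(s)$, and $\tilde{s}$ itself exhibits the equivalence $[\tilde{s}\,\lq\,\tilde{s}^{-1}] = [\lq \otimes \a(s)]$ in $\cP{\lG}$, so $\a(s) \in X^{\Sigma_0}(\lq)$. For the untwisted statement, if $s$ lies in $\S{\q}$ one may take $\tilde{s} \in \D{\lG}$ and the equivalence holds already in $\P{\lG}$, giving $\a(s) \in X(\lq)$.

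For the reverse inclusion, given $\omega \in X^{\Sigma_0}(\lq)$ I would use the exactness of \eqref{eq: twist} to pick a cocycle $c \in Z^{1}(W_{F}, Z(\D{\lG}))$ whose image in $\bar{H}^{1}(W_{F}, \C^{\times})$ represents $\omega$; the hypothesis then produces $\tilde{g} \in \D{\lG}^{\Sigma_0}$ with $\tilde{g}\,\lq\,\tilde{g}^{-1} = \lq \cdot c$, and $s := {\bf p}(\tilde{g})$ lies in $S_{\q}^{\Sigma_0}$ with $\a(s) = \omega$ by construction; for $X(\lq) = \a(\S{\q})$ one requires $\tilde{g} \in \D{\lG}$ and obtains $s \in \S{\q}$. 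Descending from $S_{\q}^{\Sigma_0}$ to the component group $\S{\q}^{\Sigma_0}$ is automatic, because the finiteness of $\Im \delta$ recorded after \eqref{eq: twisted endoscopic sequence} gives $\cS{\lq}^{0} = \cS{\q}^{0}$, whence $\a$ vanishes on the identity component and the image is unchanged on quotienting. The main technical obstacle, in my view, will be to verify that the cocycle $c_{s}$ extracted from $\tilde{s}$ genuinely represents the \emph{reduced} class $\bar{\delta}(s)$ rather than only the lifted class $\delta(s) \in H^{1}(W_{F}, \C^{\times})$; the correct normalisation by $\delta(Z(\D{G})^{\Gal{F}})$ coming from the snake lemma for \eqref{eq: twist} is precisely what forces the image of $\a$ to land inside $X = \Hom(\lG(F)/G(F)\lZ(F), \C^{\times})$ and not merely in $\Hom(\lG(F)/G(F), \C^{\times})$.
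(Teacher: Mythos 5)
Your proposal is correct and follows the same strategy the paper invokes by citing \cite[Lemma 3.3]{Xu:2016}: lift an element of $S_{\q}^{\Sigma_0}$ (resp.\ $S_\q$) to $\D{\lG}^{\Sigma_0}$ (resp.\ $\D{\lG}$), read off a $\C^{\times}$-valued $1$-cocycle as the failure to centralize $\lq$, and identify its class with $\a(s)$ via the connecting map; the reverse inclusion runs the diagram chase in the opposite direction. You correctly isolate the two points requiring care: that the output of the chase lands in $\bar H^{1}(W_F,\C^{\times}) \cong H^{1}(W_F,\C^{\times})/\delta(Z(\D G)^{\Gamma_F})$ after quotienting out the ambiguity in the lift $\tilde s$ (coming from modifying $s$ by $Z(\D G)^{\Gamma_F}$), and that $\a$ descends from $\cS{\q}^{\Sigma_0}$ to $\S{\q}^{\Sigma_0}$ because the identity component lies in the image of $\iota$, hence in $\ker\bar\delta$ by exactness of \eqref{eq: twisted endoscopic sequence}.

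One small imprecision worth flagging: in the reverse direction you write that you pick a cocycle $c \in Z^{1}(W_F, Z(\D{\lG}))$ ``whose image in $\bar H^{1}(W_F,\C^{\times})$ represents $\omega$''; it is cleaner to take $c \in Z^{1}(W_F,\C^{\times})$ representing $\omega$ directly (which is possible by definition of $\bar H^1$), then note that $\mathbf p \circ c$ is cohomologically trivial in $Z(\D G)$, so after correcting $\tilde g$ by the appropriate $z \in Z(\D G)$ one obtains $s \in S_{\q}^{\Sigma_0}$ on the nose, not merely up to coboundary. With that phrased carefully, $\a(s) = \omega$ holds exactly, and the argument closes.
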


For $\theta \in \Sigma_0$, any semisimple $s \in \cS{\q}^{\theta}$ and $\x := \alpha(s)$, let $\D{H} := Z_{\D{G}}(s)^{0}$ and it can be equipped with a Galois action given by $\q$. This determines a quasisplit connected reductive group $H$, and $\q$ will factor through $\L{H}$ for some $\theta$-twisted endoscopic datum $(H, s, \iota)$ of $G$. Hence we get a parameter $\q_{H} \in \Q{H}$. In this way, we call $(H, \q_{H})$ corresponds to $(\q, s)$, and denote this relation by 
\begin{align}
\label{eq: endoscopy}
(H, \q_{H}) \rightarrow (\q, s).
\end{align}
Let $\D{\lif{H}}$ be the preimage of $\D{H}$ in $\D{\lG}$ and it can be equipped with a Galois action given by $\lq$. This determines a quasisplit connected reductive group $\lif{H}$, and $\lq$ will factor through $\L{\lif{H}}$ for some $(\theta, \x)$-twisted endoscopic datum $(\lif{H}, \tilde{s}, \tilde{\iota})$ of $\lG$. Hence we get a parameter $\lq_{H} \in \Q{\lif{H}}$. 

From \cite[Section 2.1]{Xu:Lpacket}, we have a factorization
\[
H \cong \prod_{i} GL(n_i) \times G'
\]
where $G'$ is of the form \eqref{eq: product}, and
\[
\lif{H} \cong \prod_{i} GL(n_i) \times \lG'
\]
We extend the action of $\Sigma_0$ on $\lG'$ trivially to $\lif{H}$. When $H = G'$, it is called elliptic. If $\q_{H} = \prod_{i} \q_{i} \times \q'$ for $\q_{i} \in \Q{GL(n_i)}$ and $\q' \in \Q{G'}$, then
\(
\S{\q_{H}}^{\Sigma_0} \cong \S{\q'}^{\Sigma_0}
\)
and $\S{\lq_{H}}^{\Sigma_0} \cong \S{\lq'}^{\Sigma_0}$.

\section{Parabolic induction and Jacquet module}
\label{sec: Jac}

From now on, let $G$ be a quasisplit symplectic or special even orthogonal group over $F$ and $\lG$ the corresponding similitude group. We define the split symplectic group $Sp(2n)$ (resp. split special even orthogonal group $SO(2n)$) in $GL(2n)$ with respect to 
\[
\begin{pmatrix} 
      0 & -J_{n} \\
      J_{n} &  0 
\end{pmatrix}\, \Big(\text{resp.} \begin{pmatrix} 
      0 & J_{n} \\
      J_{n} &  0 
\end{pmatrix}\Big) \, \text{ for } J_{n} = 
\begin{pmatrix}
        &&&1\\
        &&1&\\
        &\iddots&&\\
        1&&&
\end{pmatrix}
\]
We denote the outer twist of $SO(2n)$ with respect to a quadratic extension $E / F$ by $SO(2n, \eta_{E/F})$, where $\eta_{E/F}$ is the quadratic character associated to $E/F$ by the local class field theory. The Levi subgroup $\lM$ of $\lG$ is isomorphic to
\begin{align}
\label{eq: levi}
GL(n_{1}) \times \cdots \times GL(n_{r}) \times \lG_{-},
\end{align}
where $\lG_{-}$ is of the same type as $\lG$ with rank $n_{-} + 1$ and $n = \sum_{i = 1}^{r} n_{i} + n_{-}$. Throughout this paper we fix a Borel subgroup $\lif{B}$ of $\lG$ consisting of upper-triangular matrices and choose $\lM$ to be contained in the group 
\[
\begin{pmatrix}
GL(n_{1})&&&&&&0 \\
&\ddots &&&&& \\
&& GL(n_{r})&&&&\\
&&&\lG_{-} &&&\\
&&&&GL(n_{r})&& \\
&&&&&\ddots&\\
0&&&&&&GL(n_{1})
\end{pmatrix}.
\]
This gives all the standard Levi subgroups if $\lG$ is $GSp(2n)$ or $GSO(2n, \eta)$ ($\eta \neq 1$), and $GO(2n)$-conjugacy classes of standard Levi subgroups if $\lG$ is $GSO(2n)$. We fix an isomorphism from $\eqref{eq: levi}$ to $\lif{M}$ as follows
\[
(g_{1}, \cdots g_{r}, g) \longrightarrow \text{diag}\{g_{1}, \cdots, g_{r}, g, \c(g){}_tg^{-1}_{r}, \cdots, \c(g){}_tg^{-1}_{1}\}
\]
if $n_{-} > 0$, and
\[
(g_{1}, \cdots g_{r}, g) \longrightarrow \text{diag}\{g_{1}, \cdots, g_{r}, \c(g){}_tg^{-1}_{r}, \cdots, \c(g){}_tg^{-1}_{1}\}
\]
if $n_{-} = 0$.
Here ${}_tg_{i} = J_{n_{i}}{}^tg_{i}J^{-1}_{n_{i}}$ for $1 \leqslant i \leqslant r$. If $\lif{M}$ is $\theta_{0}$-stable, we write $\lif{M}^{\Sigma_{0}} := \lif{M} \rtimes \Sigma_{0}$. Otherwise, we let $\lif{M}^{\Sigma_{0}} = \lif{M}$. Suppose $\tilde{\sigma}^{\Sigma_{0}} \in \Rep(\lif{M}^{\Sigma_{0}}(F))$, $\lr^{\Sigma_{0}} \in \Rep(\lG^{\Sigma_{0}}(F))$, the categories of finite-length smooth representations, we define the parabolic induction and Jacquet module as follows.

\begin{itemize}

\item If $\lif{M}^{\theta_{0}} = \lif{M}$, we define the normalized parabolic induction $\Ind^{\lG^{\Sigma_{0}}(F)}_{\lif{P}^{\Sigma_{0}}(F)} \tilde{\sigma}^{\Sigma_{0}}$ to be the extension of the representation $\Ind^{\lG(F)}_{\lif{P}(F)}(\tilde{\sigma}^{\Sigma_{0}}|_{\lif{M}(F)})$ by an induced action of $\Sigma_{0}$, and we define the normalized Jacquet module $\Jac_{\lif{P}^{\Sigma_{0}}(F)} \lr^{\Sigma_{0}}$ to be the extension of the representation $\Jac_{\lif{P}(F)}(\lr^{\Sigma_{0}}|_{\lG(F)})$ by an induced action of $\Sigma_{0}$.

\item If $\lif{M}^{\theta_{0}} \neq \lif{M}$, we define the normalized parabolic induction $\Ind^{\lG^{\Sigma_{0}}(F)}_{\lif{P}^{\Sigma_{0}}(F)} \tilde{\sigma}^{\Sigma_{0}}$ to be $\Ind^{\lG^{\Sigma_{0}}(F)}_{\lG(F)} \Ind^{\lG(F)}_{\lif{P}(F)}(\tilde{\sigma}^{\Sigma_{0}}|_{\lif{M}(F)})$, and we define the normalized Jacquet module $\Jac_{\lif{P}^{\Sigma_{0}}(F)} \lr^{\Sigma_{0}}$ to be $\Jac_{\lif{P}(F)}(\lr^{\Sigma_{0}}|_{\lG(F)})$.

\end{itemize}

For $\tilde{\sigma} = \tau_{1} \otimes \cdots \otimes \tau_{r} \otimes \lr^{\Sigma_0}_{-}$, where $\tau_{i} \in {\rm Rep}(GL(n_{i}, F))$ for $1 \leqslant i \leqslant r$ and $\lr^{\Sigma_0}_{-} \in {\rm Rep}(\lG^{\Sigma_0}_{-}(F))$, we denote the normalized parabolic induction $\Ind_{\lif{P}^{\Sigma_0}(F)}^{\lG^{\Sigma_0}(F)}(\tilde{\sigma}^{\Sigma_0})$ by 
\(
\tau_{1} \times \cdots \times \tau_{r} \rtimes \lr^{\Sigma_0}_{-}
\) 
and its socle by $\langle \tau_{1} \times \cdots \times \tau_{r} \rtimes \lr^{\Sigma_0}_{-} \rangle$. We also denote by $\tau_{1} \times \cdots \times \tau_{r}$ the normalized parabolic induction in $GL(\sum_{i = 1}^{r} n_i, F)$ and its socle by $\langle \tau_{1} \times \cdots \times \tau_{r} \rangle$. 

Suppose $\rho$ is a unitary irreducible supercuspidal representation of $GL(d_{\rho}, F)$. For an increasing (resp. decreasing) sequence $\{x, \cdots, y\}$ of real numbers of common distance $1$, we denote
\[
\langle x, \cdots, y \rangle_{\rho}:= \langle \rho||^{x} \times \cdots \times \rho||^{y} \rangle.
\]
Let $St(\rho, a) := \langle \frac{a-1}{2}, \cdots, -\frac{a-1}{2} \rangle_{\rho}$. More generally, we denote 
\[
  \begin{pmatrix}
   x_{11} & \cdots & x_{1n} \\
   \vdots &  & \vdots \\
   x_{m1} & \cdots & x_{mn}
  \end{pmatrix}_{\rho}
:= \langle \times_{i \in [1, m]} \langle x_{i1}, \cdots, x_{in} \rangle_{\rho} \rangle
\]
where each row is decreasing (resp. increasing) and each column is increasing (resp. decreasing) of common distance $1$. This includes the case that
\[
Sp(St(\rho, a), b) := \langle St(\rho, a)||^{-(b-1)/2} \times St(\rho, a)||^{-(b-3)/2} \times \cdots \times St(\rho, a)||^{(b-1)/2} \rangle.
\]

Suppose $\lif{M} = GL(d_{\rho}) \times \lG_{-}$. For $\lr^{\Sigma_0} \in \Rep(\lG^{\Sigma_0}(F))$, we can decompose the semisimplification of the Jacquet module
\[
s.s. \Jac_{\lif{P}^{\Sigma_0}(F)}(\lr^{\Sigma_0}) = \bigoplus_{i} \tau_{i} \otimes \tilde{\sigma}^{\Sigma_0}_{i},
\] 
where $\tau_{i} \in \Rep(GL(d_{\rho}, F))$ and $\tilde{\sigma}^{\Sigma_0}_{i} \in \Rep(\lG^{\Sigma_0}_{-}(F))$, both of which are irreducible. We define $\Jac^{\rho}_{x} (\lr^{\Sigma_0})$ for any real number $x$ to be 
\[
\Jac^{\rho}_{x}(\lr^{\Sigma_0}) := \bigoplus_{\tau_{i} = \rho||^{x}} \tilde{\sigma}^{\Sigma_0}_{i}.
\]
By linearity, this induces a map on the Grothendieck groups
\[
\Jac^{\rho}_{x}: K\Rep(\lG^{\Sigma_0}(F)) \rightarrow K\Rep(\lG^{\Sigma_0}_{-}(F)).
\]
If we have an ordered sequence of real numbers $\{x_{1}, \cdots, x_{s}\}$, we can define
\[
\Jac^{\rho}_{x_{1}, \cdots, x_{s}} \lr^{\Sigma_0} := \Jac^{\rho}_{x_{s}} \circ \cdots \circ \Jac^{\rho}_{x_{1}} \lr^{\Sigma_0}.
\]

\begin{lemma}
\label{lemma: Jacquet module}
For $A, B \in \mathbb{R}$ such that $A - B \in \mathbb{Z} \backslash \{0\}$ and $\zeta$ a sign, let
\[
\lr^{\Sigma_0} = \langle \zeta B, \cdots, -\zeta A \rangle_{\rho} \rtimes \lr^{\Sigma_0}_{-}.
\]
Then
\begin{align*}
{\rm Jac}^{\rho}_{\zeta B} \, \lr^{\Sigma_0} & = \langle \zeta(B-1), \cdots, -\zeta A \rangle_{\rho} \rtimes \lr_{-}^{\Sigma_0} + \langle \zeta B, \cdots, -\zeta A \rangle_{\rho} \rtimes {\rm Jac}^{\rho}_{\zeta B} \, \lr^{\Sigma_0}_{-}, \\
{\rm Jac}^{\rho^{\vee}}_{\zeta A} \, \lr^{\Sigma_0} & = \langle \zeta B, \cdots, -\zeta (A-1) \rangle_{\rho} \rtimes (\lr_{-}^{\Sigma_0} \otimes \eta_{\rho} \omega^{-\zeta A}_{\rho}) + \langle \zeta B, \cdots, -\zeta A \rangle_{\rho} \rtimes {\rm Jac}^{\rho^{\vee}}_{\zeta A} \, \lr_{-}^{\Sigma_0},
\end{align*}
where $\omega_{\rho} = |\cdot|^{d_{\rho}}$ and $\eta_{\rho}$ is the central character of $\rho$.
\end{lemma}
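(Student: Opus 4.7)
The strategy is to apply the Bernstein--Zelevinsky geometric lemma, in the form adapted to similitude groups, to the parabolic induction $\lr^{\Sigma_0} = \tau \rtimes \lr^{\Sigma_0}_{-}$ with $\tau = \langle \zeta B, \cdots, -\zeta A\rangle_{\rho}$, and then project onto the isotypic component specified by ${\rm Jac}^{\rho}_{\zeta B}$ or ${\rm Jac}^{\rho^{\vee}}_{\zeta A}$. This is the similitude-group analogue of Tadi\'c's structure formula for classical groups; the only new feature is an additional bookkeeping of the similitude character $\lambda_{\lG}$, which is what produces the central character twist $\eta_{\rho}\omega_{\rho}^{-\zeta A}$ in the second identity.

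Concretely, I would first record the Jacquet module of the Zelevinsky segment representation $\tau$ in the $GL$ factor: along the maximal parabolic peeling off one copy of $GL(d_\rho)$ from the left, the (semisimplified) Jacquet module contains $\rho|\cdot|^{\zeta B} \otimes \langle \zeta(B-1), \cdots, -\zeta A\rangle_{\rho}$, while peeling from the right gives $\langle \zeta B, \cdots, -\zeta(A-1)\rangle_{\rho} \otimes \rho|\cdot|^{-\zeta A}$; these are the only pieces in which a single supercuspidal $\rho|\cdot|^{x}$ can be extracted from $\tau$. Next, applying the geometric lemma to $\tau \rtimes \lr^{\Sigma_0}_{-}$ along the similitude parabolic with Levi $GL(d_\rho) \times \lG_{-}'$ decomposes the Jacquet module via a sum over Weyl double cosets, and collapses to three possible kinds of terms: (a) the Jacquet of $\tau$ coupled to $\lr^{\Sigma_0}_{-}$, (b) $\tau$ coupled to the Jacquet of $\lr^{\Sigma_0}_{-}$, and (c) a ``flipped'' contribution coming from the Weyl element that exchanges the $GL(d_\rho)$ factor with its opposite, in which the similitude character intervenes and produces a central twist on $\lr^{\Sigma_0}_{-}$.

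For the first identity, with extraction character $\rho|\cdot|^{\zeta B}$, only (a) and (b) survive: (a) yields $\langle \zeta(B-1), \cdots, -\zeta A\rangle_{\rho} \rtimes \lr^{\Sigma_0}_{-}$, and (b) yields $\tau \rtimes {\rm Jac}^{\rho}_{\zeta B}(\lr^{\Sigma_0}_{-})$. The flipped term (c) would require, after duality, $\rho^{\vee} \cong \rho$ and $\zeta A = \zeta B$, which is excluded by $A - B \in \mathbb{Z}\setminus\{0\}$. For the second identity, with extraction character $\rho^{\vee}|\cdot|^{\zeta A}$, the analysis is symmetric: term (a) now vanishes (for the same reason), while (c) contributes, leaving $\langle \zeta B, \cdots, -\zeta(A-1)\rangle_{\rho}$ in the $GL$ factor and twisting $\lr^{\Sigma_0}_{-}$ by the central character of $\rho|\cdot|^{-\zeta A}$, namely $\eta_{\rho}\omega_{\rho}^{-\zeta A}$ pulled back through $\lambda_{\lG_{-}}$; term (b) contributes $\tau \rtimes {\rm Jac}^{\rho^{\vee}}_{\zeta A}(\lr^{\Sigma_0}_{-})$ as before.

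The main obstacle I anticipate is pinning down the ``flipped'' contribution (c) precisely, and in particular verifying that the twist by which $\lr^{\Sigma_0}_{-}$ is modified is exactly $\eta_{\rho}\omega_{\rho}^{-\zeta A}$. This requires a careful book-keeping of how the unipotent integration in the Jacquet functor interacts with the Weyl element that swaps a $GL(d_\rho)$ block with its ``mirror'' block inside $\lG$, together with the conjugation rule $g \mapsto \lambda_{\lG_{-}}(h){}^{t}g^{-1}$ that converts ordinary duality into duality twisted by the similitude character. Once this computation is carried out, both identities follow directly from the three-term decomposition.
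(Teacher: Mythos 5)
Your proposal is essentially the paper's own proof: the paper disposes of the lemma in one line by citing Tadi\'c's structure formula (\cite[Theorem 5.2]{Tadic:1995}), and your argument is precisely that formula unpacked, with the three geometric-lemma contributions identified and the observation that $A\ne B$ kills one of them, while the Weyl-element flip through the similitude datum produces the $\eta_\rho\omega_\rho^{-\zeta A}$ twist. The bookkeeping you flag as the main obstacle is exactly the content of the cited theorem, so there is no gap.
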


\begin{proof}
It follows from \cite[Theorem 5.2]{Tadic:1995}.
\end{proof}

For a real valued matrix $X$, we denote by $|X|$ the sum of entries in $X$ and define $\Jac^{\rho}_{X} := \circ_{x \in X} \Jac^{\rho}_{x}$, where $x$ ranges over $X$ from top to bottom then left to right. In particular, we denote
\[
X^{T}_{(\rho, A, B, \zeta)} := \begin{bmatrix}
    \zeta (B + T) & \cdots & \zeta(B + 1) \\
              \vdots &  & \vdots \\
              \zeta (A + T) & \cdots & \zeta(A + 1)
  \end{bmatrix}
\]
and 
\[
\Jac_{(\rho, A + T, B + T, \zeta) \mapsto (\rho, A, B, \zeta)} := \Jac^{\rho}_{X^{T}_{(\rho, A, B, \zeta)}}.
\]

Let $\overline{\Rep}(\lG(F))$ be the category of finite-length smooth representations of $\lG(F)$ viewed as $\sH(\lG(F))$-modules. We denote the elements in $\overline{\Rep}(\lG(F))$ by $[\lr]$ for $\lr \in \Rep(\lG(F))$, and we call $[\lr]$ is irreducible if $\lr$ is irreducible. Let
\[
\overline{\Jac}_{\lif{P}(F)} = \begin{cases}
                         \Jac_{\lif{P}(F)} + \Jac_{\lif{P}(F)} \circ \theta_{0},  & \text{ if $\lG = GSO(2n)$ and $\lif{M}^{\theta_{0}} \neq \lif{M}$,} \\
                         \Jac_{\lif{P}(F)},         & \text{ otherwise. }
                         \end{cases}
\]
We can define parabolic induction and Jacquet module on $\overline{\Rep}(\lG(F))$ as follows
\[
\Ind^{\lG(F)}_{\lif{P}(F)} [\tilde{\sigma}] := [\Ind^{\lG(F)}_{\lif{P}(F)} \tilde{\sigma}] \text{ and } \overline{\Jac}_{\lif{P}(F)} [\lr] := [\overline{\Jac}_{\lif{P}(F)} \lr].
\]
We define  
\[
\overline{\Jac}^{\rho}_{x} = \begin{cases}
                         \Jac^{\rho}_{x} + \Jac^{\rho}_{x} \circ \theta_{0},  & \text{ if $\lG = GSO(2n)$ and $n = d_{\rho} \neq 1$, } \\
                         \Jac^{\rho}_{x},         & \text{ otherwise. }
                         \end{cases}
\]
The above discussion also applies to $G(F)$ (cf. \cite{Xu:Apacket}).

Let ${\rm Rep}(\lG^{\Sigma_0}(F), \omega)$ be the category of objects $(\lr^{\Sigma_0}, A)$, were $\lr^{\Sigma_0} \in {\rm Rep}(\lG^{\Sigma_0}(F))$ and $A$ is an intertwining operator between $\lr^{\Sigma_0} \otimes \omega$ and $\lr^{\Sigma_0}$ such that $A^{2} = {\rm id}$. A morphism between $(\lr^{\Sigma_0}_1, A_1)$ and $(\lr^{\Sigma_0}_2, A_2)$ is a morphism $\varphi: \lr^{\Sigma_0}_1 \rightarrow \lr^{\Sigma_0}_2$ in ${\rm Rep}(\lG^{\Sigma_0}(F))$ such that $\varphi \circ A_1 =  A_2 \circ \varphi$. The simple objects $(\lr^{\Sigma_0}, A)$ can be obtained as follows.

\begin{itemize}
\item $\lr^{\Sigma_0}$ is irreducible and $\lr^{\Sigma_0} \cong \lr^{\Sigma_0} \otimes \omega$. There are two choices for the intertwining operator, say $A, -A$. In particular, $(\lr^{\Sigma_0}, A) \ncong (\lr^{\Sigma_0}, -A)$.

\item $\lr^{\Sigma_0}$ is a direct sum of two irreducible representations $\lr^{\Sigma_0}_1$, $\lr^{\Sigma_0}_2$ such that $\lr^{\Sigma_0}_2 \cong \lr^{\Sigma_0}_1 \otimes \omega \ncong \lr^{\Sigma_0}_1$. For any choice of intertwining operator $A$, the isomorphism class of $(\lr^{\Sigma_0}, A)$ is the same.
\end{itemize}
Let $\overline{{\rm Rep}}(\lG(F), \theta, \omega)$ be the category of objects $(\lr, A)$, were $\lr \in {\rm Rep}(\lG(F))$ and $A$ is an intertwining operator between $\lr \otimes \omega$ and $\lr^{\theta}$ such that $A^{2} = {\rm id}$. A morphism between $(\lr_1, A_1)$ and $(\lr_2, A_2)$ is a morphism $\varphi: \lr_1 \rightarrow \lr_2$ as $\sH(\lG(F))$-modules such that $\varphi \circ A_1 =  A_2 \circ \varphi$. The simple objects $(\lr, A)$ can be obtained as follows.
\begin{itemize}
\item $\lr$ is irreducible and $\lr \cong \lr^{\theta} \otimes \omega$. There are two choices for the intertwining operator, say $A, -A$. In particular, $(\lr, A) \ncong (\lr, -A)$.

\item $\lr$ is a direct sum of two irreducible representations $\lr_1$, $\lr_2$ such that $\lr_2 \cong \lr^{\theta}_1 \otimes \omega \ncong \lr_1$. For any choice of intertwining operator $A$, the isomorphism class of $(\lr, A)$ is the same.
\end{itemize}
Our definition of parabolic induction and Jacquet module (including its variants) can be extended to ${\rm Rep}(\lG^{\Sigma_0}(F), \omega)$ and $\overline{{\rm Rep}}(\lG(F), \theta, \omega)$ by taking the induced intertwining operators.

Let $R(\lG^{\Sigma_0}(F), \omega)$ be the space of finite linear combinations of $\omega$-twisted characters of $\lG^{\Sigma_0}(F)$ and $\bar{R}(\lG(F), \theta, \omega)$ the restriction to $\sH(\lG(F))$ of the space of finite linear combinations of $(\theta, \omega)$-twisted characters of $\lG(F)$. We have the following diagram on the corresponding Grothendieck groups:
\[
\xymatrix{K{\rm Rep}(G^{\Sigma_0}(F)) & K{\rm Rep}(\lG^{\Sigma_0}(F)) \ar[l]_{r_1} \\
& K{\rm Rep}(\lG^{\Sigma_0}(F), \omega) \ar@{->>}[r]^{r_3} \ar[u]^{f} \ar[ul]^{p_1} \ar[dl]_{p_2} \ar@{->>}[d]_{\rm ch_1} & K\overline{\rm Rep}(\lG(F), \theta, \omega) \ar@{->>}[d]_{\rm ch_2} \\
R(G^{\Sigma_0}(F)) & R(\lG^{\Sigma_0}(F), \omega) \ar[l]^{r_2} \ar@{->>}[r]_{r_4} & \bar{R}(\lG(F), \theta, \omega)
}
\]
\begin{itemize}
\item $f$ forgets the intertwining operators; 

\item $r_{1}, r_{2}$ are restrictions to $G^{\Sigma_0}(F)$, and $p_1 = r_1 \circ f, p_2 = r_2 \circ f$;

\item $r_3: (\lr^{\Sigma_0}, A) \mapsto (\lr^{\Sigma_0}|_{\lG}, \, \lr^{\Sigma_0}(\theta) \circ A)$;

\item $r_4$ is the restriction to $\lG(F) \rtimes \theta$;

\item ${\rm ch}_1, {\rm ch}_2$ are maps to the associated twisted characters.
\end{itemize}

\begin{lemma}
Elements in $K{\rm Rep}(\lG^{\Sigma_0}(F), \omega)$ can be determined by their images under $f$ and ${\rm ch}_1$.  
\end{lemma}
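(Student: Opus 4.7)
The plan is to prove that the map $(f, {\rm ch}_1)$ is injective on $K{\rm Rep}(\lG^{\Sigma_0}(F), \omega)$ by first fixing a natural basis consisting of isomorphism classes of simple objects, and then checking that the pair $(f, {\rm ch}_1)$ separates this basis. The preceding discussion gives two disjoint families of simple objects: \emph{Family~I}, consisting for each irreducible $\lr^{\Sigma_0}$ with $\lr^{\Sigma_0} \otimes \omega \cong \lr^{\Sigma_0}$ of two objects $(\lr^{\Sigma_0}, A_{\lr^{\Sigma_0}})$ and $(\lr^{\Sigma_0}, -A_{\lr^{\Sigma_0}})$; and \emph{Family~II}, consisting for each unordered pair of inequivalent irreducibles $\lr_1^{\Sigma_0}, \lr_2^{\Sigma_0}$ with $\lr_2^{\Sigma_0} \cong \lr_1^{\Sigma_0} \otimes \omega$ of a single object $(\lr_1^{\Sigma_0} \oplus \lr_2^{\Sigma_0}, A)$. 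I would write a general element as
\[
x \;=\; \sum_{\lr^{\Sigma_0}} \bigl( n_{\lr^{\Sigma_0}}^{+}[(\lr^{\Sigma_0}, A_{\lr^{\Sigma_0}})] + n_{\lr^{\Sigma_0}}^{-}[(\lr^{\Sigma_0}, -A_{\lr^{\Sigma_0}})] \bigr) + \sum_{\{\lr_1^{\Sigma_0}, \lr_2^{\Sigma_0}\}} m_{\lr_1^{\Sigma_0}, \lr_2^{\Sigma_0}} [(\lr_1^{\Sigma_0} \oplus \lr_2^{\Sigma_0}, A)].
\]

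The next step is to exploit $f$. An irreducible appearing in the $f$-image of a Family~I object is $\omega$-self-dual, while those appearing in the $f$-image of a Family~II object are not; hence the two families contribute to disjoint sets of irreducibles. Assuming $f(x) = 0$, linear independence of isomorphism classes in $K{\rm Rep}(\lG^{\Sigma_0}(F))$ forces $m_{\lr_1^{\Sigma_0}, \lr_2^{\Sigma_0}} = 0$ for every Family~II index, and $n_{\lr^{\Sigma_0}}^{+} + n_{\lr^{\Sigma_0}}^{-} = 0$ for every Family~I index. Writing $n_{\lr^{\Sigma_0}} := n_{\lr^{\Sigma_0}}^{+}$, the element $x$ reduces to
\[
x \;=\; \sum_{\lr^{\Sigma_0}} n_{\lr^{\Sigma_0}} \bigl( [(\lr^{\Sigma_0}, A_{\lr^{\Sigma_0}})] - [(\lr^{\Sigma_0}, -A_{\lr^{\Sigma_0}})] \bigr).
\]

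Finally I would invoke ${\rm ch}_1$. Replacing an intertwining operator by its negative negates the $\omega$-twisted character, and the twisted character of any Family~II simple object vanishes identically (the intertwiner swaps the two summands, so the trace of any operator of the form $\lr^{\Sigma_0}(g) A$ is zero). Imposing ${\rm ch}_1(x) = 0$ therefore yields
\[
2 \sum_{\lr^{\Sigma_0}} n_{\lr^{\Sigma_0}} \, {\rm ch}_1(\lr^{\Sigma_0}, A_{\lr^{\Sigma_0}}) \;=\; 0 \quad \text{in } R(\lG^{\Sigma_0}(F), \omega).
\]
All $n_{\lr^{\Sigma_0}} = 0$ then follows from the linear independence of the $\omega$-twisted characters attached to pairwise non-isomorphic simple Family~I objects, concluding $x = 0$.

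The combinatorial bookkeeping with $f$ is routine; the single analytic input I expect to be the main point is the linear independence of the $\omega$-twisted characters $\{{\rm ch}_1(\lr^{\Sigma_0}, A_{\lr^{\Sigma_0}})\}$ indexed by inequivalent $\omega$-self-dual $\lr^{\Sigma_0}$. This is the twisted analogue of the classical linear independence of characters of irreducible admissible representations on a reductive $p$-adic group, and I would cite it from the standard literature on twisted harmonic analysis rather than reprove it here.
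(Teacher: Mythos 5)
Your proposal is correct and follows essentially the same reasoning as the paper: decompose into the two families of simple objects, use $f$ to kill the reducible ones and force $n^+ = -n^-$, then use ${\rm ch}_1$ together with linear independence of twisted characters to kill the rest. The only difference is that you make explicit the appeal to linear independence of $\omega$-twisted characters, which the paper leaves implicit in "we can solve for the multiplicities."
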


\begin{proof}
Suppose $Z \in K{\rm Rep}(\lG^{\Sigma_0}(F), \omega)$ satisfies that $f(Z) = {\rm ch}_1(Z) = 0$. For any irreducible object $(\lr^{\Sigma_0}, A)$ in ${\rm Rep}(\lG^{\Sigma_0}(F), \omega)$, we would like to show that it does not contribute to $Z$. If $\lr^{\Sigma_0}$ is reducible, then $(\lr^{\Sigma_0}, A)$ is uniquely determined by $f((\lr^{\Sigma_0}, A))$, so the result is clear. If $\lr^{\Sigma_0}$ is irreducible, then $f((\lr^{\Sigma_0}, A)) = f((\lr^{\Sigma_0}, -A))$ and ${\rm ch}_1((\lr^{\Sigma_0}, A)) = -{\rm ch}_1((\lr^{\Sigma_0}, -A))$. Then we can solve for the multiplicities of both $(\lr^{\Sigma_0}, A), (\lr^{\Sigma_0}, -A)$ in $Z$, which are necessarily zero. 
\end{proof}

\begin{corollary}
\label{cor: determination}
Suppose $Z \in K{\rm Rep}(\lG^{\Sigma_0}(F), \omega)$ is supported on irreducible representation $\lr^{\Sigma_0}$ with some fixed infinitesimal character $\tilde{\lambda}$ and $X(\lr^{\Sigma_0}) = X^{\Sigma_0}(\tilde{\lambda})$, then $Z$ can be determined by its image under $p_1, p_2$.
\end{corollary}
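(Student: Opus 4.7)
The plan is to reduce the corollary to the preceding lemma, which determines elements of $K{\rm Rep}(\lG^{\Sigma_0}(F),\omega)$ by their images under $f$ and ${\rm ch}_1$; it thus suffices to recover $f(Z)$ from $p_1(Z)$ and ${\rm ch}_1(Z)$ from $p_2(Z)$, under the support hypotheses on $Z$.

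For $f(Z)$, I would prove that $r_1$ is injective on the $\mathbb{Z}$-span of irreducible representations $\lr^{\Sigma_0}$ of $\lG^{\Sigma_0}(F)$ having $\Sigma_0$-infinitesimal character $[\tilde{\lambda}]$ and $X(\lr^{\Sigma_0}) = X^{\Sigma_0}(\tilde{\lambda})$. If two such $\lr^{\Sigma_0}_1, \lr^{\Sigma_0}_2$ have isomorphic restrictions to $G^{\Sigma_0}(F)$, then Clifford theory combined with the identification $\bar{H}^1(W_F,\mathbb{C}^{\times}) \cong \Hom(\lG(F)/G(F),\mathbb{C}^{\times})$ from \eqref{eq: twist} yields $\omega' \in \bar{H}^1(W_F,\mathbb{C}^{\times})$ with $\lr^{\Sigma_0}_2 \cong \lr^{\Sigma_0}_1 \otimes \omega'$; preservation of the $\Sigma_0$-infinitesimal character forces $[\tilde{\lambda} \otimes \omega'] = [\tilde{\lambda}]$, i.e. $\omega' \in X^{\Sigma_0}(\tilde{\lambda}) = X(\lr^{\Sigma_0}_1)$, so $\lr^{\Sigma_0}_1 \cong \lr^{\Sigma_0}_2$. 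Hence $p_1(Z)$ recovers $f(Z)$.

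For ${\rm ch}_1(Z)$, each $\lr^{\Sigma_0}$ in the support is irreducible and $\omega$-invariant, so the simple objects above it in ${\rm Rep}(\lG^{\Sigma_0}(F),\omega)$ are the pair $(\lr^{\Sigma_0},\pm A)$ of the first case in the classification preceding the lemma. Writing $Z = \sum_i\bigl(c^+_i(\lr^{\Sigma_0}_i,A_i) + c^-_i(\lr^{\Sigma_0}_i,-A_i)\bigr)$, the sums $c^+_i + c^-_i$ are already known from $f(Z)$, and it remains to extract the differences $c^+_i - c^-_i$ from $p_2(Z)$. Using that restriction of irreducibles from $\lG^{\Sigma_0}(F)$ to $G^{\Sigma_0}(F)$ is multiplicity-free in the similitude setting, Schur's lemma forces $A_i$ to act as $\varepsilon_{i,k} = \pm 1$ on each irreducible $G^{\Sigma_0}(F)$-constituent $\sigma_{i,k}$ of $\lr^{\Sigma_0}_i|_{G^{\Sigma_0}(F)}$, so the restricted twisted character equals $\sum_k \varepsilon_{i,k}\chi_{\sigma_{i,k}}$. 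One then argues linear independence of these virtual characters as $i$ ranges over the support; together with the first step this lets $p_2(Z)$ determine $c^+_i - c^-_i$ and hence recover $Z$.

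The main obstacle is the linear-independence claim for the restricted twisted characters. Although the first step yields pairwise non-isomorphic $\lr^{\Sigma_0}_i|_{G^{\Sigma_0}(F)}$, shared $G^{\Sigma_0}(F)$-constituents across different $i$ could a priori produce cancellations; the key structural input is the $\omega$-equivariance relation $\omega(g)\, A_i \lr^{\Sigma_0}_i(g) = \lr^{\Sigma_0}_i(g) A_i$, which coordinates the signs $\varepsilon_{i,k}$ coherently through the Langlands parameter of $\lr^{\Sigma_0}_i$, so that the twist-stabilizer condition $X(\lr^{\Sigma_0}_i) = X^{\Sigma_0}(\tilde{\lambda})$ pins down the eigendecomposition intrinsically to $(\lr^{\Sigma_0}_i,A_i)$ and precludes any accidental dependence among the signed sums $\sum_k \varepsilon_{i,k}\chi_{\sigma_{i,k}}$.
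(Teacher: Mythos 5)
Your reduction to Lemma 4.2 and your Step~1 are correct and match the paper's (terse) argument: the hypothesis $X(\lr^{\Sigma_0}) = X^{\Sigma_0}(\tilde{\lambda})$ plus the fixed $\Sigma_0$-infinitesimal character makes $r_1$ injective on the span of the allowed irreducibles, exactly by the Clifford-theory/twist argument you give, so $p_1(Z) = r_1(f(Z))$ recovers $f(Z)$.

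The gap is in your Step~2. You correctly note that $A_i$ acts by signs $\varepsilon_{i,k} = \pm 1$ on the (multiplicity-free) constituents $\sigma_{i,k}$ of $\lr^{\Sigma_0}_i|_{G^{\Sigma_0}(F)}$, so $r_2({\rm ch}_1((\lr^{\Sigma_0}_i,A_i))) = \sum_k \varepsilon_{i,k}\chi_{\sigma_{i,k}}$. But you flag the linear independence of these signed sums over $i$ as the ``main obstacle'' and then resolve it only by a vague appeal to ``$\omega$-equivariance coordinating the signs through the Langlands parameter,'' which is not an argument. The actual resolution is elementary and is \emph{already supplied by your Step~1}: if some $\sigma_{i,k} \cong \sigma_{j,l}$ for $i \neq j$, then by Clifford theory with abelian quotient $\lG^{\Sigma_0}(F)/G^{\Sigma_0}(F) \cong \lG(F)/G(F)$ the representations $\lr^{\Sigma_0}_i$ and $\lr^{\Sigma_0}_j$ are twists of each other by a character of this quotient (any character of the stabilizer of $\sigma_{i,k}$ extends to the abelian quotient, and the projection formula turns the twist of the inducing datum into a twist of the induced representation). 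Since both have $\Sigma_0$-infinitesimal character $[\tilde{\lambda}]$, the twist lies in $X^{\Sigma_0}(\tilde{\lambda}) = X(\lr^{\Sigma_0}_i)$, forcing $\lr^{\Sigma_0}_i \cong \lr^{\Sigma_0}_j$, a contradiction. Hence the $\sigma_{i,k}$ are pairwise distinct across $i$, the virtual characters $\sum_k \varepsilon_{i,k}\chi_{\sigma_{i,k}}$ have disjoint support, and linear independence is automatic. With that replaced, your proof is complete and takes essentially the same route as the paper, just with the injectivity of $r_1, r_2$ spelled out rather than asserted.
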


\begin{proof}
By the assumption $f(Z), {\rm ch}_1(Z)$ can be determined by $p_1(Z), p_2(Z)$ respectively. Then the result follows from the previous lemma.
\end{proof}

\section{Local Langlands correspondence for similitude groups}
\label{sec: LLC}

In order to define the $\Sigma_0$-infinitesimal characters of $\cPkt{}(\lG(F))$, we need the local Langlands correspondence for $\lG$ modulo $\theta_0$-conjugation. From \cite[Theorem 8.12]{Xu:Lpacket}, we can construct such a correspondence, which depends on the choices for simple parameters of $G$. Although this correspondence is not canonical, it is compatible with parabolic induction and twisted endoscopic transfer. For our application, we would further require that the $\Sigma_0$-infinitesimal characters are preserved under parabolic inductions, cf. Proposition~\ref{prop: induction preserves infinitesimal character}. So we will make the following choices for simple parameters. Let $\p_a := \rho \otimes \nu_a \in \cPsm{G_a}$, where $\rho$ is a self-dual irreducible unitary $d_{\rho}$-dimensional representation of $W_{F}$ and $\nu_a$ is the irreducible $a$-dimensional representation of $SL(2, \C)$. By the local Langlands correspondence for general linear groups \cite{HarrisTaylor:2001} \cite{Henniart:2000} \cite{Scholze:2013}, we can associate $\rho$ with a self-dual irreducible unitary supercuspidal representation of $GL(d_{\rho}, F)$, denoted again by $\rho$. Let $\eta_{\rho}$ be the central character of $\rho$. When $\rho$ is of orthogonal type, it parametrizes an $L$-packet $\cPkt{\rho}$ of $G_{\rho}(F)$, which is a singleton for $\S{\rho} = 1$. Then we fix a correspondence 
\(
\tilde{\rho} \mapsto \cPkt{\tilde{\rho}} 
\)
such that 
\begin{enumerate}

\item $\cPkt{\tilde{\rho} \otimes \omega} = \cPkt{\tilde{\rho}} \otimes \omega$ for any $\omega \in H^{1}(W_{F}, Z(\D{\lG}_{\rho})) \cong {\rm Hom}(\lG_{\rho}(F), \mathbb{C}^{\times})$,

\item the central character $\chi_{\tilde{\rho}}$ of $\cPkt{\tilde{\rho}}$ is given by the composition of $\tilde{\rho}$ with $\D{\lG}_{\rho} \rightarrow \D{Z}_{\lG_{\rho}}$. 

\end{enumerate}
Next we construct the correspondence for $\lp_a$ by induction on $a$. Note $\cPkt{\p_a}$ is also a singleton and we have
\[
\cPkt{\p_{a}} \hookrightarrow \rho||^{\frac{a-1}{2}} \rtimes \cPkt{\p_{a-2}}
\] 
induced from $M := GL(d_{\rho}) \times G_{a-2}$ (cf. \cite[Proposition 8.1]{Xu:cusp}). Suppose we have associated $\lp_{a-2}$ with a packet $\cPkt{\lp_{a-2}}$. Let $\lp_{a}$ be the lift of $\p_{a}$ such that the $\Sigma_0$-infinitesimal character is the same as that of
\(
\rho||^{\frac{a-1}{2}} \rtimes \lp_{a-2}.
\)
Then we associate $\lp_a$ with $\cPkt{\lp_{a}}$ such that
\[
\cPkt{\lp_{a}} \hookrightarrow \rho||^{\frac{a-1}{2}} \rtimes \cPkt{\lp_{a-2}}.
\]

\begin{proposition}
\label{prop: induction preserves infinitesimal character}
For any Levi subgroup $\lM$ of $\lG$ and $\Sigma_0$-infinitesimal character $[\lambda_{\lM}]$ for $\lM$, we have
\[
{\rm Ind}^{\lG(F)}_{\lif{P}(F)} \, \cPkt{}(\lM(F))_{\lambda_{\lM}} \subseteq \cPkt{}(\lG(F))_{\tilde{\lambda}}
\]
for
\(
[\tilde{\lambda}] = [\iota_{\lM} \circ \lambda_{\lM}],
\)
where
\(
\iota_{\lM}: \L{\lM} \rightarrow \L{\lG}.
\)
\end{proposition}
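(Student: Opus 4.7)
The plan is to prove the proposition by a double induction: on the semisimple rank of $\lG$ and on the Langlands-classification depth of $\tilde{\sigma}$. The outer induction on rank is essential because the construction of the local Langlands correspondence in Section~\ref{sec: LLC} already appeals implicitly to the proposition in smaller ranks when it defines the lift $\lp_a$ as the one whose $\Sigma_0$-infinitesimal character matches that of $\rho|\cdot|^{(a-1)/2} \rtimes \lp_{a-2}$.

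First, by the transitivity $\iota_{\lM} = \iota_{\widetilde{L}} \circ \iota^{\widetilde{L}}_{\lM}$ for a chain $\lM \subset \widetilde{L} \subset \lG$ together with the transitivity of parabolic induction, I would reduce to the case of a maximal Levi $\lM = GL(n_1) \times \lG_-$. For $\tilde{\sigma} = \tilde{\tau} \otimes \lr_-$, I would apply the Langlands classification on each factor: $\tilde{\tau}$ is a subquotient of a standard module induced from essentially square-integrable data on smaller general linear Levis, and $\lr_-$ is likewise a subquotient of an induction from a proper Levi of $\lG_-$. Iterating, together with induction in stages and the outer induction hypothesis applied to $\lG_-$, reduces the problem to the situation in which $\tilde{\tau} = \tilde{\rho}|\cdot|^e$ is a twist of a self-dual supercuspidal of $GL(d_\rho, F)$ and $\lr_-$ lies in an $L$-packet attached to a simple parameter. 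At this base case, the statement holds by the very definition of $\lp_a$ in Section~\ref{sec: LLC}.

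The main obstacle will be ensuring coherence of the constructed lifts: $\lp_a$ was defined by matching infinitesimal characters along one specific parabolic induction, whereas the proposition asserts compatibility with \emph{all} parabolic inductions yielding representations of the given $\Sigma_0$-infinitesimal character. The resolution is that, for fixed $[\tilde{\lambda}]$, the fiber of $\P{\lG}_{\tilde{\lambda}} \to \P{G}_{\lambda}$ is a torsor under $X^{\Sigma_0}(\tilde{\lambda})/X^{\Sigma_0}(\lp)$ by Lemma~\ref{lemma: twist invariant}, so the correct lift is detected by the restriction of the central character to $\lZ$. Since parabolic induction manifestly preserves central characters, and the classical ($G$-level) compatibility of the local Langlands correspondence with parabolic induction is known from Arthur's work, the two lifts coincide.
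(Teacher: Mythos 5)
Your overall reduction strategy (Langlands classification, reduction in rank, reduction to the supercuspidal base case) parallels the shape of the paper's argument, which proceeds through Lemma~\ref{lemma: cuspidal support} by reducing Langlands quotients to tempered representations, tempered to discrete series, and discrete series to cuspidal data via Jacquet modules. However, your final coherence step contains a genuine and fatal gap.

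You assert that the correct lift in the fiber of $\cP{\lG}_{\tilde{\lambda}} \rightarrow \cP{G}_{\lambda}$ is ``detected by the restriction of the central character to $\lZ$.'' This cannot work. By Lemma~\ref{lemma: twist invariant}, the fiber over $[\p]$ is a torsor under $X^{\Sigma_0}(\tilde{\lambda})/X^{\Sigma_0}(\lp)$, and both $X^{\Sigma_0}(\tilde{\lambda})$ and $X^{\Sigma_0}(\lp)$ lie inside $X = \Hom(\lG(F)/G(F)\lZ(F), \C^{\times})$ (the paper notes explicitly that the image of $\alpha$ lies in $X$, and this is visible from the combinatorial formula \eqref{eq: combinatorial description}: $\alpha(s) = (\prod_{i:s_i=-1}\eta_i)\circ\lambda_{\lG}$ is a quadratic character composed with the similitude character, hence trivial on scalars). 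Thus every $\omega$ parametrizing the ambiguity is \emph{by definition} trivial on $\lZ(F)$, so $\lp$ and $\lp\otimes\omega$ produce packets with identical central characters. The central character carries no information about which lift is the correct one. Concretely, for $\lG=GSp(2n)$ the ambiguity is by quadratic characters of $F^\times\cong\lG(F)/G(F)$ precomposed with $\lambda_{\lG}$, and $\lambda_{\lG}$ is a square on $\lZ=\mathbb{G}_m$, so all such twists vanish on the center.

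What actually pins down the lift, as the paper's proof of Lemma~\ref{lemma: cuspidal support} shows, is the compatibility of the twisted endoscopic transfer with Jacquet modules (from the reference \cite[Appendix C]{Xu:cusp}), together with the specific construction of $\cPkt{\lp}$ via a chain of twisted endoscopic transfers in \cite[Theorem 8.12]{Xu:Lpacket}. One reduces to a discrete series, extracts a $\rho||^{(a-1)/2}$ from the cuspidal support by a Jacquet module, identifies the resulting $\lr_{-}$ as lying in a specific $L$-packet $\cPkt{\lp_{-}}$ by tracking how $\overline{\rm Jac}^{\rho}_{(a-1)/2}$ interacts with the endoscopic transfer defining $\cPkt{\lp}$, and invokes the construction of $\cPkt{\lp_a}$ (which was \emph{designed} so that $\overline{\rm Jac}^{\rho}_{(a-1)/2}\cPkt{\lp_a} = \cPkt{\lp_{a-2}}$). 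Your proposal needs to be rebuilt around these Jacquet-module/endoscopic-transfer compatibilities rather than around central characters, which are invisible to the relevant twists.
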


This kind of statement is due to Haines \cite{Haines:2014}. In the case of classical groups, this has been proved by Moussaoui \cite{Moussaoui:2017}. In our case, the critical step is the following lemma.

\begin{lemma}
\label{lemma: cuspidal support}
For $[\lr] \in \cPkt{}(\lG(F))_{\tilde{\lambda}}$ with cuspidal support $(\lif{L}, \lr_{\rm cusp})$, if $[\lambda_{\lif{L}}]$ is the $\Sigma_0$-infinitesimal character of $\lr_{\rm cusp}$, then 
\(
[\tilde{\lambda}] = [\iota_{\lif{L}} \circ \lambda_{\lif{L}}].
\)
\end{lemma}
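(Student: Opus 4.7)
My plan is to transfer the analogous statement for the classical group $G$ (due to Moussaoui \cite{Moussaoui:2017}) to the similitude group $\lG$, using the correspondence $\cPkt{}(\lG(F)) \to \cPkt{}(G(F))$ together with the particular inductive construction of the LLC for $\lG$ fixed earlier in this section. Set $L := \lif{L} \cap G$. The restriction $\lr_{\rm cusp}|_{L(F)}$ decomposes as a finite direct sum of irreducible supercuspidal representations of $L(F)$; after choosing an irreducible constituent $\pi_{\rm cusp}$, transitivity of Jacquet modules through $\lif{P} \cap G$ produces an irreducible constituent $\pi \subseteq \lr|_{G(F)}$ with cuspidal support $(L, \pi_{\rm cusp})$. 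By the definition of the $\Sigma_0$-infinitesimal character via restriction, the composition with ${\bf p}$ sends $[\tilde{\lambda}]$ to $[\lambda_{\pi}]$ and sends $[\iota_{\lif{L}} \circ \lambda_{\lif{L}}]$ to $[\iota_{L} \circ \lambda_{\pi_{\rm cusp}}]$. Moussaoui's theorem then gives $[\lambda_{\pi}] = [\iota_{L} \circ \lambda_{\pi_{\rm cusp}}]$, so both candidate lifts on $\lG$ project to the same $\Sigma_0$-orbit in $\cP{G}_{\lambda}$.

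By the fiber description at the end of Section~\ref{sec: infinitesimal character}, the two lifts can then differ only by an element of $X^{\Sigma_0}(\tilde{\lambda})/X^{\Sigma_0}(\lp_{\lr})$. To rule this out, I reduce by the Langlands classification to the tempered case: a standard module's parameter is $\iota_{\lif{L}'} \circ \lp_{\tilde{\sigma}_{0} \otimes \chi}$ with $\tilde{\sigma}_{0}$ the tempered support on a Levi $\lif{L}' \supseteq \lif{L}$, so the $\Sigma_0$-infinitesimal character of $\lr$ equals $[\iota_{\lif{L}'} \circ \lambda_{\lif{L}'}]$, while $\lr_{\rm cusp}$ remains a cuspidal support of $\tilde{\sigma}_{0}$. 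By \cite[Theorem 8.12]{Xu:Lpacket}, tempered $L$-packets on $\lG$ are built by parabolic induction from products of $GL$-factors and a discrete packet on a smaller similitude group, and one descends further along the decomposition of the discrete parameter into simple summands. At the elementary level, the recursion $\cPkt{\lp_{a}} \hookrightarrow \rho||^{\frac{a-1}{2}} \rtimes \cPkt{\lp_{a-2}}$ together with the normalizations (1)--(2) on $\tilde{\rho} \mapsto \cPkt{\tilde{\rho}}$ was imposed in this section precisely to force the $\Sigma_0$-infinitesimal character to match under the parabolic induction. Iterating up through the Langlands classification yields $[\tilde{\lambda}] = [\iota_{\lif{L}} \circ \lambda_{\lif{L}}]$.

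The main obstacle is the bookkeeping in this tempered reduction: at each inductive step one must verify that the particular lift produced by the construction does not pick up a spurious twist by some $\omega \in X^{\Sigma_0}(\tilde{\lambda})$ when one passes from simple parameters $\lp_{a}$ to compound tempered parameters $\bigoplus_{i} \lp_{a_{i}}$. This should follow from two rigidities baked into our normalizations: condition (2) pins down the central character $\chi_{\tilde{\rho}}$ of $\cPkt{\tilde{\rho}}$, and the inductive choice of $\lp_{a}$ pins down its lifted infinitesimal character. Both rigidities are inherited by the product construction of tempered packets, so once $\lr_{\rm cusp}$ is specified, the lift $[\tilde{\lambda}]$ is uniquely determined and must coincide with $[\iota_{\lif{L}} \circ \lambda_{\lif{L}}]$.
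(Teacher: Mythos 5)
Your overall strategy is the right one and mirrors the paper's: reduce via the Langlands classification to the tempered case, then to the discrete series case, and there exploit the specific choices made in Section~\ref{sec: LLC}. The initial Moussaoui-based reduction to a twist ambiguity in $X^{\Sigma_0}(\tilde{\lambda})/X^{\Sigma_0}(\lp_{\lr})$ is a sensible alternative framing (the paper does not explicitly invoke Moussaoui inside the proof, though it cites him just before), and the observation that an irreducible constituent $\pi\subseteq\lr|_{G(F)}$ inherits the cuspidal support $(L,\pi_{\rm cusp})$ is correct.

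The gap is the final paragraph. You write that ``one descends further along the decomposition of the discrete parameter into simple summands'' and that the rigidities (central character of $\cPkt{\tilde\rho}$, the inductive choice of $\lp_a$) ``are inherited by the product construction.'' But this is exactly what has to be proved, and it is not automatic: the decomposition of a discrete parameter into simple summands corresponds to passing to a \emph{twisted endoscopic group} $H = G_a\times G'$, and $\cPkt{\lp}$ is defined via twisted endoscopic transfer from $\cPkt{\lp_a}\,\tilde\otimes\,\cPkt{\lp'}$; by contrast, the cuspidal support of $\lr$ is reached via iterated \emph{Jacquet modules} through a Levi subgroup. These are different kinds of descent, and there is no a priori reason the lift determined by one is compatible with the lift determined by the other. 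The paper bridges them by (a) reducing rank one step at a time via $\overline{{\rm Jac}}^{\rho}_{(a-1)/2}$, (b) arranging the twisted endoscopic embeddings so that the relevant square commutes, and (c) invoking the compatibility of twisted endoscopic transfer with Jacquet module from \cite[Appendix~C]{Xu:cusp} to conclude $\cPkt{\lp_-}=\overline{{\rm Jac}}^{\rho}_{(a-1)/2}\cPkt{\lp}$. None of these appear in your proposal, and without (c) in particular, the assertion that the rigidities propagate to compound discrete packets is not established. As written, the argument presupposes the very compatibility it needs to prove.
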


\begin{proof}
Let $[\lp] \in \cPbd{\lG}$ be the Langlands parameter of $[\lr]$, then $\lp$ factors through $\lp_{M} \otimes \xi \in \P{\lif{M}}$ for some parabolic subgroup $\lif{P}$ of $\lG$ with Levi factor $\lif{M}$ and $\lr$ is the Langlands quotient of ${\rm Ind}^{\lG(F)}_{\lif{P}(F)} (\tilde{\sigma} \otimes \xi)$ for some tempered representation $[\tilde{\sigma}] \in \cPkt{\lp_{M}}$. It suffices to prove the lemma for $[\tilde{\sigma}]$. So we can assume $\lr$ is tempered. In this case, $\lp$ factors through $\lp_{M} \in \Pdt{\lif{M}}$ and $\lr$ is a direct summand of ${\rm Ind}^{\lG(F)}_{\lif{P}(F)} (\tilde{\sigma})$ for some discrete series representation $[\tilde{\sigma}] \in \cPkt{\lp_M}$. Therefore we can further reduce to the discrete series case.

Suppose $\lr$ is a non-cuspidal discrete series representation. Let $[\p] := [{\bold p} \circ \lp] \in \cPdt{G}$ and $\r$ be an irreducible constituent of $\lr|_{G}$. Then $\r$ is also a non-cuspidal discrete series representation. By \cite[Theorem 3.3, Proposition 8.1]{Xu:cusp}, there exists a self-dual irreducible unitary supercuspidal representation $\rho$ of $GL(d_{\rho}, F)$ and integer $a$ such that $\r_{-} := \overline{{\rm Jac}}^{\rho}_{_{\frac{a-1}{2}}} \r$ is an irreducible tempered representation of $G_{-}(F)$. Then $\lr_{-} := \overline{{\rm Jac}}^{\rho}_{_{\frac{a-1}{2}}} \lr$ is an irreducible representation of $\lG_{-}(F)$ containing $\r_{-}$ in its restriction to $G_{-}(F)$, in particular, it is also tempered. By Frobenius reciprocity, $\lr$ or $\lr^{\theta_0}$ is an irreducible constituent of $\rho||^{\frac{a-1}{2}} \rtimes \lr_{-}$. By the induction on the rank of groups, we can assume the Lemma holds for $\lG_{-}$. Then it suffices to show the $\Sigma_0$-infinitesimal character of $\rho||^{\frac{a-1}{2}} \boxtimes \lr_{-}$ induces that of $\lr$. For this purpose we need to find the $L$-parameter of $\lr_{-}$.

If $\p = \rho \otimes \nu_a := \p_a$, then the lemma follows from our construction of the correspondence for $\lp_a$. Otherwise, we can factor $\p$ through $\p_{H} := \p_{a} \times \p' \in \cPbd{H}$ for a twisted endoscopic group $H := G_{a} \times G'$ of $G$, where $\p_{a} = \rho \otimes \nu_a$ if $d_{\rho}$ is even (resp. $\p_{a} = (\rho \otimes \eta_{\rho}) \otimes \nu_{a}$ if $d_{\rho}$ is odd). Correspondingly, $\lp$ factors through $\lp_{H} = {\bold p} \circ (\lp_{a} \times \lp') \in \cPbd{\lif{H}}$ for a twisted endoscopic group $\lif{H}$ of $\lG$. Choose $\p_{-} \in \cPbd{G_{-}}$ factoring through $\p_{H_{-}} = \p_{a-2} \times \p'$ for a twisted endoscopic group $H_{-} := G_{a-2} \times G'$ of $G_{-}$. Choose $\lp_{-} \in \cPbd{\lG_{-}}$ factoring through $\lp_{H_{-}} = {\bold p} \circ (\lp_{a-2} \times \lp')$ for the corresponding twisted endoscopic group $\lif{H}_{-}$. Here $\lp_{a-2}$ is so chosen that $\lp_{a}$ has the same $\Sigma_0$-infinitesimal character as that of
$\rho||^{\frac{a-1}{2}} \rtimes \lp_{a-2}$ if $d_{\rho}$ is even (resp. $\rho \otimes \eta_{\rho} ||^{\frac{a-1}{2}} \rtimes \lp_{a-2}$ if $d_{\rho}$ is odd). The twisted endoscopic embeddings are so chosen that
\[
\xymatrix{\L{\lif{H}} \ar[r] & \L{\lG} \\
\L{(GL(d_{\rho}) \times \lif{H}_{-})} \ar[u] \ar[r] & \L{(GL(d_{\rho}) \times \lG_{-})} \ar[u]
}
\]
commutes. Then $\lp$ has the same $\Sigma_0$-infinitesimal character as that of 
\(
\rho||^{\frac{a-1}{2}} \rtimes \lp_{-}.
\)
It remains to show that $[\lp_{-}]$ is the Langlands parameter of $[\lr_{-}]$. First of all, 
\(
\cPkt{\lp_{a-2}} = \overline{{\rm Jac}}^{\rho}_{\frac{a-1}{2}} \cPkt{\lp_{a}}
\)
(resp. $\cPkt{\lp_{a-2}} = \overline{{\rm Jac}}^{\rho \otimes \eta_{\rho}}_{\frac{a-1}{2}} \cPkt{\lp_{a}}
$)
from our construction. Secondly, $\cPkt{\lp} =  {\rm Tran} \, (\cPkt{\lp_{a}} \, \tilde{\otimes} \, \cPkt{\lp'})$ and $\cPkt{\lp_{-}} = {\rm Tran} \, (\cPkt{\lp_{a-2}} \, \tilde{\otimes} \, \cPkt{\lp'})$ (cf. \cite[Theorem 8.12]{Xu:Lpacket}). At last, it follows from the compatibility of the twisted endoscopic transfer with Jacquet module \cite[Appendix C]{Xu:cusp} that $\cPkt{\lp_{-}} = \overline{{\rm Jac}}^{\rho}_{\frac{a-1}{2}} \cPkt{\lp}$. Hence $[\lr_{-}] \in \cPkt{\lp_{-}}$. 
\end{proof}

\begin{remark}
In \cite[Appendix C]{Xu:cusp}, we have only considered the twist by automorphism. Our case can be deduced from there by considering $\lif{M}^{+} := \lG \times GL(1)$ with the automorphism $\theta^{+}(g, x) := (\theta(g), \lambda(g)x^{-1})$, where $\theta \in \Sigma_0$. Note a representation $\lr \boxtimes \omega$ of $\lM^{+}$ is $\theta^{+}$-invariant if and only if $\lr^{\theta} \otimes \x \cong \lr$, in which case $\omega^2 = 1$.
\end{remark}

Now we can complete the proof of Proposition~\ref{prop: induction preserves infinitesimal character}. 

\begin{proof}

Suppose the cuspidal support of $[\tilde{\sigma}] \in \cPkt{}(\lif{M}(F))_{\lambda_{\lif{M}}}$ is $(\lif{L}, \tilde{\sigma}_{\rm cusp})$. Then this is also the cuspidal support of $\lr$. Let $[\lambda_{\lif{L}}]$ be the $\Sigma_0$-infinitesimal character of $\tilde{\sigma}_{\rm cusp}$. Then by this lemma, the $\Sigma_0$-infinitesimal character of $\lr$ is
\(
[\tilde{\lambda}] = [\iota_{\lif{L}} \circ \lambda_{\lif{L}}] = [\iota_{\lif{M}} \circ \iota^{\lif{M}}_{\lif{L}} \circ \lambda_{\lif{L}}] = [\iota_{\lif{M}} \circ \lambda_{\lif{M}}].
\)
\end{proof}

\begin{corollary}
\label{cor: cuspidal support}
For any Levi subgroup $\lM$ of $\lG$ and $\Sigma_0$-infinitesimal character $[\lambda_{\lM}]$ for $\lM$, we have
\[
{\rm Ind}^{\lG^{\Sigma_0}(F)}_{\lif{P}^{\Sigma_0}(F)} \, \Pkt{}(\lM^{\Sigma_0}(F))_{\lambda_{\lM}} \subseteq \Pkt{}(\lG^{\Sigma_0}(F))_{\tilde{\lambda}}
\]
for
\(
[\tilde{\lambda}] = [\iota_{\lM} \circ \lambda_{\lM}],
\)
where
\(
\iota_{\lM}: \L{\lM} \rightarrow \L{\lG}.
\)
\end{corollary}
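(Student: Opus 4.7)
The plan is to reduce the corollary directly to Proposition~\ref{prop: induction preserves infinitesimal character} by restriction to $\lG(F)$ (respectively $\lM(F)$), using the fact from Section~\ref{sec: infinitesimal character} that the $\Sigma_0$-infinitesimal character of an object in $\Pkt{}(\lG^{\Sigma_0}(F))$ is by definition the $\Sigma_0$-infinitesimal character of its restriction to $\lG(F)$. There is no new geometric input required; the work is bookkeeping against the two separate definitions of the parabolic induction $\Ind^{\lG^{\Sigma_0}(F)}_{\lif{P}^{\Sigma_0}(F)}$ given in Section~\ref{sec: Jac}.

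First, I would fix $\tilde{\sigma}^{\Sigma_0} \in \Pkt{}(\lM^{\Sigma_0}(F))_{\lambda_{\lM}}$ and unwind its restriction to $\lM(F)$. By Clifford theory for the finite extension $\lM^{\Sigma_0}/\lM$ (or trivially, in the non-stable case where $\lM^{\Sigma_0} = \lM$), the restriction $\tilde{\sigma}^{\Sigma_0}|_{\lM(F)}$ is semisimple with irreducible constituents forming a single $\Sigma_0$-orbit $[\tilde{\sigma}] \in \cPkt{}(\lM(F))_{\lambda_{\lM}}$. Next, I would pick an irreducible constituent $\lr^{\Sigma_0}$ of $\Ind^{\lG^{\Sigma_0}(F)}_{\lif{P}^{\Sigma_0}(F)} \tilde{\sigma}^{\Sigma_0}$. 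By the two cases of the definition in Section~\ref{sec: Jac}:
\begin{itemize}
\item If $\lM^{\theta_0} = \lM$, then $\Ind^{\lG^{\Sigma_0}(F)}_{\lif{P}^{\Sigma_0}(F)} \tilde{\sigma}^{\Sigma_0}$ has underlying $\lG(F)$-module $\Ind^{\lG(F)}_{\lif{P}(F)}(\tilde{\sigma}^{\Sigma_0}|_{\lM(F)})$, so $\lr^{\Sigma_0}|_{\lG(F)}$ is a subrepresentation of the latter.
\item If $\lM^{\theta_0} \neq \lM$, then $\lM^{\Sigma_0} = \lM$ and $\Ind^{\lG^{\Sigma_0}(F)}_{\lif{P}^{\Sigma_0}(F)} \tilde{\sigma}^{\Sigma_0} = \Ind^{\lG^{\Sigma_0}(F)}_{\lG(F)} \Ind^{\lG(F)}_{\lif{P}(F)} \tilde{\sigma}^{\Sigma_0}$, so $\lr^{\Sigma_0}|_{\lG(F)}$ is a subrepresentation of $\Ind^{\lG(F)}_{\lif{P}(F)} \tilde{\sigma}^{\Sigma_0} \oplus (\Ind^{\lG(F)}_{\lif{P}(F)} \tilde{\sigma}^{\Sigma_0})^{\theta_0}$.
\end{itemize}

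Then I would apply Proposition~\ref{prop: induction preserves infinitesimal character} to each irreducible constituent of $\Ind^{\lG(F)}_{\lif{P}(F)}[\tilde{\sigma}]$: each such constituent lies in $\cPkt{}(\lG(F))_{\tilde{\lambda}}$ for $[\tilde{\lambda}] = [\iota_{\lM} \circ \lambda_{\lM}]$. The $\theta_0$-twist appearing in the second case preserves the $\Sigma_0$-orbit of the Langlands parameter, hence preserves the $\Sigma_0$-infinitesimal character, so every irreducible $\lG(F)$-constituent of $\lr^{\Sigma_0}|_{\lG(F)}$ has $\Sigma_0$-infinitesimal character $[\tilde{\lambda}]$. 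By the definition of the $\Sigma_0$-infinitesimal character on $\Pkt{}(\lG^{\Sigma_0}(F))$, this means $\lr^{\Sigma_0} \in \Pkt{}(\lG^{\Sigma_0}(F))_{\tilde{\lambda}}$, which is the desired conclusion.

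There is really no obstacle beyond matching up conventions: the only subtle point to verify carefully is that in the non-stable case the definition $\lM^{\Sigma_0} = \lM$ still makes the statement $[\tilde{\lambda}] = [\iota_{\lM} \circ \lambda_{\lM}]$ meaningful (here $[\,]$ on the $\lM$ side collapses to the usual equivalence since $\D{\Sigma}_0$ does not act on $\L{\lM}$), and that the $\theta_0$-twist in the second bullet stays within $\cPkt{}(\lG(F))_{\tilde{\lambda}}$, which is immediate from the fact that $\theta_0$ preserves the $\Sigma_0$-conjugacy class of any parameter.
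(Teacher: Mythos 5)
Your proof is correct and takes the only available route: the paper states Corollary~\ref{cor: cuspidal support} without proof, treating it as an immediate consequence of Proposition~\ref{prop: induction preserves infinitesimal character} via restriction to $\lG(F)$, and your argument fills in exactly those restriction details (the two cases of the definition of $\Ind^{\lG^{\Sigma_0}(F)}_{\lif{P}^{\Sigma_0}(F)}$, Clifford theory, the $\theta_0$-twist preserving the $\D{\Sigma}_0$-orbit of the infinitesimal character). The only tiny imprecision is saying ``subrepresentation'' where ``constituent'' is what is used, but this is immaterial since the conclusion about infinitesimal characters applies to every constituent.
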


\begin{corollary}
\label{cor: Jac preserves infinitesimal character}
For $[\lr] \in \bar{\Pi}(\lG(F))_{\tilde{\lambda}}$ (resp. $\lr^{\Sigma_0} \in \Pi(\lG^{\Sigma_0}(F))_{\tilde{\lambda}}$), let $\tilde{\sigma}$ (resp. $\tilde{\sigma}^{\Sigma_0}$) be any irreducible constituent of ${\rm Jac}_{\lP(F)} \lr$ (resp. ${\rm Jac}_{\lP^{\Sigma_0}(F)} \lr^{\Sigma_0}$) for a parabolic subgroup $\lP$ of $\lG$, then the $\Sigma_0$-infinitesimal character of ${\rm Ind}^{\lG(F)}_{\lP(F)} \, \tilde{\sigma}$ (resp. ${\rm Ind}^{\lG^{\Sigma_0}(F)}_{\lP^{\Sigma_0}(F)} \, \tilde{\sigma}^{\Sigma_0}$) is $\tilde{\lambda}$. 
\end{corollary}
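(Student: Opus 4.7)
The plan is to reduce everything to a statement about cuspidal support, then combine Lemma~\ref{lemma: cuspidal support} with Proposition~\ref{prop: induction preserves infinitesimal character}.

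First I would denote by $[\lambda_{\lif{M}}]$ the $\Sigma_0$-infinitesimal character of $\tilde{\sigma}$ (respectively $\tilde{\sigma}^{\Sigma_0}$). By Proposition~\ref{prop: induction preserves infinitesimal character} (respectively Corollary~\ref{cor: cuspidal support}), every irreducible constituent of ${\rm Ind}^{\lG(F)}_{\lif{P}(F)} \tilde{\sigma}$ (respectively ${\rm Ind}^{\lG^{\Sigma_0}(F)}_{\lif{P}^{\Sigma_0}(F)} \tilde{\sigma}^{\Sigma_0}$) has $\Sigma_0$-infinitesimal character $[\iota_{\lif{M}} \circ \lambda_{\lif{M}}]$, so it suffices to show $[\iota_{\lif{M}} \circ \lambda_{\lif{M}}] = [\tilde{\lambda}]$.

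To compare these two infinitesimal characters I would pass through a common cuspidal support. Let $(\lif{L}, \lr_{\rm cusp})$ be the cuspidal support of $[\lr]$, and let $[\lambda_{\lif{L}}]$ be the $\Sigma_0$-infinitesimal character of $\lr_{\rm cusp}$. By Lemma~\ref{lemma: cuspidal support} applied to $[\lr]$, we have $[\tilde{\lambda}] = [\iota_{\lif{L}} \circ \lambda_{\lif{L}}]$. Now, since $\tilde{\sigma}$ is an irreducible constituent of ${\rm Jac}_{\lif{P}(F)} \lr$, transitivity of the Jacquet functor implies that $\lr_{\rm cusp}$ appears in ${\rm Jac}_{\lif{P}'(F)} \tilde{\sigma}$ for some parabolic $\lif{P}'$ of $\lif{M}$ whose Levi is $\lG(F)$-conjugate to $\lif{L}$. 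Thus, up to $\lG(F)$-conjugation, the cuspidal support of $\tilde{\sigma}$ as a representation of $\lif{M}(F)$ agrees with $(\lif{L}, \lr_{\rm cusp})$. Applying the analogue of Lemma~\ref{lemma: cuspidal support} inside $\lif{M}$ then yields $[\lambda_{\lif{M}}] = [\iota^{\lif{M}}_{\lif{L}} \circ \lambda_{\lif{L}}]$. Composing with $\iota_{\lif{M}}$ and using the transitivity of twisted endoscopic embeddings $\iota_{\lif{M}} \circ \iota^{\lif{M}}_{\lif{L}} = \iota_{\lif{L}}$, we conclude
\[
[\iota_{\lif{M}} \circ \lambda_{\lif{M}}] = [\iota_{\lif{M}} \circ \iota^{\lif{M}}_{\lif{L}} \circ \lambda_{\lif{L}}] = [\iota_{\lif{L}} \circ \lambda_{\lif{L}}] = [\tilde{\lambda}],
\]
which finishes the proof. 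The $\Sigma_0$-equivariant case follows in exactly the same way, with Corollary~\ref{cor: cuspidal support} replacing Proposition~\ref{prop: induction preserves infinitesimal character}.

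The main subtlety is to justify using Lemma~\ref{lemma: cuspidal support} for a representation of the Levi $\lif{M}(F)$ rather than of $\lG(F)$ itself. By the description \eqref{eq: levi}, any standard Levi of $\lG$ is a product of general linear factors and a similitude group $\lG_{-}$ of the same type as $\lG$; the infinitesimal character of $\tilde{\sigma}$ is then the product of the infinitesimal characters of the factors, and the same is true of its cuspidal support. On the general linear factors the statement is immediate from the Bernstein--Zelevinsky classification and the compatibility of the local Langlands correspondence with cuspidal support, while on the similitude factor $\lG_{-}$ the proof of Lemma~\ref{lemma: cuspidal support} applies verbatim (an inductive argument reducing the rank through $\overline{{\rm Jac}}^{\rho}_{(a-1)/2}$ and twisted endoscopic transfer). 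Combining these factorwise yields the required Levi version of Lemma~\ref{lemma: cuspidal support}, and with that in hand the argument above goes through.
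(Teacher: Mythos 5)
Your proof is correct, and it takes a somewhat different route than the paper's. The paper's argument is a one-step reduction through the geometric lemma: it writes $\mathrm{Jac}_{\lP(F)} \mathrm{Ind}^{\lG(F)}_{\lif{Q}(F)}(\lr_{\rm cusp})$ as a sum of $\mathrm{Ind}^{\lM(F)}_{w^{-1}\lif{Q}(F)\cap\lM(F)}(w^{-1}\lr_{\rm cusp})$, so $\tilde{\sigma}$ is realized as a constituent of a representation induced from a supercuspidal, and by transitivity $\mathrm{Ind}^{\lG(F)}_{\lP(F)}\tilde{\sigma}$ sits inside $\mathrm{Ind}^{\lG(F)}_{(w^{-1}\lif{Q}(F)\cap\lM(F))N_{\lP}(F)}(w^{-1}\lr_{\rm cusp})$. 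All constituents of the latter share the cuspidal support $(w^{-1}\lif{L}, w^{-1}\lr_{\rm cusp})$, so Lemma~\ref{lemma: cuspidal support} applied at the level of $\lG(F)$ finishes; the paper never needs to talk about infinitesimal characters of the Levi $\lif{M}$ in Cor.~\ref{cor: Jac preserves infinitesimal character} itself. You instead go through Proposition~\ref{prop: induction preserves infinitesimal character} first, which reduces the assertion to the equality $[\iota_{\lif{M}}\circ\lambda_{\lif{M}}]=[\tilde{\lambda}]$ of two infinitesimal characters, and then compare them through the common cuspidal support $(\lif{L},\lr_{\rm cusp})$. This requires a version of Lemma~\ref{lemma: cuspidal support} for the Levi $\lif{M}$, which you are right to flag and which you justify by factoring $\lif{M}$ into general linear blocks times a smaller $\lG_{-}$; this is exactly what is used (silently) in the proof of Proposition~\ref{prop: induction preserves infinitesimal character} anyway, so nothing circular or new is introduced. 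The paper's argument is more self-contained since it stays on $\lG(F)$; your organization is more conceptual (everything is a statement about compatibility of infinitesimal characters with the endoscopic embeddings $\iota_{\lif{L}}, \iota_{\lif{M}}$) at the cost of the extra factorization argument.
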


\begin{proof}
It suffices to treat $[\lr] \in \bar{\Pi}(\lG(F))_{\tilde{\lambda}}$. Suppose $\lr$ has cuspidal support $(\lif{L}, \lr_{\rm cusp})$. Let $\lif{Q}$ be a parabolic subgroup of $\lG$ with Levi factor $\lif{L}$. Then $\lr$ is an irreducible constitutent of ${\rm Ind}^{\lG(F)}_{\lif{Q}(F)} \, \lr_{\rm cusp}$. By the geometric lemma,
\[
{\rm Jac}_{\lP(F)} \, {\rm Ind}^{\lG(F)}_{\lif{Q}(F)} (\lr_{\rm cusp}) = \sum_{w \in W^{\lP, \lif{Q}} : \lif{L} \subseteq w \lM} {\rm Ind}^{\lM(F)}_{w^{-1} \lif{Q}(F) \cap \lM(F)} \, (w^{-1} \, \lr_{\rm cusp})
\]
in the Grothendieck group, where
\[
W^{\lP, \lif{Q}} := \{w \in W^{\lG} : w(\lif{M} \cap \lif{B}) \subseteq \lif{B} \text{ and } w^{-1}(\lif{L} \cap \lif{B}) \subseteq \lif{B}\}.
\]
It follows that $\lif{\sigma}$ is an irreducible constituent of ${\rm Ind}^{\lM(F)}_{w^{-1} \lif{Q}(F) \cap \lM(F)} \, (w^{-1} \, \lr_{\rm cusp})$ for some $w$. So the $\Sigma_0$-infinitesimal character of ${\rm Ind}^{\lG(F)}_{\lP(F)} (\tilde{\sigma})$ is the same as that of ${\rm Ind}^{\lG(F)}_{(w^{-1} \lif{Q}(F) \cap \lM(F))N_{\lP}(F)} \, (w^{-1} \, \lr_{\rm cusp})$. The rest follows from Lemma~\ref{lemma: cuspidal support}.

\end{proof}

At last, for $\lr^{\Sigma_0} \in \Pkt{}(\lG^{\Sigma_0}(F))$ and $\lr \in \Pkt{}(\lG(F))$ such that $\lr \subseteq \lr^{\Sigma_0}|_{\lG(F)}$, let 
\begin{align*}
& X(\lr^{\Sigma_0}) = \{\x \in X \, | \, \lr^{\Sigma_0} \otimes \x \cong \lr^{\Sigma_0}\}, \\
& X(\lr) =  \{\x \in X \, | \, \lr \otimes \x  \cong \lr \}, \\
& X([\lr]) = \{\omega \in X \, | \, [\lr \otimes \omega] =  [\lr] \}.
\end{align*}
We have $X(\lr^{\Sigma_0}) = X([\lr])$.

\begin{lemma}
Suppose $\lp \in \P{\lG}$ and $\lr^{\Sigma_0} \in \Pkt{\lp}^{\Sigma_0}$, $[\lr] \in \cPkt{\lp}$ such that $\lr \subseteq \lr^{\Sigma_0}|_{\lG(F)}$, then 
\[
X(\lr^{\Sigma_0}) = X^{\Sigma_0}(\lp), \quad X(\lr) = X(\lp).
\]
\end{lemma}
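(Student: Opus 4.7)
The plan is to verify each of the two equalities by proving both inclusions, exploiting the Langlands parameter correspondence on one side and the $L$-packet construction of \cite{Xu:2018} on the other. I would begin with the easy $(\subseteq)$ inclusions: if $\lr \otimes \omega \cong \lr$, the Langlands parameters of the two sides must coincide, so $\lp \otimes \omega = \lp$ in $\P{\lG}$ and $\omega \in X(\lp)$; restricting an isomorphism $\lr^{\Sigma_0} \otimes \omega \cong \lr^{\Sigma_0}$ to $\lG(F)$ yields $[\lr \otimes \omega] = [\lr]$, hence $[\lp \otimes \omega] = [\lp]$ and $\omega \in X^{\Sigma_0}(\lp)$. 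Combined with the identity $X(\lr^{\Sigma_0}) = X([\lr])$ recorded immediately before the lemma, this gives $X(\lr^{\Sigma_0}) \subseteq X^{\Sigma_0}(\lp)$ and $X(\lr) \subseteq X(\lp)$.

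Next I would treat the inclusion $X^{\Sigma_0}(\lp) \subseteq X(\lr^{\Sigma_0})$ at the orbit level. Given $\omega \in X^{\Sigma_0}(\lp)$, we have $\cPkt{\lp \otimes \omega} = \cPkt{\lp}$, so $[\lr \otimes \omega]$ still lies in $\cPkt{\lp}$; and because $\omega$ is trivial on $G(F)$, the restriction to $G(F)$ is unchanged, so $[\r]$ continues to appear in the restriction of $\lr \otimes \omega$. The construction of $\cPkt{\lp}$ in \cite[Theorem 4.6]{Xu:2018}, combined with the multiplicity-freeness of $\cPkt{\p}$ proved in \cite{Moeglin1:2011}, forces each $[\r] \in \cPkt{\p}$ to be contained in the restriction of a unique orbit in $\cPkt{\lp}$. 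This pins down $[\lr \otimes \omega] = [\lr]$, i.e. $\omega \in X([\lr]) = X(\lr^{\Sigma_0})$.

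The delicate step is the remaining inclusion $X(\lp) \subseteq X(\lr)$. Given $\omega \in X(\lp) \subseteq X^{\Sigma_0}(\lp)$, the previous paragraph already gives $[\lr \otimes \omega] = [\lr]$, so $\lr \otimes \omega \cong \lr$ or $\lr \otimes \omega \cong \lr^{\theta_0}$. The second option forces $\lp = \lp^{\theta_0}$ by comparing Langlands parameters, and I would rule it out when $\lr \not\cong \lr^{\theta_0}$ using the pairing with $\S{\lp}$ attached to $\lr$ via Corollary~\ref{cor: Whittaker pairing}. Since $\omega$ is trivial on $G(F)$, the $G(F)$-constituent $\r \subseteq \lr|_{G(F)}$ and its character on $\S{\p}$ are unaffected, so through the compatibility diagram with the embedding $\iota \colon \S{\lp} \hookrightarrow \S{\p}$ from \eqref{eq: twisted endoscopic sequence} we get $\langle \cdot, \lr \otimes \omega \rangle = \langle \cdot, \lr \rangle$ on $\S{\lp}$. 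On the other hand, $\langle \cdot, \lr^{\theta_0} \rangle$ is the $\D{\theta}_0$-twist of $\langle \cdot, \lr \rangle$, which by the injectivity of $\lr \mapsto \langle \cdot, \lr \rangle$ within $\Pkt{\lp}$ is distinct from $\langle \cdot, \lr \rangle$ when $\lr \not\cong \lr^{\theta_0}$. This excludes the second option and forces $\lr \otimes \omega \cong \lr$.

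The main obstacle is this last step: it requires a careful bookkeeping of how $\theta_0$ acts on the Whittaker-normalized pairing inherited from $\cPkt{\p}$ and the use of injectivity of the $\S{\lp}$-parametrization of $\Pkt{\lp}$. Once that is in hand, everything else reduces to the multiplicity-one input for classical groups and the already established orbit-level identity $X(\lr^{\Sigma_0}) = X([\lr])$.
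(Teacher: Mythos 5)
The paper's own proof is one line: it cites Lemma~\ref{lemma: twist invariant}, which identifies $X(\lp) = \a(\S{\p})$ and $X^{\Sigma_0}(\lp) = \a(\S{\p}^{\Sigma_0})$, together with \cite[Corollary 4.2]{Xu:2018}, which supplies the matching description of $X(\lr)$ and $X(\lr^{\Sigma_0})$ in terms of the stabilizers $\lG(\r)$ and $\lG([\r])$ (and crucially uses the equality $\lG(\r) = \lG([\r])$ from Arthur's refinement of $L$-packets for even orthogonal groups, \cite[Theorem 8.4.1]{Arthur:2013}). You instead attempt a direct two-inclusion argument, which is a genuinely different route, but it has a gap in the hardest step.

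The gap is in your last paragraph, where you try to rule out $\lr \otimes \omega \cong \lr^{\theta_0} \not\cong \lr$ for $\omega \in X(\lp)$. You claim $\langle \cdot, \lr^{\theta_0} \rangle$ is the $\D{\theta}_0$-twist of $\langle \cdot, \lr \rangle$ and is distinct from it when $\lr \not\cong \lr^{\theta_0}$. This is incorrect: by Corollary~\ref{cor: Whittaker pairing}, the pairing is a map $\clPkt{\p, \tilde{\zeta}} \rightarrow \D{\S{\lp}}$ defined on $\theta_0$-orbits $[\lr]$, and since $[\lr] = [\lr^{\theta_0}]$, one has $\langle \cdot, \lr \rangle = \langle \cdot, \lr^{\theta_0} \rangle$ on the nose. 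The pairing can never see the difference between $\lr$ and $\lr^{\theta_0}$, so it cannot be used to exclude this case. Ruling out $\lr \otimes \omega \cong \lr^{\theta_0} \not\cong \lr$ is precisely what requires the stabilizer equality $\lG(\r) = \lG([\r])$ coming from Arthur's refinement; your proof never invokes this, and without it the argument does not close.

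A secondary issue: in your first paragraph you argue that $\lr \otimes \omega \cong \lr$ implies $\lp \otimes \omega = \lp$ in $\P{\lG}$, ``by Langlands parameter correspondence.'' But the paper only constructs the correspondence $\cPkt{}(\lG(F)) \rightarrow \cP{\lG}$ at the level of $\theta_0$-orbits; there is no asserted lift-level bijection $\Pkt{}(\lG(F)) \rightarrow \P{\lG}$ attaching an individual parameter $\lp$ to an individual $\lr$. So from $\lr \otimes \omega \cong \lr$ you can only conclude $[\lp \otimes \omega] = [\lp]$, i.e., $\omega \in X^{\Sigma_0}(\lp)$; the containment $X(\lr) \subseteq X(\lp)$ does not follow by this route without further input. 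Both this and the main gap are exactly the content of \cite[Corollary 4.2]{Xu:2018}, which you would need to cite (as the paper does) rather than rederive via the Whittaker pairing.
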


\begin{proof}
It follows from Lemma~\ref{lemma: twist invariant} and \cite[Corollary 4.2]{Xu:2018}.
\end{proof}

\section{Statement of main results}
\label{sec: main results}

From now on we will fix a $\theta_0$-stable Whittaker datum for $G$. For $[\q] \in \cQ{G}$ and $\lambda = \lambda_{\q}$, Arthur \cite[Theorem 1.5.1]{Arthur:2013} associated it with a multi-set $\cPkt{\q}$ over $\cPkt{}(G(F))$. It follows from M{\oe}glin's multiplicity one result \cite{Moeglin1:2011} and \cite[(2.2.12)]{Arthur:2013} that $\cPkt{\q}$ is a subset of $\cPkt{}(G(F))_{\lambda}$. It is equipped with a map
\begin{align}
\label{eq: Whittaker parametrization G}
\cPkt{\q} \longrightarrow \D{\S{\q}}, \quad [\r] \mapsto \langle \cdot, \r \rangle_{W}
\end{align}
with respect to our choice of Whittaker datum, such that 
\begin{enumerate}

\item
\[
f_{W}(\q) := \sum_{[\r] \in \cPkt{\q}} \langle s_{\q}, \r \rangle_{W}  \, f_{G}(\r), \quad f \in \sH(G(F))
\]
is stable.

\item For semisimple $s \in \cS{\q}$ and $(H ,\q_{H}) \rightarrow (\q, s)$, we have
\begin{align}
\label{eq: character relation G}
f^{H}_{W}(\q_{H}) = \sum_{[\r] \in \cPkt{\q}} \langle ss_{\q}, \r \rangle_{W} \, f_{G}(\r), \quad f \in \sH(G(F)),
\end{align}
\end{enumerate}
where $f^{H}$ is the transfer of $f$ and $f^{H}_{W}(\q)$ is defined with respect to $\cPkt{\q_H} := \otimes_{i} \Pkt{\q_{i}} \otimes \cPkt{\q'}$ for $\q_{H} = \prod_{i} \q_{i} \times \q'$ with $\q_{i} \in \Q{GL(n_i)}$ and $\q' \in \Q{G'}$. For $\bar{\e} \in \D{\S{\q}}$, let $\r_{W}(\q, \bar{\e})$ be the direct sum of the preimages of $\bar{\e}$ under \eqref{eq: Whittaker parametrization G}.

We define $\Pkt{\q}^{\Sigma_{0}}$ to be the set of irreducible representations of $G^{\Sigma_{0}}(F)$, whose restriction to $G(F)$ belong to $\cPkt{\q}$. If $\S{\q}^{\Sigma_{0}} \neq \S{\q}$, then $\r^{\theta_{0}} \cong \r$ for any irreducible constituent $[\r]$ in $\r_{W}(\q, \bar{\e})$ (cf. \cite[Section 8]{Xu:Apacket}). So we can define
\begin{align}
\label{eq: Whittaker parametrization full orthogonal}
\Pkt{\q}^{\Sigma_0} \longrightarrow \D{\S{\q}^{\Sigma_0}}, \quad \r^{\Sigma_0} \mapsto \langle \cdot, \r^{\Sigma_0} \rangle_{W}
\end{align}
so that the following properties are satisfied. For $\e \in \D{\S{\q}^{\Sigma_0}}$, let $\r^{\Sigma_0}_{W}(\q, \e)$ be the direct sum of the preimages of $\e$ under \eqref{eq: Whittaker parametrization full orthogonal} and $\bar{\e} = \e|_{\S{\q}}$, then
\begin{itemize}

\item $[\r^{\Sigma_{0}}_{W}(\q, \e)|_{G(F)}] = 2 \r_{W}(\q , \bar{\e})$ if $G$ is special even orthogonal and $\S{\q}^{\Sigma_{0}} = \S{\q}$, or $\r_{W}(\q ,\bar{\e})$ otherwise,

\item for any semisimple $s \in \cS{\q}^{\Sigma_{0}}$ but not in $\cS{\q}$ and $(H, \q_{H}) \rightarrow (\q, s)$, the following identity holds

\begin{align}
\label{eq: twisted character relation classical}
f^{H}_{W}(\q_{H}) = \sum_{[\r] \in \cPkt{\q}} \langle ss_{\q}, \r^{\Sigma_0} \rangle_{W} \, f_{G}(\r^{\Sigma_0}), \,\,\,\,\,\,\,\,\,\,\, f \in C^{\infty}_{c}(G(F) \rtimes \theta_{0}),
\end{align}
where $\r^{\Sigma_0}$ is any preimage of $[\r]$.
\end{itemize}
Recall that there is an exact sequence

\begin{align}
\label{eq: local twisted endoscopic sequence}
\xymatrix{1 \ar[r] &  \S{\lq}^{\Sigma_0} \ar[r]^{\iota} & \S{\q}^{\Sigma_0} \ar[r]^{\a  \quad \quad \quad \quad}  & \Hom(\lG(F)/G(F), \C^{\times})}.               
\end{align}

\begin{lemma}
\label{lemma: conjugation} \,
\begin{enumerate} 
\item $\r_W(\q, \bar{\e}_1), \r_W(\q, \bar{\e}_2)$ are conjugate under $\lG(F)$ if and only if $\bar{\e}_1 / \bar{\e}_2$ is trivial on $\S{\lq}$. 
\item $\r^{\Sigma_0}_W(\q, \e_1), \r^{\Sigma_0}_W(\q, \e_2)$ are conjugate under $\lG^{\Sigma_0}(F)$ if and only if $\e_1 / \e_2$ is trivial on $\S{\lq}^{\Sigma_0}$. 
\end{enumerate}
\end{lemma}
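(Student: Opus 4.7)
The plan is to reduce both parts to the commutative diagram of Whittaker parametrizations
\[
\xymatrix{
\cPkt{\lq} \ar[r] \ar[d] & \D{\S{\lq}} \\
\cPkt{\q} \ar[r] & \D{\S{\q}} \ar[u]
}
\]
combined with the restriction identity $\bigoplus_{[\lr] \in \cPkt{\lq}} [\lr|_{G(F)}] = \bigoplus_{[\r] \in \cPkt{\q}} [\r]$ from the main theorem. Because $G \triangleleft \lG$ and each $\lr|_{G(F)}$ is a single $\lG(F)$-orbit of irreducibles, this identifies the $\lG(F)$-orbits on $\cPkt{\q}$ with the fibers of the restriction map $\D{\S{\q}} \to \D{\S{\lq}}$, and in particular every constituent of $\r_{W}(\q,\bar{\e})$ lies in a common $\lG(F)$-orbit whose image in $\D{\S{\lq}}$ is $\bar{\e}|_{\S{\lq}}$.

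Given this, the forward direction of (1) is immediate: if a constituent of $\r_{W}(\q,\bar{\e}_1)$ is $\lG(F)$-conjugate to one of $\r_{W}(\q,\bar{\e}_2)$, then both lie in the same fiber, so $\bar{\e}_1|_{\S{\lq}} = \bar{\e}_2|_{\S{\lq}}$. For the reverse direction, $\bar{\e}_1/\bar{\e}_2$ defines a character of $\S{\q}/\S{\lq}$, which via the non-$\Sigma_0$ analog of \eqref{eq: local twisted endoscopic sequence} together with Lemma~\ref{lemma: twist invariant} identifies with an element of $\D{X(\lq)}$. Pontryagin duality on the finite abelian group $\lG(F)/G(F)\lZ(F)$, whose character group is $X$, yields a surjection $\lG(F)/G(F)\lZ(F) \twoheadrightarrow \D{X(\lq)}$, so some $g \in \lG(F)$ realizes the character $s \mapsto \a(s)(g) = (\bar{\e}_1/\bar{\e}_2)(s)$. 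The Arthur-packet analog of \cite[Corollary 4.2]{Xu:2018}---encoded in our Whittaker normalization together with the commuting diagram above---then says that conjugation by this $g$ acts on the Whittaker parameter by multiplication with $s \mapsto \a(s)(g)$. Consequently conjugation by $g$ permutes the preimages of $\bar{\e}_1$ bijectively onto the preimages of $\bar{\e}_2$, giving $\r_{W}(\q,\bar{\e}_1)^{g} \cong \r_{W}(\q,\bar{\e}_2)$.

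Part (2) proceeds in parallel: replace $\S{\lq}, \S{\q}, \r_{W}(\q, \bar{\e}_i)$ by their $\Sigma_0$-variants and use \eqref{eq: local twisted endoscopic sequence} together with the corresponding parametrization diagram for $\Pkt{\q}^{\Sigma_0}$ and $\Pkt{\lq}^{\Sigma_0}$. The additional action of $\theta_0$ inside $\lG^{\Sigma_0}(F)$ is already absorbed into the enlarged centralizer $\S{\q}^{\Sigma_0}$ (so that conjugation by $\theta_0$ appears as the $\D{\theta}_0$-component of $\S{\q}^{\Sigma_0}$), hence the same reasoning applies verbatim after systematically adding the superscript $\Sigma_0$.

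The main obstacle is the compatibility claim invoked in the reverse direction---that $\lG(F)$-conjugation acts on $\D{\S{\q}}$ (resp. $\D{\S{\q}^{\Sigma_0}}$) by multiplication with $s \mapsto \a(s)(g)$. This is the extension to Arthur packets of the corresponding $L$-packet fact in \cite{Xu:2018}; I expect to derive it either formally from Clifford theory together with the twisted character identities \eqref{eq: character relation G} and \eqref{eq: twisted character relation classical}, or by transporting the argument for $L$-packets through the commutative diagram of Whittaker parametrizations for $(G,\lG)$ and $(\q,\lq)$.
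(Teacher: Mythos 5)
Your proposed main route is circular. The commutative diagram relating $\cPkt{\lq}$ to $\D{\S{\lq}}$ and the restriction identity $\bigoplus_{[\lr]\in\cPkt{\lq}}[\lr|_{G(F)}]=\bigoplus_{[\r]\in\cPkt{\q}}[\r]$ are, respectively, Corollary~\ref{cor: Whittaker pairing} and Theorem~\ref{thm: Apacket}(1), both of which come \emph{after} Lemma~\ref{lemma: conjugation}. Corollary~\ref{cor: Whittaker pairing} explicitly invokes this lemma to show the Whittaker pairing for $\lG$ is well-defined, and Theorem~\ref{thm: Apacket} is the main theorem whose proof occupies the rest of the paper. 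At the point where the lemma is stated, the only parametrization available is Arthur's $\cPkt{\q}\to\D{\S{\q}}$ and its $\Sigma_0$-variant; there is as yet no packet $\cPkt{\lq}$, no pairing $\cPkt{\lq}\to\D{\S{\lq}}$, and no restriction identity to appeal to. So the framework you want to ``reduce to'' is precisely what this lemma helps to build, and cannot be assumed.

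The compatibility you flag at the end as the ``main obstacle''---that conjugation by $g\in\lG(F)$ acts on the Whittaker parameter by multiplication with $x\mapsto\a(x)(g)$---is not a subsidiary fact to be checked later; it \emph{is} the whole content of the lemma, and your first alternative (derive it from the character identities) is the correct self-contained route, and is indeed what the paper does. Concretely: apply \eqref{eq: character relation G} to $f$ and to $f^{g}$, noting that $f^{H}_W(\q_H)$ is unchanged since $\x_x(g)\,f^{H}=(f^{g})^{H}$, and invert over $\D{\S{\q}}$ to get
\[
\r_W(\q,\bar{\e})^{g}=\r_W\big(\q,\,\omega_{(\cdot)}(g)\,\bar{\e}\big), \qquad \omega_x(g):=\a(x)(g),
\]
after observing $s_{\q}=s_{\lq}$ so the $s_{\q}$-twist does not interfere (cf.~\cite[Lemma~3.13]{Xu:2016}). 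The ``only if'' direction of (1) is then immediate, and the ``if'' direction follows because $\a$ is injective on $\S{\q}/\S{\lq}$ (by \eqref{eq: local twisted endoscopic sequence}), so Pontryagin duality on $\lG(F)/G(F)$ makes $g\mapsto\omega_{(\cdot)}(g)$ surjective onto ${\rm Hom}(\S{\q}/\S{\lq},\Two)$. For (2), when $\S{\q}^{\Sigma_0}=\S{\q}$ this already suffices; when $\S{\q}^{\Sigma_0}\neq\S{\q}$ the same manipulation applied to \eqref{eq: twisted character relation classical}, combined with the identity just obtained for $\S{\q}$, gives the $\Sigma_0$-version of the conjugation formula, and the argument concludes as before with $\S{\q}^{\Sigma_0}/\S{\lq}^{\Sigma_0}$ in place of $\S{\q}/\S{\lq}$. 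You should make this your primary line of argument rather than a fallback.
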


\begin{proof}
Since $f_{W}(\q)$ is stable, then $\cPkt{\q}$ is stable under $\lG(F)$-conjugation. Moreover, the character relation \eqref{eq: character relation G} implies that for any $g \in \lG(F)$ and $x \in \S{\q}$,
\[
\sum_{\bar{\e} \in \widehat{\S{\q}}} \bar{\e}(s_{\q} x) \, \r_W(\q, \bar{\e})^{g^{-1}} = \sum_{\bar{\e} \in \widehat{\S{\q}}} \omega_x(g) \bar{\e}(s_{\q}x) \, \r_W(\q, \bar{\e}),
\]
where $\omega_x(g) = \alpha(x)(g)$ (cf. \cite[Lemma 3.13]{Xu:2016}). Note $s_{\q} = s_{\lq}$, so $\omega_{s_{\q}}(\cdot) = 1$. It follows that
\begin{align}
\label{eq: conjugate}
\r_W(\q, \bar{\e})^{g} = \r_W(\q, \omega_{(\cdot)}(g) \, \bar{\e}).
\end{align}
Since $\alpha$ induces an injection $\S{\q}/\S{\lq} \hookrightarrow {\rm Hom}(\lG(F)/G(F), \mathbb{C}^{\times})$, then
\[
\lG(F)/G(F) \rightarrow {\rm Hom}(\S{\q}/\S{\lq}, \, \Two), \quad g \mapsto \omega_{(\cdot)}(g)
\]
is surjective. Part (1) follows from this. Part (2) also follows when $\S{\q}^{\Sigma_0} = \S{\q}$.

Now let us suppose $\S{\q}^{\Sigma_0} \neq \S{\q}$, then $\r^{\theta_0} \cong \r$ for any irreducible constituent $[\r]$ in $\r_{W}(\q, \bar{\e})$. The character relation \eqref{eq: twisted character relation classical} implies that for any $g \in \lG(F)$ and $x \in \S{\q}^{\theta_0}$,
\[
\sum_{\bar{\e} \in \widehat{\S{\q}}} \e(s_{\q} x) \, f_{G}(\r^{\Sigma_0}_W(\q, \e)^{g^{-1}}) = \sum_{\bar{\e} \in \widehat{\S{\q}}} \omega_{x}(g) \e(s_{\q}x) \, f_{G}(\r^{\Sigma_0}_W(\q, \e)), \quad f \in C^{\infty}_{c}(G(F) \rtimes \theta_0).
\] 
Combining with \eqref{eq: conjugate}, we get
\begin{align}
\label{eq: conjugate full orthogonal}
\r^{\Sigma_0}_W(\q, \e)^{g} = \r^{\Sigma_0}_W(\q, \omega_{(\cdot)}(g) \, \e).
\end{align}
Since $\alpha$ induces an injection $\S{\q}^{\Sigma_0}/\S{\lq}^{\Sigma_0} \hookrightarrow {\rm Hom}(\lG(F)/G(F), \mathbb{C}^{\times})$, then
\[
\lG(F)/G(F) \rightarrow {\rm Hom}(\S{\q}^{\Sigma_0}/\S{\lq}^{\Sigma_0}, \, \Two), \quad g \mapsto \omega_{(\cdot)}(g)
\]
is surjective. Part (2) follows from this.
\end{proof}

Let $\tilde{\zeta}$ be a character of $\lZ(F)$ whose restriction to $\Z(F)$ is the central character of $\cPkt{\q}$. Let $\clPkt{\q, \tilde{\zeta}}$ (resp. $\lPkt{\q, \tilde{\zeta}}^{\Sigma_0}$) be the subset of $\cPkt{}(\lG(F))$ (resp. $\Pkt{}(\lG^{\Sigma_0}(F))$) with central character $\tilde{\zeta}$, such that the restrictions to $G(F)$ (resp, $G^{\Sigma_0}(F)$) belong to $\cPkt{\q}$ (resp. $\Pkt{\q}^{\Sigma_0}$). 

\begin{corollary}
\label{cor: Whittaker pairing}
There exist unique maps
\begin{align}
\label{eq: Whittaker pairing}
\clPkt{\q, \tilde{\zeta}} \rightarrow \D{\S{\lq}} \quad \text{ and } \quad \lPkt{\q, \tilde{\zeta}}^{\Sigma_0} \rightarrow \D{\S{\lq}^{\Sigma_0}}
\end{align}
such that 
\[
\xymatrix{\clPkt{\q, \tilde{\zeta}} \ar[r] \ar[d] & \D{\S{\lq}} \\
\cPkt{\q} \ar[r] & \D{\S{\q}} \ar[u]
} \quad 
\text{ and } \quad \xymatrix{\lPkt{\q, \tilde{\zeta}}^{\Sigma_0} \ar[r] \ar[d] & \D{\S{\lq}^{\Sigma_0}} \\
\Pkt{\q}^{\Sigma_0} \ar[r] & \D{\S{\q}^{\Sigma_0}} \ar[u]
} 
\]
commute respectively.

\end{corollary}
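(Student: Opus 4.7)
The plan is to define the two maps explicitly by composing the given parametrizations on $G$ with the injection $\iota$, and then to reduce the problem to checking that this composition does not depend on auxiliary choices. Uniqueness is immediate from the commutativity of the two diagrams: if $[\lr] \in \clPkt{\q,\tilde{\zeta}}$ maps down to $[\r] \in \cPkt{\q}$ and then right to $\bar{\e}_{\r} := \langle\cdot,\r\rangle_W \in \D{\S{\q}}$, the square forces its image in $\D{\S{\lq}}$ to be $\bar{\e}_{\r}\circ\iota$, and similarly on the $\Sigma_0$-side. So the whole content is existence, namely the well-definedness of these composites.

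For the left-hand diagram, given $[\lr] \in \clPkt{\q,\tilde{\zeta}}$, I would pick any irreducible constituent $\r$ of $\lr|_{G(F)}$ and set the image of $[\lr]$ to be $\bar{\e}_{\r}\circ\iota$. The only ambiguity is the choice of $\r$; since $[\lr]$ is a $\theta_0$-orbit and $\lr|_{G(F)}$ decomposes into an $\lG(F)$-orbit of irreducibles, any two such choices give $[\r_1]$ and $[\r_2]$ in $\cPkt{\q}$ that are $\lG^{\Sigma_0}(F)$-conjugate; absorbing $\theta_0$ into the $\cPkt{\q}$-equivalence reduces to $\lG(F)$-conjugation. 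Lemma~\ref{lemma: conjugation}(1) then says $\bar{\e}_{\r_1}/\bar{\e}_{\r_2}$ is trivial on $\S{\lq}$, which by the exact sequence \eqref{eq: local twisted endoscopic sequence} is precisely the kernel of $\alpha$. Pre-composing with $\iota$ therefore kills this ambiguity, so $\bar{\e}_{\r}\circ\iota \in \D{\S{\lq}}$ is well-defined.

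The right-hand diagram is treated in exactly the same way, using Lemma~\ref{lemma: conjugation}(2) in place of (1). Given $\lr^{\Sigma_0} \in \lPkt{\q,\tilde{\zeta}}^{\Sigma_0}$, choose an irreducible summand $\r^{\Sigma_0}$ of $\lr^{\Sigma_0}|_{G^{\Sigma_0}(F)}$ and take $\e_{\r^{\Sigma_0}}\circ\iota \in \D{\S{\lq}^{\Sigma_0}}$. Two such summands are $\lG^{\Sigma_0}(F)$-conjugate, and by Lemma~\ref{lemma: conjugation}(2) the ratio of their characters on $\S{\q}^{\Sigma_0}$ has the form $\alpha(\cdot)(g)$ and hence is trivial on $\S{\lq}^{\Sigma_0} = \iota(\S{\lq}^{\Sigma_0})$.

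The main (and essentially only) obstacle is the well-definedness step; once one recognizes that the ambiguity of lifting $[\lr]$ to $[\r]$ (respectively of choosing an irreducible summand of $\lr^{\Sigma_0}|_{G^{\Sigma_0}(F)}$) is governed by $\lG(F)$- or $\lG^{\Sigma_0}(F)$-conjugation, Lemma~\ref{lemma: conjugation} together with the exactness of \eqref{eq: local twisted endoscopic sequence} delivers the argument directly. No additional endoscopic input or character computation is needed beyond what is already invoked in the proof of Lemma~\ref{lemma: conjugation}.
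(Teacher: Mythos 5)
Your argument is correct and follows the same route as the paper: choose an irreducible constituent in the restriction, define the character by restriction to $\S{\lq}$ (resp.\ $\S{\lq}^{\Sigma_0}$), and invoke Lemma~\ref{lemma: conjugation} together with the exact sequence \eqref{eq: local twisted endoscopic sequence} to show independence of the choice. The only cosmetic difference is that the paper proves the $\Sigma_0$-version first and then deduces the other by restriction, whereas you treat both cases directly; the substance is identical.
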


\begin{proof}
For $\lr^{\Sigma_0} \in \lPkt{\q, \tilde{\zeta}}^{\Sigma_0}$, we choose $\r^{\Sigma_0}$ in the restriction of $\lr^{\Sigma_0}$. Then we define 
\[
\langle \cdot , \lr^{\Sigma_0} \rangle_{W} : = \langle \cdot , \r^{\Sigma_0} \rangle_{W} |_{\S{\lq}^{\Sigma_0}}.
\] 
By Lemma~\ref{lemma: conjugation}, we see that $\langle \cdot , \lr^{\Sigma_0} \rangle_{W}$ is independent of the choice of $\r^{\Sigma_0}$. The commutativity of the diagram is clear from our definition. The uniqueness is also clear. By restriction, we can deduce the other case.
\end{proof}

\begin{lemma}
\label{lemma: twist}
For $[\r] \in \cPkt{\q}$, 
\(
X(\lq) \subseteq X(\lp_{\r})
\)
and $X^{\Sigma_0}(\lq) \subseteq X^{\Sigma_0}(\lp_{\r})$. Moreover for any $\x \in \bar{H}^{1}(W_{F}, \mathbb{C}^{\times})$,
\begin{align}
\label{eq: twist}
\lq^{\theta_0} = \lq \otimes \x \Rightarrow \lp^{\theta_0}_{\r} = \lp_{\r} \otimes \x.
\end{align}
\end{lemma}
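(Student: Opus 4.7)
The plan is to reduce the claims about twist-stabilizers of Arthur parameters to corresponding claims about the representation $\lr \in \cPkt{\lq}$ lifting $\r$, by applying the immediately preceding lemma to the $L$-parameter $\lp_\r$: that gives $X(\lr) = X(\lp_\r)$ and $X(\lr^{\Sigma_0}) = X^{\Sigma_0}(\lp_\r)$. Under this reduction, both inclusions amount to showing that $\omega$-twist preserves the appropriate representative (or class) of the lift.

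First, I would fix a representative $\lr$ of the class $[\lr] \in \cPkt{\lq}$ lifting $[\r] \in \cPkt{\q}$, whose existence and uniqueness come from Theorem~\ref{thm: Apacket}. For $\omega \in X(\lq)$, the strict equality $\lq \otimes \omega = \lq$ forces $\cPkt{\lq \otimes \omega} = \cPkt{\lq}$ inside $\clPkt{\q,\tilde\zeta}$. Since $\omega \in X$ is trivial on both $G(F)$ and $\lZ(F)$, the twist $\lr \otimes \omega$ remains in $\cPkt{\lq}$, retains central character $\tilde\zeta$, and restricts to the same $\r$ on $G(F)$. The uniqueness of the lift then forces $\lr \otimes \omega \cong \lr$, giving $\omega \in X(\lr) = X(\lp_\r)$. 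Running the same argument at the coarser $\Sigma_0$-class level---using $[\lq \otimes \omega] = [\lq]$ for $\omega \in X^{\Sigma_0}(\lq)$ and the uniqueness of the lifted class $[\lr]$---yields the second inclusion $X^{\Sigma_0}(\lq) \subseteq X^{\Sigma_0}(\lp_\r)$.

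For the final implication $\lq^{\theta_0} = \lq \otimes \omega \Rightarrow \lp_\r^{\theta_0} = \lp_\r \otimes \omega$, the strict equality on parameters identifies the packets $\cPkt{\lq}^{\theta_0} = \cPkt{\lq^{\theta_0}} = \cPkt{\lq \otimes \omega} = \cPkt{\lq} \otimes \omega$, so $\lr^{\theta_0}$ must equal $\lr_{\ast} \otimes \omega$ for some $\lr_{\ast} \in \cPkt{\lq}$. Matching restrictions to $G(F)$ forces $\lr_{\ast}$ to be the lift of $\r^{\theta_0}$; invoking the $\theta_0$-equivariance of the packet construction, $\lr_{\ast}$ can be identified with $\lr^{\theta_0}$ for a coherent choice of representatives, giving $\lr^{\theta_0} \cong \lr \otimes \omega$ and hence $\lp_\r^{\theta_0} = \lp_\r \otimes \omega$ after passing to $L$-parameters.

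The main obstacle I expect is promoting these uniqueness arguments from the level of $\theta_0$-classes (where $\cPkt{\lq}$ is a priori defined) to the level of specific representations, which is essential in the first inclusion: the conclusion $\omega \in X(\lp_\r)$ requires the honest equality $\lr \otimes \omega \cong \lr$, not merely $[\lr \otimes \omega] = [\lr]$. This forces a careful bookkeeping of representatives within each class, making essential use of the fact that restriction to $G(F)$ separates $\lr$ from $\lr^{\theta_0}$ whenever $\r \ncong \r^{\theta_0}$, together with a $\theta_0$-compatible choice in the degenerate case; the same subtlety underlies the last implication, where the distinction between $\lp_\r$ and $\lp_\r^{\theta_0}$ must be tracked throughout.
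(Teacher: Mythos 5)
Your proposal is circular and therefore does not work as stated. Lemma~\ref{lemma: twist} is used in the proof of Theorem~\ref{thm: special case}, which is a building block of Theorem~\ref{thm: Apacket}; so you cannot invoke Theorem~\ref{thm: Apacket} (in particular the existence, uniqueness, and $X(\lq)$-stability of $\cPkt{\lq}$ and of the lift $[\lr]$) when proving this lemma. The lemma must be established with only the data available at this point: the Arthur packet $\cPkt{\q}$ for the classical group $G$ together with its endoscopic character relations, the set $\clPkt{\q,\tilde\zeta}$, and the pairing of Corollary~\ref{cor: Whittaker pairing}.

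Even if the circularity were set aside, there is a second gap in your first inclusion. From $\lq \otimes \omega = \lq$ you try to conclude $\lr \otimes \omega \cong \lr$, using uniqueness of the lift inside $\cPkt{\lq}$. But $\cPkt{\lq}$ is only determined up to twist by the full group $X$, and $\cPkt{\lq}\otimes\omega$ is an equally valid candidate; the theorem's uniqueness only gives $\cPkt{\lq}\otimes\omega = \cPkt{\lq}\otimes\omega''$ for some $\omega''\in X$, not that $\lr\otimes\omega$ lies in the specific $\cPkt{\lq}$ you chose. To close this step you would need precisely the sort of canonical ``stability under $X(\lq)$-twist'' that the lemma is trying to establish.

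The paper's proof avoids both problems by arguing entirely inside the classical-group world. It compares stabilizers under $\lG(F)$-conjugation: the refinement of $L$-packets (\cite[Theorem 8.4.1]{Arthur:2013}) gives $\lG(\r)=\lG([\r])$, and the endoscopic character relation for $\cPkt{\q}$ (via \eqref{eq: conjugate}) shows that $\lG(\r_W(\q,\e))=\bigcap_{\omega\in\alpha(\S{\q})}\Ker\omega$. Dualizing the inclusion $\lG([\r])\subseteq\lG(\r_W(\q,\e))$ gives $X(\lp_\r)=X(\lr)\supseteq\alpha(\S{\q})=X(\lq)$. The $\theta_0$-twisted implication \eqref{eq: twist} then comes from the twisted character relation \eqref{eq: conjugate full orthogonal}, again relying only on what is known for $G$ (cf. \cite[Proposition 6.16]{Xu:2016}), and $X^{\Sigma_0}(\lq)\subseteq X^{\Sigma_0}(\lp_\r)$ follows formally. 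If you want to rewrite your argument, the key substitution is to replace ``uniqueness of the lift in $\cPkt{\lq}$'' with this stabilizer computation, which is available before the similitude-group Arthur packet is constructed.
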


\begin{proof}
Let $\lG(\r)$ (resp. $\lG([\r])$) be the stabilizer of $\r$ (resp. $[\r]$) in $\lG(F)$ under the conjugation action. By the refinement of $L$-packets in the case of even orthogonal groups \cite[Theorem 8.4.1]{Arthur:2013}, we have $\lG(\r) = \lG([\r])$. Since 
\[
\lG([\r]) \subseteq \lG(\r_{W}(\q, \e)) = \bigcap_{\omega \in \a(\S{\q})} \, {\rm Ker} \, \omega,
\]
then
\[
X(\lp_{\r}) = X(\lr) = (\lG/\lG(\r))^{*} \supseteq (\lG/\lG(\r_{W}(\q, \e)))^{*} = \a(\S{\q}) = X(\lq).
\]
This also settles \eqref{eq: twist} in the case when $\theta_0 = id$. For the remaining cases, it suffices to assume $\S{\q}^{\Sigma_0} \neq \S{\q}$. Then $\r^{\theta_0} \cong \r$. Suppose $\lq^{\theta_0} = \lq \otimes \x$, there exists $x_0 \in \S{\q}^{\theta_0}$ such that $\x = \alpha(x_0) = \omega_{x_{0}}$. From \eqref{eq: conjugate full orthogonal}, we can deduce that for $g \in \lG(\r)$ and $f \in C^{\infty}_{c}(G(F) \rtimes \theta_0)$,
\[
\langle x_{0}, (\r^{\Sigma_0})^{g} \rangle_{W}\, f_{G}(\r^{\Sigma_0}) =   \x_{x_{0}}(g)\langle x_{0}, \r^{\Sigma_0}\rangle_{W} \, f_{G}(\r^{\Sigma_0}) =  \x_{x_{0}}(g) \langle x_{0}, (\r^{\Sigma_0})^{g} \rangle_{W} \, f_{G}((\r^{\Sigma_0})^{g}).
\]
Hence
\[
f_{G}((\r^{\Sigma_0})^{g}) = \x_{x_{0}}(g)^{-1} f_{G}(\r^{\Sigma_0}).
\]
From this we can conclude that
\(
\lr^{\theta_0} \cong \lr \otimes \omega_{x_0}
\)
as in the proof of \cite[Proposition 6.16]{Xu:2016}. This completes the proof of \eqref{eq: twist}. As a consequence, we get $X^{\Sigma_0}(\lq) \subseteq X^{\Sigma_0}(\lp_{\r})$.
\end{proof}




Let $\lif{Z}_{F}$ be a closed subgroup of $Z_{\lG}(F)$ such that $\c(\lif{Z}_{F})$ has finite index in $F^{\times}$ and $Z_{F} = \lif{Z}_{F} \cap G(F)$. Let $\tilde{\chi} = \tilde{\zeta}|_{\lif{Z}_{F}}$ and $\chi = \tilde{\chi}|_{Z_{F}}$. Define $\sH(\lG(F), \tilde{\chi})$ (resp. $\sH(G(F), \chi)$) to be the space $\tilde{\chi}^{-1}$ (resp. $\chi^{-1}$)-equivariant $\theta_0$-invariant smooth compactly supported functions on $\lG(F)$ (resp. $G(F)$).

\begin{theorem}
\label{thm: Apacket}

For $[\q] \in \cQ{G}$, there exists a subset $\cPkt{\lq}$ of $\clPkt{\q, \tilde{\zeta}}$ unique up to twisting by $X$, such that the following properties are satisfied.

\begin{enumerate}

\item 
For $\tilde{\bar{\e}} \in \D{\S{\lq}}$, let $\lr_{W}(\lq, \tilde{\bar{\e}})$ be the direct sum of the preimages of $\tilde{\bar{\e}}$ in $\cPkt{\lq}$ under \eqref{eq: Whittaker pairing}, then 
\begin{align}
\label{eq: restriction 1}
\lr_{W}(\lq, \tilde{\bar{\e}})|_{G(F)} = \bigoplus_{\bar{\e} \in \D{\S{\q}} : \, \bar{\e}|_{\S{\lq}} = \tilde{\bar{\e}}} \, \r_{W}(\q, \bar{\e}).
\end{align}

\item
\[
\lf_{W}(\lq) := \sum_{[\lr] \in \cPkt{\lq}} \langle s_{\lq}, \lr \rangle_{W} \, \lf_{\lG}(\lr), \quad \lf \in \sH(\lG(F), \tilde{\chi})
\]
is stable.

\item Suppose $\theta \in \Sigma_0$, semisimple $s \in \cS{\q}^{\theta}$ with $\x := \a(s)$ and $(H, \q_{H}) \rightarrow (\q, s)$, fix $\cPkt{\lq_{H}} := \otimes_{i} \Pkt{\q_{i}} \otimes \cPkt{\lq'}$ for $\q_{H} = \prod_{i} \q_{i} \times \q'$ with $\q_{i} \in \Q{GL(n_i)}$ and $\q' \in \Q{G'}$, then we can choose $\cPkt{\lq}$ such that
\begin{align}
\label{eq: theta twisted character relation}
\lf^{\tilde{H}}_{W}(\lq_{H}) = \sum_{[\lr] \in \cPkt{\lq}} \lf_{\lG^{\theta}, W}(\lr, \x), \,\,\,\,\,\, \lf \in \sH(\lG(F), \tilde{\chi})  
\end{align}
where $\lf_{\lG^{\theta}, W}(\lr, \x) = tr (\lr(\lf) \circ A_{\lr}(\theta, \x))$, and $A_{\lr}(\theta, \x)$ is an intertwining operator between $\lr \otimes \x$ and $\lr^{\theta}$, which is normalized in a way so that if $f$ is the restriction of $\lf$ on $G(F)$, then 
\begin{align}
\label{eq: theta twisted intertwining operator}
(\lf|_{\lif{Z}_{F}G(F)})_{\lG^{\theta}, W}(\lr, \x) = \sum_{\r \subseteq \lr|_{G(F)}} \langle s_{\q}s, \r^+ \rangle_{W} \, f_{G^{\theta}}(\r)  
\end{align}
where $\r^{+}$ is an extension of $\r$ to $G^{+}(F) = G(F) \rtimes \langle \theta \rangle$ and $f_{G^{\theta}}(\r) = tr(\r(f) \circ \r^{+}(\theta))$.

\end{enumerate}
\end{theorem}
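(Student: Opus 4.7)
The plan is to construct, for each $[\q] \in \cQ{G}$, a coherent lift $\cPkt{\lq} \subseteq \clPkt{\q, \tilde{\zeta}}$ by first pinning down the $\Sigma_0$-infinitesimal character $[\tilde{\lambda}]$ of its elements. Fixing a lift $\lq \in \Q{\lG}$ of $\q$ (so $\tilde{\lambda} = \lambda_{\lq}$), the fibre of $\clPkt{\q, \tilde{\zeta}} \to \cPkt{\q}$ over $[\r]$ is a torsor under $X/X([\lr])$, and the guiding principle from Conjecture~\ref{conj: ABV} is that only those lifts with $\Sigma_0$-infinitesimal character $[\tilde{\lambda}]$ should lie in $\cPkt{\lq}$. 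Property (1), together with Lemma~\ref{lemma: twist} and Corollary~\ref{cor: Whittaker pairing}, then forces the Whittaker parametrization $[\lr] \mapsto \langle \cdot, \lr \rangle_W$ on the lifted packet.

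I would first handle the \emph{elementary case}, in which $X^{\Sigma_0}(\tilde{\lambda}) = X^{\Sigma_0}(\lp_{\r})$ for every $[\r] \in \cPkt{\q}$; equivalently, $[\tilde{\lambda}]$ selects the lift of each $[\p_{\r}]$ uniquely. Here each $[\r]$ admits a unique extension $[\lr]$ of $\Sigma_0$-infinitesimal character $[\tilde{\lambda}]$ with central character $\tilde{\zeta}$, so $\cPkt{\lq}$ is forced. Stability (2) and the twisted endoscopic relation (3) can then be reduced to the tempered case via the Langlands quotient construction and Proposition~\ref{prop: induction preserves infinitesimal character}, using that (twisted) endoscopic transfer commutes with normalized parabolic induction; the tempered case is settled in \cite[Theorem 4.6]{Xu:2018}.

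For the general case I would follow M{\oe}glin's inductive strategy \cite{Moeglin:2009}: any $\q \in \Q{G}$ is reached from a discrete parameter via a chain of reductions $(\rho, A+T, B+T, \zeta) \mapsto (\rho, A, B, \zeta)$ which on representations correspond to the functors $\Jac_{(\rho, A+T, B+T, \zeta) \mapsto (\rho, A, B, \zeta)}$, interleaved with parabolic inductions from Speh-type segments on Levis of the form $GL(d_\rho)^{s} \times \lG_{-}$. Since both the Jacquet functor and parabolic induction preserve the $\Sigma_0$-infinitesimal character (Corollary~\ref{cor: Jac preserves infinitesimal character} and Proposition~\ref{prop: induction preserves infinitesimal character}), I can lift the chain to $\lG$: starting from the discrete packet $\cPkt{\lq_d}$ on a smaller similitude group, already constructed in \cite{Xu:2018}, I apply the lifted Jacquet and induction operations inside $\overline{\Rep}(\lG(F))$ to produce $\cPkt{\lq}$ together with its Whittaker pairing. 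The identities (2) and (3) propagate through these operations via the compatibility of (twisted) endoscopic transfer with Jacquet modules \cite[Appendix C]{\nolinkurl{Xu:cusp}} and with parabolic induction.

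The hardest step will be controlling the twisted character identity (3) — and in particular the normalization of the intertwining operator $A_{\lr}(\theta, \x)$ in \eqref{eq: theta twisted intertwining operator} — as one runs through the Jacquet/induction chain, because the $\lG$-lifts are only determined up to $X$-twists and $A_{\lr}(\theta,\x)$ has a sign ambiguity at each step. The key device for this is Corollary~\ref{cor: determination}: an element of $K\Rep(\lG^{\Sigma_0}(F), \omega)$ is determined by its image in $K\Rep(G^{\Sigma_0}(F))$ together with its twisted character, so property (1) (which fixes the restriction via $\cPkt{\q}$) and \eqref{eq: theta twisted intertwining operator} (which fixes the normalization of $A_{\lr}(\theta,\x)$ against the known extension $\r^{+}$) together pin down both the packet and the intertwining operator inductively. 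Uniqueness up to $X$-twisting, addressed in Section~\ref{sec: uniqueness}, then follows by transporting the ambiguity introduced at the discrete base case through the chain of Jacquet/induction steps, exactly matching the $X/X(\lq)$ freedom in lifting $[\r]$ to $[\lr]$ across $\clPkt{\q, \tilde{\zeta}} \to \cPkt{\q}$.
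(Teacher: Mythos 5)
Your outline captures the broad strategy correctly — pin down lifts via $\Sigma_0$-infinitesimal character in the case $X^{\Sigma_0}(\tilde{\lambda}) = X^{\Sigma_0}(\lq)$ (Theorem~\ref{thm: special case}), then follow M{\oe}glin's reduction — and your use of Corollary~\ref{cor: determination} to pin down both the packet and the normalization of $A_{\lr}(\theta,\x)$ simultaneously is precisely the device used in Proposition~\ref{prop: recursive formula for packet}. However, there is a genuine gap: a ``chain of reductions $(\rho, A+T, B+T, \zeta) \mapsto (\rho, A, B, \zeta)$ interleaved with parabolic inductions'' does not by itself connect discrete $L$-packets to general Arthur packets. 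The shift functors $\Jac_{(\rho, A+T, B+T, \zeta) \mapsto (\rho, A, B, \zeta)}$ only pass from a dominating parameter down to $\q$ (Subsection~\ref{subsec: general case}); they leave $\sum_{(\rho,A,B,\zeta)}(A-B)$ unchanged. The step that actually decreases $\sum(A-B)$ is M{\oe}glin's recursive formula \eqref{eq: case 1}--\eqref{eq: case 2}, which a priori defines $\lr^{\Sigma_0}_M(\lq, \e)$ only as a virtual representation, and the bulk of Subsection~\ref{subsec: DDR} (Theorem~\ref{thm: discrete diagonal restriction}, built on Proposition~\ref{prop: existence}, Proposition~\ref{prop: cancellation}, Lemma~\ref{lemma: cancellation of Jacquet module}, and Lemma~\ref{lemma: special case}) is devoted to proving the cancellations that make it an honest representation. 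This requires genuinely new work beyond M{\oe}glin's classical-group case, because a constituent $\lr^{\Sigma_0}$ of the formula can satisfy $X(\lr^{\Sigma_0}) \subsetneq X^{\Sigma_0}(\tilde{\lambda})$: its $\eta_{\rho}$-twist is invisible both to the $\Sigma_0$-infinitesimal character and to the restriction to $G^{\Sigma_0}(F)$, so the cancellation cannot be read off from the classical case by restriction and must be re-established directly.

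Two further points your proposal elides: (i) passing from the elementary case ($A=B$ for all Jordan blocks) down to discrete $L$-packets is done via the generalized Aubert involution of Lemma~\ref{lemma: involution} and \eqref{eq: involution}, which is again not a Jacquet-module or parabolic-induction operation, and extending it to $\lG^{\Sigma_0}$ requires its own argument about which twists survive; (ii) $\lq_d$ lives on $\lG$ itself --- the rank reduction happens only inside the Jacquet steps of the recursive formula, not because the base case sits on a smaller similitude group. As written, your chain simply never reaches the elementary case from a parameter with some $A>B$.
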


Let $\lr_W(\lq, \bar{\e}) := \lr_W(\lq, \tilde{\bar{\e}})$ for $\bar{\e} \in \D{\S{\q}}$, where $\tilde{\bar{\e}} = \bar{\e}|_{\S{\lq}}$. Then
\(
\lr_W(\lq, \bar{\e}_1) = \lr_W(\lq, \bar{\e}_2)
\)
if and only if $\bar{\e}_1/ \bar{\e}_2$ is trivial on $\S{\lq}$. Moreover,
\[
\lr_{W}(\lq, \bar{\e})|_{G(F)} = \bigoplus_{\bar{\e}' \in (\S{\q}/\S{\lq})^{\wedge}} \, \r_{W}(\q, \bar{\e} \bar{\e}').
\]

We define $\Pkt{\lq}^{\Sigma_{0}}$ to be the set of irreducible representations of $\lG^{\Sigma_0}(F)$, whose restrictions to $\lG(F)$ belong to $\cPkt{\lq}$. If $\S{\lq}^{\Sigma_{0}} \neq \S{\lq}$, i.e., $\lq^{\theta_0} = \lq$, then $\lr^{\theta_{0}} \cong \lr$ for any $[\lr] \in \clPkt{\q, \tilde{\zeta}}$ by \eqref{eq: twist}. By Corollary~\ref{cor: Whittaker pairing}, we can define
\begin{align}
\label{eq: Whittaker parametrization disconnected}
\Pkt{\lq}^{\Sigma_0} \longrightarrow \D{\S{\lq}^{\Sigma_0}}, \quad \lr^{\Sigma_0} \mapsto \langle \cdot, \lr^{\Sigma_0} \rangle_{W}.
\end{align}
For $\tilde{\e} \in \D{\S{\lq}^{\Sigma_0}}$, let $\lr^{\Sigma_0}_{W}(\lq, \tilde{\e})$ be the direct sum of the preimages of $\tilde{\e}$ under \eqref{eq: Whittaker parametrization disconnected} and $\tilde{\bar{\e}} = \tilde{\e}|_{\S{\lq}}$, then
\begin{itemize}

\item
$[\lr^{\Sigma_{0}}_{W}(\lq, \tilde{\e})|_{\lG}] = 2 \lr_{W}(\lq , \tilde{\bar{\e}})$ if $G$ is special even orthogonal and $\S{\lq}^{\Sigma_{0}} = \S{\lq}$, or $\lr_{W}(\lq ,\tilde{\bar{\e}})$ otherwise.

\item for any semisimple $s \in \cS{\lq}^{\Sigma_{0}}$ but not in $\cS{\lq}$ and $(H, \q_{H}) \rightarrow (\q, s)$, the following identity holds

\begin{align*}
\lf^{\lif{H}}_{W}(\lq_{H}) = \sum_{[\lr] \in \cPkt{\lq}} \langle ss_{\lq}, \lr^{\Sigma_0} \rangle_{W} \,  \lf_{\lG}(\lr^{\Sigma_0}), \,\,\,\,\,\,\,\,\,\,\, \lf \in C^{\infty}_{c}(\lG(F) \rtimes \theta_{0}).
\end{align*}

\end{itemize}
Let $\lr^{\Sigma_0}_W(\lq, \e) := \lr^{\Sigma_0}_W(\lq, \tilde{\e})$ for $\e \in \D{\S{\q}^{\Sigma_0}}$, where $\tilde{\e} = \e|_{\S{\lq}^{\Sigma_0}}$. Then
\(
\lr^{\Sigma_0}_W(\lq, \e_1) = \lr^{\Sigma_0}_W(\lq, \e_2)
\)
if and only if $\e_1/\e_2$ is trivial on $\S{\lq}^{\Sigma_0}$. Moreover,
\[
\lr^{\Sigma_0}_{W}(\lq, \e)|_{G^{\Sigma_0}} = \bigoplus_{\e' \in (\S{\q}^{\Sigma_0}/\S{\lq}^{\Sigma_0})^{\wedge}} \, \r^{\Sigma_0}_{W}(\q, \e \e').
\]

\subsection{Proving a special case}
\label{subsec: special case}

Suppose $[\q] \in \cQ{G}$ satisifies $X^{\Sigma_0}(\tilde{\lambda}) = X^{\Sigma_0}(\lq)$. Then for any $[\r] \in \cPkt{\q}$, we have $X^{\Sigma_0}(\tilde{\lambda}) = X^{\Sigma_0}(\lp_{\r})$ by Lemma~\ref{lemma: twist}, i.e., there exists a unique $[\lr] \in \cPkt{}(\lG(F))_{\tilde{\lambda}}$ such that $[\r]$ is in the restriction of $[\lr]$ (cf. Section~\ref{sec: infinitesimal character}). So we can define
\[
\cPkt{\lq} := \Big\{[\lr] \in \cPkt{}(\lG(F))_{\tilde{\lambda}} \, : \, [\lr]|_{G(F)} \subseteq \cPkt{\q} \Big\}.
\]

\begin{theorem}
\label{thm: special case}
For $[\q] \in \cQ{G}$ such that $X^{\Sigma_0}(\tilde{\lambda}) = X^{\Sigma_0}(\lq)$, $\cPkt{\lq}$ satisfies (1), (2) in Theorem~\ref{thm: Apacket}. 
\end{theorem}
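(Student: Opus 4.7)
The plan is to verify (1) directly from the hypothesis and Corollary~\ref{cor: Whittaker pairing}, and to deduce (2) by reducing to the stability of $f_W(\q)$ on $G(F)$.

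For (1), the hypothesis $X^{\Sigma_0}(\tilde{\lambda}) = X^{\Sigma_0}(\lq)$, combined with Lemma~\ref{lemma: twist} (which supplies $X^{\Sigma_0}(\lq) \subseteq X^{\Sigma_0}(\lp_{\r})$) and the tautological inclusion $X^{\Sigma_0}(\lp_{\r}) \subseteq X^{\Sigma_0}(\tilde{\lambda})$, forces the equality $X^{\Sigma_0}(\lp_{\r}) = X^{\Sigma_0}(\tilde{\lambda})$ for every $[\r] \in \cPkt{\q}$. By the fiber description at the end of Section~\ref{sec: infinitesimal character}, each $[\r]$ then admits a unique lift $[\lr] \in \cPkt{}(\lG(F))_{\tilde{\lambda}}$, and by the definition this lift lies in $\cPkt{\lq}$. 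The commutative diagram of Corollary~\ref{cor: Whittaker pairing} supplies the compatibility of parametrizations: $[\lr] \mapsto \tilde{\bar{\e}}$ iff every $[\r] \subseteq [\lr]|_{G(F)}$ has Whittaker parameter $\bar{\e}$ satisfying $\bar{\e}|_{\S{\lq}} = \tilde{\bar{\e}}$. Using the multiplicity-freeness of $\lr|_{G(F)}$ (restriction theory) and of Arthur packets (M{\oe}glin), the multi-sets on the two sides of (1) then match.

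For (2), I would first treat $\lf \in \sH(\lG(F), \tilde{\chi})$ supported on $\lif{Z}_F G(F)$. Setting $f := \lf|_{G(F)} \in \sH(G(F), \chi)$, the character identity $\lf_{\lG}(\lr) = \sum_{[\r] \subseteq [\lr]|_{G(F)}} f_{G}(\r)$ (from multiplicity-free restriction together with $\tilde{\chi}$-equivariance) combined with $\langle s_{\lq}, \lr \rangle_{W} = \langle s_{\q}, \r \rangle_{W}$ (from $s_{\lq} = s_{\q}$ and Corollary~\ref{cor: Whittaker pairing}) yields, after summing over $\cPkt{\lq}$ and re-indexing over $\cPkt{\q}$ via the bijection from (1), the equality $\lf_{W}(\lq) = f_{W}(\q)$, which is stable by \cite{Arthur:2013}. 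A general $\lf \in \sH(\lG(F), \tilde{\chi})$ decomposes into a finite sum of pieces supported on cosets of the finite-index subgroup $\lif{Z}_F G(F)$; for a coset $y \lif{Z}_F G(F)$ with $y \notin \lif{Z}_F G(F)$, left-translation by $y$ reduces the computation to the identity coset, using the relation $\lG(\bar{F}) = G(\bar{F}) \cdot \lZ(\bar{F})$ to match stable conjugacy classes across cosets, and the stability of the translated $\q$-analog on $G(F)$ controls the outcome.

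The hardest step will be the off-$G(F)$ coset analysis: one must carefully track how the Whittaker pairing and the stable-conjugacy correspondence transform under left-translation by $y$, and match the resulting twisted characters to the correct packet elements. This descent mirrors the one carried out in the tempered case \cite[Theorem 1.1]{Xu:Lpacket}, and the uniqueness of the lift provided by the hypothesis ensures that no ambiguity (such as a choice among several lifts) enters the argument.
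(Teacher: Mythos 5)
Your treatment of (1) is essentially sound: the chain $X^{\Sigma_0}(\lq) \subseteq X^{\Sigma_0}(\lp_{\r}) \subseteq X^{\Sigma_0}(\tilde{\lambda})$ together with the hypothesis forces equality, giving a unique lift in $\cPkt{}(\lG(F))_{\tilde{\lambda}}$ for each $[\r] \in \cPkt{\q}$, and Corollary~\ref{cor: Whittaker pairing} then supplies the required compatibility of parametrizations, which is what the paper implicitly uses.

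For (2), however, there is a genuine gap. The paper does not deduce stability of $\lf_W(\lq)$ by restricting to $G(F)$ and descending coset-by-coset; it \emph{constructs} a stable distribution on $\lG(F)$ directly by lifting Arthur's standard-character expansion $f_W(\q) = \sum_{[\p]} m([\q],[\p]) f(\p)$ to $\lG(F)$, replacing each $f(\p)$ by the corresponding standard character $\lf(\lp)$ built from tempered $L$-packets $\cPkt{\lp_M}$ of $\lG$ (with coefficients normalized by $|X^{\Sigma_0}(\tilde{\lambda})/X^{\Sigma_0}(\lp)|^{-1}$). The stability of this lifted distribution is immediate, because the tempered $L$-packets of $\lG$ are already known to be stable by \cite{Xu:2018}. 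Then, by Proposition~\ref{prop: induction preserves infinitesimal character}, all contributing irreducibles lie in $\cPkt{}(\lG(F))_{\tilde{\lambda}}$, and the $X^{\Sigma_0}(\tilde{\lambda})$-invariance of the expansion ensures that contributions do not cancel on restriction to $G(F)$, which pins them down as elements of $\cPkt{\lq}$ and identifies the lifted distribution with $\lf_W(\lq)$ by restricting to $\lif{Z}_F G(F)$. Your coset-descent plan is not a substitute: stability is a statement about stable orbital integrals on all of $\lG(F)$, and stable conjugacy classes do not decompose coset-wise in the way left-translation by a fixed $y$ would require (translation does not commute with conjugation by $\lG(\bar{F})$). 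The step you flag as ``the hardest'' is precisely the step the paper circumvents by the standard-character lift and the appeal to the tempered stability result of \cite{Xu:2018} (where the coset-type analysis was done once and for all). Without invoking that input, the argument does not close.
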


\begin{proof}
Since $X^{\Sigma_0}(\tilde{\lambda}) = X^{\Sigma_0}(\lq)$, the elements in $\cPkt{\lq}$ can be distinguished from $\cPkt{}(\lG(F))_{\tilde{\lambda}}$ by their restrictions to $G(F)$. Let us write
\[
f_{W}(\q) = \sum_{[\p] \in \cP{G}_{\lambda}} m([\q], [\p]) f(\p), \quad f \in \sH(G(F), \chi),
\]
where $\p$ factors through $\p_{M} \otimes \xi$ for $[\p_{M}] \in \cPbd{M}$ and ${\rm Re} \, \xi \in (\mathfrak{a}^{*}_{P})^{+}$,
\[
f(\p) = \sum_{[\sigma] \in \cPkt{\p_M}} f_{G}( {\rm Ind}^{G(F)}_{P(F)}\, \sigma \otimes \xi )
\]
(cf. \cite[(2.2.12)]{Arthur:2013}). We form a stable distribution on $\lG(F)$ by
\begin{align}
\label{eq: stable expansion 1}
\sum_{[\lp] \in \cP{\lG}_{\tilde{\lambda}}} m([\lq], [\lp]) \lf(\lp), \quad \lf \in \sH(\lG(F), \tilde{\chi})
\end{align}
where
\(
m([\lq], [\lp]) = |X^{\Sigma_0}(\tilde{\lambda})/X^{\Sigma_0}(\lp)|^{-1} m([\q], [\p])
\)
and $\lp$ factors through $\lp_{M} \otimes \xi$ for $[\lp_{M}] \in \cPbd{\lif{M}}$,
\[
\lf(\lp) = \sum_{[\tilde{\sigma}] \in \cPkt{\lp_{M}}} \lf_{\lG}( {\rm Ind}^{\lG(F)}_{\lif{P}(F)} \, \tilde{\sigma} \otimes \xi).
\]
It is clear that the restriction of \eqref{eq: stable expansion 1} to $\lif{Z}_{F}G(F)$ is $f_{W}(\q)$. We would like to show that it is equal to $\lf_{W}(\lq)$.
By Proposition~\ref{prop: induction preserves infinitesimal character}, the irreducible constituents of ${\rm Ind}^{\lG(F)}_{\lif{P}(F)} \, \tilde{\sigma} \otimes \xi$ for $[\tilde{\sigma}] \in \cPkt{\lp_{\lM}}$ and $[\p] \in \cP{G}_{\lambda}$ all belong to $\cPkt{}(\lG(F))_{\tilde{\lambda}}$. Since \eqref{eq: stable expansion 1} is invariant under $X^{\Sigma_0}(\tilde{\lambda})$, then for any $[\lr] \in \cPkt{}(\lG(F))_{\tilde{\lambda}}$ that contributes to $\eqref{eq: stable expansion 1}$, $[\lr \otimes \omega]$ must have the same coefficients in \eqref{eq: stable expansion 1} for all $\omega \in X^{\Sigma_0}(\tilde{\lambda})$. So their restrictions to $G(F)$ do not cancel. This means that $[\lr] \in \cPkt{\lq}$. The rest is clear by considering the restriction of \eqref{eq: stable expansion 1} to $\lif{Z}_{F}G(F)$ again.

\end{proof}

\begin{theorem}
\label{thm: special case 1}
Suppose $[\q] \in \cQ{G}$ satisifies $X^{\Sigma_0}(\tilde{\lambda}) = X^{\Sigma_0}(\lq)$. For $\theta \in \Sigma_0$, semisimple $s \in \cS{\q}^{\theta}$ with $\x := \a(s)$ and $(H, \q_{H}) \rightarrow (\q, s)$, let $\lambda_{H} = \lambda_{\q_{H}}$ and choose $\tilde{\lambda}_{H}$ such that $\tilde{\lambda} = \iota_{\lif{H}} \circ \tilde{\lambda}_{H}$, where $\iota_{\lif{H}}$ is the twisted endoscopic embedding. If $X^{\Sigma_0}(\lq_{H}) = X^{\Sigma_0}(\tilde{\lambda}_{H})$, then \eqref{eq: theta twisted character relation} holds.
\end{theorem}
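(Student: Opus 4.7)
The plan is to parallel the argument of Theorem~\ref{thm: special case}, only now on the endoscopic side. The hypothesis $X^{\Sigma_0}(\lq_H) = X^{\Sigma_0}(\tilde{\lambda}_H)$ lets me apply Theorem~\ref{thm: special case} to $\lif{H}$ (the $GL$-factors pose no difficulty), giving a stable expansion
\[
\lf^{\lif{H}}_W(\lq_H) = \sum_{[\lp_H] \in \cP{\lif{H}}_{\tilde{\lambda}_H}} m([\lq_H], [\lp_H]) \, \lf^{\lif{H}}(\lp_H),
\]
whose restriction to $\lif{Z}_F H(F)$ is $f^H_W(\q_H)$. I then transport each $\lf^{\lif{H}}(\lp_H)$ through the twisted endoscopic transfer at the level of standard modules: the tempered case is \cite[Theorem 4.6]{Xu:2018} and \cite[Theorem 1.1]{Xu:Lpacket}, and the non-tempered case follows by parabolic induction together with Proposition~\ref{prop: induction preserves infinitesimal character}. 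This yields
\[
\lf^{\lif{H}}(\lp_H) = \sum_{[\lr] \in \cPkt{\lp}} \lf_{\lG^{\theta}, W}(\lr, \x), \qquad \lp := \iota_{\lif{H}} \circ \lp_H \in \cP{\lG}_{\tilde{\lambda}}.
\]
Re-indexing the resulting double sum by $[\lp] \in \cP{\lG}_{\tilde{\lambda}}$, the lift-multiplicities $m([\lq_H], [\lp_H])$ combine into those of Theorem~\ref{thm: special case}, and the characterization of $\cPkt{\lq}$ as the set of $[\lr] \in \cPkt{}(\lG(F))_{\tilde{\lambda}}$ with $[\lr]|_G \subseteq \cPkt{\q}$ produces the right-hand side of \eqref{eq: theta twisted character relation}.

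A cleaner rigidity packaging of the same idea is as follows. Both sides of \eqref{eq: theta twisted character relation} are $(\theta, \x)$-twisted distributions on $\sH(\lG(F), \tilde{\chi})$, supported on irreducible representations with infinitesimal character $\tilde{\lambda}$: the LHS by Lemma~\ref{lemma: cuspidal support} together with the compatibility of twisted endoscopic transfer with infinitesimal characters, and the RHS by construction. Their restrictions to $\sH(\lif{Z}_F G(F), \tilde{\chi})$ coincide with $f^H_W(\q_H)$; on the LHS this is the standard restriction property of endoscopic transfer, while on the RHS it follows from \eqref{eq: theta twisted intertwining operator} combined with the classical identities \eqref{eq: character relation G} and \eqref{eq: twisted character relation classical}. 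Since $X^{\Sigma_0}(\tilde{\lambda}) = X^{\Sigma_0}(\lq)$ forces every $[\lr] \in \cPkt{\lq}$ to be uniquely determined by its restriction to $G(F)$, and since their $(\theta, \x)$-twisted intertwining operators are pinned down by \eqref{eq: theta twisted intertwining operator} on this restriction, the vanishing of the difference on $\sH(\lif{Z}_F G(F), \tilde{\chi})$ promotes to vanishing on all of $\sH(\lG(F), \tilde{\chi})$.

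The main obstacle will be the bookkeeping in the endoscopic transport step: verifying that the Whittaker normalization and the intertwining operator $A_{\lr}(\theta, \x)$ induced through $\iota_{\lif{H}}$ from the tempered identity on $\lif{H}$ coincide with the normalization fixed in Theorem~\ref{thm: Apacket}(3). Concretely this amounts to comparing the two exact sequences of the form \eqref{eq: local twisted endoscopic sequence} for $\lG$ and $\lif{H}$ through the diagram induced by $\iota_{\lif{H}}$, tracking the $s_{\lq}$-shifts in the pairing $\langle ss_{\q}, \cdot \rangle_W$ under passage to the $\lif{H}$-side, and identifying $|X^{\Sigma_0}(\tilde{\lambda}_H)/X^{\Sigma_0}(\lp_H)|$ with $|X^{\Sigma_0}(\tilde{\lambda})/X^{\Sigma_0}(\lp)|$ via \eqref{eq: twist}. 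Each piece is standard but has to be assembled carefully for the Whittaker-normalized twisted characters to match on the nose.
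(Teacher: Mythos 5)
Your overall strategy matches the paper's: lift the $H$-side stable expansion in standard characters to $\lif{H}$ using the hypothesis on $X^{\Sigma_0}(\tilde{\lambda}_H)$, transfer to $\lG$, and conclude by restricting to $\lif{Z}_F G(F)$. But there are two concrete gaps.

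First, the intermediate formula $\lf^{\lif{H}}(\lp_H) = \sum_{[\lr]\in\cPkt{\lp}} \lf_{\lG^\theta, W}(\lr,\x)$ is not correct for non-tempered $\lp$. In the paper's expansion \eqref{eq: stable expansion 1}, $\lf(\lp)$ is the \emph{standard} character $\sum_{[\tilde\sigma]\in\cPkt{\lp_M}} \lf_\lG({\rm Ind}^{\lG(F)}_{\lif{P}(F)}\tilde\sigma\otimes\xi)$, i.e.\ the character of the full parabolic induction of the tempered packet, not the sum of twisted characters of the Langlands quotients in $\cPkt{\lp}$. The twisted endoscopic transfer identity at this level is $\lf^{\lif{H}}(\lp_H) = \lf(\lp,x)$, the twisted standard character, and the whole argument must stay at the level of standard modules; otherwise the re-indexing of the double sum does not work as claimed.

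Second, the ``rigidity packaging'' in your second paragraph has a missing step. Knowing both sides are twisted distributions supported on representations with $\Sigma_0$-infinitesimal character $\tilde{\lambda}$, and that their restrictions to $\sH(\lif{Z}_F G(F),\tilde\chi)$ agree, does \emph{not} force equality: a priori the LHS could contain twisted characters of $[\lr']\in\cPkt{}(\lG(F))_{\tilde{\lambda}}$ lying outside $\cPkt{\lq}$, whose restrictions cancel among themselves. The crucial step — present in the paper but absent from your write-up — is to use the $X^{\Sigma_0}(\tilde{\lambda})$-invariance of the transferred expansion (as in Theorem~\ref{thm: special case}) to first show that $\lf^{\lif{H}}_W(\lq_H)$ is supported on $\cPkt{\lq}$: any $[\lr]$ contributing does so with the same coefficient as each twist $[\lr\otimes\omega]$, $\omega\in X^{\Sigma_0}(\tilde{\lambda})$, so the restrictions to $G(F)$ cannot cancel, which forces $[\lr]|_G\subseteq\cPkt{\q}$. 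Only once the support is pinned down to $\cPkt{\lq}$ — a set on which restriction to $G(F)$ is injective thanks to $X^{\Sigma_0}(\tilde{\lambda})=X^{\Sigma_0}(\lq)$ — does the restriction argument, together with the normalization \eqref{eq: theta twisted intertwining operator}, determine the coefficients and give \eqref{eq: theta twisted character relation}. Your third paragraph correctly anticipates the normalization bookkeeping, but the support step is the one that actually needs supplying.
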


\begin{proof}
Let us write
\[
f^{H}_{W}(\q_{H}) = \sum_{\p_{H} \in \P{H}_{\lambda_{H}}} m(\p_{H}, \q_{H}) f^{H}(\p_{H}), \quad f \in \sH(G(F), \chi),
\]
where $f^{H}$ is the transfer of $f$, then we can show as in Theorem~\ref{thm: special case} that
\[
\lf^{\lif{H}}_{W}(\lq_{H}) = \sum_{\lp_{H} \in \cP{\lif{H}}_{\tilde{\lambda}_{H}}} m(\lq_{H}, \lp_{H}) \lf^{\lif{H}}(\lp_{H}), \quad f \in \sH(\lG(F), \tilde{\chi}),
\]
where 
\(
m(\lq_H, \lp_H) = |X(\tilde{\lambda}_H)/X(\lp_H)|^{-1} m(\q_H, \p_H)
\)
and $\lf^{\lif{H}}$ is the transfer of $\lf$. Note the restriction of $\lf^{\lif{H}}_{W}(\lq_{H})$ to $\lif{Z}_{F}G(F)$ is $f^{H}_{W}(\q_{H})$. Since $\lf^{\lif{H}}_{W}(\lq_{H})$ is invariant under $X^{\Sigma_0}(\tilde{\lambda})$, we can further show as before that it is supported on $\cPkt{\lq}$. Then the rest is clear by considering the restriction of $\lf^{\lif{H}}_{W}(\lq_{H})$ to $\lif{Z}_{F}G(F)$ again.
\end{proof}

\begin{remark}
In case that $X^{\Sigma_{0}}(\tilde{\lambda}) = 1$, the condition in Theorem~\ref{thm: special case 1} is always satisfied.
\end{remark}

\section{Construction}
\label{sec: construction}
 
M{\oe}glin \cite{Moeglin:2006} \cite{Moeglin:2009} gives a construction of Arthur packets for classical groups, which reduces to the tempered case. Since we have already constructed the tempered packets of $\lG^{\Sigma_0}(F)$ in \cite{Xu:2018}, the idea is to extend M{\oe}glin's construction to $\lG^{\Sigma_0}(F)$. First we will give a combinatorial description of Arthur parameters (cf. Subsection~\ref{subsec: Arthur parameters}), from which we will divide our construction into three cases: {\bf elementary case}, {\bf case of discrete diagonal restriction} and the {\bf general case}. The elementary case (cf. Subsection~\ref{subsec: elementary case}) will follow from the special case treated in Subsection~\ref{subsec: special case}. For later purpose we will also relate this case with the case of discrete $L$-packets by a generalized version of Aubert involution following an idea of M{\oe}glin \cite{Moeglin:2006}. The case of discrete diagonal restriction (cf. Subsection~\ref{subsec: DDR}) is the most difficult one and a more detailed outline will be given in the beginning of that subsection. At last, the general case (cf. Subsection~\ref{subsec: general case}) can be easily reduced to the case of discrete diagonal restriction by using Jacquet modules.

\subsection{Combinatorial description of Arthur parameters}
\label{subsec: Arthur parameters}

Let $N = 2n + 1$ if $G = Sp(2n)$ and $N = 2n$ if $G = SO(2n, \eta_{E/F})$. Let $\x_0$ be the character of $\lG^{\Sigma_0}(F)/\lG(F) \cong G^{\Sigma_0}(F)/G(F)$. For $[\q] \in \cQ{G}$, we get an equivalence class of $N$-dimensional self-dual representation of $W_{F} \times SL(2, \mathbb{C}) \times SL(2, \mathbb{C})$ by composing with a $GL(N, \mathbb{C})$-conjugacy class of twisted endoscopic embedding $\iota_{G}: \L{G} \longrightarrow GL(N, \mathbb{C})$ such that
\begin{itemize}
\item $\iota_{G}|_{\D{G}}$ is the standard representation of $\D{G}$,
\item $\iota_{G}|_{W_{F}}$ is trivial if $N$ is odd, and factors through $\Gal{E/F}$ with the nontrivial element sent to a reflection if $N$ is even.
\end{itemize}
Then we can decompose $\q$ into a direct sum of irreducible subrepresentations
\begin{align}
\label{eq: Arthur parameter}
\q = \bigoplus_{i=1}^{r} l_{i} \q_{i} = \bigoplus_{i=1}^{r}l_{i}(\rho_{i} \otimes \nu_{a_{i}} \otimes \nu_{b_{i}}).
\end{align}
Here $\rho_{i}$ are equivalence classes of irreducible unitary representations of $W_{F}$ and $\nu_{a_{i}}$ (resp. $\nu_{b_{i}}$) are the $(a_{i}-1)$-th (resp. $(b_{i}-1)$-th) symmetric power representations of $SL(2, \mathbb{C})$. The irreducible constituent $\rho_{i} \otimes \nu_{a_{i}} \otimes \nu_{b_{i}}$ has dimension $n_{i} = n_{(\rho_{i}, a_{i}, b_{i})}$ and multiplicity $l_{i}$. Let $\eta_i = \eta_{\rho_i}^{a_ib_i}$. We define the multi-set of Jordan blocks for $\q$ as follows,
\[
Jord(\q) := \{(\rho_{i}, a_{i}, b_{i}) \text{ with multiplicity } l_{i}: 1 \leqslant i \leqslant r \}.
\]
Moreover, for any  $\rho$ let us define
\[
Jord_{\rho}(\q) := \{(\rho', a', b') \in Jord(\q): \rho' = \rho\}.
\]
One can define the parity for self-dual irreducible unitary representations $\rho$ of $W_{F}$ as in (\cite{Xu:cusp}, Section 3). Then we say $(\rho_{i}, a_{i}, b_{i}) $ is of {\bf orthogonal type} if $a_{i} + b_{i}$ is even when $\rho_{i}$ is of orthogonal type, and $a_{i} + b_{i}$ is odd when $\rho_{i}$ is of symplectic type. Similarly we say $(\rho_{i}, a_{i}, b_{i})$ is of {\bf symplectic type} if $a_{i} + b_{i}$ is odd when $\rho_{i}$ is of orthogonal type, and $a_{i} + b_{i}$ is even when $\rho_{i}$ is of symplectic type. Let $\q_{p}$ be the parameter whose Jordan blocks consist of those in $Jord(\q)$ with the same parity as $\D{G}$, and 
let $\q_{np}$ be any parameter of general linear groups such that 
\[
\q = \q_{np} \+ \q_{p} \+ \q_{np}^{\vee},
\]
where $\q_{np}^{\vee}$ is the dual of $\q_{np}$. We also denote by $Jord(\q)_{p}$ the set of Jordan blocks in $Jord(\q_{p})$ without multiplicity. Let $s_{0} = (s_{0,i}) \in \Two^{Jord(\q)_{p}}$ be defined as $s_{0,i} = 1$ if $l_{i}$ is even and $s_{0,i} = -1$ if $l_{i}$ is odd. Then
\[
\S{\q}^{\Sigma_{0}} \cong \{s  = (s_{i}) \in \Two^{Jord(\q)_{p}} \} / \langle s_{0} \rangle,
\]
and
\[
\S{\q} \cong \{ s  = (s_{i}) \in \Two^{Jord(\q)_{p}} : \prod_{i}(s_{i})^{n_{i}} = 1\} / \langle s_{0} \rangle
\] 
if $G$ is special even orthogonal. Under this interpretation, one can show
\begin{align}
\label{eq: combinatorial description}
\alpha: \S{\q}^{\Sigma_{0}} \rightarrow {\rm Hom}(\lG(F)/G(F), \mathbb{C}^{\times}), \quad s \mapsto (\prod_{i : s_i = -1} \eta_i) \circ \lambda_{\lG}
\end{align}
as in \cite[Lemma 6.9]{Xu:2018}).

There is a natural inner product on $\Two^{Jord(\q)_{p}}$ which identify its dual with itself. Let $\e = (\e_{i})$ and $s = (s_{i})$ be two elements in $\Two^{Jord(\q)_{p}}$, then their inner product is defined by $\e(s) = \prod_{i}(\e_{i} \ast s_{i})$, where
\[
\e_{i} \ast s_{i} = \begin{cases}
                           -1, & \text{ if } \e_{i} = s_{i} = -1, \\
                           1,  & \text{ otherwise. } 
                           \end{cases}
\]
So on the dual side, 
\[
\D{\S{\q}^{\Sigma_{0}}} \cong \{\e = (\e_{i}) \in \Two^{Jord(\q)_{p}}: \prod_{i} \e_{i}^{l_{i}} = 1\}.
\]
When $G$ is special even orthogonal, let $\e_{0} = (\e_{0,i}) \in \Two^{Jord(\q)_{p}}$ be defined as $\e_{0,i} = 1$ if $n_{i}$ is even, or $\e_{0,i} = -1$ if $n_{i}$ is odd, then $\e_{0} \in \D{\S{\q}^{\Sigma_{0}}}$ is always trivial when restricted to $\S{\q}$, and  
\[
\D{\S{\q}} \cong \{\e = (\e_{i}) \in \Two^{Jord(\q)_{p}}: \prod_{i} \e_{i}^{l_{i}} = 1\} / \langle \e_{0} \rangle.
\]
In general, we can let $\e_{0} = 1$ if $G$ is not special even orthogonal. 

There is a natural projection  
\begin{align}
\label{eq: projection}
\xymatrix{ \Two^{Jord(\q_{p})} \ar[rr]^{Cont} && \Two^{Jord(\q)_{p}} \\
                 s \ar@{|->}[rr] && s'}
\end{align}
such that 
\[
s'(\rho, a, b) = \prod_{\substack{(\rho', a', b') \in Jord(\q_{p}) \\ (\rho', a', b') = (\rho, a, b) \text{ in } Jord(\q)_{p} }} s(\rho', a', b')
\]
for $(\rho, a, b) \in Jord(\q)_{p}$. In particular, $s_{0}$ has a natural representative $s_{0}^{>}$ in $\Two^{Jord(\q_{p})}$ given by $s_{0}^{>}(\rho, a, b) = -1$ for all $(\rho, a, b) \in Jord(\q_{p})$. We define
\[
\S{\q^{>}}^{\Sigma_{0}} = \{s(\cdot) \in \Two^{Jord(\q_{p})} \} / \langle s_{0}^{>} \rangle,
\]
and 
\[
\S{\q^{>}} = \{ s(\cdot) \in \Two^{Jord(\q_{p})} : \prod_{(\rho, a, b) \in Jord(\q_{p})} s(\rho, a, b)^{n_{(\rho, a, b)}} = 1\} / \langle s_{0}^{>} \rangle
\] 
if $G$ is special even orthogonal. Then there are surjections $\S{\q^{>}}^{\Sigma_{0}} \rightarrow \S{\q}^{\Sigma_{0}}$ and $\S{\q^{>}} \rightarrow \S{\q}$.

On the dual side, we have a natural inclusion
\[
\xymatrix{ \Two^{Jord(\q)_{p}} \ar@{^{(}->}[rr]^{Ext} && \Two^{Jord(\q_{p})} \\
                 \e \ar@{|->}[rr] && \e'}
\]
such that 
\[
\e'(\rho, a, b) = \e(\rho, a, b)
\]
for $(\rho, a, b) \in Jord(\q_{p})$. We define an inner product on $\Two^{Jord(\q_{p})}$ as for $\Two^{Jord(\q)_{p}}$. Then
\[
\D{\S{\q^{>}}^{\Sigma_{0}}} \cong \{\e(\cdot) \in \Two^{Jord(\q_{p})} : \prod_{(\rho, a, b) \in Jord(\q_{p})} \e(\rho, a, b) = 1\}, 
\]
and 
\[
\D{\S{\q^{>}}} \cong \{\e(\cdot) \in \Two^{Jord(\q_{p})} : \prod_{(\rho, a, b) \in Jord(\q_{p})} \e(\rho, a, b) = 1\} / \langle \e_{0} \rangle
\] 
if $G$ is special even orthogonal. 
There are inclusions $\D{\S{\q}^{\Sigma_{0}}} \hookrightarrow \D{\S{\q^{>}}^{\Sigma_{0}}}$ and $\D{\S{\q}} \hookrightarrow \D{\S{\q^{>}}}$. For $\e \in \D{\S{\q^{>}}^{\Sigma_{0}}}$, we denote its image in $\D{\S{\q^{>}}}$ by $\bar{\e}$. 

Suppose $\q = \q_{p}$. For $(\rho, a, b) \in Jord(\q)$, let us write $A = (a+b)/2-1$, $B = |a-b|/2$, and set $\zeta = \zeta_{a, b} = \text{Sign}(a - b)$ if $a \neq b$ and arbitrary otherwise. Then we can denote $(\rho, a, b)$ also by $(\rho, A, B, \zeta)$. We would like to impose some total order $>_{\q}$ on $Jord_{\rho}(\q)$ for each $\rho$. We say $>_{\q}$ is ``admissible" if it satisfies
\begin{align*}
(\mathcal{P}): \quad & \forall (\rho, A, B, \zeta), (\rho, A', B', \zeta') \in Jord(\q) \text{ with } A > A', B > B' \text{ and } \zeta = \zeta', \\
& \text{ then } (\rho, A, B, \zeta) >_{\q} (\rho, A', B', \zeta').
\end{align*}
For a fixed admissible order $>_{\q}$, we have introduced $\e^{MW/W}_{\q}, \e^{M/MW}_{\q} \in \D{\S{\q^{>}}^{\Sigma_{0}}}$ in \cite{Xu:Apacket}. For $[\r] \in \cPkt{\q}$ and $s \in \D{\S{\q^{>}}^{\Sigma_{0}}}$, we define 
\[
\langle s, \r \rangle_{MW} := \e^{MW/W}_{\q}(s) \langle s, \r \rangle_{W}, \quad \langle s, \r \rangle_{M} := \e^{M/MW}_{\q}(s) \langle s, \r \rangle_{MW}.
\]

For $s \in \S{\q^{>}}^{\Sigma_0}$, it gives a partition of  
\[
Jord(\q) = Jord_{+} \sqcup Jord_{-}.
\]
depending on $s(\rho, a, b) = 1$ or $-1$. Let
\[
N_{I} = \sum_{(\rho, a, b) \in Jord_{+}} n_{(\rho, a, b)}, \quad N_{II} = \sum_{(\rho, a, b) \in Jord_{-}} n_{(\rho, a, b)}
\]
and
\[
\eta_{I} = \prod_{(\rho, a, b) \in Jord_{+}}\eta_{(\rho, a, b)}, \quad \eta_{II} = \prod_{(\rho, a, b) \in Jord_{-}}\eta_{(\rho, a, b)}.
\]
Then $\q$ factors through $\q_{H} := \q_{I} \times \q_{II} \in \Q{H}$ for a twisted elliptic endoscopic group $H = G_{I} \times G_{II}$ of $G$, where 
\[
\D{G}_{I} = SO(N_I), \quad \D{G}_{II} = SO(N_{II})
\]
and $\q_{I} \in \Q{G_{I}}, \q_{II} \in \Q{G_{II}}$ such that
\[
Jord(\q_{I}) = \begin{cases} \{(\rho \otimes \eta_{I}, a, b) :  (\rho, a, b) \in Jord_{I}\} & \text{ if $N_{I}$ is odd } \\
 Jord_{I}  & \text{ if $N_{I}$ is even }
 \end{cases}
\]
\[
Jord(\q_{II}) = \begin{cases} \{(\rho \otimes \eta_{II}, a, b) :  (\rho, a, b) \in Jord_{II}\} & \text{ if $N_{II}$ is odd } \\
 Jord_{II}  & \text{ if $N_{II}$ is even }
 \end{cases}
\]
Comparing with \eqref{eq: endoscopy}, this amounts to choosing a special representative of $s$ in $\cS{\q}^{\Sigma_0}$. So we will say that $(H, \q_{H}) \rightarrow (\q, s)$.



\subsection{Elementary case}
\label{subsec: elementary case}

Suppose $[\q] \in \cQ{G}$ is elementary, i.e., $[\q_d] \in \cPdt{G}$ and $A = B$ for all $(\rho, A, B, \zeta) \in Jord(\q)$ (cf. \cite[Section 6]{Xu:Apacket}). For simplicity, we denote by $Jord_{\rho}(\q_{d})$ the set of integers $\alpha$ such that $(\rho, \alpha, 1) \in Jord(\q_{d})$, and we write $(\rho, \alpha, \delta_{\alpha})$ for $(\rho, (\alpha-1)/2, (\alpha-1)/2, \delta_{\alpha}) \in Jord(\q)$. We always impose the natural order $>_{\q}$, i.e., $(\rho, \alpha, \delta_{\alpha}) >_{\q} (\rho, \alpha', \delta_{\alpha'})$ if $\alpha > \alpha'$. Let $\lambda = \lambda_{\q}$. In this case, we have
\[
X(\lq) = X(\lq_d) = X(\tilde{\lambda}) \quad \text{ and } \quad X^{\Sigma_0}(\lq) = X^{\Sigma_0}(\lq_d) = X^{\Sigma_0}(\tilde{\lambda}).
\]
by Lemma~\ref{lemma: twist invariant} and \eqref{eq: combinatorial description}. So this case has been covered in Theorem~\ref{thm: special case}. Theorerm~\ref{thm: special case 1} also applies to this case for $\q_{H} = \q_{I} \times \q_{II}$, where $\q_I, \q_{II}$ are both elementary. Moreover, 
\[
\cPkt{\q} \longrightarrow \D{\S{\q}} \quad \text{ and } \quad  \Pkt{\q}^{\Sigma_0} \longrightarrow \D{\S{\q}^{\Sigma_0}}
\]
are bijections (cf. \cite[Section 6.4]{Xu:Apacket}). Let us define 
\begin{align*}
& \r^{\Sigma_0}_{MW}(\q, \e) := \r^{\Sigma_0}_{W}(\q, \e \e^{MW/W}_{\q}), \quad \r^{\Sigma_0}_{M}(\q, \e) := \r^{\Sigma_0}_{MW}(\q, \e \e^{M/MW}_{\q}), \\
& \lr^{\Sigma_0}_{MW}(\lq, \e) := \lr^{\Sigma_0}_{W}(\lq, \e \e^{MW/W}), \quad \lr^{\Sigma_0}_{M}(\lq, \e) := \lr^{\Sigma_0}_{MW}(\lq, \e\e^{M/MW}).
\end{align*}
Similarly, we define $\r_{MW}(\q, \bar{\e}), \r_{MW}(\q, \bar{\e}), \lr_{MW}(\lq, \bar{\e}), \lr_{MW}(\lq, \bar{\e})$.

M{\oe}glin constructs $\Pkt{\q}^{\Sigma_0}$ from the discrete $L$-packet $\Pkt{\q_{d}}^{\Sigma_0}$ by a generalized version of Aubert involution in \cite{Moeglin:2006}. Let $\rho$ be a self-dual irreducible unitary supercuspidal representation of $GL(d_{\rho}, F)$ and $\mathcal{P}^{\Sigma_{0}}_{d_{\rho}}$ the set of $\Sigma_{0}$-conjugacy classes of standard parabolic subgroups $P$ of $G$, whose Levi component $M$ is of the form 
\begin{align}
\label{eq: Aubert dual for G Levi}
GL(a_{1}d_{\rho}) \times \cdots \times GL(a_{l}d_{\rho}) \times G_{-},
\end{align}
where $a_{l}d_{\rho} \neq 1$ if $G = SO(2n)$ and $G_{-} = 1$. For $P \in \mathcal{P}^{\Sigma_{0}}_{d_{\rho}}$, $\sigma^{\Sigma_{0}} \in \Rep(M^{\Sigma_{0}}(F))$ and $x_0 \in \mathbb{R}_{\geqslant 0}$, we denote by $(\sigma^{\Sigma_{0}})^{\rho}_{< x_{0}}$ the direct sum of  irreducible constitutes of $\sigma^{\Sigma_0}$ whose cuspidal support on the general linear factors consist only of $\rho||^{x}$ with $|x| < x_{0}$. If $G_{-} = SO(2)$, we further define ${}^1(\sigma^{\Sigma_0})^{\rho}_{< x_0}$ to be the direct summand of $(\sigma^{\Sigma_0})^{\rho}_{< x_0}$, whose irreducible constituents are isomorphic to $\rho||^{x'}$ on $SO(2) \cong GL(1)$ for $|x'| < x_0$, and ${}^2(\sigma^{\Sigma_0})^{\rho}_{< x_0}$ to be its complement. Define
\[
\widetilde{\Jac}_{P^{\Sigma_{0}}(F)}(\r^{\Sigma_{0}})^{\rho}_{< x_0} = \begin{cases}
                                                         {}^1\Jac_{P^{\Sigma_{0}}(F)}(\r^{\Sigma_{0}})^{\rho}_{< x_0} \otimes \x_{0} - {}^2\Jac_{P^{\Sigma_{0}}(F)}(\r^{\Sigma_{0}})^{\rho}_{< x_0} ,  & \text{ if } G_{-} = SO(2), \\
                                                         \Jac_{P^{\Sigma_{0}}(F)}(\r^{\Sigma_{0}})^{\rho}_{< x_0}, & \text{ otherwise. }  
                                                         \end{cases}
\]
For $\r^{\Sigma_0} \in \Rep(G^{\Sigma_{0}}(F)), X_0 \in \mathbb{N}, x_0 = (X_0 - 1)/2$, define 
\[
{\rm inv}^{\rho}_{< X_{0}}(\r^{\Sigma_{0}}) := \sum_{P \in \mathcal{P}^{\Sigma_{0}}_{d_{\rho}}} (-1)^{{\rm dim} \, A_{M}} \Ind^{G^{\Sigma_{0}}(F)}_{P^{\Sigma_{0}}(F)}(\widetilde{\Jac}_{P^{\Sigma_{0}}(F)} (\r^{\Sigma_{0}})^{\rho}_{< x_{0}}).
\] 

\begin{remark}
The term ${}^2\Jac_{P^{\Sigma_{0}}(F)}(\r^{\Sigma_{0}})^{\rho}_{< x_0}$ is missing in the definition in \cite{Xu:Apacket}. That is incorrect for it would fail to be an involution (cf. \cite[Proposition 6.5]{Xu:Apacket}). However, the results are unchanged when applying to elementary Arthur packets.
\end{remark}

Define ${\rm inv}^{\rho}_{\leqslant X_0} (\r^{\Sigma_0})$ similarly. Let $\q^{\sharp}$ be obtained from $\q$ by changing $\delta_{\alpha}$ to $-\delta_{\alpha}$ for all $\alpha \in Jord_{\rho}(\q)$ such that $\alpha < X_0$. M{\oe}glin \cite[Theorem 5]{Moeglin:2006} showed that
\[
\e(s_\q) \beta(\q, \rho, < X_0) \, {\rm inv}^{\rho}_{< X_0} \, \r^{\Sigma_0}_M(\q, \e) = \e(s_{\q^{\sharp}}) \r^{\Sigma_0}_M(\q^{\sharp}, \e),
\]
where $\beta(\q, \rho, < X_{0})$ is some sign (cf. \cite[Section 6.2]{Xu:Apacket}). By taking $|{\rm inv}^{\rho}_{< X_0}|$ (resp. $|{\rm inv}^{\rho}_{\leqslant X_0}|$), we forget the sign. Then
\begin{align}
\label{eq: elementary classical}
\r^{\Sigma_0}_{M}(\q, \e) = \circ_{\substack{(\rho, a, \delta_a) \in Jord(\q) \\ \delta_a = -1}} (|{\rm inv}^{\rho}_{< a}| \circ |{\rm inv}^{\rho}_{\leqslant a}|) \, \r^{\Sigma_0}_W(\q_d, \e).
\end{align}

Next we would like to extend this result to $\lG^{\Sigma_0}(F)$. For $\lr^{\Sigma_0} \in \Rep(\lG^{\Sigma_{0}}(F)), X_0 \in \mathbb{N}, x_0 = (X_0 - 1)/2$, define 
\[
{\rm inv}^{\rho}_{< X_0} (\lr^{\Sigma_0}) := \sum_{P \in \mathcal{P}^{\Sigma_0}_{d_{\rho}}} (-1)^{{\rm dim} \, A_M} {\rm Ind}^{\lG^{\Sigma_0}(F)}_{\lP^{\Sigma_0}(F)} \, (\widetilde{\Jac}_{\lP^{\Sigma_0}(F)} (\lr^{\Sigma_0})^{\rho}_{< x_{0}}),
\]
where $\widetilde{{\Jac}}_{\lP^{\Sigma_0}(F)} (\lr^{\Sigma_0})^{\rho}_{< x_0}$ is defined with respect to the restriction to $M^{\Sigma_0}(F)$. We also define ${\rm inv}^{\rho}_{ \leqslant X_0} (\lr^{\Sigma_0})$ similarly.
Denote by ${\rm inv}^{\rho}_{\infty}$ if $X_0$ is taken to be infinity. Note ${\rm inv}^{\rho}_{< X_0}$ preserves the $\Sigma_0$-infinitesimal characters by Corollary~\ref{cor: cuspidal support} and Corollary~\ref{cor: Jac preserves infinitesimal character},  

\begin{lemma}
\label{lemma: involution}
\[
\e(s_\q) \beta(\q, \rho, < X_0) \, {\rm inv}^{\rho}_{< X_0} \, \lr^{\Sigma_0}_M(\lq, \e) =  \e(s_{\q^{\sharp}}) \lr^{\Sigma_0}_M(\lq^{\sharp}, \e).
\]
\end{lemma}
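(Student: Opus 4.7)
The plan is to lift M\oe glin's classical identity
\[
\e(s_\q)\,\beta(\q,\rho,<X_0)\,{\rm inv}^{\rho}_{<X_0}\,\r^{\Sigma_0}_M(\q,\e) \;=\; \e(s_{\q^{\sharp}})\,\r^{\Sigma_0}_M(\q^{\sharp},\e),
\]
displayed just before this lemma, from $G^{\Sigma_0}(F)$ to $\lG^{\Sigma_0}(F)$ by an application of Corollary~\ref{cor: determination}. Since $\q$ is elementary, every $(\rho,A,B,\zeta)\in Jord(\q)$ has $A=B$, so the sign $\zeta=\delta_\alpha$ is not intrinsic to the underlying Arthur parameter; consequently $\q^{\sharp}$ and $\q$ define the same Arthur parameter, and $\tilde{\lambda}_{\q^{\sharp}}=\tilde{\lambda}$. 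In the elementary case (Subsection~\ref{subsec: elementary case}) we have $X^{\Sigma_0}(\lq^{\sharp})=X^{\Sigma_0}(\tilde{\lambda})$, and Lemma~\ref{lemma: twist} then forces $X(\lr^{\Sigma_0})=X^{\Sigma_0}(\tilde{\lambda})$ for every $\lr^{\Sigma_0}\in\Pkt{\lq^{\sharp}}^{\Sigma_0}$. Both sides of the desired equality are supported in this set, so Corollary~\ref{cor: determination} reduces the proof to checking the identity after applying $p_1$ and $p_2$.

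For $p_1$, the operator ${\rm inv}^{\rho}_{<X_0}$ commutes with the restriction $r_1$ from $\lG^{\Sigma_0}(F)$ to $G^{\Sigma_0}(F)$, because parabolic induction and Jacquet module on $\lG^{\Sigma_0}$ are constructed from their $\lG$-analogues via an induced $\Sigma_0$-action (and these commute with restriction to $G$), while the decompositions into ${}^1(\cdot)^{\rho}_{<x_0}$ and ${}^2(\cdot)^{\rho}_{<x_0}$ depend only on cuspidal support. Combined with the restriction formula
\[
p_1\bigl(\lr^{\Sigma_0}_M(\lq,\e)\bigr) \;=\; \bigoplus_{\e'\in(\S{\q}^{\Sigma_0}/\S{\lq}^{\Sigma_0})^{\wedge}} \r^{\Sigma_0}_M(\q,\e\e'),
\]
inherited from the analogous description of $\lr^{\Sigma_0}_W$ recalled in Section~\ref{sec: main results}, the classical identity applied to each summand yields the required equality of $p_1$-images. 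The sign matching relies on $\e'(s_\q)=\e'(s_{\q^{\sharp}})$ under the canonical identification $\S{\q}^{\Sigma_0}\cong\S{\q^{\sharp}}^{\Sigma_0}\cong\Two^{Jord(\q)_p}/\langle s_0\rangle$.

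For $p_2$, the same mechanism applies with ${\rm ch}_1$ in place of $f$: parabolic induction and Jacquet module intertwine the intertwining operator used to define the $\omega$-twisted character on $\lG^{\Sigma_0}(F)$, and their restrictions to $G^{\Sigma_0}(F)$ produce an analogous expansion of $p_2(\lr^{\Sigma_0}_M(\lq,\e))$ into twisted characters of the $\r^{\Sigma_0}_M(\q,\e\e')$. Since the classical identity is an equality of virtual representations on $G^{\Sigma_0}(F)$, it persists after applying any linear character functional, and a termwise application matches the $p_2$-images. The principal technical obstacle I anticipate is verifying compatibility of the intertwining operator implicit in the $(\omega)$-twisted structure on $\lr^{\Sigma_0}_M(\lq,\e)$ with the restriction formula displayed above, so that the combinatorial signs $\e(s_\q)$ and $\beta(\q,\rho,<X_0)$ propagate coherently through both $p_1$ and $p_2$; this amounts to checking that the normalization of $A$ chosen in Section~\ref{sec: Jac} is preserved by the Jacquet and induction steps comprising ${\rm inv}^{\rho}_{<X_0}$. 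With $p_1$ and $p_2$ matched, Corollary~\ref{cor: determination} concludes the proof.
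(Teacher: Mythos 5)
There is a genuine gap. Your plan is to apply Corollary~\ref{cor: determination} after observing that the right-hand side $\e(s_{\q^{\sharp}})\lr^{\Sigma_0}_M(\lq^{\sharp},\e)$ is supported on irreducibles with $X(\lr^{\Sigma_0}) = X^{\Sigma_0}(\tilde\lambda)$. But this requires that the left-hand side ${\rm inv}^{\rho}_{<X_0}\,\lr^{\Sigma_0}_M(\lq,\e)$ be supported on such irreducibles as well, and you assert this (``both sides of the desired equality are supported in this set'') without proof. That is precisely the hard step. The operator ${\rm inv}^{\rho}_{<X_0}$ is an alternating sum of parabolic inductions of Jacquet modules; these preserve the $\Sigma_0$-infinitesimal character (Corollaries~\ref{cor: cuspidal support} and~\ref{cor: Jac preserves infinitesimal character}) but not the stabilizer $X(\lr^{\Sigma_0})$, so constituents with $X(\lr^{\Sigma_0}) \subsetneq X^{\Sigma_0}(\tilde\lambda)$ can appear a priori and must be shown to cancel. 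Without that cancellation the hypothesis of Corollary~\ref{cor: determination} fails, and for good reason: if $\lr^{\Sigma_0}$ has a strictly smaller stabilizer, then $\lr^{\Sigma_0} - \lr^{\Sigma_0}\otimes\omega$ with $\omega \in X^{\Sigma_0}(\tilde\lambda)\setminus X(\lr^{\Sigma_0})$ is a nonzero virtual representation killed by restriction to $G^{\Sigma_0}(F)$.

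The paper supplies exactly the missing step, by contradiction: if a small-stabilizer constituent does not cancel, then since the whole virtual representation ${\rm inv}^{\rho}_{<X_0}\,\lr^{\Sigma_0}_M(\lq,\e)$ is invariant under twisting by all of $X^{\Sigma_0}(\tilde\lambda)$, its restriction to $G^{\Sigma_0}(F)$ cannot cancel either and must appear in some $\r^{\Sigma_0}_M(\q^{\sharp},\e\e')$; but $X^{\Sigma_0}(\lq^{\sharp}) = X^{\Sigma_0}(\tilde\lambda)$ forces every lift of an irreducible in $\cPkt{\q^{\sharp}}$ to have full stabilizer, a contradiction. You do correctly identify that $X^{\Sigma_0}(\lq^{\sharp}) = X^{\Sigma_0}(\tilde\lambda)$, which is the key ingredient, but you deploy it on the wrong side of the identity. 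Two further inaccuracies: $\q^{\sharp}$ and $\q$ are genuinely different Arthur parameters (the two $SL(2,\C)$-factors are interchanged on some Jordan blocks), though they share the same infinitesimal character, which is all that is needed; and since the lemma lives in the ordinary Grothendieck group $K{\rm Rep}(\lG^{\Sigma_0}(F))$ with no $\omega$-twisted intertwining operators in play, your $p_2$-step and the associated worries about normalizing $A$ are out of place --- once small-stabilizer constituents are shown to cancel, restriction to $G^{\Sigma_0}(F)$ alone separates the remaining irreducibles, and Corollary~\ref{cor: determination} is more machinery than the lemma requires.
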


\begin{proof}
Since ${\rm inv}^{\rho}_{< X_0}$ preserves the $\Sigma_0$-infinitesimal characters, it suffices to show that any irreducible constituent $\lr^{\Sigma_0}$ in ${\rm Ind}^{\lG^{\Sigma_0}(F)}_{\lP^{\Sigma_0}(F)} \, (\widetilde{{\rm Jac}}_{\lP^{\Sigma_0}(F)} \lr^{\Sigma_0}_M(\lq, \tilde{\e}))^{\rho}_{< x_0}$ such that $X(\lr^{\Sigma_0}) \subsetneq X^{\Sigma_0}(\tilde{\lambda})$ is cancelled in ${\rm inv}^{\rho}_{< X_0} \, \lr^{\Sigma_0}_M(\lq, \e)$. If not, then it follows from
\[
\Big\{\omega \in X \, | \, {\rm inv}^{\rho}_{< X_0} \, \lr^{\Sigma_0}_M(\lq, \e) \otimes \omega = {\rm inv}^{\rho}_{< X_0} \, \lr^{\Sigma_0}_M(\lq, \e) \Big\} = X^{\Sigma_0}(\tilde{\lambda})
\]
that the restriction of $\lr^{\Sigma_0}$ to $G^{\Sigma_0}(F)$ is not cancelled in
\begin{align*}
& (\e(s_\q) \beta(\q, \rho, < X_0) \, {\rm inv}^{\rho}_{< X_0} \, \lr^{\Sigma_0}_M(\lq, \e))|_{G^{\Sigma_0}(F)} = \e(s_\q) \beta(\q, \rho, < X_0) \, {\rm inv}^{\rho}_{< X_0} \, (\lr^{\Sigma_0}_M(\lq, \e)|_{G^{\Sigma_0}}(F)) \\
= & \sum_{\e' \in (\S{\q}^{\Sigma_0}/\S{\lq}^{\Sigma_0})^{\wedge}}  \e(s_\q) \beta(\q, \rho, < X_0) \,  {\rm inv}^{\rho}_{< X_0} \, \r^{\Sigma_0}_M(\q, \e\e') = \sum_{\e' \in (\S{\q^{\sharp}}^{\Sigma_0}/\S{\lq^{\sharp}}^{\Sigma_0})^{\wedge}}  \e(s_{\q^{\sharp}}) \r^{\Sigma_0}_M(\q^{\sharp}, \e\e').
\end{align*}
Here we have used the fact that $s_{\q} = s_{\lq}$ and $s_{\q^{\sharp}} = s_{\lq^{\sharp}}$. Since $X^{\Sigma_0}(\lq^{\sharp}) = X^{\Sigma_0}(\tilde{\lambda})$, then the irreducible constituents of $\lr^{\Sigma_0}|_{G^{\Sigma_0}(F)}$ can not belong to $\r^{\Sigma_0}_M(\q^{\sharp}, \e\e')$. So we get a contradiction.
\end{proof}

As a consequence, we have
\begin{corollary}
\begin{align}
\label{eq: involution}
\lr^{\Sigma_0}_{M}(\lq, \e) = \circ_{\substack{(\rho, a, \delta_a) \in Jord(\q) \\ \delta_a = -1}} (|{\rm inv}^{\rho}_{<a}| \circ |{\rm inv}^{\rho}_{\leqslant a}|) \, \lr^{\Sigma_0}_W(\lq_d, \e).
\end{align}
\end{corollary}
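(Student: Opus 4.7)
The plan is to prove \eqref{eq: involution} by iterating Lemma~\ref{lemma: involution}, directly lifting the argument that establishes the classical-group formula \eqref{eq: elementary classical}. The key observation is that the composition $|{\rm inv}^{\rho}_{<a}|\circ|{\rm inv}^{\rho}_{\leqslant a}|$ flips exactly the sign $\delta_a$ at the Jordan block $(\rho,a)$: every $\delta_\alpha$ with $\alpha<a$ is flipped twice (no net change), while $\delta_a$ is flipped only once. Taking absolute values discards the scalar factors $\e(s_\q)\beta(\q,\rho,\bullet)/\e(s_{\q^\sharp})$ produced by Lemma~\ref{lemma: involution}, so each such composition transforms $\lr^{\Sigma_0}_M(\lq',\e)$ into $\lr^{\Sigma_0}_M(\lq'',\e)$, where $\lq''$ differs from $\lq'$ only in the sign at $(\rho,a)$.

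First I would enumerate in some chosen order the Jordan blocks $(\rho^{(i)},a_i,-1)$ in $Jord(\q)$ with $\delta_{a_i}=-1$, and define intermediate elementary parameters $\lq^{(0)}=\lq_d$, $\lq^{(1)},\ldots,\lq^{(k)}=\lq$ by flipping one such sign at a time. At step $i$, applying Lemma~\ref{lemma: involution} twice---once with $X_0$ slightly larger than $a_i$ and once with $X_0=a_i$---and passing to absolute values would yield
\[
|{\rm inv}^{\rho^{(i)}}_{<a_i}|\circ|{\rm inv}^{\rho^{(i)}}_{\leqslant a_i}|\,\lr^{\Sigma_0}_M(\lq^{(i-1)},\e)=\lr^{\Sigma_0}_M(\lq^{(i)},\e).
\]
Iterating for $i=1,\ldots,k$ converts $\lr^{\Sigma_0}_M(\lq_d,\e)$ into $\lr^{\Sigma_0}_M(\lq,\e)$, which is the left-hand side of \eqref{eq: involution}.

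To close the argument I would identify the base case $\lr^{\Sigma_0}_M(\lq_d,\e)$ with $\lr^{\Sigma_0}_W(\lq_d,\e)$ that appears on the right-hand side of \eqref{eq: involution}. This reduces to verifying that for the discrete parameter $\lq_d$ the correction characters $\e^{MW/W}_{\q_d}$ and $\e^{M/MW}_{\q_d}$ are trivial on $\S{\lq_d}^{\Sigma_0}$. I expect this to be an immediate check using the explicit definitions recorded in \cite{Xu:Apacket}: under the natural order $(\mathcal{P})$ all signs on $Jord(\q_d)$ are $+1$, so the combinatorial signs entering the $MW/W$ and $M/MW$ normalizations collapse to the trivial character; this matches the analogous reduction used to derive \eqref{eq: elementary classical} from its signed predecessor.

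The main obstacle I foresee is the bookkeeping needed to confirm two subsidiary points. First, at each intermediate step $|{\rm inv}^{\rho}_{<X_0}|\lr^{\Sigma_0}_M(\lq^{(i-1)},\e)$ must be a genuine representation so that the absolute value is unambiguous; this follows from Lemma~\ref{lemma: involution} since its output is always $\pm 1$ times an irreducible representation. Second, the resulting composition should be independent of the chosen enumeration of the negative Jordan blocks, which is required since the corollary asserts a composition without specifying an order. This follows because each pair $|{\rm inv}^{\rho}_{<a}|\circ|{\rm inv}^{\rho}_{\leqslant a}|$ depends only on $(\rho,a)$ and acts on the parameter side by flipping the single sign $\delta_a$, so two such operations attached to distinct blocks commute at the level of the output representation.
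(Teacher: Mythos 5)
Your proposal is correct and takes essentially the same route the paper intends: the corollary follows from Lemma~\ref{lemma: involution} by iterating the sign-flipping property of ${\rm inv}^{\rho}_{<X_0}$ (with the $\leqslant a$ case being ${\rm inv}^{\rho}_{<a+1}$ for integer $a$, so the lemma applies directly), together with the agreement of the $W$-, $MW$-, and $M$-normalizations on the tempered/discrete base case $\lq_d$. The paper simply asserts the corollary ``as a consequence'' of the lemma; your write-up fills in the bookkeeping (absolute values discarding the scalar prefactors, net effect of each pair being a single sign flip at $(\rho,a)$, order-independence), none of which presents an obstacle.
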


For $[\lr] \in \overline{\Rep}(\lG(F)), X_0 \in \mathbb{N}, x_0 = (X_0 - 1)/2$, we define 
\[
\overline{\rm inv}^{\rho}_{< X_0} ([\lr]) := \sum_{P \in \mathcal{P}^{\Sigma_0}_{d_{\rho}}} (-1)^{{\rm dim} \, A_M} {\rm Ind}^{\lG(F)}_{\lP(F)} \, (\widetilde{\overline{\Jac}}_{\lP(F)} ([\lr])^{\rho}_{< x_{0}}),
\]
where 
\[
\widetilde{\overline{\Jac}}_{\lP(F)}([\lr])^{\rho}_{< x_0} = \begin{cases}
                                                         {}^1 \overline{\Jac}_{\lP(F)}([\lr])^{\rho}_{< x_0} - {}^2 \overline{\Jac}_{\lP(F)}([\lr])^{\rho}_{< x_0} ,  & \text{ if } G_{-} = SO(2), \\
                                                         \overline{\Jac}_{\lP(F)}([\lr])^{\rho}_{< x_0}, & \text{ otherwise. }  
                                                         \end{cases}
\]
So for $\lr^{\Sigma_0} \in \Rep(\lG^{\Sigma_0}(F))$,
\[
[{\rm inv}^{\rho}_{< X_0} (\lr^{\Sigma_0})|_{G}] = \overline{\rm inv}^{\rho}_{< X_0} ([\lr^{\Sigma_0}|_{G}]).
\]
It follows from \eqref{eq: involution} that
\[
\lr_{M}(\lq, \bar{\e}) = \circ_{\substack{(\rho, a, \delta_a) \in Jord(\q) \\ \delta_a = -1}} (|\overline{\rm inv}^{\rho}_{<a}| \circ |\overline{\rm inv}^{\rho}_{\leqslant a}|) \, \lr_W(\lq_d, \bar{\e}).
\]


By using the same argument as for the Aubert involution in \cite{Aubert:1995}, one can show that ${\rm inv}^{\rho}_{\infty}$ is an involution on the Grothendieck group of ${\rm Rep}(\lG^{\Sigma_0}(F))$ and preserves irreducible representations up to signs. Moreover, for $\lr^{\Sigma_0} \in {\rm Rep}(\lG^{\Sigma_0}(F))$ and self-dual irreducible unitary supercuspidal representations $\rho$, $\rho'$ of $GL(d_{\rho}, F)$, $GL(d_{\rho'}, F)$ respectively and $x \in \mathbb{R}_{\geqslant 0}$, one can show
\begin{align}
\label{eq: involution and Jac}
{\rm Jac}^{\rho'}_{x} \circ {\rm inv}^{\rho}_{\infty} (\lr^{\Sigma_0}) = \begin{cases} - \, {\rm inv}^{\rho}_{\infty} \circ {\rm Jac}^{\rho'}_{-x} (\lr^{\Sigma_0}) \otimes \omega^{-x}_{\rho} \eta_{\rho} \quad & \text{ if $\rho' = \rho$,} \\
{\rm inv}^{\rho}_{\infty} \circ {\rm Jac}^{\rho'}_{x} (\lr^{\Sigma_0}) \quad & \text{ if $\rho' \neq \rho$,}
\end{cases}
\end{align}
which is a consequence of the analogue of \cite[Theorem 1.7 (2)]{Aubert:1995}. Hence,
\begin{align}
\label{eq: involution and Jac 1}
\overline{\rm Jac}^{\rho'}_{x} \circ \overline{\rm inv}^{\rho}_{\infty} ([\lr]) = \begin{cases} - \, \overline{\rm inv}^{\rho}_{\infty} \circ \overline{\rm Jac}^{\rho'}_{-x} ([\lr]) \otimes \omega^{-x}_{\rho} \eta_{\rho} \quad & \text{ if $\rho' = \rho$,} \\
\overline{\rm inv}^{\rho}_{\infty} \circ \overline{\rm Jac}^{\rho'}_{x} ([\lr]) \quad & \text{ if $\rho' \neq \rho.$}
\end{cases}
\end{align}

\subsection{Case of discrete diagonal restriction}
\label{subsec: DDR}

We say $[\q] \in \cQ{G}$ has discrete diagonal restriction if $[\q_d] \in \cPdt{G}$, i.e.,  $\q = \q_{p}$ and for any $\rho$ the intervals $[B, A], [B', A']$ do not intersect for any $(\rho, A, B, \zeta), (\rho, A', B', \zeta')$ in $Jord_{\rho}(\q)$. We will give a recursive formula for elements in $\Pkt{\lq}^{\Sigma_0}$ (\eqref{eq: case 1}, \eqref{eq: case 2}). The key point of this formula is to reduce 
\[
\sum_{(\rho, A, B, \zeta) \in Jord(\q)} (A - B).
\]
When the sum is zero, we are back to the elementary case. The advantage of this formula is that it is easy to show stability and twisted character relation (Theorem~\ref{thm: discrete endoscopy}). The challenge is to show that the formula does give a representation instead of a virtual representation, which is also the main theorem of this subsection (Theorem~\ref{thm: discrete diagonal restriction}). The basic idea of the proof is to find the desired representations in the formula (Proposition~\ref{prop: existence}), and show the other representations in the formula are all cancelled. The hard part is to show the cancellation. One idea is to characterize the irreducible representations by their Jacquet modules. Then it suffices to show the cancellation of their Jacquet modules. This is carried out in Proposition~\ref{prop: cancellation} and the key computation is done in Lemma~\ref{lemma: cancellation of Jacquet module}. For the remaining cancellations, we would like to reduce to the results of M{\oe}glin for $G^{\Sigma_0}$. To do so we need to require the irreducible representation $\lr^{\Sigma_0}$ to satisfy $X(\lr^{\Sigma_0}) = X^{\Sigma_0}(\tilde{\lambda})$ (cf. the proof of Proposition~\ref{prop: special case}), i.e., it can be distinguished from the recursive formula by its restriction to $G^{\Sigma_0}(F)$. At last, we would like to argue that these are the only representations left to consider (cf. Lemma~\ref{lemma: special case}).

\subsubsection{Construction}
\label{subsubsec: construction}

Suppose $[\q] \in \cQ{G}$ has discrete diagonal restriction, i.e., $[\q_d] \in \cPdt{G}$ (cf. \cite[Section 7]{Xu:Apacket}). We always impose the natural order i.e., 
\(
(\rho, A, B, \zeta) >_{\q} (\rho, A', B', \zeta')
\)
if $A > A'$. Let 
\[
\r^{\Sigma_0}_{MW}(\q, \e) := \r^{\Sigma_0}_{W}(\q, \e\e^{MW/W}_{\q}), \quad \r^{\Sigma_0}_{M}(\q, \e) := \r^{\Sigma_0}_{MW}(\q, \e\e^{M/MW}_{\q}).
\]
Suppose there exists $(\rho, A, B, \zeta) \in Jord(\q)$ such that $A > B$. For $\e \in \D{\S{\q}^{\Sigma_0}}$, let $\eta_0 = \e(\rho, A, B, \zeta)$ and M{\oe}glin \cite{Moeglin:2009}  (also see \cite[Theorem 7.14]{Xu:Apacket}) showed that
\begin{align*}
\r^{\Sigma_0}_{M}(\q, \e) = & \oplus_{C \in ]B, A]} (-1)^{A-C} \langle \zeta B, \cdots, -\zeta C \rangle_{\rho} \rtimes {\rm Jac}^{\rho}_{\zeta(B+2), \cdots, \zeta C} \, \r^{\Sigma_0}_{M}\Big(\q', \e', (\rho, A, B+2, \zeta; \eta_0) \Big) \\ & \oplus_{\eta = \pm} (-1)^{[(A - B +1)/2]} \eta^{A-B+1} \eta^{A-B}_0 \r^{\Sigma_0}_M \Big(\q', \e', (\rho, A, B+1, \zeta; \eta), (\rho, B, B, \zeta; \eta\eta_0) \Big).
\end{align*}
Let $J(\q)$ be the set of $\rho$ appearing in $Jord(\q)$. Our idea is to construct a family of Arthur packets for the similitude groups of various ranks, so that they satisfy similar recursive formulas. Let
\begin{align*}
&\lif{X}_{C} := \langle \zeta B, \cdots, -\zeta C \rangle_{\rho} \rtimes  {\rm Jac}^{\rho}_{\zeta(B+2), \cdots, \zeta C} \, \lr^{\Sigma_0}_{M}\Big(\lq', \e', (\rho, A, B+2, \zeta; \eta_0)\Big) \otimes \eta^{C}_{\rho} \omega^{\zeta \gamma_{B}(C)}_{\rho} \chi_{\tilde{\rho}}, \\
& \lif{X}_{\eta} := \lr^{\Sigma_0}_M \Big(\lq', \e', (\rho, A, B+1, \zeta; \eta), (\rho, B, B, \zeta; \eta\eta_0) \Big),
\end{align*}
where $\omega_{\rho} = |\cdot|^{d_{\rho}}$,
\(
\gamma_B(C) = \frac{B + 1}{2} + \Big((B + 2) + \cdots + C \Big),
\)
and $\chi_{\tilde{\rho}}$ is the central character of $\cPkt{\tilde{\rho}}$ for $d_{\rho}$ even (resp. $\cPkt{\widetilde{\rho \otimes \eta_{\rho}}}$ for $d_{\rho}$ odd) when $\rho$ is of orthogonal type and trivial otherwise. If $\lif{X}_{+} \neq \lif{X}_{-}$, we define
\begin{align}
\label{eq: case 1}
\lr^{\Sigma_0}_{M}(\lq, \e) = \oplus_{C \in ]B, A]} (-1)^{A-C} \lif{X}_{C} \oplus_{\eta = \pm} (-1)^{[(A - B +1)/2]} \eta^{A-B+1} \eta^{A-B}_0 \lif{X}_{\eta}.
\end{align}
If $\lif{X}_{+} = \lif{X}_{-}$, then it is necessary that $\rho$ is of orthogonal type with $\eta_{\rho} \neq 1$ and $a, b$ are even, so $A - B + 1$ is also even, and we define
\begin{align}
\label{eq: case 2}
\lr^{\Sigma_0}_{M}(\lq, \e) = \oplus_{C \in ]B, A]} (-1)^{A-C} \lif{X}_{C} \oplus (-1)^{(A - B +1)/2} \eta_0 \lif{X}_{+}.
\end{align}

The construction reduces to the elementary case, which further reduces to the case of discrete $L$-packets by \eqref{eq: involution}. So it suffices to specify a family of discrete $L$-packets. If $\rho \in J(\q)$ is of orthogonal type, we fix $\cPkt{\tilde{\rho}}$ for $d_{\rho}$ even (resp. $\cPkt{\widetilde{\rho \otimes \eta_{\rho}}}$ for $d_{\rho}$ odd). In the case that $\rho \in J(\q)$ and $\p_a = \rho \otimes \nu_a$ for $d_{\rho}$ even (resp. $\p_a = (\rho \otimes \eta_{\rho}) \otimes \nu_a$ for $d_{\rho}$ odd), we define $\cPkt{\lp_{a}}$ inductively by requiring
\begin{align*}
& \cPkt{\lp_{a}} \hookrightarrow \rho||^{\frac{a-1}{2}} \rtimes (\cPkt{\lp_{a - 2}} \otimes \omega_{\rho}^{-\frac{a - 1}{4}}) \text{ for $d_{\rho}$ even} \\
\Big({\rm resp}. \quad & \cPkt{\lp_{a}} \hookrightarrow  \rho \otimes \eta_{\rho}||^{\frac{a-1}{2}} \rtimes (\cPkt{\lp_{a - 2}} \otimes \omega_{\rho}^{-\frac{a - 1}{4}}) \text{ for $d_{\rho}$ odd} \Big).
\end{align*}
For $\p \in \cPdt{G}$ such that $J(\p) \subseteq J(\q)$, we will define $\cPkt{\lp}$ through a sequence of twisted endoscopic transfers. According to \cite[Section 8.3]{Xu:Lpacket}, we need to specify the factorization of $\p$ and liftings of the twisted endoscopic embeddings, which depend on some choices of $1$-cochains of $W_{F}$ in $\mathbb{C}^{\times}$. When $J(\p) = \{\rho\}$, we choose $1$-cochains of $\Gamma_{E/F}$ in $\mathbb{C}^{\times}$ for $E/F$ associated with $\eta_{\rho}$. By \cite[Corollary 8.5 and Theorem 8.12]{Xu:Lpacket}, the resulting packet $\cPkt{\lp}$ is independent of the choice of factorizations of $\p$. In general, we fix an order on $J(\q)$ and decompose $\p = \oplus_{\rho \in J(\q)} \p_{\rho}$ such that $J(\p_{\rho}) = \{\rho\}$. Then we will factor $\p$ by factoring out $\p_{\rho}$ one by one according to this order. To lift the twisted endoscopic embeddings, we will choose 1-cochains of $\Gamma_{E/F}$ in $\mathbb{C}^{\times}$ for $E/F$ associated with $\eta_{\p_{\rho}}$ whenever possible, and fix some arbitrary choices depending on $\rho$ otherwise. We can further make these choices independent of the discrete parameters $\p$ relevant in the reductions of $\cPkt{\q}$. Denote the product of $1$-cochains for defining $\cPkt{\lq_d}$ by $c_{\q}$. At last, $\cPkt{\lp}$ determines $\Pkt{\lp}^{\Sigma_0}$. 
\begin{remark}
\label{rk: elementary}
Let $\p^{\sharp}_a = \rho \otimes \nu_1 \otimes \nu_a$ for $d_{\rho}$ even (resp. $\p^{\sharp}_a = (\rho \otimes \eta_{\rho}) \otimes \nu_1 \otimes \nu_a$ for $d_{\rho}$ odd). Then by \eqref{eq: involution and Jac 1},
\begin{align*}
& \cPkt{\lp^{\sharp}_{a}} \hookrightarrow \rho||^{-\frac{a-1}{2}} \rtimes (\cPkt{\lp^{\sharp}_{a - 2}} \otimes \omega_{\rho}^{\frac{a - 1}{4}}) \text{ for $d_{\rho}$ even} \\
\Big({\rm resp}. \quad & \cPkt{\lp^{\sharp}_{a}} \hookrightarrow  \rho \otimes \eta_{\rho}||^{-\frac{a-1}{2}} \rtimes (\cPkt{\lp^{\sharp}_{a - 2}} \otimes \omega_{\rho}^{\frac{a - 1}{4}}) \text{ for $d_{\rho}$ odd} \Big).
\end{align*}
We claim the elementary packets $\cPkt{\lq}$ for $\q_d \in \cPdt{G}$ with $J(\q_d) \subseteq J(\q)$ can also be obtained through the same sequence of twisted endoscopic transfers as for $\cPkt{\lq_d}$. This is because the elementary packets can be determined by their infinitesimal characters, and the twisted endoscopic transfers preserve the infinitesimal characters, cf. Theorem~\ref{thm: special case 1}.
\end{remark}

Under these constructions of discrete $L$-packets, we have the analogue of \cite[Proposition 9.3]{Xu:cusp} for $\lG^{\Sigma_0}(F)$. 

\begin{proposition}
\label{prop: parabolic reduction full orthogonal group}
Let $\p \in \cPdt{G}$ such that $J(\p) \subseteq J(\q)$ and $\e \in \D{\S{\p}^{\Sigma_{0}}}$. For any $(\rho, a, 1) \in Jord(\p)$, we denote by $a_{-}$ the biggest positive integer smaller than $a$ in $Jord_{\rho}(\p)$, denote by $a_{min}$ the minimum of $Jord_{\rho}(\p)$. If $a = a_{min}$, we let $a_{-} = 0$ if $a$ is even, and $-1$ otherwise. In this case, we always assume $\e(\rho, a,1)\e(\rho, a_{-}, 1) = -1$.
\begin{enumerate}

\item If $\e(\rho, a, 1)\e(\rho, a_{-}, 1) = -1$ and $a_{-} < a - 2$, then 
\begin{align}
\label{eq: parabolic reduction 1 full orthogonal group}
\lr^{\Sigma_{0}}(\p, \e) \hookrightarrow \langle (a-1)/2, \cdots, (a_{-} + 3)/2 \rangle_{\rho} \rtimes \lr^{\Sigma_{0}}(\p', \e') \otimes \omega^{-((a-1) + \cdots + (a_{-} + 3))/4}_{\rho}
\end{align}
as the unique irreducible subrepresentation, where 
\[
Jord(\p') = Jord(\p) \cup \{(\rho, a_{-} + 2, 1)\} \backslash \{(\rho, a, 1)\},
\]
and 
\[
\e'(\cdot)= \e(\cdot) \text{ over } Jord(\p) \backslash \{(\rho, a, 1)\}, \quad \quad \e'(\rho, a_{-}+2, 1) = \e(\rho, a, 1).
\]

\item If $\e(\rho, a, 1)\e(\rho, a_{-},1) = 1$, then 
\begin{align}
\label{eq: parabolic reduction 2 full orthogonal group}
\lr^{\Sigma_{0}}(\p, \e) \hookrightarrow \langle (a-1)/2, \cdots, -(a_{-} -1)/2 \rangle_{\rho} \rtimes \lr^{\Sigma_{0}}(\p', \e') \otimes \omega^{-((a - 1) + \cdots + (a_{-} + 1))/4}_{\rho} \chi_{\tilde{\rho}},
\end{align}
where 
\[
Jord(\p') = Jord(\p) \backslash \{(\rho, a, 1), (\rho, a_{-}, 1)\},
\]
and $\e'(\cdot)$ is the restriction of $\e(\cdot)$. 

\item If $\e(\rho, a_{min}, 1) = 1$ and $a_{min}$ is even, then 
\begin{align}
\label{eq: parabolic reduction 3 full orthogonal group}
\lr^{\Sigma_{0}}(\p, \e) \hookrightarrow \langle (a_{min}-1)/2, \cdots, 1/2 \rangle_{\rho} \rtimes \lr^{\Sigma_{0}}(\p', \e') \otimes \omega_{\rho}^{-((a_{min} - 1) + \cdots + 1)/4}
\end{align}
as the unique irreducible subrepresentation, where 
\[
Jord(\p') = Jord(\p) \backslash \{(\rho, a_{min}, 1)\},
\] 
and $\e'(\cdot)$ is the restriction of $\e(\cdot)$. 

\end{enumerate}

\end{proposition}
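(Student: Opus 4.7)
The plan is to deduce each part of the statement from its analogue for $G^{\Sigma_0}(F)$, established in \cite[Proposition 9.3]{Xu:cusp}, by lifting the parabolic embedding from $G^{\Sigma_0}(F)$ to $\lG^{\Sigma_0}(F)$. The crucial observation is that $\p \in \cPdt{G}$ is discrete, so $\S{\p}^{\Sigma_{0}}$ surjects onto $X^{\Sigma_{0}}(\tilde{\lambda})$ by Lemma~\ref{lemma: twist invariant} combined with \eqref{eq: combinatorial description}; consequently $X^{\Sigma_{0}}(\lp) = X^{\Sigma_{0}}(\tilde{\lambda})$, and any irreducible representation of $\lG^{\Sigma_0}(F)$ whose restriction to $G^{\Sigma_0}(F)$ contains $\r^{\Sigma_0}(\p, \e)$ is pinned down by its central character. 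In particular $\lr^{\Sigma_0}(\p, \e)$ is the unique such lift with central character $\chi_{\lp}$ prescribed by the inductive construction.

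Granting this, I would apply \cite[Proposition 9.3]{Xu:cusp} to obtain the parabolic embedding at the level of $G^{\Sigma_0}(F)$, as the unique irreducible subrepresentation in cases (1) and (3). The RHS of each displayed formula for $\lG^{\Sigma_{0}}(F)$ restricts on $G^{\Sigma_0}(F)$ to the corresponding classical RHS, so by Frobenius reciprocity some irreducible subrepresentation of the similitude RHS restricts to contain $\r^{\Sigma_0}(\p, \e)$. To identify it with $\lr^{\Sigma_0}(\p, \e)$ it remains to compute the central character of this subrepresentation and match it with $\chi_{\lp}$. Using the recursion $\cPkt{\lp_a} \hookrightarrow \rho||^{(a-1)/2} \rtimes (\cPkt{\lp_{a-2}} \otimes \omega_\rho^{-(a-1)/4})$ that defines $\cPkt{\lp}$, each reduction of a single Jordan block $(\rho, a, 1)$ by $2$ contributes one factor $\omega_{\rho}^{-(a-1)/4}$, so iterating from $a$ down to $a_{-} + 2$ accumulates exactly the twist $\omega_{\rho}^{-((a-1) + \cdots + (a_{-} + 3))/4}$ in (1). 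Case (3) is the analogous iteration from $a_{min}$ to the bottom, and case (2) differs by a further step that removes the matched pair $(\rho, a_{-} + 2, 1), (\rho, a_{-}, 1)$ via the twisted endoscopic factorization, which accounts both for the shifted $\omega_{\rho}$-exponent and for the extra factor $\chi_{\tilde{\rho}}$ coming from the central character of the rank-zero discrete packet $\cPkt{\tilde{\rho}}$ (resp. $\cPkt{\widetilde{\rho \otimes \eta_\rho}}$) that enters the endoscopic datum. Uniqueness of the subrepresentation in (1) and (3) then transfers back from $G^{\Sigma_0}(F)$ via restriction, since any two lifts of the same unique $G^{\Sigma_0}$-subrepresentation would differ by an element of $X / X^{\Sigma_0}(\lp) = 1$.

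The main obstacle is the coherent bookkeeping of the $1$-cochains $c_{\p}$ used to lift the twisted endoscopic embeddings in the construction of $\cPkt{\lp}$ for all discrete $\p$ with $J(\p) \subseteq J(\q)$ relevant to the reductions above. This coherence was arranged in Subsection~\ref{subsec: DDR} so that the chain of transfers used to define $\cPkt{\lp}$ and $\cPkt{\lp'}$ is compatible; with this in hand the central character identities reduce to the elementary computation with $\omega_{\rho}$-powers sketched above, and the rest of the argument is parallel to the $G^{\Sigma_{0}}$-proof in \cite[Proposition 9.3]{Xu:cusp}.
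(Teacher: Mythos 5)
Your proposal lands on essentially the same underlying ideas as the paper's proof: reduce to the $G^{\Sigma_0}$ result of \cite[Proposition 9.3]{Xu:cusp}, then pin down the correct similitude lift and the $\omega_{\rho}$-twist from the inductive construction of $\cPkt{\lp}$ and the compatibility of twisted endoscopic transfer with Jacquet modules. The paper packages the twist computation more efficiently, directly as a packet-level Jacquet-module identity, e.g.\ $\overline{\Jac}^{\rho}_{(a-1)/2, \dots, (a_-+3)/2}\,\cPkt{\lp} = \cPkt{\lp'} \otimes \omega^{-((a-1)+\cdots+(a_-+3))/4}_{\rho}$ in case (1), which, combined with the classical uniqueness, immediately yields the embedding; your Frobenius-reciprocity-plus-lift-uniqueness route is equivalent but needs more care in the identification step.

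Two points you should correct. First, the uniqueness-of-lift argument as written is off: ``$X / X^{\Sigma_0}(\lp) = 1$'' does not parse, since $X$ and $X^{\Sigma_0}(\lp)$ live in $\Hom(\lG(F)/G(F)\lZ(F), \C^{\times})$ and $\bar{H}^1(W_F, \C^{\times})$ respectively; what you actually want is that the lift with fixed $\Sigma_0$-infinitesimal character is unique because $X^{\Sigma_0}(\tilde{\lambda})/X^{\Sigma_0}(\lp) = 1$ — a correct fact, since a discrete $\p$ corresponds to the open orbit in $V_{\lambda}$ whose unique lift is pinned down by $[\tilde{\lambda}]$. Relatedly, ``pinned down by its central character'' should read ``by its $\Sigma_0$-infinitesimal character'': the central character alone does not separate $X$-twists. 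Second, for case (2) the extra twist produced by \cite[Lemma 8.2]{Xu:Lpacket} is the central character $\chi_{\lp_{a_-}}$ of $\cPkt{\lp_{a_-}}$, not $\chi_{\tilde{\rho}}$ directly; one needs the additional observation that $\chi_{\lp_{a_-}} = \eta_{\rho}^{(a_- - 1)/2}\chi_{\tilde{\rho}}$ and that the $\eta_{\rho}$-power can be absorbed because $\lr^{\Sigma_0}(\p, \e) \otimes \eta_{\rho} \cong \lr^{\Sigma_0}(\p, \e)$. With these repairs, your argument agrees with the paper's.
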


\begin{proof}
Comparing with \cite[Proposition 9.3]{Xu:cusp}, it suffices to show 
\[
\overline{\Jac}^{\rho}_{(a - 1)/2, \cdots, (a_{-} + 3)/2} \, \cPkt{\lp} = \cPkt{\lp'} \otimes \omega^{-((a-1) + \cdots + (a_{-} + 3))/4}_{\rho}
\]
for (1), and
\[
\overline{\Jac}^{\rho}_{(a - 1)/2, \cdots, (a_{-} + 1)/2} \, \cPkt{\lp} = \langle (a_{-} -1)/2, \cdots, -(a_{-} -1)/2 \rangle_{\rho} \rtimes \cPkt{\lp'} \otimes \omega^{-((a-1) + \cdots + (a_{-} + 1))/4}_{\rho}
\]
for (2), and
\[
\overline{\Jac}^{\rho}_{(a_{min} - 1)/2, \cdots, 1/2} \, \cPkt{\lp} = \cPkt{\lp'} \otimes \omega_{\rho}^{-((a_{min} - 1) + \cdots + 1)/4}
\]
for (3). Here (1) and (3) follow directly from our construction of the $L$-packets and the compatibility of twisted endoscopic transfer with Jacquet module. By the same argument, we know in case (2) that
\[
\overline{\Jac}^{\rho}_{(a - 1)/2, \cdots, (a_{-} + 1)/2} \, \cPkt{\lp} = {\rm Tran} \, \cPkt{\lp_{H}} \otimes \omega^{-((a-1) + \cdots + (a_{-} + 1))/4}_{\rho}
\]
where $\lp_{H} = {\bold p} \circ (\lp_{a_{-}} \times \lp'')$ and $Jord(\p'') = Jord(\p) \backslash \{(\rho, a)\}$. By \cite[Lemma 8.2]{Xu:Lpacket}, ${\rm Tran} \, \cPkt{\lp_{H}}$ differs from 
\[
\langle (a_{-} -1)/2, \cdots, -(a_{-} -1)/2 \rangle_{\rho} \rtimes \cPkt{\lp'}
\] 
by the twist of the central character $\chi_{\lp_{a_{-}}}$of $\cPkt{\lp_{a_{-}}}$. Since $\chi_{\lp_{a}} = \eta_{\rho} \chi_{\lp_{a - 2}}$ for any $a \in \mathbb{Z}$, then $\chi_{\lp_{a_{-}}} = \eta^{(a_{-} - 1)/2}_{\rho} \chi_{\tilde{\rho}}$. Since of $\lr^{\Sigma_{0}}(\p, \e) \otimes \eta_{\rho} \cong \lr^{\Sigma_{0}}(\p, \e)$, we can also drop $\eta^{(a_{-} - 1)/2}_{\rho}$.
\end{proof}

We could also define $\lr_{M}(\q, \bar{\e})$ in a similar way. Let 
\[
Jord(\q^{1}) = Jord(\q') \cup \{(\rho, A, B + 2, \zeta)\},
\] 
and 
\[
Jord(\q^{2}) = Jord(\q') \cup \{(\rho, A, B + 1, \zeta), (\rho, B, B, \zeta)\}.
\] 
We can identify $\S{\q} \cong \S{\q^{1}}$ by sending $(\rho, A, B, \zeta)$ to $(\rho, A, B+2, \zeta)$ if $A > B +1$. It induces $\S{\lq} \cong \S{\lq^{1}}$. We also map $s \in \S{\q}$ into $\S{\q^{2}}$ by letting 
\[
s(\rho, A, B + 1, \zeta) = s(\rho, B, B, \zeta) := s(\rho, A, B, \zeta).
\]
Then $\S{\q} \hookrightarrow \S{\q^{2}}$ is of index $1$ or $2$. It also induces $\S{\lq} \hookrightarrow \S{\lq^{2}}$ of index $1$ or $2$. We denote the image of $\bar{\e}$ in $\D{\S{\q^{1}}}$ by $\bar{\e}_{1}$. Let us define
\begin{align*}
\lr_{M}(\lq, \bar{\e}) := & \+_{C \in ]B, A]} (-1)^{A-C} \langle \zeta B, \cdots, -\zeta C \rangle_{\rho} \rtimes \overline{\Jac}^{\rho}_{\zeta (B+2), \cdots, \zeta C} \lr_{M}(\lq^{1}, \bar{\e}_{1}) \otimes \eta^{C}_{\rho} \omega^{\zeta \gamma_{B}(C)}_{\rho} \chi_{\tilde{\rho}} \\
& \+_{\tilde{\bar{\e}} \leftarrow \tilde{\bar{\e}}_{2} \in \D{\S{\lq^{2}}}} (-1)^{[(A - B + 1) / 2]} \e_{2}(\rho, A, B+1, \zeta)^{A - B + 1} \e(\rho, A, B, \zeta)^{A - B}  \lr_{M}(\lq^{2}, \bar{\e}_{2}).
\end{align*} 
By induction, one can show $[\lr^{\Sigma_0}_{M}(\lq, \e)|_{\lG}] = 2 \lr_{M}(\lq, \bar{\e})$ if $G$ is special even orthogonal and $\S{\lq}^{\Sigma_0} = \S{\lq}$, or $\lr_{M}(\lq, \bar{\e})$ otherwise.

\begin{lemma}
\label{lemma: independence}
The definitions of $\lr^{\Sigma_0}_{M}(\lq, \e)$ and $\lr_{M}(\lq, \bar{\e})$ are independent of the choice of $(\rho, A, B, \zeta) \in Jord(\q)$ such that $A > B$.
\end{lemma}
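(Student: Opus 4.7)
The plan is to argue by induction on the integer $\ell(\q) := \sum_{(\rho, A, B, \zeta) \in Jord(\q)}(A - B)$, the base case $\ell(\q) = 0$ being the elementary case, in which no choice is available and the assertion is vacuous. For the inductive step, suppose we are given two distinct blocks $t_i = (\rho_i, A_i, B_i, \zeta_i) \in Jord(\q)$ with $A_i > B_i$ for $i = 1, 2$. I want to show that applying formula \eqref{eq: case 1} or \eqref{eq: case 2} with $t_1$ and then (inside each summand) with $t_2$ gives the same element of $K{\rm Rep}(\lG^{\Sigma_0}(F))$ as applying it in the reverse order. By the inductive hypothesis, each intermediate $\lr^{\Sigma_0}_{M}(\lq', \e', \ldots)$ occurring in the first reduction is itself independent of the secondary choice, so may be unfolded via $t_2$; the same holds with the roles of $t_1$ and $t_2$ exchanged. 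The equality therefore reduces to the term-by-term comparison of two double sums after a reindexing.

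First I would treat the case $\rho_1 \neq \rho_2$. Here the $t_1$-reduction involves only parabolic inductions from $GL(d_{\rho_1})$-factors, Jacquet modules of the form ${\rm Jac}^{\rho_1}_{x}$, and twists by $\omega_{\rho_1}$ and $\chi_{\tilde{\rho}_1}$; the $t_2$-reduction involves the analogous objects for $\rho_2$. By the geometric lemma, parabolic inductions from Levis whose general linear factors are $GL(d_{\rho_1})$ and $GL(d_{\rho_2})$ commute up to isomorphism; Jacquet modules ${\rm Jac}^{\rho_1}_{x}$ and ${\rm Jac}^{\rho_2}_{y}$ commute as operations on cuspidal supports of disjoint types (Lemma~\ref{lemma: Jacquet module}); and the character twists $\omega^{?}_{\rho_i} \chi_{\tilde{\rho}_i}$ commute among themselves and pass through the $\rho_j$-operations for $j \neq i$ transparently. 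Hence the two double sums agree.

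Next I would treat the case $\rho_1 = \rho_2 = \rho$. Since $[\q_d] \in \cPdt{G}$, the intervals $[B_1, A_1]$ and $[B_2, A_2]$ are disjoint; say $A_1 < B_2$. Then the Jacquet modules ${\rm Jac}^{\rho}_{\zeta_1(B_1+2), \ldots, \zeta_1 C_1}$ and ${\rm Jac}^{\rho}_{\zeta_2(B_2+2), \ldots, \zeta_2 C_2}$ act at disjoint exponents and commute by Lemma~\ref{lemma: Jacquet module}; the socle inductions $\langle \zeta_i B_i, \ldots, -\zeta_i C_i \rangle_{\rho} \rtimes (\cdot)$ for $i = 1, 2$ commute likewise by the geometric lemma, and the same holds for the discrete-series-type summands $\lif{X}_{\eta}$. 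The signs $(-1)^{A_i - C_i}$ and $\eta^{A_i - B_i + 1} \eta_{0,i}^{A_i - B_i}$, together with the twists $\omega^{\zeta_i \gamma_{B_i}(C_i)}_{\rho} \chi_{\tilde{\rho}}$, multiply over the two reductions and so are independent of the order of application. The corresponding statement for $\lr_{M}(\lq, \bar{\e})$ follows by the identical argument with $\Jac$ replaced by $\overline{\Jac}$, since $\overline{\Jac}^{\rho_1}_{x}$ and $\overline{\Jac}^{\rho_2}_{y}$ commute for the same reasons.

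The main obstacle will be the careful bookkeeping of signs, central characters $\chi_{\tilde{\rho}}$, and twists by $\omega_{\rho}$ when the two reductions are interchanged; one must verify that these multiplicative factors reassemble identically in both orders. This is not conceptually difficult but is delicate: the verification rests on the symmetry of the formula in $(C, \eta)$, the fact that our choice of lifts $\cPkt{\lp}$ for the discrete $L$-packets is made coherently via fixed $1$-cochains $c_\q$ (so is insensitive to the order in which the $\rho$'s are processed; cf.\ Remark~\ref{rk: elementary}), and the standing fact that $\omega_\rho, \eta_\rho$ depend only on $\rho$ and not on the particular block in which $\rho$ occurs. Once these elementary identities are checked, the inductive step closes.
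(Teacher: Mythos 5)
Your proposal follows the same route as the paper: induct on $\sum(A-B)$, unfold $\lr^{\Sigma_0}_{M}(\lq,\e)$ with respect to $t_1$ and then $t_2$, use the inductive hypothesis on the intermediate terms, and close via the commutation of the relevant Jacquet functors and segment inductions (the case of $\lr_M(\lq,\bar\e)$ then following by restriction to $\lG(F)$). Your explicit split into $\rho_1\neq\rho_2$ and $\rho_1=\rho_2$ (with the DDR disjointness of intervals invoked in the latter) is a slightly more detailed presentation of what the paper asserts without comment, but the underlying argument is the same.
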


\begin{proof}
We can prove this by induction on $\sum_{(\rho, A, B, \zeta) \in Jord(\q)} (A - B)$. Suppose there exists another Jordan block $(\rho', A', B', \zeta') \in Jord(\q)$ such that $A' > B'$. By induction assumption, we can substitute in $\lr^{\Sigma_0}_{M}(\lq, \e)$ the recursive formulas for $\lr^{\Sigma_0}_{M}(\lq', \e', (\rho, A, B+2, \zeta; \eta_0))$ and $\lr^{\Sigma_0}_M (\lq', \e', (\rho, A, B+1, \zeta; \eta), (\rho, B, B, \zeta; \eta\eta_0) )$ with respect to $(\rho', A', B', \zeta')$. To simplify the result, we can use the facts that
\begin{align*}
& {\rm Jac}^{\rho}_{\zeta(B+2), \cdots, \zeta C} \, \Big(\langle \zeta' B, \cdots, -\zeta' C' \rangle_{\rho'} \rtimes {\rm Jac}^{\rho'}_{\zeta'(B'+2), \cdots, \zeta' C'} \,\lr^{\Sigma_0}_{M}\Big(\lq'', \e'', (\rho, A, B+2, \zeta; \eta_0), (\rho', A', B'+2, \zeta'; \eta'_0)\Big) \Big) \\
= & \langle \zeta' B, \cdots, -\zeta' C' \rangle_{\rho'} \rtimes {\rm Jac}^{\rho}_{\zeta(B+2), \cdots, \zeta C} \circ {\rm Jac}^{\rho'}_{\zeta' (B'+2), \cdots, \zeta' C'} \,\lr^{\Sigma_0}_{M}\Big(\lq'', \e'', (\rho, A, B+2, \zeta; \eta_0), (\rho', A', B'+2, \zeta'; \eta'_0)\Big)
\end{align*}
and
\[
\langle \zeta B, \cdots, -\zeta C \rangle_{\rho} \times \langle \zeta' B', \cdots, -\zeta' C' \rangle_{\rho'} \cong \langle \zeta' B', \cdots, -\zeta' C' \rangle_{\rho'} \times \langle \zeta B, \cdots, -\zeta C \rangle_{\rho}
\]
and ${\rm Jac}^{\rho}_{\zeta(B+2), \cdots, \zeta C}$ commutes with ${\rm Jac}^{\rho'}_{\zeta' (B'+2), \cdots, \zeta' C'}$. Then the result would be the same if we first define $\lr^{\Sigma_0}_{M}(\lq, \e)$ with respect to $(\rho', A', B', \zeta')$ and expand further with respect to $(\rho, A, B, \zeta)$. The case of $\lr_{M}(\lq, \bar{\e})$ follows by restricting $\lr^{\Sigma_0}_{M}(\lq, \e)$ to $\lG(F)$.

\end{proof}

\begin{lemma}
\label{lemma: restriction}
\,
\begin{enumerate}

\item For $\e' \in (\S{\q}^{\Sigma_0}/\S{\lq}^{\Sigma_0})^{\wedge}$ (resp. $\bar{\e}' \in (\S{\q}/\S{\lq})^{\wedge}$), 
\[
\lr^{\Sigma_0}_M(\lq, \e) = \lr^{\Sigma_0}_M(\lq, \e \e') \quad \text{ (resp. $\lr_M(\lq, \bar{\e}) = \lr_M(\lq, \bar{\e} \bar{\e}')$ ).}
\]

\item
\begin{align}
\label{eq: restriction}
\lr^{\Sigma_0}_{M}(\lq, \e)|_{G^{\Sigma_0}(F)} & = \bigoplus_{\e' \in (\S{\q}^{\Sigma_0}/\S{\lq}^{\Sigma_0})^{\wedge}} \, \r^{\Sigma_0}_{M}(\q, \e \e'). 
\end{align}

\begin{align}
\lr_{M}(\lq, \bar{\e})|_{G(F)} & = \bigoplus_{\bar{\e}' \in (\S{\q}/\S{\lq})^{\wedge}} \, \r_{M}(\q, \bar{\e} \bar{\e}').
\end{align}

\end{enumerate}
\end{lemma}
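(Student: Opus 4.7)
The plan is to induct on $N(\q) := \sum_{(\rho, A, B, \zeta) \in Jord(\q)}(A - B)$, treating the $\lG^{\Sigma_0}(F)$ version first and then descending to $\lG(F)$. The base case $N(\q) = 0$ is elementary, so formula \eqref{eq: involution} expresses $\lr^{\Sigma_0}_M(\lq, \e)$ as an iterated composition of $|{\rm inv}^{\rho}_{<a}|$ and $|{\rm inv}^{\rho}_{\leqslant a}|$ applied to $\lr^{\Sigma_0}_W(\lq_d, \e)$. Each such involution is an alternating sum of parabolic inductions and Jacquet modules, both of which commute with restriction from $\lG^{\Sigma_0}(F)$ to $G^{\Sigma_0}(F)$; therefore both assertions for $\lr^{\Sigma_0}_M$ reduce to the corresponding statements for $\lr^{\Sigma_0}_W$ attached to the discrete parameter $\lq_d$. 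These are the $\Sigma_0$-refinement of Theorem~\ref{thm: Apacket}(1) recorded after its statement, and they apply because in the discrete elementary case $X^{\Sigma_0}(\lq_d) = X^{\Sigma_0}(\tilde{\lambda})$, placing us within the scope of Theorem~\ref{thm: special case}.

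For the inductive step, fix $(\rho, A, B, \zeta) \in Jord(\q)$ with $A > B$ and expand $\lr^{\Sigma_0}_M(\lq, \e)$ via \eqref{eq: case 1} or \eqref{eq: case 2}. Since $N(\q^1), N(\q^2) < N(\q)$, the induction hypothesis applies to each summand. A character $\e' \in (\S{\q}^{\Sigma_0}/\S{\lq}^{\Sigma_0})^{\wedge}$ transports through the maps $\S{\q}^{\Sigma_0} \xrightarrow{\cong} \S{\q^1}^{\Sigma_0}$ and $\S{\q}^{\Sigma_0} \hookrightarrow \S{\q^2}^{\Sigma_0}$ to characters $(\e')_1, (\e')_2$ still trivial on the corresponding $\S{\lq^{\bullet}}^{\Sigma_0}$. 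Replacing $\e$ by $\e\e'$ in $\lif{X}_C$ sends $\eta_0 = \e(\rho, A, B, \zeta)$ to $\eta_0\e'(\rho, A, B, \zeta)$, which is exactly the action of $(\e')_1$ on $(\rho, A, B+2, \zeta)$; the induction hypothesis for $\lq^1$ then yields $\lif{X}_C(\e\e') = \lif{X}_C(\e)$. The second summand is subtler: the same substitution produces an extra scalar $\e'(\rho, A, B, \zeta)^{A-B}$ from the sign factor $\eta_0^{A-B}$, and one must verify that this scalar is absorbed by the constraint $(\e')_2(\rho, A, B+1, \zeta) = (\e')_2(\rho, B, B, \zeta) = \e'(\rho, A, B, \zeta)$ dictated by the embedding $\S{\q}^{\Sigma_0} \hookrightarrow \S{\q^2}^{\Sigma_0}$, combined with induction on $\lq^2$. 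The exceptional configuration $\lif{X}_+ = \lif{X}_-$ appearing in \eqref{eq: case 2} requires a parallel but separate bookkeeping, using that in that setting $A - B + 1$ is even.

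For part (2), restrict the recursive formula term by term to $G^{\Sigma_0}(F)$, invoking the commutation of parabolic induction and of Jacquet modules with restriction. Substituting the induction hypothesis
\[
\lr^{\Sigma_0}_M(\lq^{\bullet}, \e^{\bullet})|_{G^{\Sigma_0}(F)} = \bigoplus_{\e'_{\bullet} \in (\S{\q^{\bullet}}^{\Sigma_0}/\S{\lq^{\bullet}}^{\Sigma_0})^{\wedge}} \r^{\Sigma_0}_M(\q^{\bullet}, \e^{\bullet} \e'_{\bullet})
\]
for $\bullet \in \{1, 2\}$ and matching with M{\oe}glin's recursive formula for $\r^{\Sigma_0}_M(\q, \e\e')$ recalled at the start of Subsection~\ref{subsubsec: construction} gives \eqref{eq: restriction}, with part (1) ensuring that the indexing set is traversed without duplication. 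The statement for $\lr_M(\lq, \bar{\e})$ follows either by restricting the $\lG^{\Sigma_0}(F)$ identity one more step, or by a parallel induction using the recursive formula on $\lG(F)$ written out in the passage preceding Lemma~\ref{lemma: independence}.

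The principal obstacle is the sign bookkeeping inside $\lif{X}_{\eta}$: one must show that the scalar $\e'(\rho, A, B, \zeta)^{A-B}$ introduced by the substitution $\eta_0 \mapsto \eta_0\e'(\rho, A, B, \zeta)$ cleanly absorbs into the transport of $\e'$ to $(\e')_2$, and handle the degenerate case $\lif{X}_+ = \lif{X}_-$ where one of the $\pm$-terms disappears and the compatibility must be rechecked directly from \eqref{eq: case 2}.
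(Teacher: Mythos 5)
Your overall strategy — induct on $\sum_{(\rho, A, B, \zeta) \in Jord(\q)}(A - B)$, treat the elementary base case via the involution formula \eqref{eq: involution} which reduces to the tempered packets and Theorem~\ref{thm: special case}, then propagate through the recursive formulas \eqref{eq: case 1}, \eqref{eq: case 2} — matches the paper, whose proof is a single line appealing to exactly these ingredients. Your verification that the $\lif{X}_C$ terms are handled cleanly by the isomorphism $\S{\q}^{\Sigma_0} \cong \S{\q^1}^{\Sigma_0}$ and the induction hypothesis is correct, as is your remark that $|{\rm inv}^{\rho}|$ commutes with restriction to $G^{\Sigma_0}(F)$ (it is a signed sum of parabolic inductions and Jacquet modules with no character twist), which makes part (2) in the base case immediate.

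One correction is needed in your description of the $\lif{X}_\eta$ step. You write that the constraint dictated by the embedding $\S{\q}^{\Sigma_0} \hookrightarrow \S{\q^2}^{\Sigma_0}$ is $(\e')_2(\rho, A, B+1, \zeta) = (\e')_2(\rho, B, B, \zeta) = \e'(\rho, A, B, \zeta)$. That is the condition on the $s$-side of the embedding (the paper sets $s_2(\rho, A, B+1, \zeta) = s_2(\rho, B, B, \zeta) := s(\rho, A, B, \zeta)$). On the dual side, a character $(\e')_2$ of $\S{\q^2}^{\Sigma_0}$ restricts to $\e'$ on $\S{\q}^{\Sigma_0}$ precisely when the \emph{product}
\[
(\e')_2(\rho, A, B+1, \zeta)\,(\e')_2(\rho, B, B, \zeta) = \e'(\rho, A, B, \zeta)
\]
holds, not when both factors separately equal $\e'(\rho, A, B, \zeta)$. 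This extra freedom is exactly what accounts for the index $1$ or $2$ of $\S{\q}^{\Sigma_0}$ in $\S{\q^2}^{\Sigma_0}$, and it is what allows the modified character $\delta'_2/\delta_2$ (which is $\e'$ on $Jord(\q')$, trivial on $(\rho, A, B+1, \zeta)$, and $\e'(\rho, A, B, \zeta)$ on $(\rho, B, B, \zeta)$) to be absorbed into the $\eta = \pm$ sum together with the sign factor $\e'(\rho, A, B, \zeta)^{A - B}$. The computation you sketch does close up, but under the product constraint, not the one you wrote. Apart from that imprecision, the proposal is a faithful (and far more explicit) rendering of the paper's one-sentence proof.
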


\begin{proof}
It follows from the recursive formulas and the results in the discrete case.
\end{proof}

So far we have only defined $\lr^{\Sigma_0}_{M}(\lq, \e)$ in the Grothendieck group, and it is by no means clear that this defines a representation. Indeed, M{\oe}glin first defined $\r^{\Sigma_0}_M(\q, \e)$ by the recursive formula and then showed that it is a representation by direct computation (cf. \cite[Theorem 4.1]{Moeglin:2009}). We will follow the same strategy below.

\begin{theorem}
\label{thm: discrete diagonal restriction}
Let
\[
\lr^{\Sigma_0}_{M}(\lq, \e)_{\rm main} = \Big\langle \langle \zeta B, \cdots ,-\zeta A \rangle_{\rho} \rtimes \lr^{\Sigma_0}_M \Big(\lq', \e', (\rho, A- 1, B+1, \zeta; \eta_0)\Big) \otimes \eta_{\rho}^{A} \omega_{\rho}^{\zeta((B+1) + \cdots + A)/2} \chi_{\tilde{\rho}} \Big\rangle
\]
and 
\[
\lr^{\Sigma_0}_{M}(\lq, \e)_{{\rm com}, \eta} = \lr^{\Sigma_0}_{M}\Big(\lq', \e', \cup_{C \in [B, A]}(\rho, C, C, \zeta; (-1)^{C-B}\eta) \Big)
\]
for
\[
\eta \in S_{\eta_0} := \Big\{\eta = \pm \, | \, \eta_0 = \prod_{C \in [B, A]} (-1)^{C-B} \eta \Big\}. 
\]
If
\(
\lr^{\Sigma_0}_{M}(\lq, \e)_{{\rm com}, \eta} \neq \lr^{\Sigma_0}_{M}(\lq, \e)_{com, -\eta},
\) 
then
\begin{align}
\label{eq: case 1 resolved}
\lr^{\Sigma_0}_{M}(\lq, \e) = \lr^{\Sigma_0}_{M}(\lq, \e)_{\rm main} \oplus_{\eta \in S_{\eta_0}} \lr^{\Sigma_0}_{M}(\lq, \e)_{{\rm com}, \eta}.
\end{align}
Otherwise,
\begin{align}
\label{eq: case 2 resolved}
\lr^{\Sigma_0}_{M}(\lq, \e) = \lr^{\Sigma_0}_{M}(\lq, \e)_{\rm main} \oplus \lr^{\Sigma_0}_{M}(\lq, \e)_{{\rm com}, \eta}
\end{align}
for any $\eta \in S_{\eta_0}$.
\end{theorem}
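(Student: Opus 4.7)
The plan is to adapt M{\oe}glin's strategy in \cite{Moeglin:2009} to the similitude setting and proceed by induction on $\sum_{(\rho, A, B, \zeta) \in Jord(\q)} (A - B)$. The base case is the elementary situation of Subsection~\ref{subsec: elementary case}, where the statement becomes trivial. For the inductive step, Lemma~\ref{lemma: independence} allows me to fix once and for all a specific Jordan block $(\rho, A, B, \zeta)$ with $A > B$ in terms of which the recursion is written, and I can then expand the right-hand sides of \eqref{eq: case 1} and \eqref{eq: case 2} using the induction hypothesis so that each summand $\lif{X}_C$ and $\lif{X}_\eta$ is a genuine representation.

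First, I would establish an existence statement: both $\lr^{\Sigma_0}_{M}(\lq, \e)_{\rm main}$ and each $\lr^{\Sigma_0}_{M}(\lq, \e)_{{\rm com}, \eta}$ occur with the stated multiplicity in the recursive formula. The main piece $\lr^{\Sigma_0}_M(\lq, \e)_{\rm main}$ should be located inside the $C = A$ summand $\lif{X}_A = \langle \zeta B, \cdots, -\zeta A \rangle_{\rho} \rtimes \mathrm{Jac}^{\rho}_{\zeta(B+2), \cdots, \zeta A}\, \lr^{\Sigma_0}_M(\lq', \e', (\rho, A, B+2, \zeta;\eta_0)) \otimes (\cdots)$ by unfolding $(\rho, A, B+2, \zeta)$ all the way to $(\rho, A-1, B+1, \zeta)$ using the induction hypothesis and then taking the socle as in the parabolic reductions of Proposition~\ref{prop: parabolic reduction full orthogonal group}. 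Each commutative piece $\lr^{\Sigma_0}_M(\lq, \e)_{{\rm com}, \eta}$ should be found inside $\lif{X}_\eta$ by iteratively applying the recursive formula to the trailing block $(\rho, A, B+1, \zeta)$ until it fully collapses into a chain of $(\rho, C, C, \zeta)$ blocks, keeping careful track of the signs $\eta^{A-B+1} \eta_0^{A-B}$.

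The main obstacle is to show that every other irreducible subquotient of the right-hand side cancels. I would split this cancellation into two regimes according to the twist stabilizer $X(\lr^{\Sigma_0})$. For candidate subquotients $\lr^{\Sigma_0}$ with $X(\lr^{\Sigma_0}) = X^{\Sigma_0}(\tilde{\lambda})$, Corollary~\ref{cor: determination} says that the cancellation at the level of $K{\rm Rep}(\lG^{\Sigma_0}(F), \omega)$ is detected by the restriction to $G^{\Sigma_0}(F)$ together with the twisted character, so I can restrict the virtual identity on both sides to $G^{\Sigma_0}(F)$ by Lemma~\ref{lemma: restriction} and reduce to M{\oe}glin's cancellation theorem \cite[Theorem 4.1]{Moeglin:2009} for $\r^{\Sigma_0}_M(\q, \e)$. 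For candidates with $X(\lr^{\Sigma_0}) \subsetneq X^{\Sigma_0}(\tilde{\lambda})$, the restriction argument no longer suffices; instead, I would characterize such an $\lr^{\Sigma_0}$ by applying a well-chosen sequence of Jacquet functors $\mathrm{Jac}^\rho_{\zeta(B+2), \cdots, \zeta C}$, using Lemma~\ref{lemma: Jacquet module} and Lemma~\ref{lemma: Jacquet module} in turn to compute the image of each $\lif{X}_C$ and $\lif{X}_\eta$, and verify that the alternating signs $(-1)^{A-C}$ and $(-1)^{[(A-B+1)/2]}\eta^{A-B+1}\eta_0^{A-B}$ force the Jacquet module of the remainder to vanish. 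This combinatorial cancellation of Jacquet modules is the technical heart of the proof.

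Finally, the split between \eqref{eq: case 1 resolved} and \eqref{eq: case 2 resolved} is dictated entirely by whether $\lif{X}_+ \neq \lif{X}_-$, which as noted after \eqref{eq: case 2} occurs only when $\rho$ is orthogonal with $\eta_\rho \neq 1$ and $a, b$ both even. In that special case $A - B + 1$ is even, so $\eta^{A-B+1}_0 = 1$, and the two contributions $\lif{X}_+$ and $\lif{X}_-$ produce a single representative $\lr^{\Sigma_0}_M(\lq, \e)_{{\rm com}, \eta}$ with sign $(-1)^{(A-B+1)/2}\eta_0$, in agreement with \eqref{eq: case 2 resolved}. Throughout, Proposition~\ref{prop: induction preserves infinitesimal character} and Corollary~\ref{cor: Jac preserves infinitesimal character} guarantee that all constituents appearing in the recursion have the correct $\Sigma_0$-infinitesimal character $[\tilde{\lambda}]$, so that no extraneous representation can sneak into the identity.
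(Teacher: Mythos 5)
Your outline correctly reproduces the general shape of the argument: induction on $\sum (A-B)$, a division of the right-hand side of \eqref{eq: case 1}, \eqref{eq: case 2} into its pieces $\lif{X}_C$, $\lif{X}_\eta$, an existence step (locating $\lr^{\Sigma_0}_M(\lq,\e)_{\rm main}$ in $\lif{X}_A$ and the $\lr^{\Sigma_0}_M(\lq,\e)_{{\rm com},\eta}$ in $\lif{X}_{\eta'}$, as in Proposition~\ref{prop: existence}), and a two-regime cancellation split by $X(\lr^{\Sigma_0})$. That much matches the paper.

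However, your treatment of the small-stabilizer regime has a genuine hole. The Jacquet-module cancellation you propose (which is the content of Proposition~\ref{prop: cancellation} and Lemma~\ref{lemma: cancellation of Jacquet module}) only eliminates constituents $\lr^{\Sigma_0}$ for which ${\rm Jac}^{\rho}_{\zeta x}\lr^{\Sigma_0}\neq 0$ for some $x\in[B+1,A]$. It says nothing about a constituent with $X(\lr^{\Sigma_0})\subsetneq X^{\Sigma_0}(\tilde\lambda)$ and ${\rm Jac}^{\rho}_{\zeta x}\lr^{\Sigma_0}=0$ for \emph{all} $x\in[B+1,A]$, which a priori could survive as an uncancelled subquotient of some $\lif{X}_C$ or $\lif{X}_\eta$ without being in the main or commutative pieces. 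The paper rules this out by Lemma~\ref{lemma: special case}, a separate representation-theoretic argument: one embeds such an $\lr^{\Sigma_0}$ into a standard induced representation, uses the non-invariance under $\otimes\eta_\rho$ to control the shape of the inducing data, and deduces ${\rm Jac}^{\rho}_{\zeta B,\cdots,-\zeta A}\lr^{\Sigma_0}\neq 0$, i.e.\ $\lr^{\Sigma_0}$ must lie in the main piece, a contradiction. You never formulate or prove anything of this kind, and without it the cancellation is incomplete.

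Moreover, Lemma~\ref{lemma: special case} is proved only under the hypothesis that all $(\rho,A',B',\zeta')\in Jord_\rho(\q)$ with $B'>B$ have $\zeta'=\zeta$, and one cannot arrange this simultaneously for every $\rho$ and every choice of recursion block in a general $\q$ of discrete diagonal restriction. The paper therefore first proves the theorem only under the order assumption of Proposition~\ref{prop: special case}, and then removes that assumption by choosing an admissible order $>_\q$ compatible with the hypothesis, passing to a dominating parameter $\q_{\gg}$ with discrete diagonal restriction and the natural order, applying Lemma~\ref{lemma: shift} to write $\lr^{\Sigma_0}_M(\lq,\e)$ as a Jacquet restriction of the already-known representation $\lr^{\Sigma_0}_M(\lq_{\gg},\e_{\gg})$, and concluding via Lemma~\ref{lemma: independence resolved}. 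This shift-and-reduce step is entirely absent from your proposal; without it the theorem is established only for a restricted class of parameters.
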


\begin{corollary}
$\lr^{\Sigma_0}_{M}(\lq, \e)$ is a representation of $\lG^{\Sigma_0}(F)$.
\end{corollary}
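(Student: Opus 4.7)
The plan is to derive the corollary from Theorem~\ref{thm: discrete diagonal restriction} by induction on the nonnegative integer
\[
d(\q) := \sum_{(\rho, A, B, \zeta) \in Jord(\q)} (A - B),
\]
keeping the ambient class fixed, namely parameters with discrete diagonal restriction. The base case $d(\q) = 0$ forces $A = B$ for every Jordan block, so $\q$ is elementary; in that situation $\lr^{\Sigma_0}_M(\lq, \e)$ has already been constructed as an irreducible representation in Subsection~\ref{subsec: elementary case}.

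For the inductive step I would fix the block $(\rho, A, B, \zeta)$ with $A > B$ used to set up Theorem~\ref{thm: discrete diagonal restriction} and examine each summand of \eqref{eq: case 1 resolved} (or \eqref{eq: case 2 resolved}). The main piece $\lr^{\Sigma_0}_M(\lq, \e)_{\rm main}$ is by definition the socle $\langle \cdots \rangle$ of a parabolic induction whose inducing datum involves the object $\lr^{\Sigma_0}_M\bigl(\lq', \e', (\rho, A-1, B+1, \zeta; \eta_0)\bigr)$. The underlying Arthur parameter replaces the block $(\rho, A, B, \zeta)$ by $(\rho, A-1, B+1, \zeta)$; it still has discrete diagonal restriction (the diagonal blocks contributed by the new piece form a subset of those contributed by the old) and its $d$-invariant is $d(\q) - 2 < d(\q)$. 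By the inductive hypothesis the inducing datum is a genuine representation, hence so is its socle.

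Each complementary piece $\lr^{\Sigma_0}_M(\lq, \e)_{{\rm com}, \eta}$ is, by definition, $\lr^{\Sigma_0}_M$ evaluated at the parameter obtained from $\q$ by removing $(\rho, A, B, \zeta)$ and inserting the diagonal blocks $(\rho, C, C, \zeta)$ for $C \in [B, A]$. A quick bookkeeping check shows that the diagonal Jordan blocks contributed by $\{(\rho, C, C, \zeta)\}_{C \in [B, A]}$ are exactly $(\rho, 2B+1, 1), (\rho, 2B+3, 1), \ldots, (\rho, 2A+1, 1)$, which coincide with those previously contributed by the single block $(\rho, A, B, \zeta)$. Hence the new parameter still has discrete diagonal restriction, and its $d$-invariant equals $d(\q) - (A-B) < d(\q)$. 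By induction the complementary pieces are also representations, and Theorem~\ref{thm: discrete diagonal restriction} then exhibits $\lr^{\Sigma_0}_M(\lq, \e)$ as an honest direct sum of representations.

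The only real work is already embedded in Theorem~\ref{thm: discrete diagonal restriction}; granted that theorem, the corollary is routine. The one piece of bookkeeping to double-check is that both auxiliary parameters remain in the domain of the inductive hypothesis---discrete diagonal restriction with strictly smaller $d$-invariant---and this is immediate from comparing the diagonal Jordan blocks.
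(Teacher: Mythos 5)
Your proof is correct and matches the paper's intended argument: the paper leaves the corollary unproved because it is exactly the induction you describe. The only point worth flagging is the boundary case $A - B = 1$, where the block $(\rho, A-1, B+1, \zeta)$ degenerates to the empty block; the argument still goes through since the $d$-invariant then drops by $1$ rather than $2$, but it is worth noting so the bookkeeping stays honest.
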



We will state some consequences of this theorem. Define $\Pkt{\lq}^{\Sigma_0}$ to be the set of irreducible constituents of
\[
\bigoplus_{\tilde{\e} \in \D{\S{\lq}^{\Sigma_0}}} \lr^{\Sigma_0}_{M}(\lq, \e).
\]
From \cite[Theorem 4.2]{Moeglin:2009} (also see \cite[Theorem 7.8]{Xu:Apacket}), the irreducible constitutes of $\r^{\Sigma_{0}}_{M}(\q, \e)$ can be parametrized by pairs of integer-valued functions $(\ul, \ueta)$ over $Jord(\q)$, such that 
\begin{align}
\label{eq: refined parameter}
\ul(\rho, A, B, \zeta) \in [0, [(A-B+1)/2]] \text{ and } \ueta(\rho, A, B, \zeta) \in \{\pm 1\},
\end{align}
and
\begin{align}
\label{eq: endoscopic character formula}
\e(\rho, A, B, \zeta) = \e_{\ul, \ueta}(\rho, A, B, \zeta) := \ueta(\rho, A, B, \zeta)^{A - B + 1} (-1)^{[(A-B+1)/2] + \ul(\rho, A, B, \zeta)}.
\end{align}
Moreover,
\begin{align*}
\r_{M}^{\Sigma_{0}}(\q, \ul, \ueta) & \hookrightarrow \times_{(\rho, A, B, \zeta) \in Jord(\q)} 
       \begin{pmatrix}
              \zeta B & \cdots & -\zeta A \\
              \vdots &  & \vdots \\
              \zeta (B + \ul(\rho, A, B, \zeta) - 1) & \cdots & - \zeta (A - \ul(\rho, A, B, \zeta) + 1)
       \end{pmatrix}_{\rho} \\
& \times \r_{M}^{\Sigma_{0}}\Big(\cup_{(\rho, A, B, \zeta) \in Jord(\q)} \cup_{C \in [B + \ul(\rho, A, B, \zeta), A - \ul(\rho, A, B, \zeta)]} (\rho, C, C, \zeta; \ueta(\rho, A, B, \zeta)(-1)^{C - B - \ul(\rho, A, B, \zeta)})\Big)
\end{align*}
as the unique irreducible subrepresentation. There is an obvious equivalence relation to be made here on pairs $(\ul, \ueta)$, namely 
\[
(\ul, \ueta) \sim_{\Sigma_{0}} (\ul', \ueta')
\]
if and only if $\ul = \ul'$ and $(\ueta/\ueta') (\rho, A, B, \zeta)= 1$ unless $\ul(\rho, A, B, \zeta) = (A - B +1)/2$. Then 
\begin{align}
\label{eq: refined decomposition classical}
\r^{\Sigma_{0}}_{M}(\q, \e) = \bigoplus_{\{(\ul, \ueta): \, \e = \e_{\ul, \ueta} \}/\sim_{\Sigma_{0}}} \r^{\Sigma_{0}}_{M}(\q, \ul, \ueta).
\end{align}
We define $\lr^{\Sigma_0}_{M}(\lq, \ul, \ueta)$ to be the element in $\Pkt{\lq}^{\Sigma_0}$ containing $\r^{\Sigma_0}_{M}(\q, \ul, \ueta)$ in its restriction to $G^{\Sigma_0}(F)$. Then 
\begin{align*}
\lr_{M}^{\Sigma_{0}}(\lq, \ul, \ueta) & \hookrightarrow \times_{(\rho, A, B, \zeta) \in Jord(\q)} 
       \begin{pmatrix}
              \zeta B & \cdots & -\zeta A \\
              \vdots &  & \vdots \\
              \zeta (B + \ul(\rho, A, B, \zeta) - 1) & \cdots & - \zeta (A - \ul(\rho, A, B, \zeta) + 1)
       \end{pmatrix}_{\rho} \\
& \rtimes \lr_{M}^{\Sigma_{0}}\Big(\cup_{(\rho, A, B, \zeta) \in Jord(\q)} \cup_{C \in [B + \ul(\rho, A, B, \zeta), A - \ul(\rho, A, B, \zeta)]} (\rho, C, C, \zeta; \ueta(\rho, A, B, \zeta)(-1)^{C - B - \ul(\rho, A, B, \zeta)})\Big) \\
& \otimes_{(\rho, A, B, \zeta) \in Jord(\q)} \eta^{(A - (\ul(\rho, A, B, \zeta)-1)/2)\ul(\rho, A, B, \zeta)}_{\rho} \omega^{(A + B + 1)(A - B - \ul(\rho, A, B, \zeta) +1)\ul(\rho, A, B, \zeta)/2}_{\rho} \chi^{\ul(\rho, A, B, \zeta)}_{\tilde{\rho}}
\end{align*}
as the unique irreducible subrepresentation. Moreover, $\lr_{M}^{\Sigma_{0}}(\lq, \ul, \ueta) = \lr_{M}^{\Sigma_{0}}(\lq, \ul', \ueta')$ if and only if $\ul = \ul'$ and 
\begin{align*}
& \lr_{M}^{\Sigma_{0}}\Big(\cup_{(\rho, A, B, \zeta) \in Jord(\q)} \cup_{C \in [B + \ul(\rho, A, B, \zeta), A - \ul(\rho, A, B, \zeta)]} (\rho, C, C, \zeta; \ueta(\rho, A, B, \zeta)(-1)^{C - B - \ul(\rho, A, B, \zeta)})\Big) \\
= \, & \lr_{M}^{\Sigma_{0}}\Big(\cup_{(\rho, A, B, \zeta) \in Jord(\q)} \cup_{C \in [B + \ul'(\rho, A, B, \zeta), A - \ul'(\rho, A, B, \zeta)]} (\rho, C, C, \zeta; \ueta'(\rho, A, B, \zeta)(-1)^{C - B - \ul'(\rho, A, B, \zeta)})\Big).
\end{align*}
This defines an equivalence relation $\sim_{\lG^{\Sigma_0}}$ on $(\ul, \ueta)$. Hence
\begin{align}
\label{eq: refined decomposition}
\lr^{\Sigma_{0}}_{M}(\lq, \e) = \bigoplus_{\{(\ul, \ueta): \, \e = \e_{\ul, \ueta} \}/\sim_{\lG^{\Sigma_{0}}}} \lr^{\Sigma_{0}}_{M}(\lq, \ul, \ueta).
\end{align}

We define $\lr_{M}(\lq, \ul, \ueta)$ to be the irreducible representation of $\lG(F)$ viewed as $\sH(\lG(F))$-module in the restriction of $\lr_{M}^{\Sigma_{0}}(\lq, \ul, \ueta)$ to $\lG(F)$. Then
\begin{align*}
\lr_{M}(\lq, \ul, \ueta) & \hookrightarrow \times_{(\rho, A, B, \zeta) \in Jord(\q)} 
       \begin{pmatrix}
              \zeta B & \cdots & -\zeta A \\
              \vdots &  & \vdots \\
              \zeta (B + \ul(\rho, A, B, \zeta) - 1) & \cdots & - \zeta (A - \ul(\rho, A, B, \zeta) + 1)
       \end{pmatrix} \\
& \rtimes \lr_{M}\Big(\cup_{(\rho, A, B, \zeta) \in Jord(\q)} \cup_{C \in [B+\ul(\rho, A, B, \zeta), A - \ul(\rho, A, B, \zeta)]} (\rho, C, C, \zeta; \ueta(\rho, A, B, \zeta)(-1)^{C - B - \ul(\rho, A, B, \zeta)})\Big) \\
& \otimes_{(\rho, A, B, \zeta) \in Jord(\q)} \eta^{(A - (\ul(\rho, A, B, \zeta)-1)/2)\ul(\rho, A, B, \zeta)}_{\rho} \omega^{(A + B + 1)(A - B - \ul(\rho, A, B, \zeta) +1)\ul(\rho, A, B, \zeta)/2}_{\rho} \chi^{\ul(\rho, A, B, \zeta)}_{\tilde{\rho}}
\end{align*}
as the unique irreducible element in $\overline{\Rep}(\lG(F))$ forming an $\sH(\lG(F))$-submodule. Moreover, $\lr_{M}(\lq, \ul, \ueta) = \lr_{M}(\lq, \ul', \ueta')$ if and only if $\ul = \ul'$ and 
\begin{align*}
& \lr_{M}\Big(\cup_{(\rho, A, B, \zeta) \in Jord(\q)} \cup_{C \in [B + \ul(\rho, A, B, \zeta), A - \ul(\rho, A, B, \zeta)]} (\rho, C, C, \zeta; \ueta(\rho, A, B, \zeta)(-1)^{C - B - \ul(\rho, A, B, \zeta)})\Big) \\
= \, & \lr_{M}\Big(\cup_{(\rho, A, B, \zeta) \in Jord(\q)} \cup_{C \in [B + \ul'(\rho, A, B, \zeta), A - \ul'(\rho, A, B, \zeta)]} (\rho, C, C, \zeta; \ueta'(\rho, A, B, \zeta)(-1)^{C - B - \ul'(\rho, A, B, \zeta)})\Big).
\end{align*}
This defines an equivalence relation $\sim_{\lG}$ on $(\ul, \ueta)$. Hence
\begin{align}
\label{eq: refined decomposition G}
\lr_{M}(\lq, \bar{\e}) = \bigoplus_{\{(\ul, \ueta): \, \bar{\e} = \bar{\e}_{\ul, \ueta} \}/\sim_{\lG}} \lr_{M}(\lq, \ul, \ueta).
\end{align}

In the rest of this sub-subsection, we will prove Theorem~\ref{thm: discrete diagonal restriction}. Let us assume the theorem holds for 
\[
\sum_{(\rho, A, B, \zeta) \in Jord(\q)} (A - B) < K.
\]
In our discussions below, we consider $\sum_{(\rho, A, B, \zeta) \in Jord(\q)} (A - B) \leqslant K$. The next two lemmas are preparations.

\begin{lemma}
\label{lemma: independence resolved}
The representations on the right hand sides of \eqref{eq: case 1 resolved}, \eqref{eq: case 2 resolved} are independent of the choice of $(\rho, A, B, \zeta) \in Jord(\q)$ for $A > B$. 
\end{lemma}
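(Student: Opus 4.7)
The plan is to induct on $K = \sum_{(\rho, A, B, \zeta) \in Jord(\q)} (A - B)$. By the inductive hypothesis, Theorem~\ref{thm: discrete diagonal restriction} holds for all parameters with total strictly less than $K$, and I must establish the independence for parameters with total equal to $K$. Fix two distinct Jordan blocks $(\rho, A, B, \zeta), (\rho', A', B', \zeta') \in Jord(\q)$ with $A > B$ and $A' > B'$. Since Lemma~\ref{lemma: independence} already tells us that the total virtual representation $\lr^{\Sigma_0}_M(\lq, \e)$ is independent of the choice, it suffices to match the individual pieces.

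First, for the commutative pieces: by definition, $\lr^{\Sigma_0}_M(\lq, \e)_{{\rm com}, \eta}$ computed via $(\rho, A, B, \zeta)$ equals $\lr^{\Sigma_0}_M(\lq^{(1)}, \e^{(1)})$, where $\lq^{(1)}$ is obtained from $\q$ by replacing $(\rho, A, B, \zeta)$ with $\cup_{C \in [B, A]}(\rho, C, C, \zeta; (-1)^{C-B}\eta)$. Since $(\rho', A', B', \zeta')$ still lies in $Jord(\lq^{(1)})$ and since the new total $K - (A-B) < K$, I apply the inductive form of Theorem~\ref{thm: discrete diagonal restriction} to $\lr^{\Sigma_0}_M(\lq^{(1)}, \e^{(1)})$ to decompose it further via $(\rho', A', B', \zeta')$. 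Performing the same operations in the opposite order yields another double decomposition, which I then match term by term via the equivalence $\sim_{\lG^{\Sigma_0}}$, tracking how the pairs of $\eta$-labels determine the same elementary block structure.

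Next, for the main piece: by its definition, $\lr^{\Sigma_0}_M(\lq, \e)_{\rm main}$ is the unique irreducible subrepresentation (socle) of the parabolically induced representation built from the segment $\langle \zeta B, \ldots, -\zeta A\rangle_\rho$ and the reduced representation $\lr^{\Sigma_0}_M(\lq', \e', (\rho, A-1, B+1, \zeta; \eta_0))$. Applying the inductive hypothesis of Theorem~\ref{thm: discrete diagonal restriction} to the inner representation with respect to $(\rho', A', B', \zeta')$, and then invoking induction in stages together with the isomorphism
\[
\langle \zeta B, \ldots, -\zeta A\rangle_\rho \times \langle \zeta' B', \ldots, -\zeta' A'\rangle_{\rho'} \cong \langle \zeta' B', \ldots, -\zeta' A'\rangle_{\rho'} \times \langle \zeta B, \ldots, -\zeta A\rangle_\rho,
\]
realizes $\lr^{\Sigma_0}_M(\lq, \e)_{\rm main}$ as the unique irreducible subrepresentation of an induced representation symmetric in the two blocks. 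The isomorphism above is immediate when $\rho \neq \rho'$; when $\rho = \rho'$ it follows from the disjointness of $[B, A]$ and $[B', A']$ guaranteed by discrete diagonal restriction, combined with the standard criterion for commuting Zelevinsky segments.

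The main obstacle will be the combinatorial matching of the commutative pieces. Although the total virtual sum is independent by Lemma~\ref{lemma: independence}, proving that each irreducible constituent has the same multiplicity on both sides requires showing that the bijection between the iterated $\eta$-labels preserves the associated character $\e_{\ul, \ueta}$ and the equivalence class $\sim_{\lG^{\Sigma_0}}$ of the elementary parameter they ultimately refer to, via \eqref{eq: refined decomposition}. Moreover, one must verify that the dichotomy between \eqref{eq: case 1 resolved} and \eqref{eq: case 2 resolved}---namely whether $\lr^{\Sigma_0}_M(\lq, \e)_{{\rm com}, +}$ and $\lr^{\Sigma_0}_M(\lq, \e)_{{\rm com}, -}$ coincide---is itself independent of the chosen Jordan block, which reduces to checking that the orthogonality-type and central-character conditions distinguishing the two cases depend only on $(\rho, A, B, \zeta)$-data through $\q$ as a whole.
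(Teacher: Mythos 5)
The paper's actual proof is one sentence: by the inductive hypothesis (Theorem~\ref{thm: discrete diagonal restriction} for parameters with $\sum(A-B)<K$), the right hand side of \eqref{eq: case 1 resolved}/\eqref{eq: case 2 resolved} can be rewritten in the form \eqref{eq: refined decomposition}, whose index set $\{(\ul,\ueta):\e=\e_{\ul,\ueta}\}/\sim_{\lG^{\Sigma_0}}$ and summands $\lr^{\Sigma_0}_M(\lq,\ul,\ueta)$ are characterized intrinsically (as socles of explicit induced representations), without any reference to a choice of Jordan block. You take a genuinely different and longer route: a symmetric double expansion via two blocks $(\rho,A,B,\zeta)$, $(\rho',A',B',\zeta')$ and a term-by-term matching.

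Two issues. First, your opening move is circular. You write that Lemma~\ref{lemma: independence} ``already tells us that the total virtual representation $\lr^{\Sigma_0}_M(\lq,\e)$ is independent, [so] it suffices to match the individual pieces.'' But Lemma~\ref{lemma: independence} is a statement about the recursively defined virtual element in the Grothendieck group from \eqref{eq: case 1}/\eqref{eq: case 2}; it does not yet know about the genuine representation on the right-hand side of \eqref{eq: case 1 resolved}/\eqref{eq: case 2 resolved}. The equality of the two is precisely the content of Theorem~\ref{thm: discrete diagonal restriction} at level $K$, which is what these preparatory lemmas are working toward. So you cannot invoke that equality to simplify this lemma. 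Second, your proposal explicitly defers the key step (``the main obstacle will be the combinatorial matching of the commutative pieces'') without carrying it out: tracking multiplicities through the double expansion and verifying that the case dichotomy \eqref{eq: case 1 resolved} versus \eqref{eq: case 2 resolved} is block-independent is essentially the entire lemma, and your sketch does not close it.

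The cleaner path, and the one the paper takes, is to bypass any pairwise comparison: apply the inductive hypothesis to the inner representation $\lr^{\Sigma_0}_M(\lq',\e',(\rho,A-1,B+1,\zeta;\eta_0))$ inside $\lr^{\Sigma_0}_M(\lq,\e)_{\rm main}$ and to the elementary com pieces, rewrite each as a sum of $\lr^{\Sigma_0}_M(\lq,\ul,\ueta)$'s via the unique-subrepresentation characterizations, and observe that the resulting index set is exactly $\{(\ul,\ueta):\e=\e_{\ul,\ueta}\}/\sim_{\lG^{\Sigma_0}}$, which makes no reference to the chosen Jordan block. This also automatically handles the dichotomy you worry about, since whether two summands $\lr^{\Sigma_0}_M(\lq,\ul,\ueta)$ coincide is encoded in $\sim_{\lG^{\Sigma_0}}$, an intrinsic relation. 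Your observations about commuting segments and applying induction in stages are correct and would be needed in a pairwise comparison, but they are subsumed once one compares against the canonical refined decomposition instead.
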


\begin{proof}
By the induction assumption, we can conclude that the right hand sides of \eqref{eq: case 1 resolved}, \eqref{eq: case 2 resolved} are equal to that of \eqref{eq: refined decomposition}, which is independent of the choice of $(\rho, A, B, \zeta) \in Jord(\q)$ for $A > B$.
\end{proof}

\begin{lemma}
\label{lemma: shift}
Suppose $\q_{T^{0}}$ is obtained from $\q$ by shifting $(\rho, A, B, \zeta)$ to $(\rho, A+T_{0}, B+T_{0}, \zeta)$ such that it has discrete diagonal restriction and
any Jordan block in $Jord_{\rho}(\q)$ between the two has sign $-\zeta$. Then
\begin{align}
\label{eq: shift}
{\rm Jac}_{(\rho, A+T_0, B+T_0, \zeta) \mapsto (\rho, A, B, \zeta)} \, \lr^{\Sigma_0}_{M}(\lq_{T_0}, \e_{T_0}) = \lr^{\Sigma_0}_{M}(\lq, \e) \otimes \omega^{-|X^{T_{0}}_{(\rho, A, B, \zeta)}|/2}_{\rho}.
\end{align}
where $\e_{T_0}$ is related to $\e$ by the change of order formula (cf. \cite[Theorem 6.3]{Xu:Comb}).
\end{lemma}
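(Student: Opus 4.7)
The plan is induction on $T_0 \geq 0$; the case $T_0 = 0$ is trivial, and by composing Jacquet functors along successive columns of $X^{T_0}_{(\rho,A,B,\zeta)}$ it suffices to establish the case $T_0 = 1$, i.e.,
\[
{\rm Jac}^{\rho}_{\zeta(B+1), \ldots, \zeta(A+1)} \, \lr^{\Sigma_0}_{M}(\lq_{1}, \e_{1}) = \lr^{\Sigma_0}_{M}(\lq, \e) \otimes \omega_{\rho}^{-|X^{1}_{(\rho,A,B,\zeta)}|/2}.
\]
Both sides share the same $\Sigma_0$-infinitesimal character by Corollary~\ref{cor: Jac preserves infinitesimal character}. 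I would then invoke Corollary~\ref{cor: determination} as the identification criterion: it suffices to match the two sides under the restriction to $G^{\Sigma_{0}}(F)$ (the map $p_1$) together with the twisted character on $\lG(F)$ (the map ${\rm ch}_1$), or equivalently under the restriction to $G^{\Sigma_{0}}(F)$ together with the central character on $\lZ(F)$.

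For the restriction to $G^{\Sigma_{0}}(F)$, the functor $\Jac^{\rho}_{x}$ commutes with restriction and $\omega_{\rho}$ is trivial on $G^{\Sigma_{0}}(F)$, so the identity reduces to
\[
{\rm Jac}^{\rho}_{\zeta(B+1), \ldots, \zeta(A+1)} \, \r^{\Sigma_0}_{M}(\q_{1}, \e_{1}) = \r^{\Sigma_0}_{M}(\q, \e),
\]
which is the shift formula for classical groups due to M{\oe}glin (compatible with the change-of-order formula \cite[Theorem 6.3]{Xu:Comb}). The argument expands $\r^{\Sigma_0}_{M}(\q_1, \e_1)$ via the recursive formula at the Jordan block $(\rho, A+1, B+1, \zeta)$ and applies Lemma~\ref{lemma: Jacquet module} repeatedly; the hypothesis that every intermediate Jordan block in $Jord_{\rho}(\q)$ carries sign $-\zeta$, combined with discrete diagonal restriction, ensures that $\Jac^{\rho}_{\zeta C}$ for $C \in [B+1, A+1]$ interacts only with the shifted block, so that all other contributions cancel.

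For the central character, I would substitute the recursive formula \eqref{eq: case 1}--\eqref{eq: case 2} for $\lr^{\Sigma_0}_{M}(\lq_{1}, \e_{1})$ at $(\rho, A+1, B+1, \zeta)$ and process the composed Jacquet functor term by term via Lemma~\ref{lemma: Jacquet module}. Each application of $\Jac^{\rho}_{\zeta C}$ strips off a copy of $\rho|\cdot|^{\zeta C}$ and, through the second formula of Lemma~\ref{lemma: Jacquet module}, twists the central character of the resulting representation of $\lG^{\Sigma_{0}}_{-}(F)$ by $\omega_{\rho}^{-\zeta C}$ (up to signs absorbed by $\eta_\rho$); summing over $C \in [B+1, A+1]$ and halving to account for the normalization that transports the twist through the similitude character $\c_{\lG}$ produces exactly $\omega_{\rho}^{-|X^{1}|/2}$. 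The alternating signs $(-1)^{A-C}$ appearing in the recursion, together with the classical shift formula above, force the unwanted terms of the expansion to telescope to zero. The main obstacle will be the combinatorial bookkeeping of these cancellations together with the twist factors $\eta_{\rho}^{C}\omega_{\rho}^{\zeta\gamma_{B+1}(C)}\chi_{\tilde{\rho}}$ attached to each main-component term, and the parallel accounting for the complementary-component terms $\lif{X}_{\eta}$ (handled using Proposition~\ref{prop: parabolic reduction full orthogonal group}); this is the similitude analogue of the computation in \cite{Xu:Comb}, but with the extra burden of verifying that the central-character contributions on $\lZ(F)$ assemble to precisely $\omega_{\rho}^{-|X^{1}|/2}$ at each inductive step.
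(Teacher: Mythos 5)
Your proof has two genuine gaps that would cause it to fail.

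First, the reduction to $T_0 = 1$ by composing one column of $X^{T_0}_{(\rho,A,B,\zeta)}$ at a time does not go through under the hypotheses of the lemma. After shifting $(\rho, A, B, \zeta)$ by only $T < T_0$ steps, the block $(\rho, A+T, B+T, \zeta)$ has support $[B+T, A+T]$, which can overlap with an intermediate Jordan block $(\rho, A'', B'', -\zeta) \in Jord_\rho(\q)$ sitting between $(\rho,A,B,\zeta)$ and $(\rho, A+T_0, B+T_0, \zeta)$. Then $\q_T$ fails to have discrete diagonal restriction, so $\lr^{\Sigma_0}_{M}(\lq_T, \e_T)$ is not defined by the recursive formulas \eqref{eq: case 1}, \eqref{eq: case 2} and the lemma statement cannot be applied inductively to it. The paper's proof avoids this: it inducts on $\sum_{Jord(\q)} (A-B)$ and, in the core case where $(\rho, A, B, \zeta)$ is the only non-elementary block, it \emph{defines} $\lr^{\Sigma_0}_{M}(\lq_T, \e_T)$ directly as the Jacquet module \eqref{eq: shift intermediate} and then proves, by a descending induction on $T_0 - T$ inside the proof, that these defined objects satisfy a recursion with shifted terms $\lif{X}_{T,C}$, $\lif{X}_{T,\eta}$.

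Second, invoking Corollary~\ref{cor: determination} (or your reformulation, matching the restriction to $G^{\Sigma_0}(F)$ together with the central character on $\lZ(F)$) is not sound here. That corollary requires every constituent $\lr^{\Sigma_0}$ of $Z$ to satisfy $X(\lr^{\Sigma_0}) = X^{\Sigma_0}(\tilde{\lambda})$. For a non-elementary $\q$ this fails in general: there exist constituents with $X(\lr^{\Sigma_0}) \subsetneq X^{\Sigma_0}(\tilde{\lambda})$, and for $\omega \in X^{\Sigma_0}(\tilde{\lambda}) \cap X \setminus X(\lr^{\Sigma_0})$ the representations $\lr^{\Sigma_0}$ and $\lr^{\Sigma_0} \otimes \omega$ have identical restriction to $G^{\Sigma_0}(F)$ and identical central character yet are inequivalent, so they cannot be separated by your criterion. (This phenomenon is exactly what makes the discrete-diagonal-restriction case hard and is the reason Proposition~\ref{prop: special case} and Lemma~\ref{lemma: special case} need a dedicated argument.) The paper's proof never reduces Lemma~\ref{lemma: shift} to a determination principle of this kind; instead it matches the Jacquet module of the recursive formula term-by-term, with the single nontrivial computational step being the vanishing statement \eqref{eq: vanishing 2}, which is established via the classical-group identity ${\rm Jac}^{\rho}_{\zeta(B+T+2), \zeta(B+T+2)} \r^{\Sigma_0} = 0$ — a point your telescoping argument does not address.
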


\begin{proof}
We will prove this by induction on $\sum_{(\rho, A, B, \zeta) \in Jord(\q)} (A - B)$. If $\sum_{(\rho, A, B, \zeta) \in Jord(\q)} (A - B) = 0$, i.e., $\q$ is elementary, then $\lr^{\Sigma_0}_{M}(\lq_{T_0}, \e_{T_0})$ is a representation by definition (cf. Theorem~\ref{thm: special case}). Since
\[
{\rm Jac}_{(\rho, A+T_0, B+T_0, \zeta) \mapsto (\rho, A, B, \zeta)} \, \r^{\Sigma_0}_{M}(\q_{T_0}, \e_{T_0}) = \r^{\Sigma_0}_{M}(\q, \e),
\]
by the change of order formula (cf. \cite[Theorem 6.3]{Xu:Comb}), it suffices to know that
\[
\overline{{\rm Jac}}_{(\rho, A+T_0, B+T_0, \zeta) \mapsto (\rho, A, B, \zeta)} \, \cPkt{\lq_{T_0}} = \cPkt{\lq} \otimes \omega^{-|X^{T_0}_{(\rho, A, B, \zeta)}|/2}_{\rho}.
\]
Since the elementary packets can also be constructed through a sequence of twisted endoscopic transfers (see Remark~\ref{rk: elementary}), then this follows from the compatibility of twisted endoscopic transfer with Jacquet module as in the proof of \eqref{eq: parabolic reduction 1 full orthogonal group}. If there exists $(\rho', A', B', \zeta') \in Jord(\q)$ distinct from $(\rho, A, B, \zeta)$ such that $A' > B'$. Then we can expand $\lr^{\Sigma_0}_{M}(\lq_{T_0}, \e_{T_0})$ according to the recursive formula with respect to $(\rho', A', B', \zeta')$. Since we can take ${\rm Jac}_{(\rho, A+T_0, B+T_0, \zeta) \mapsto (\rho, A, B, \zeta)}$ in front of 
\[
\lr^{\Sigma_0}_{M}\Big(\lq'_{T_0}, \e'_{T_0}, (\rho, A', B'+2, \zeta'; \eta'_{T_0, 0})\Big), \quad \lr^{\Sigma_0}_M \Big(\lq'_{T_0}, \e'_{T_0}, (\rho, A', B'+1, \zeta'; \eta'), (\rho, B', B', \zeta'; \eta' \eta'_{T_0, 0}) \Big),
\]
then the result follows from the induction assumption.

Now we may assume $(\rho, A, B, \zeta)$ is the only Jordan block such that $A > B$. For $0 \leqslant T \leqslant T_0$, let us define
\begin{align}
\label{eq: shift intermediate}
\lr^{\Sigma_0}_{M}(\lq_{T}, \e_{T}) :=  {\rm Jac}_{(\rho, A + T_{0}, B + T_{0}, \zeta) \mapsto (\rho, A + T, B + T, \zeta)} \, \lr^{\Sigma_0}_{M}(\lq_{T_0}, \e_{T_0}) \otimes \omega_{\rho}^{|X^{T_0 - T}_{(\rho, A + T, B+T, \zeta)}|/2}.
\end{align}
Note the restriction of $\lr^{\Sigma_0}_{M}(\lq_{T}, \e_{T})$ to $G^{\Sigma_0}(F)$ is 
\[
\bigoplus_{\e' \in (\S{\q_{T}}^{\Sigma_0}/\S{\lq_{T}}^{\Sigma_0})^{\wedge}} \, \r^{\Sigma_0}_{M}(\q_{T}, \e_{T} \e').
\]

We claim that $\lr^{\Sigma_0}_{M}(\lq_{T}, \e_{T})$ can be expressed as \eqref{eq: case 1} or \eqref{eq: case 2}, where
\begin{align*}
\lif{X}_{T, C} & := \langle \zeta (B + T), \cdots, -\zeta (C + T) \rangle_{\rho} \rtimes  {\rm Jac}^{\rho}_{\zeta(B + T +2), \cdots, \zeta (C + T)}  \\
& \lr^{\Sigma_0}_{M}\Big(\lq'_{T}, \e'_{T}, (\rho, A + T, B+T+2, \zeta; \eta_{T, 0})\Big) \otimes \eta^{C + T}_{\rho} \omega^{\zeta \gamma_{B+T}(C + T)}_{\rho} \chi_{\tilde{\rho}}
\end{align*}
for $B + 1 \leqslant C \leqslant A$ and
\begin{align*}
& \lr^{\Sigma_0}_{M}\Big(\lq'_{T}, \e'_{T}, (\rho, A + T, B + T + 2, \zeta; \eta_{T, 0})\Big) =  {\rm Jac}_{(\rho, A + T_{0}, B + T_{0} +2, \zeta) \mapsto (\rho, A + T, B + T +2, \zeta)} \\
& \lr^{\Sigma_0}_{M}\Big(\lq'_{T}, \e'_{T}, (\rho, A + T_0, B + T_0 + 2, \zeta; \eta_{T, 0})\Big) \otimes \omega_{\rho}^{|X^{T_0 - T}_{(\rho, A + T, B+T +2, \zeta)}|/2}
\end{align*}
and
\begin{align*}
\lif{X}_{T, \eta} := & {\rm Jac}_{(\rho, A + T_{0}, B + T_{0}, \zeta) \mapsto (\rho, A + T, B + T, \zeta)} \, \lif{X}_{T_0, \eta} \otimes \omega_{\rho}^{|X^{T_0 - T}_{(\rho, A + T, B+T, \zeta)}|/2} \\
= & {\rm Jac}_{(\rho, A + T_{0}, B + T_{0} + 1, \zeta) \mapsto (\rho, A + T, B + T + 1, \zeta)} \circ {\rm Jac}_{(\rho, B + T_{0}, B + T_{0}, \zeta) \mapsto (\rho, B + T, B + T, \zeta)} \\\
&  \lif{X}_{T_0, \eta} \otimes \omega_{\rho}^{|X^{T_0 - T}_{(\rho, B + T, B+T, \zeta)}|/2} \omega_{\rho}^{|X^{T_0 - T}_{(\rho, A + T, B+T + 1, \zeta)}|/2}.
\end{align*}
Then the lemma follows from the case $T = 0$ and the induction assumption.

At last, we shall prove our claim by induction on $T_0 - T$. It suffices to show that
\begin{align}
\label{eq: shift 2}
{\rm Jac}^{\rho}_{\zeta(B+T), \cdots, \zeta(A+T)} \, \lif{X}_{T, C} = \lif{X}_{T - 1, C} \otimes \omega_{\rho}^{-\zeta((B+T) + \cdots +(A+T))/2}
\end{align}
and
\begin{align}
\label{eq: shift 3}
{\rm Jac}^{\rho}_{\zeta(B+T), \cdots, \zeta(A+T)} \, \lif{X}_{T, \eta} = \lif{X}_{T - 1, \eta} \otimes \omega_{\rho}^{-\zeta((B+T) + \cdots +(A+T))/2}.
\end{align}
The equality \eqref{eq: shift 3} is clear from the definition. For \eqref{eq: shift 2}, we write
\[
{\rm Jac}^{\rho}_{\zeta (B+T), \cdots, \zeta (A+T)} \lif{X}_{T, C} = {\rm Jac}^{\rho}_{\zeta (C + T + 1), \cdots, \zeta (A + T)} \circ {\rm Jac}^{\rho}_{\zeta (B + T + 1) , \cdots, \zeta (C + T)} \circ {\rm Jac}^{\rho}_{\zeta (B + T)} \lif{X}_{T, C}.
\]
First we have
\begin{align*}
{\rm Jac}^{\rho}_{\zeta(B+T)} \lif{X}_{T, C} & = \langle \zeta (B + T - 1), \cdots, -\zeta (C + T) \rangle_{\rho} \rtimes  {\rm Jac}^{\rho}_{\zeta(B + T +2), \cdots, \zeta (C + T)}  \\
& \lr^{\Sigma_0}_{M}\Big(\lq'_{T}, \e'_{T}, (\rho, A + T, B+T+2, \zeta; \eta_{T, 0})\Big) \otimes \eta^{C + T}_{\rho} \omega^{\zeta \gamma_{B+T}(C + T)}_{\rho} \chi_{\tilde{\rho}}.
\end{align*}
Next we claim that
\begin{align}
\label{eq: vanishing 2}
{\rm Jac}^{\rho}_{\zeta (B + T + 1) , \cdots, \zeta (C + T)} \circ  {\rm Jac}^{\rho}_{\zeta(B + T +2), \cdots, \zeta (C + T)} \lr^{\Sigma_0}_{M}\Big(\lq'_{T}, \e'_{T}, (\rho, A + T, B+T+2, \zeta; \eta_{T, 0})\Big) = 0.
\end{align}
Since $\lr^{\Sigma_0}_{M}\Big(\lq'_{T}, \e'_{T}, (\rho, A + T, B+T+2, \zeta; \eta_{T, 0})\Big)$ is a representation, it suffices to show
\[
{\rm Jac}^{\rho}_{\zeta (B + T + 1) , \cdots, \zeta (C + T)} \circ  {\rm Jac}^{\rho}_{\zeta(B + T +2), \cdots, \zeta (C + T)} \r^{\Sigma_0}_{M}\Big(\q'_{T}, \e'_{T}, (\rho, A + T, B+T+2, \zeta; \eta_{T, 0})\Big) = 0.
\]
For any irreducible constituent $\r^{\Sigma_0}$ in $\r^{\Sigma_0}_{M}\Big(\q'_{T}, \e'_{T}, (\rho, A + T, B+T+2, \zeta; \eta_{T, 0})\Big)$, we have 
\[
\r^{\Sigma_0} \hookrightarrow \begin{pmatrix} \zeta(B+T +2) & \zeta(B+T+1) \\
\vdots & \vdots \\
\zeta(C+T) & \zeta(C+T - 1)
\end{pmatrix}_{\rho} \rtimes \sigma^{\Sigma_0}.
\]
If ${\rm Jac}^{\rho}_{\zeta x} \sigma^{\Sigma_0} = 0$ for all $x \in [B + T +1, C + T]$, the vanishing statement is clear. Suppose ${\rm Jac}^{\rho}_{\zeta x} \sigma^{\Sigma_0} \neq 0$ for some $x \in [B + T +1, C + T]$, then ${\rm Jac}^{\rho}_{\zeta x} \r^{\Sigma_0} \neq 0$. It is necessary that $x = B + T +2$. So ${\rm Jac}^{\rho}_{\zeta (B+T + 2) , \zeta (B+T+2)} \r^{\Sigma_0} \neq 0$. But this is impossible by \cite[Proposition 8.3]{Xu:Apacket}. At last, it follows from \eqref{eq: vanishing 2} that
\begin{align*}
& {\rm Jac}^{\rho}_{\zeta (B+T), \cdots, \zeta (A+T)} \lif{X}_{T, C}  = \langle \zeta (B + T - 1), \cdots, -\zeta (C +T - 1) \rangle_{\rho} \rtimes {\rm Jac}^{\rho}_{\zeta (C + T + 1), \cdots, \zeta (A+T)} \circ {\rm Jac}^{\rho}_{\zeta (B+T + 1) , \cdots, \zeta (C +T - 1)} \\
& \circ {\rm Jac}^{\rho}_{\zeta (B + T +2), \cdots, \zeta (C + T)} \lr^{\Sigma_0}_{M}\Big(\lq'_{T}, \e'_{T}, (\rho, (A+T), (B+T+2), \zeta; \eta_{T, 0})\Big) \otimes \eta^{C+T}_{\rho} \omega^{\zeta \gamma_{B+T}(C+T)}_{\rho} \chi_{\tilde{\rho}} \cdot \eta_{\rho} \omega_{\rho}^{-\zeta (C+T)} 
\\
& = \langle \zeta (B + T - 1), \cdots, -\zeta (C +T - 1) \rangle_{\rho} \rtimes {\rm Jac}_{\zeta (B+T + 1) , \cdots, \zeta (C +T - 1)} \\
& \circ {\rm Jac}^{\rho}_{\zeta (B + T +2), \cdots, \zeta (A + T)} \lr^{\Sigma_0}_{M}\Big(\lq'_{T}, \e'_{T}, (\rho, A+T, B+T+2, \zeta; \eta_{T, 0})\Big) \otimes \eta^{C+T}_{\rho} \omega^{\zeta \gamma_{B+T}(C+T)}_{\rho} \chi_{\tilde{\rho}} \cdot \eta_{\rho} \omega_{\rho}^{-\zeta (C+T)} 
\\
& = \langle \zeta (B +T - 1), \cdots, -\zeta (C +T - 1) \rangle_{\rho} \rtimes {\rm Jac}_{\zeta (B +T + 1) , \cdots, \zeta (C +T - 1)}  \\
& \lr^{\Sigma_0}_{M}\Big(\lq'_{T}, \e'_{T}, (\rho, A + T - 1, B + T +1, \zeta; \eta_{T, 0})\Big) \otimes \eta^{C + T - 1}_{\rho} \omega^{\zeta \gamma_{B + T}(C + T - 1)}_{\rho} \chi_{\tilde{\rho}}  \cdot \omega_{\rho}^{-\zeta((B+T+2) + \cdots +(A+T))/2}
\end{align*}
Note 
\[
\gamma_{B+T}(C + T - 1) = \gamma_{B + T - 1}(C + T - 1) - (B + T +1)/2 - (B + T)/2.
\]
This finishes the proof of our claim.
\end{proof}

\begin{remark}
We will only need the special case that there are no Jordan blocks between $(\rho, A, B, \zeta)$ and $(\rho, A + T_0, B+T_0, \zeta)$ until the end of the proof. The idea of shifting the Jordan block is motivated by the construction in the general case (cf. Subsection~\ref{subsec: general case}).
\end{remark}




Now we can start the heart part of the proof. First of all, let us determine the $\Sigma_0$-infinitesimal characters of irreducible representations in $\lr^{\Sigma_0}_{M}(\lq, \e)$.

\begin{proposition}
The $\Sigma_0$-infinitesimal characters of irreducible constituents in $\lif{X}_{C}, \lif{X}_{\eta}$ are the same as that of $\Pkt{\lq_{d}}^{\Sigma_0}$.
\end{proposition}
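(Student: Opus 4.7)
The plan is to argue by induction on $\sum_{(\rho, A, B, \zeta) \in Jord(\q)} (A - B)$. When this sum is zero, $\q$ is elementary; then as Arthur parameters $\q = \q_d$ (the second $SL(2, \mathbb{C})$ acts trivially), so the statement is tautological given Theorem~\ref{thm: special case} and Proposition~\ref{prop: induction preserves infinitesimal character}. For the inductive step, fix $(\rho, A, B, \zeta) \in Jord(\q)$ with $A > B$ and consider the auxiliary parameters $\q^{1}$ (replace $(\rho, A, B, \zeta)$ by $(\rho, A, B+2, \zeta)$) and $\q^{2}$ (replace it by the pair $(\rho, A, B+1, \zeta)$ and $(\rho, B, B, \zeta)$). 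Both still have discrete diagonal restriction with strictly smaller $\sum (A-B)$, so by the induction hypothesis the $\Sigma_0$-infinitesimal characters of $\lr^{\Sigma_0}_M\bigl(\lq', \e', (\rho, A, B+2, \zeta; \eta_0)\bigr)$ and $\lr^{\Sigma_0}_M\bigl(\lq', \e', (\rho, A, B+1, \zeta; \eta), (\rho, B, B, \zeta; \eta \eta_0)\bigr)$ are $\tilde{\lambda}_{\q^{1}_d}$ and $\tilde{\lambda}_{\q^{2}_d}$ respectively.

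For $\lif{X}_\eta$ the claim reduces to $\tilde{\lambda}_{\q^{2}_d} = \tilde{\lambda}_{\q_d}$. Writing $a = A+B+1$, $b = A-B+1$ (taking $\zeta = +$ for notational simplicity), this follows from the Clebsch--Gordan identity
\[
\nu_a \otimes \nu_b \cong (\nu_{a+1} \otimes \nu_{b-1}) \oplus \nu_{a-b+1}
\]
restricted to the diagonal $SL(2, \mathbb{C})$, which gives equality of the $G$-side infinitesimal characters; agreement of the lifts to $\lG$-side follows from the fact that the elementary packets and the discrete packets we use are built via the same sequence of twisted endoscopic transfers in Section~\ref{sec: LLC} (see Remark~\ref{rk: elementary}). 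For $\lif{X}_C$, put $\sigma := \lr^{\Sigma_0}_M\bigl(\lq', \e', (\rho, A, B+2, \zeta; \eta_0)\bigr)$ and $\tau := \Jac^{\rho}_{\zeta(B+2), \ldots, \zeta C} \, \sigma$. By Corollary~\ref{cor: Jac preserves infinitesimal character} applied to $\tau$ and Corollary~\ref{cor: cuspidal support} applied to $\langle \zeta B, \ldots, -\zeta C \rangle_{\rho} \rtimes \tau$, the cuspidal support of $\langle \zeta B, \ldots, -\zeta C \rangle_{\rho} \rtimes \tau$ is obtained from that of $\sigma$ by removing $\{\rho||^{\zeta(B+2)}, \ldots, \rho||^{\zeta C}\}$ and adding $\{\rho||^{\zeta B}, \rho||^{\zeta(B-1)}, \ldots, \rho||^{-\zeta C}\}$. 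The complementary Clebsch--Gordan identity
\[
\nu_a \otimes \nu_b \cong (\nu_{a+2} \otimes \nu_{b-2}) \oplus \nu_{a-b+3} \oplus \nu_{a-b+1}
\]
then matches this cuspidal support with the one associated to $\tilde{\lambda}_{\q_d}$ on the $G$-side, so $\lambda_{\lif{X}_C} = \lambda_{\q_d}$ already before the twist by $\chi$.

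The remaining step, which I expect to be the main obstacle, is to verify that after twisting by $\chi = \eta_{\rho}^{C} \omega_{\rho}^{\zeta \gamma_B(C)} \chi_{\tilde{\rho}}$ the $\Sigma_0$-infinitesimal character of $\lif{X}_C$ is precisely $\tilde{\lambda}_{\q_d}$ rather than some other lift of $\lambda_{\q_d}$. This amounts to a central-character bookkeeping threading through the inductive construction of $\cPkt{\lp_a}$ in Section~\ref{sec: LLC}: the particular power $\gamma_B(C) = (B+1)/2 + ((B+2) + \cdots + C)$ and the factor $\chi_{\tilde{\rho}}$ are calibrated to cancel the lift-twist introduced by $\Jac^{\rho}$ and by induction along $\langle \zeta B, \ldots, -\zeta C \rangle_{\rho}$. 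Once this central-character identity is tracked through the inductive definition, the equality of lifts, and hence of $\Sigma_0$-infinitesimal characters, follows.
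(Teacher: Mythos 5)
Your induction on $\sum_{(\rho, A, B, \zeta) \in Jord(\q)} (A - B)$ and your treatment of $\lif{X}_{\eta}$ agree with the paper: since $\q^{2}$ has the same diagonal restriction as $\q$ (your first Clebsch--Gordan identity is a correct way to see this), the claim for $\lif{X}_{\eta}$ does follow immediately from the induction hypothesis. Your base case remark that ``$\q = \q_d$'' for elementary $\q$ is not right --- an elementary Jordan block $(\rho, A, A, \zeta)$ with $\zeta = -$ has $a = 1$, $b = 2A+1$, so the second $SL(2, \mathbb{C})$ acts nontrivially --- but this is harmless because when $\sum (A - B) = 0$ there are no $\lif{X}_{C}$ or $\lif{X}_{\eta}$ and the statement is vacuous.

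The genuine gap is in the $\lif{X}_{C}$ case, and you flag it yourself. After matching the $G$-side infinitesimal character by a cuspidal-support count, you write that verifying the lift ``amounts to a central-character bookkeeping'' that you ``expect to be the main obstacle,'' and you leave it there. But this lift verification is the entire content of the proposition: the twist $\eta_{\rho}^{C} \omega_{\rho}^{\zeta \gamma_{B}(C)} \chi_{\tilde{\rho}}$ is built into the \emph{definition} of $\lif{X}_{C}$ precisely so that this proposition holds, so one cannot wave it away. The paper carries out exactly the bookkeeping you defer: it replaces the inner factor by $\lr^{\Sigma_0}_{M}(\lq^{1}_d, 1)$ (justified by the induction hypothesis and Corollaries~\ref{cor: cuspidal support} and~\ref{cor: Jac preserves infinitesimal character}), computes ${\rm Jac}^{\rho}_{B+2, \ldots, C} \, \lr^{\Sigma_0}_{M}(\lq^{1}_d, 1)$ explicitly as another elementary packet representation twisted by $\omega_{\rho}^{-((B+2) + \cdots + C)/2}$ (via the compatibility of twisted endoscopic transfer with Jacquet modules from the proof of \eqref{eq: parabolic reduction 1 full orthogonal group}), and then exhibits an explicit element of $\Pkt{\lq_d}^{\Sigma_0}$ embedded, via \eqref{eq: parabolic reduction 2 full orthogonal group}, into the same parabolically induced representation with a matching $\omega_{\rho}$- and $\chi_{\tilde{\rho}}$-twist. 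Only then does Proposition~\ref{prop: induction preserves infinitesimal character} close the argument. Your proposal identifies that a computation of this kind is required but does not produce it, and there is also a residual $\eta_{\rho}$-discrepancy that one must observe does not affect the $\Sigma_0$-infinitesimal character (the paper notes this separately). As it stands, the proposal proves only that $\lif{X}_{C}$ has $\Sigma_0$-infinitesimal character equal to \emph{some} lift of $\lambda_{\q_d}$, not the lift $\tilde{\lambda}_{\q_d}$ determined by $\Pkt{\lq_{d}}^{\Sigma_0}$, which is the statement.
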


\begin{proof}
We will prove this by induction on $\sum_{(\rho, A, B, \zeta) \in Jord(\q)} (A - B)$. The case of $\lif{X}_{\eta}$ follows from the induction assumption directly. For $\lif{X}_{C}$, it is the same to determine that of
\[
\langle  B, \cdots,  -C \rangle_{\rho} \rtimes  {\rm Jac}^{\rho}_{B+2, \cdots, C} \, \lr^{\Sigma_0}_{M}(\lq^{1}_d, 1) \otimes \eta^{C}_{\rho} \omega^{ \gamma_{B}(C)}_{\rho} \chi_{\tilde{\rho}},
\]
which is the same as 
\[
\langle  C, \cdots, - B \rangle_{\rho} \rtimes  {\rm Jac}^{\rho}_{B+2, \cdots,  C} \, \lr^{\Sigma_0}_{M}(\lq^{1}_d, 1) \otimes \eta^{C}_{\rho} \omega^{ \gamma_{B}(C)}_{\rho} \chi_{\tilde{\rho}} \cdot  \omega_{\rho}^{-((B+1) + \cdots + C)} \eta_{\rho}^{C - B}.
\]
Note
\[
{\rm Jac}^{\rho}_{B+2, \cdots, C} \, \lr^{\Sigma_0}_{M}(\lq^{1}_d, 1) = \lr^{\Sigma_0}_{M}\Big(\lq'_d, 1, \cup_{C' \in [B+1, A] \backslash \{C\}}(\rho, C', C', +; +1) \Big) \otimes \omega_{\rho}^{-((B+2) + \cdots + C)/2}
\]
(cf. proof of \eqref{eq: parabolic reduction 1 full orthogonal group}). On the other hand, one can show 
\begin{align*}
& \lr^{\Sigma_0}_{M}\Big(\lq'_d, 1, \cup_{C' \in [C + 1, A]}(\rho, C', C', -; +1), (\rho, C, C, +; (-1)^{C - B - 1}), \cup_{C' \in [B + 1, C - 1]}(\rho, C', C', -; -1), (\rho, B, B, +; +1)\Big) \\
& \hookrightarrow  \langle  C, \cdots, - B \rangle_{\rho} \rtimes \lr^{\Sigma_0}_{M}\Big(\lq'_d, 1, \cup_{C' \in [B+1, A] \backslash \{C\}}(\rho, C', C', -; +1) \Big) \otimes  \omega_{\rho}^{-(C + \cdots + (B+1))/2} \chi_{\tilde{\rho}},
\end{align*}
by the compatibility of twisted endoscopic transfer with Jacquet module as in the proof of \eqref{eq: parabolic reduction 2 full orthogonal group}. So the $\Sigma_0$-infinitesimal character is the same as that of $\Pkt{\lq_d}^{\Sigma_0}$.

\end{proof}

This proposition explains the appearance of twists by $\omega_{\rho}$ and $\chi_{\tilde{\rho}}$ in $\lif{X}_{C}$. The twist by $\eta_{\rho}$ is more subtle, since it does not change the $\Sigma_0$-infinitesimal character. Next we would like to distinguish $\lr^{\Sigma_0}_{M}(\lq, \e)_{\rm main}$ and $\lr^{\Sigma_0}_{M}(\lq, \e)_{{\rm com}, \eta}$ from $\lif{X}_{C}, \lif{X}_{\eta}$. 

\begin{proposition}
\label{prop: existence}
The irreducible constituents of $\lr^{\Sigma_0}_{M}(\lq, \e)_{\rm main}$ (resp. $\lr^{\Sigma_0}_{M}(\lq, \e)_{{\rm com}, \eta}$) only appear in $\lif{X}_{A}$ (resp. $\lif{X}_{\eta'}$ for some $\eta'$) with multiplicity one.
\end{proposition}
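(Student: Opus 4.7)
The plan is to reduce the statement to M{\oe}glin's corresponding result for $G^{\Sigma_0}(F)$ (the underlying classical group case, \cite[Theorem 4.1]{Moeglin:2009}, reproved in \cite[Theorem 7.8]{Xu:Apacket}) via restriction, and then lift the conclusion back to $\lG^{\Sigma_0}(F)$ by pinning down the twist ambiguity. First I would observe that all the representations involved on $\lG^{\Sigma_0}(F)$, namely $\lr^{\Sigma_0}_M(\lq,\e)_{\rm main}$, $\lr^{\Sigma_0}_M(\lq,\e)_{{\rm com},\eta}$, $\lif{X}_C$ and $\lif{X}_\eta$, have the same $\Sigma_0$-infinitesimal character $\tilde{\lambda}$: this follows from the preceding proposition together with Proposition~\ref{prop: induction preserves infinitesimal character} and Corollary~\ref{cor: Jac preserves infinitesimal character}.

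Next I would restrict everything to $G^{\Sigma_0}(F)$ using Lemma~\ref{lemma: restriction}(2), the commutativity of Jacquet module and parabolic induction with restriction, and the definitions of the analogous objects $X_C, X_\eta$ on the classical side. The resulting identity
\[
\lr^{\Sigma_0}_M(\lq,\e)|_{G^{\Sigma_0}(F)} \;=\; \bigoplus_{\e'\in(\S{\q}^{\Sigma_0}/\S{\lq}^{\Sigma_0})^\wedge}\r^{\Sigma_0}_M(\q,\e\e')
\]
places the restricted main and ``com'' parts inside known classical packet representations, and by M{\oe}glin's theorem the classical $\r^{\Sigma_0}_M(\q,\e)_{\rm main}$ appears with multiplicity one in $X_A|_{G^{\Sigma_0}(F)}$ and not in any other $X_C, X_\eta$, while $\r^{\Sigma_0}_M(\q,\e)_{{\rm com},\eta}$ appears with multiplicity one in exactly one $X_{\eta'}$.

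Then I would lift the multiplicity statement back to $\lG^{\Sigma_0}(F)$ by the following observation: if $\lr^{\Sigma_0}_1, \lr^{\Sigma_0}_2 \in \Pkt{}(\lG^{\Sigma_0}(F))_{\tilde{\lambda}}$ share a common irreducible constituent in their restrictions to $G^{\Sigma_0}(F)$, then $[\lr^{\Sigma_0}_1] = [\lr^{\Sigma_0}_2 \otimes \omega]$ for some $\omega\in X^{\Sigma_0}(\tilde{\lambda})$. In our situation $\lr^{\Sigma_0}_M(\lq,\e)_{\rm main}$ and $\lr^{\Sigma_0}_M(\lq,\e)_{{\rm com},\eta}$ are (by inductive application of the proposition, or equivalently by Lemma~\ref{lemma: restriction}(1)) stabilized by the full group $X^{\Sigma_0}(\tilde{\lambda}) = \a(\S{\q}^{\Sigma_0}/\S{\lq}^{\Sigma_0})$, so they are determined among representations with $\Sigma_0$-infinitesimal character $\tilde{\lambda}$ by their restriction to $G^{\Sigma_0}(F)$ (cf. Corollary~\ref{cor: determination}). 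Combined with the classical statement, this gives multiplicity one in the single piece $\lif{X}_A$ (resp. $\lif{X}_{\eta'}$).

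The main obstacle is identifying the correct $\eta'$ in the ``com'' case, because the restriction to $G^{\Sigma_0}(F)$ forgets twists by characters trivial on $G(F)$. To pin $\eta'$ down I would expand $\lif{X}_{\eta'}$ by its definition
\[
\lif{X}_{\eta'} = \lr^{\Sigma_0}_M\Bigl(\lq',\e',(\rho,A,B+1,\zeta;\eta'),(\rho,B,B,\zeta;\eta'\eta_0)\Bigr),
\]
apply the recursive formula once more to the block $(\rho,A,B+1,\zeta)$ (whose $A-B$ is strictly smaller, so the induction hypothesis of Theorem~\ref{thm: discrete diagonal restriction} applies), and then match the resulting elementary packet with $\lr^{\Sigma_0}_M(\lq,\e)_{{\rm com},\eta}$ using Proposition~\ref{prop: parabolic reduction full orthogonal group} and the sign-tracking formula \eqref{eq: involution}. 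The constraint $\eta = \prod_{C\in[B,A]}(-1)^{C-B}\eta$ (definition of $S_{\eta_0}$) and the matching of signs $(\rho,B,B,\zeta;\eta'\eta_0)$ with $(\rho,B,B,\zeta;\eta)$ force $\eta' = \eta\eta_0^{-1}$ (or its equivalent in the degenerate case $\lif{X}_+ = \lif{X}_-$), giving the unique $\eta'$. Once this bookkeeping is done, the multiplicity-one claim follows exactly as above.
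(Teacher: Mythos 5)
Your proposal takes a genuinely different route from the paper for the ``main'' part: the paper applies ${\rm Jac}^{\rho}_{\zeta B,\cdots,-\zeta A}$ directly to $\lif{X}_A$, uses Lemma~\ref{lemma: shift} to see that the result is multiplicity free (hence the socle appears exactly once), and observes that this same Jacquet module vanishes on every $\lif{X}_C$ with $C\neq A$ and on every $\lif{X}_\eta$ -- no restriction to $G^{\Sigma_0}(F)$ is invoked at all. Your route (restrict, invoke M{\oe}glin's result for the classical group as in \cite[Theorem 7.14]{Xu:Apacket}, then lift) is also workable, although it requires one to extract from the classical proof the statement that $\r^{\Sigma_0}_M(\q,\e)_{\rm main}$ does not occur in the classical $X_C$ for $C\neq A$, whereas the paper's argument is self-contained. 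For the ``com'' part your strategy coincides with the paper's: non-appearance in $\lif{X}_C$ by restriction plus M{\oe}glin, and identification of $\eta'$ by one more application of the recursive formula and sign bookkeeping.

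There is, however, an error in the way you justify the lifting step, though it happens not to invalidate the conclusion. You assert that $\lr^{\Sigma_0}_M(\lq,\e)_{\rm main}$ and $\lr^{\Sigma_0}_M(\lq,\e)_{{\rm com},\eta}$ are stabilized by all of $X^{\Sigma_0}(\tilde{\lambda})$ and identify $X^{\Sigma_0}(\tilde{\lambda})$ with $\a(\S{\q}^{\Sigma_0})$. The identification is false in general: $\a(\S{\q}^{\Sigma_0}) = X^{\Sigma_0}(\lq)\subseteq X^{\Sigma_0}(\tilde{\lambda})=X^{\Sigma_0}(\lq_d)$, and for discrete-diagonal-restriction parameters that are not elementary the inclusion is typically strict (cf.\ \eqref{eq: combinatorial description}: the sign $\eta_{(\rho,a,b)}=\eta_{\rho}^{ab}$ attached to a Jordan block can differ from the sign $\eta_{\rho}$ attached to the blocks of its diagonal restriction when $a,b$ are both even). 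Nor does Lemma~\ref{lemma: restriction}(1) supply the desired stability -- it governs the dependence on $\e$, not twisting by $\omega\in X^{\Sigma_0}(\tilde{\lambda})$. The good news is that Corollary~\ref{cor: determination} is not needed here: for the multiplicity bound it suffices to observe that $\lr^{\Sigma_0}_M(\lq,\e)_{\rm main}|_{G^{\Sigma_0}(F)}$ is a nonzero sum of distinct representations $\r^{\Sigma_0}_M(\q,\e\e')_{\rm main}$, each occurring in $\lif{X}_A|_{G^{\Sigma_0}(F)}$ with multiplicity one by M{\oe}glin, forcing $\lr^{\Sigma_0}_M(\lq,\e)_{\rm main}$ to occur at most once in $\lif{X}_A$; occurrence at least once follows since it is the socle of $\lif{X}_A$ (an identification that itself relies on Lemma~\ref{lemma: shift}). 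The same restriction argument gives non-appearance in $\lif{X}_C$ and $\lif{X}_\eta$, and the exact multiplicity in $\lif{X}_{\eta'}$ for the ``com'' part is supplied by the running induction on Theorem~\ref{thm: discrete diagonal restriction}. You should therefore delete the appeal to $X(\lr^{\Sigma_0})=X^{\Sigma_0}(\tilde{\lambda})$ and replace it with the direct multiplicity comparison under restriction.
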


\begin{proof}
By definition, $\lr^{\Sigma_0}_{M}(\lq, \e)_{\rm main}$ is the socle of $\lif{X}_{A}$. To show multiplicity one, we can apply ${\rm Jac}^{\rho}_{\zeta B, \cdots, -\zeta A}$ to $\lif{X}_{A}$. By Lemma~\ref{lemma: shift}, 
\begin{align*}
{\rm Jac}^{\rho}_{\zeta B, \cdots, -\zeta A} \lif{X}_{A} & = {\rm Jac}^{\rho}_{\zeta(B+2), \cdots, \zeta A}\lr_{M}\Big(\lq', \e', (\rho, A, B+2, \zeta, \eta_0)\Big) \otimes \eta^{A}_{\rho} \omega^{\zeta \gamma_{B}(A)}_{\rho} \chi_{\tilde{\rho}}  \\
& = \lr^{\Sigma_0}_{M}\Big(\lq', \e', (\rho, A-1, B+1, \zeta, \eta_0)\Big) \otimes \eta^{A}_{\rho} \omega^{\zeta \gamma_{B}(A)}_{\rho} \chi_{\tilde{\rho}} \cdot \omega_{\rho}^{-\zeta((B+2) + \cdots + A)/2} \\
& = \lr^{\Sigma_0}_M \Big(\lq', \e', (\rho, A- 1, B+1, \zeta, \eta_0)\Big) \otimes \eta_{\rho}^{A} \omega_{\rho}^{\zeta((B+1) + \cdots + A)/2} \chi_{\tilde{\rho}}
\end{align*}
which is multiplicity free. This shows multiplicity one. Moreover, 
\[
{\rm Jac}^{\rho}_{\zeta B, \cdots, -\zeta A} \lif{X}_{C}, \, \lif{X}_{\eta} = 0
\]
for $C \neq A$. So the irreducible constituents of $\lr^{\Sigma_0}_{M}(\lq, \e)_{\rm main}$ can not appear in $\lif{X}_{C}, \lif{X}_{\eta}$ for $C \neq A$. 

Next we consider $\lr^{\Sigma_0}_{M}(\lq, \e)_{{\rm com}, \eta}$, where 
\[
\eta_0 = \prod_{C \in [B, A]} (-1)^{C-B} \eta = \eta^{A - B +1} (-1)^{(A - B)(A - B + 1)/2}.
\]
We would like to show that $\lr^{\Sigma_0}_{M}(\lq, \e)_{{\rm com}, \eta}$ ($\eta \in S_{\eta_0}$) appears in 
\[
\begin{cases}
\oplus_{\eta' = \pm} (-1)^{[(A - B +1)/2]} \eta'^{A-B+1} \eta^{A-B}_0 \lif{X}_{\eta'} \quad & \text{ if $\lif{X}_+ \neq \lif{X}_-$} \\
 (-1)^{(A - B +1)/2} \eta_0 \lif{X}_{+} \quad & \text{ if $\lif{X}_+ = \lif{X}_-$ }
\end{cases}
\]
with coefficient $+1$. If $A - B$ is odd, then we have $\eta_0 =  (-1)^{(A - B + 1)/2}$. The sign in front of $\lif{X}_{\eta'}$ becomes
\[
(-1)^{[(A - B +1)/2]} \eta'^{A-B+1} \eta^{A-B}_0 = (-1)^{(A - B + 1)/2} \eta_0 = 1.
\]
By induction, we can assume
\[
(\lif{X}_{\eta'})_{com, \eta''} =\lr^{\Sigma_0}_{M}\Big(\lq', \e', \cup_{C \in [B+1, A]}(\rho, C, C, \zeta, (-1)^{C - (B+1)}\eta''), (\rho, B, B, \zeta, \eta' \eta_0) \Big)
\]
where
\[
\eta' = \prod_{C \in [B+1, A]} (-1)^{C - (B+1)}\eta'' = \eta'' (-1)^{(A - B - 1)/2}.
\]
One checks that
\(
\eta'' = - \eta' \eta_0.
\)
So
\[
(\lif{X}_{\eta'})_{com, \eta''} = \lr^{\Sigma_0}_{M}(\lq, \e)_{{\rm com}, \eta},
\]
where $\eta = \eta'\eta_0$. Moreover, $\lif{X}_+ = \lif{X}_-$ if and only if $\lr^{\Sigma_0}_{M}(\lq, \e)_{{\rm com}, \eta} = \lr^{\Sigma_0}_{M}(\lq, \e)_{com, -\eta}$. 
If $A - B$ is even, then we have $\eta_0 =  (-1)^{(A - B)/2} \eta$. The sign in front of $\lif{X}_{\eta'}$ becomes
\[
(-1)^{[(A - B +1)/2]} \eta'^{A-B+1} \eta^{A-B}_0 = (-1)^{(A - B)/2} \eta'.
\]
By induction, we can assume
\[
(\lif{X}_{\eta'})_{com, \eta''} =\lr^{\Sigma_0}_{M}\Big(\lq', \e', \cup_{C \in [B+1, A]}(\rho, C, C, \zeta, (-1)^{C - (B+1)}\eta''), (\rho, B, B, \zeta, \eta' \eta_0) \Big)
\]
where 
\[
\eta' = \prod_{C \in [B+1, A]} (-1)^{C - (B+1)}\eta'' = (-1)^{(A - B)/2}.
\]
For such $\eta'$, the sign in front of $\lif{X}_{\eta'}$ is positive and we have $\eta'\eta_0 = \eta$. Choose $\eta''$ such that
\(
\eta'' = - \eta' \eta_0,
\)
then
\[
(\lif{X}_{\eta'})_{com, \eta''} = \lr_{M}(\lq, \e)_{com, \eta}.
\]
At last, we still need to show the irreducible constituents of $\lr^{\Sigma_0}_{M}(\lq, \e)_{{\rm com}, \eta}$ do not appear in $\lif{X}_{C}$. This follows from M{\oe}glin's proof of \cite[Theorem 4.1]{Moeglin:2009} that the irreducible constituents of $\lr^{\Sigma_0}_{M}(\lq, \e)_{{\rm com}, \eta} |_{G^{\Sigma_0}(F)}$ do not appear in $\lif{X}_{C} |_{G^{\Sigma_0}(F)}$.

\end{proof}

\begin{corollary}
For any irreducible constituent $\lr^{\Sigma_0}$ in $\lif{X}_{C}, \lif{X}_{\eta}$, it is in $\lr^{\Sigma_0}_{M}(\lq, \e)_{\rm main}$ if and only if
\(
{\rm Jac}^{\rho}_{\zeta B, \cdots, -\zeta A} \lr^{\Sigma_0} \neq 0.
\)
\end{corollary}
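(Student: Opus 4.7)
The plan is to deduce the corollary from two facts established in (or immediately after) the proof of Proposition~\ref{prop: existence}, together with the exactness of the Jacquet functor. These facts are: (a) ${\rm Jac}^{\rho}_{\zeta B, \cdots, -\zeta A}$ annihilates every $\lif{X}_C$ with $C \neq A$ and every $\lif{X}_\eta$; and (b) ${\rm Jac}^{\rho}_{\zeta B, \cdots, -\zeta A} \lif{X}_A$ is non-zero and irreducible.

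For the backward direction, I would expand each of $\lif{X}_C$ ($C \neq A$) and $\lif{X}_\eta$ as a non-negative integer combination of irreducibles in the Grothendieck group. Since ${\rm Jac}^{\rho}_{\zeta B, \cdots, -\zeta A}$ is exact and sends each irreducible to a non-negative combination of irreducibles, the vanishing in (a) forces term-by-term vanishing, so no irreducible constituent of these representations can have non-vanishing Jacquet. Thus any $\lr^{\Sigma_0}$ with ${\rm Jac}^{\rho}_{\zeta B, \cdots, -\zeta A} \lr^{\Sigma_0} \neq 0$ must occur in $\lif{X}_A$. Applying the same termwise argument to $\lif{X}_A$ and using the irreducibility in (b), exactly one irreducible constituent of $\lif{X}_A$ has non-vanishing Jacquet, and it occurs with multiplicity one.

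For the forward direction, $\lr^{\Sigma_0}_M(\lq, \e)_{\rm main}$ is defined as the socle of $\lif{X}_A$. Exactness of the Jacquet functor gives an embedding
\[
{\rm Jac}^{\rho}_{\zeta B, \cdots, -\zeta A} \lr^{\Sigma_0}_M(\lq, \e)_{\rm main} \hookrightarrow {\rm Jac}^{\rho}_{\zeta B, \cdots, -\zeta A} \lif{X}_A,
\]
so the left-hand side is either zero or equal to the irreducible module on the right. To rule out the zero case, I would apply Frobenius reciprocity to the embedding $\lr^{\Sigma_0}_M(\lq, \e)_{\rm main} \hookrightarrow \lif{X}_A = \langle \zeta B, \cdots, -\zeta A \rangle_\rho \rtimes \tilde{\tau}$ (where $\tilde{\tau}$ denotes the inducing datum on the smaller similitude group): this yields a non-zero surjection from the Jacquet module of the socle with respect to the corresponding maximal parabolic onto $\langle \zeta B, \cdots, -\zeta A \rangle_\rho \otimes \tilde{\tau}$. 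Transitivity of Jacquet modules, combined with Lemma~\ref{lemma: Jacquet module} applied inductively on the GL-factor to extract the characters $\rho||^{\zeta B}, \rho||^{\zeta(B-1)}, \cdots, \rho||^{-\zeta A}$ one at a time from the segment, then yields a non-zero surjection ${\rm Jac}^{\rho}_{\zeta B, \cdots, -\zeta A} \lr^{\Sigma_0}_M(\lq, \e)_{\rm main} \twoheadrightarrow \tilde{\tau}$. Combined with the backward direction, this identifies the unique irreducible constituent of $\lif{X}_A$ with non-vanishing Jacquet as the socle.

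The main technical point I anticipate requiring care is the segment extraction step in the forward direction: at each stage ${\rm Jac}^{\rho}_x$ must leave the remaining segment non-zero, but this is exactly the standard behavior of Jacquet modules on segment representations of general linear groups, covered by Lemma~\ref{lemma: Jacquet module} together with the observation that the full cuspidal support of $\langle \zeta B, \cdots, -\zeta A \rangle_\rho$ on the GL-factor is $\rho||^{\zeta B} \otimes \cdots \otimes \rho||^{-\zeta A}$, so sequential extraction in this order succeeds.
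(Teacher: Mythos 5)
Your fact~(b) is not what the paper establishes, and it is generally false. In the proof of Proposition~\ref{prop: existence}, the Jacquet module ${\rm Jac}^{\rho}_{\zeta B, \cdots, -\zeta A}\lif{X}_A$ is computed to be $\lr^{\Sigma_0}_M\big(\lq', \e', (\rho, A-1, B+1, \zeta; \eta_0)\big)$ up to a character twist, and the paper asserts only that this is \emph{multiplicity free}, not irreducible. Indeed it is a full Arthur-packet member on a smaller similitude group, which by \eqref{eq: refined decomposition} is a direct sum of (in general several) irreducibles $\lr^{\Sigma_0}_M(\cdot, \ul, \ueta)$. Both directions of your argument lean on the false irreducibility: in the forward direction the inference ``so the left-hand side is either zero or equal to the irreducible module on the right'' fails, and in the backward direction the conclusion ``exactly one irreducible constituent of $\lif{X}_A$ has non-vanishing Jacquet'' is unjustified.

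The rest of your approach is sound and the gap closes with a short addition. Set $\tilde{\tau} := {\rm Jac}^{\rho}_{\zeta B, \cdots, -\zeta A}\lif{X}_A = \bigoplus_i \tilde{\tau}_i$ with each $\tilde{\tau}_i$ irreducible, so that $\lif{X}_A = \langle \zeta B, \cdots, -\zeta A \rangle_{\rho} \rtimes \tilde{\tau}$ and
\[
\lr^{\Sigma_0}_M(\lq, \e)_{\rm main} = \bigoplus_i \big\langle \langle \zeta B, \cdots, -\zeta A \rangle_{\rho} \rtimes \tilde{\tau}_i \big\rangle.
\]
Your Frobenius-reciprocity and segment-extraction argument, applied to each summand separately, shows that ${\rm Jac}^{\rho}_{\zeta B, \cdots, -\zeta A}$ of the $i$-th socle summand is a nonzero subobject of $\tilde{\tau}$ containing $\tilde{\tau}_i$. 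Summing over $i$ and using multiplicity-freeness of $\tilde{\tau}$, the Jacquet module of the full socle equals $\tilde{\tau}$, whence ${\rm Jac}^{\rho}_{\zeta B, \cdots, -\zeta A}\big(\lif{X}_A / \lr^{\Sigma_0}_M(\lq, \e)_{\rm main}\big) = 0$ by exactness. Combined with your fact~(a), this gives the backward direction, and the forward direction is exactly the Frobenius step just described. The multiplicity-one statement in Proposition~\ref{prop: existence} then ensures each socle constituent occurs only once in $\lif{X}_A$, so ``being in $\lr^{\Sigma_0}_M(\lq, \e)_{\rm main}$'' is unambiguous.
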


Next, we would like to show any irreducible constituent $\lr^{\Sigma_0}$ in $\lif{X}_{C}, \lif{X}_{\eta}$ excluded from $\lr^{\Sigma_0}_{M}(\lq, \e)_{\rm main}, \lr^{\Sigma_0}_{M}(\lq, \e)_{{\rm com}, \eta}$ must be cancelled.

\begin{proposition}
\label{prop: cancellation}
For irreducible constituent $\lr^{\Sigma_0}$ in $\lif{X}_{C}, \lif{X}_{\eta}$, if there exists $x \in [B+1, A]$ such that ${\rm Jac}^{\rho}_{\zeta x} \lr^{\Sigma_0} \neq 0$, then $\lr^{\Sigma_0}$ is cancelled in $\lr^{\Sigma_0}_{M}(\lq, \e)$.
\end{proposition}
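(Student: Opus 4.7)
The plan is to establish the cancellation by computing Jacquet modules $\mathrm{Jac}^{\rho}_{\zeta x}$ on each summand $\lif{X}_C$ and $\lif{X}_\eta$ of the formula \eqref{eq: case 1} (or \eqref{eq: case 2}), and then showing that the alternating sum vanishes identically on irreducible constituents satisfying the hypothesis. Concretely, I would first apply Lemma~\ref{lemma: Jacquet module} to
\[
\lif{X}_C = \langle \zeta B, \cdots, -\zeta C\rangle_\rho \rtimes \mathrm{Jac}^{\rho}_{\zeta(B+2),\cdots,\zeta C}\,\lr^{\Sigma_0}_M\bigl(\lq', \e', (\rho, A, B+2, \zeta;\eta_0)\bigr)\otimes \eta_\rho^C\omega_\rho^{\zeta\gamma_B(C)}\chi_{\tilde\rho}.
\]
For $x\in[B+1,A]$ with $x\neq B$, the leading character of the segment $\langle \zeta B,\cdots,-\zeta C\rangle_\rho$ is $\zeta B\neq \zeta x$, so by Lemma~\ref{lemma: Jacquet module} the operator $\mathrm{Jac}^{\rho}_{\zeta x}$ passes through the parabolic induction and lands on the inner Jacquet term. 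A similar, more delicate, analysis applies to $\lif{X}_\eta$ by expanding its inner piece $\lr^{\Sigma_0}_M(\lq',\e',(\rho,A,B+1,\zeta;\eta),(\rho,B,B,\zeta;\eta\eta_0))$ using the induction hypothesis.

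Next, I would invoke the induction hypothesis on $\sum_{(\rho',A',B',\zeta')\in Jord(\q)}(A'-B')$ for the Theorem~\ref{thm: discrete diagonal restriction}, so that each $\lr^{\Sigma_0}_M\bigl(\lq', \e', (\rho, A, B+2, \zeta;\eta_0)\bigr)$ and each $\lif{X}_\eta$ has already been shown to be an honest representation given as the sum of $\lr^{\Sigma_0}_M(\lq'',\e'',\ldots)_{\mathrm{main}}$ and $\mathrm{com}$-terms. Applying $\mathrm{Jac}^{\rho}_{\zeta x}$ then produces, in each case, pieces labelled by the same combinatorial data $(\ul,\ueta)$ as appears in the refined parametrization \eqref{eq: refined decomposition}. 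Matching these labels across adjacent $C$ (respectively between $\lif{X}_C$ and $\lif{X}_\eta$ for boundary values of $C$), one finds that the signs $(-1)^{A-C}$ coming from \eqref{eq: case 1}, combined with the signs $(-1)$ produced by Lemma~\ref{lemma: Jacquet module} each time an extremity is peeled off, pair each term with a neighbour of opposite sign. This is the combinatorial heart of the argument, and it follows the pattern of M\oe glin's proof of \cite[Theorem 4.1]{Moeglin:2009}, transposed to the similitude setting.

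Alternatively, and perhaps more cleanly, I would reduce to M\oe glin's classical cancellation via restriction. By Lemma~\ref{lemma: restriction}(2), the restriction of the RHS of \eqref{eq: case 1}/\eqref{eq: case 2} to $G^{\Sigma_0}(F)$ is the alternating sum that M\oe glin has already shown gives the representation $\bigoplus_{\e'}\r^{\Sigma_0}_M(\q,\e\e')$; in particular any irreducible constituent of some $\lif{X}_C|_{G^{\Sigma_0}}$ or $\lif{X}_\eta|_{G^{\Sigma_0}}$ whose $\mathrm{Jac}^{\rho}_{\zeta x}$ ($x\in[B+1,A]$) is nonzero must cancel in that restricted sum by \cite[Theorem 4.1]{Moeglin:2009}. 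It then remains to lift this cancellation to $\lG^{\Sigma_0}(F)$. For this I would combine Corollary~\ref{cor: determination} (which asserts that an element of $K\mathrm{Rep}(\lG^{\Sigma_0}(F),\omega)$ supported on irreducibles $\lr^{\Sigma_0}$ with $X(\lr^{\Sigma_0})=X^{\Sigma_0}(\tilde\lambda)$ is determined by its restriction and twisted character) with a twisted character relation for the discrete diagonal restriction case, checked against each semisimple $s\in\cS{\q}^{\Sigma_0}\setminus\cS{\q}$.

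The main obstacle will be the bookkeeping of central-character twists $\chi_{\tilde\rho}$, $\omega_\rho^{\zeta\gamma_B(C)}$, $\eta_\rho^C$ appearing in $\lif{X}_C$ but absent from the classical $\r^{\Sigma_0}_M(\q,\e)$: one must verify that the differences $\gamma_B(C)-\gamma_B(C-1)$ and the shift formula \eqref{eq: shift 2} of Lemma~\ref{lemma: shift} conspire with the signs above to produce matching twists on the two summands being cancelled. Once this identification is carried out, the cancellation asserted by the proposition follows, since any $\lr^{\Sigma_0}$ with $\mathrm{Jac}^{\rho}_{\zeta x}\lr^{\Sigma_0}\neq 0$ for some $x\in[B+1,A]$ is, by Proposition~\ref{prop: existence}, necessarily outside $\lr^{\Sigma_0}_M(\lq,\e)_{\mathrm{main}}$ and $\lr^{\Sigma_0}_M(\lq,\e)_{\mathrm{com},\eta}$, and hence appears only among the terms we have shown to cancel.
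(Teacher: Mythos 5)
Your first route is indeed the paper's strategy: compute $\mathrm{Jac}^{\rho}_{\zeta x}$ of the alternating sum and show it vanishes. But you skip the step that turns a cancellation of Jacquet modules into a cancellation of the irreducibles themselves. The paper first proves that for any irreducible $\lr^{\Sigma_0}$ in $\lif{X}_C$ or $\lif{X}_\eta$ and any $x\in[B,A]$ one has $\mathrm{Jac}^{\rho}_{\zeta x,\zeta x}\lr^{\Sigma_0}=0$; this gives the corollary that two such constituents with nonvanishing $\mathrm{Jac}^{\rho}_{\zeta x}$ are equal if and only if their $\mathrm{Jac}^{\rho}_{\zeta x}$'s are equal. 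Without this injectivity statement, matching signed terms in the Jacquet module expansion (your "pairing each term with a neighbour of opposite sign") does not force the original constituents to cancel — they could merely have Jacquet modules that overlap. Once injectivity is in hand, Lemma~\ref{lemma: cancellation of Jacquet module} (a direct combinatorial computation, done separately for $x\in[B+2,A]$ and for $x=B+1$ by further expanding $\lr^{\Sigma_0}_M(\lq',\e',(\rho,A,B+2,\zeta;\eta_0))$ and $\lif{X}_\eta$ via the recursive formula) does close the argument. Note also that the paper does not pass to the $(\ul,\ueta)$-refined parametrization at this stage; it works directly with the recursive alternating formula and with Lemma~\ref{lemma: Jacquet module}.

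Your "cleaner" alternative — restrict to $G^{\Sigma_0}(F)$, invoke M{\oe}glin's cancellation there, and lift back via Corollary~\ref{cor: determination} — has a genuine gap. Corollary~\ref{cor: determination} only applies to classes supported on irreducibles with $X(\lr^{\Sigma_0})=X^{\Sigma_0}(\tilde\lambda)$, and that equality can fail for the very constituents at issue here: if $\lr^{\Sigma_0}\otimes\eta_{\rho'}\ncong\lr^{\Sigma_0}$ for some $\rho'\in J(\q)$, then $X(\lr^{\Sigma_0})\subsetneq X^{\Sigma_0}(\tilde\lambda)$, the pair $\{\lr^{\Sigma_0},\lr^{\Sigma_0}\otimes\eta_{\rho'}\}$ has the same restriction to $G^{\Sigma_0}(F)$, and neither the restriction nor the twisted character distinguishes them. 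This is precisely why the paper isolates those constituents and treats them separately in Lemma~\ref{lemma: special case} and Proposition~\ref{prop: special case}, feeding them back into Proposition~\ref{prop: cancellation} only after showing that the hypothesis $\mathrm{Jac}^{\rho'}_{\zeta'x}\lr^{\Sigma_0}\neq 0$ must hold for some Jordan block $(\rho',A',B',\zeta')$. Proposition~\ref{prop: cancellation} therefore has to be proved independently of the restriction-plus-twisted-character dichotomy, which is what the direct Jacquet module computation accomplishes.
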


The idea is to characterize $\lr^{\Sigma_0}$ by ${\rm Jac}^{\rho}_{\zeta x} \lr^{\Sigma_0}$ and show the cancellation of the corresponding Jacquet modules.

\begin{lemma}
For irreducible constituent $\lr^{\Sigma_0}$ in $\lif{X}_{C}, \lif{X}_{\eta}$ and $x \in [B, A]$, we have ${\rm Jac}^{\rho}_{\zeta x, \zeta x} \lr^{\Sigma_0} = 0$.
\end{lemma}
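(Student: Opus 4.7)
My plan is to reduce this vanishing to the analogous statement for the classical group $G^{\Sigma_0}(F)$, which was established by M{\oe}glin \cite{Moeglin:2009} in her proof of the discrete diagonal restriction case and is recorded as \cite[Proposition 8.3]{Xu:Apacket}. The argument proceeds by restricting to $G^{\Sigma_0}(F)$, invoking the classical-group vanishing summand by summand, and lifting back.

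First I will verify that restriction from $\lG^{\Sigma_0}(F)$ to $G^{\Sigma_0}(F)$ is compatible with parabolic induction, with the Jacquet functor $\Jac^{\rho}_{\zeta x}$, and with the character twists $\eta_{\rho}^{C}$, $\omega_{\rho}^{\zeta \gamma_{B}(C)}$, $\chi_{\tilde{\rho}}$ appearing in $\lif{X}_{C}$. The first two compatibilities hold because $\lif{P}^{\Sigma_0}$ and $P^{\Sigma_0}$ share the same unipotent radical and modulus character, and $\Jac^{\rho}_{\zeta x}$ depends only on the $GL$-part of the cuspidal support (so it is insensitive to any further classical-group twists). Combined with the restriction identity \eqref{eq: restriction} applied to the smaller Arthur packet $\lr^{\Sigma_0}_{M}(\lq', \e', \ldots)$, this yields
\[
\lif{X}_{C}\big|_{G^{\Sigma_0}(F)} = \bigoplus_{\e''} \, \langle \zeta B, \ldots, -\zeta C \rangle_{\rho} \rtimes \Jac^{\rho}_{\zeta(B+2), \ldots, \zeta C} \, \r^{\Sigma_0}_{M}\!\Big(\q', \e'', (\rho, A, B+2, \zeta; \eta_{0})\Big) \otimes (\text{twist})
\]
and
\[
\lif{X}_{\eta}\big|_{G^{\Sigma_0}(F)} = \bigoplus_{\e''} \, \r^{\Sigma_0}_{M}\!\Big(\q', \e'', (\rho, A, B+1, \zeta; \eta), (\rho, B, B, \zeta; \eta\eta_{0})\Big),
\]
each summand being exactly a constituent of the analogous object used by M{\oe}glin in her construction of the Arthur packet for $\q$ on $G$.

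Now let $\lr^{\Sigma_0}$ be an irreducible constituent of $\lif{X}_{C}$ or $\lif{X}_{\eta}$. Then any irreducible summand $\r^{\Sigma_0}$ of $\lr^{\Sigma_0}|_{G^{\Sigma_0}(F)}$ is a constituent of one of the $G^{\Sigma_0}$-analogs displayed above, so by \cite[Proposition 8.3]{Xu:Apacket} we have $\Jac^{\rho}_{\zeta x, \zeta x} \r^{\Sigma_0} = 0$ for every $x \in [B, A]$. Since $\Jac^{\rho}_{\zeta x}$ commutes with restriction to $G^{\Sigma_0}(F)$,
\[
\big(\Jac^{\rho}_{\zeta x, \zeta x} \lr^{\Sigma_0}\big)\big|_{G^{\Sigma_0}(F)} = \Jac^{\rho}_{\zeta x, \zeta x}\big(\lr^{\Sigma_0}|_{G^{\Sigma_0}(F)}\big) = 0.
\]
Because restriction from $\lG^{\Sigma_0}(F)$ to $G^{\Sigma_0}(F)$ does not change the underlying vector space, this forces $\Jac^{\rho}_{\zeta x, \zeta x} \lr^{\Sigma_0} = 0$, completing the proof.

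The main (and minor) obstacle will be the bookkeeping check that all twists in the definition of $\lif{X}_{C}$ interact well with restriction, but this is mechanical once one notes that $\Jac^\rho_{\zeta x}$ is insensitive to twists by characters of the classical-group factor. Beyond that, the proof is essentially a transport of M{\oe}glin's vanishing through restriction.
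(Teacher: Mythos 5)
Your proof is correct and follows essentially the same route as the paper: the paper likewise reduces the vanishing to the classical group $G^{\Sigma_0}(F)$ by observing that ${\rm Jac}^{\rho}_{\zeta x, \zeta x}\lif{X}_{C}$ and ${\rm Jac}^{\rho}_{\zeta x, \zeta x}\lif{X}_{\eta}$ restrict to the corresponding objects for $G^{\Sigma_0}(F)$, whose vanishing is M{\oe}glin's. Your version simply spells out, constituent by constituent, the compatibility of restriction with ${\rm Jac}^{\rho}_{\zeta x}$ and with the character twists, which the paper leaves implicit.
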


\begin{proof}
It suffices to show ${\rm Jac}^{\rho}_{\zeta x, \zeta x} \lif{X}_{C} = {\rm Jac}^{\rho}_{\zeta x, \zeta x}  \lif{X}_{\eta} = 0$ for all $x \in [B, A]$, which follows from the same vanishing results for $G^{\Sigma_0}(F)$.
\end{proof}

\begin{corollary}
For irreducible constituent $\lr^{\Sigma_0}_1, \lr^{\Sigma_0}_2$ in $\lif{X}_{C}, \lif{X}_{\eta}$ such that ${\rm Jac}^{\rho}_{\zeta x} \lr^{\Sigma_0}_i \neq 0$ for some $x \in [B, A]$, we have $\lr^{\Sigma_0}_1 = \lr^{\Sigma_0}_2$ if and only if ${\rm Jac}^{\rho}_{\zeta x} \lr^{\Sigma_0}_1 = {\rm Jac}^{\rho}_{\zeta x} \lr^{\Sigma_0}_2$.
\end{corollary}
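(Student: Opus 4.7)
The plan is to reduce the statement to the preceding lemma, which gives $\Jac^{\rho}_{\zeta x,\zeta x}\lr^{\Sigma_0}_i = 0$. The only-if direction is immediate from the definition of the Jacquet functor, so I focus on the converse. Setting $\tilde\tau^{\Sigma_0} := \Jac^{\rho}_{\zeta x}\lr^{\Sigma_0}_1 = \Jac^{\rho}_{\zeta x}\lr^{\Sigma_0}_2$ and picking any irreducible constituent $\tilde\sigma^{\Sigma_0}$ of $\tilde\tau^{\Sigma_0}$, the strategy is to exhibit $\lr^{\Sigma_0}_i$ as the unique irreducible subquotient of $\rho||^{\zeta x}\rtimes\tilde\sigma^{\Sigma_0}$ whose double Jacquet module vanishes, which pins down both $\lr^{\Sigma_0}_1$ and $\lr^{\Sigma_0}_2$ as the same representation determined purely by $\tilde\tau^{\Sigma_0}$.

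First I would apply Frobenius reciprocity (via either first or second adjointness for the maximal parabolic of Levi type $GL(d_{\rho})\times \lG^{\Sigma_0}_{-}$) to realise each $\lr^{\Sigma_0}_i$ as an irreducible subquotient of the induction $\rho||^{\zeta x}\rtimes \tilde\sigma^{\Sigma_0}$. Next, I would invoke Lemma~\ref{lemma: Jacquet module} in the degenerate form $A=B=x$ (equivalently the Tadi\'c/geometric lemma formula for a maximal parabolic) to obtain, in the Grothendieck group, a decomposition of the shape
\[
\Jac^{\rho}_{\zeta x}\bigl(\rho||^{\zeta x}\rtimes \tilde\sigma^{\Sigma_0}\bigr) \;=\; \tilde\sigma^{\Sigma_0} \;+\; \rho||^{\zeta x}\rtimes \Jac^{\rho}_{\zeta x}\tilde\sigma^{\Sigma_0} \;+\; \bigl(\text{contribution from }\Jac^{\rho^{\vee}}_{-\zeta x}\tilde\sigma^{\Sigma_0}\bigr).
\]
The point of this decomposition is that both the second and third summands, whenever nonzero, carry $\rho||^{\zeta x}$ in their cuspidal support, so any irreducible subquotient $\lr^{\Sigma_0}$ of $\rho||^{\zeta x}\rtimes\tilde\sigma^{\Sigma_0}$ whose $\Jac^{\rho}_{\zeta x}\lr^{\Sigma_0}$ meets either of them nontrivially will produce a nonzero $\Jac^{\rho}_{\zeta x,\zeta x}\lr^{\Sigma_0}$ upon a second Jacquet.

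Combined with the hypothesis $\Jac^{\rho}_{\zeta x,\zeta x}\lr^{\Sigma_0}_i = 0$, this forces $\Jac^{\rho}_{\zeta x}\lr^{\Sigma_0}_i$ to be a sum of copies of $\tilde\sigma^{\Sigma_0}$ only, and the multiplicity is dictated by $\tilde\tau^{\Sigma_0}$; thus $\lr^{\Sigma_0}_i$ is uniquely determined by $\tilde\tau^{\Sigma_0}$, and $\lr^{\Sigma_0}_1 = \lr^{\Sigma_0}_2$ follows. The main obstacle I anticipate is the bookkeeping in the geometric lemma computation: one must verify that no cancellation or collision occurs between the leading $\tilde\sigma^{\Sigma_0}$ summand and the other two, which ultimately reduces to the observation that such a coincidence would require $\rho||^{\zeta x}$ to occur with multiplicity at least two in the cuspidal support of $\lr^{\Sigma_0}_i$, directly contradicting the vanishing of $\Jac^{\rho}_{\zeta x,\zeta x}$ supplied by the preceding lemma.
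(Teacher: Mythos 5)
Your proposal is essentially the same argument as the paper's, and it is correct in outline: reduce to the vanishing of the double Jacquet module, use Frobenius reciprocity to embed $\lr^{\Sigma_0}_i$ in $\rho||^{\zeta x}\rtimes\tilde{\sigma}^{\Sigma_0}_i$, deduce from the vanishing that $\Jac^{\rho}_{\zeta x}\tilde{\sigma}^{\Sigma_0}_i = 0$ and hence $\Jac^{\rho}_{\zeta x}\lr^{\Sigma_0}_i = \tilde{\sigma}^{\Sigma_0}_i$, and conclude. One place where the paper is sharper and you are slightly imprecise: you pin $\lr^{\Sigma_0}_i$ down as ``the unique irreducible \emph{subquotient} of $\rho||^{\zeta x}\rtimes\tilde{\sigma}^{\Sigma_0}$ whose double Jacquet vanishes,'' but that criterion is not what singles it out (other subquotients may have vanishing $\Jac^{\rho}_{\zeta x,\zeta x}$ simply because their $\Jac^{\rho}_{\zeta x}$ is already zero). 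What actually does the job, and what the paper uses, is that Frobenius reciprocity gives an embedding $\lr^{\Sigma_0}_i\hookrightarrow\rho||^{\zeta x}\rtimes\tilde{\sigma}^{\Sigma_0}_i$, and since $\tilde{\sigma}^{\Sigma_0}_i$ occurs with multiplicity one in $\Jac^{\rho}_{\zeta x}$ of the induction (because $\Jac^{\rho}_{\zeta x}\tilde{\sigma}^{\Sigma_0}_i = 0$), the induced module has a \emph{unique} irreducible subrepresentation, namely $\lr^{\Sigma_0}_i$. That socle argument is what closes the loop from $\tilde{\sigma}^{\Sigma_0}_1 = \tilde{\sigma}^{\Sigma_0}_2$ to $\lr^{\Sigma_0}_1 = \lr^{\Sigma_0}_2$; your ``multiplicity is dictated by $\tilde\tau^{\Sigma_0}$'' step should be replaced by it.
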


\begin{proof}
Since ${\rm Jac}^{\rho}_{\zeta x} \lr^{\Sigma_0}_i \neq 0$, there exists an irreducible representation $\tilde{\sigma}^{\Sigma_0}_i$ such that 
\[
\lr^{\Sigma_0}_i \hookrightarrow \rho||^{\zeta_x} \rtimes \tilde{\sigma}^{\Sigma_0}_i
\] 
Since ${\rm Jac}^{\rho}_{\zeta x, \zeta x} \lr^{\Sigma_0}_i = 0$, we must have ${\rm Jac}^{\rho}_{\zeta x} \tilde{\sigma}^{\Sigma_0}_i = 0$. So
\[
{\rm Jac}^{\rho}_{\zeta x} \, (\rho||^{\zeta_x} \rtimes \tilde{\sigma}^{\Sigma_0}_i) = \tilde{\sigma}^{\Sigma_0}_i.
\]
This is means ${\rm Jac}^{\rho}_{\zeta x} \lr^{\Sigma_0}_i = \tilde{\sigma}^{\Sigma_0}_i$ and $\lr^{\Sigma_0}_i$ is the unique irreducible subrepresentation of $\rho||^{\zeta_x} \rtimes \tilde{\sigma}^{\Sigma_0}_i$. The rest is clear.
\end{proof}

For the proof of Proposition~\ref{prop: cancellation}, it remains to show

\begin{lemma}
\label{lemma: cancellation of Jacquet module}
For $x \in [B+1, A]$,
\(
{\rm Jac}^{\rho}_{\zeta x} \lr^{\Sigma_0}_M(\lq, \e) = 0.
\)

\end{lemma}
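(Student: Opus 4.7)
The plan is to proceed by induction on $N(\q) := \sum_{(\rho',A',B',\zeta') \in Jord(\q)} (A' - B')$, paralleling M{\oe}glin's classical-group proof of the analogous vanishing for $\r^{\Sigma_{0}}_{M}(\q, \e)$. The base case $N(\q) = 0$ is vacuous since then $]B, A] = \emptyset$. For the inductive step, I would apply ${\rm Jac}^{\rho}_{\zeta x}$ term by term to the defining identities \eqref{eq: case 1}--\eqref{eq: case 2}: each summand $\lif{X}_{\eta}$ has inner Arthur parameter of strictly smaller $N$, so the induction hypothesis applies immediately to it. For each $\lif{X}_{C}$ with $C \in ]B, A]$, the segment $\langle \zeta B, \cdots, -\zeta C \rangle_{\rho}$ starts at $\zeta B \neq \zeta x$ (since $x \geqslant B + 1$), so Lemma~\ref{lemma: Jacquet module} lets ${\rm Jac}^{\rho}_{\zeta x}$ commute past the segment, picking up only the ``dual-end'' contribution when $x = C$. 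Commuting the Jacquet operators should reduce everything to Jacquet modules of $\lr^{\Sigma_{0}}_{M}(\lq', \e', (\rho, A, B+2, \zeta; \eta_{0}))$, whose associated $N$ is $N(\q) - 2$; the induction hypothesis will then eliminate most resulting terms, and the signs $(-1)^{A-C}$ should produce the required cancellation across $C$, in parallel with the classical computation.

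A useful cross-check, which would essentially complete the argument after supplying a short lifting step, is provided by restriction: by Lemma~\ref{lemma: restriction}(2),
\[
\big({\rm Jac}^{\rho}_{\zeta x}\,\lr^{\Sigma_{0}}_{M}(\lq, \e)\big)\big|_{G^{\Sigma_{0}}(F)} = \bigoplus_{\e' \in (\S{\q}^{\Sigma_{0}}/\S{\lq}^{\Sigma_{0}})^{\wedge}} {\rm Jac}^{\rho}_{\zeta x}\,\r^{\Sigma_{0}}_{M}(\q, \e \e'),
\]
and each summand on the right vanishes by M{\oe}glin's classical-group result. By Corollary~\ref{cor: Jac preserves infinitesimal character} the virtual representation $Z := {\rm Jac}^{\rho}_{\zeta x}\,\lr^{\Sigma_{0}}_{M}(\lq, \e)$ is supported with a fixed $\Sigma_{0}$-infinitesimal character $\tilde{\lambda}_{-}$.

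The hardest part will be promoting this vanishing of $Z|_{G^{\Sigma_{0}}_{-}(F)}$ to a vanishing of $Z$ itself inside $K\Rep(\lG^{\Sigma_{0}}_{-}(F))$. The kernel of restriction on the Grothendieck group of representations with $\Sigma_{0}$-infinitesimal character $\tilde{\lambda}_{-}$ is generated by differences $[\lr^{\Sigma_{0}}] - [\lr^{\Sigma_{0}} \otimes \omega]$ for $\omega \in X^{\Sigma_{0}}(\tilde{\lambda}_{-})$, so the twists by $\eta_{\rho}, \omega_{\rho}, \chi_{\tilde{\rho}}$ built into the definitions of $\lif{X}_{C}$ (and pinned down by the shift computation of Lemma~\ref{lemma: shift}) must be tracked carefully enough to match multiplicities on the nose, not merely modulo $X^{\Sigma_{0}}(\tilde{\lambda}_{-})$. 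This bookkeeping---the genuinely new ingredient in the similitude setting beyond M{\oe}glin's argument---is where I expect the bulk of the work to lie.
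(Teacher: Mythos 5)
There is a genuine gap in your first approach, and it sits exactly where you wave it away. You claim that since each $\lif{X}_{\eta}$ has a parameter with strictly smaller $\sum (A'-B')$, the induction hypothesis applies ``immediately'' to give ${\rm Jac}^{\rho}_{\zeta x}\lif{X}_{\eta}=0$. This is correct only for $x \in [B+2, A]$: the Jordan blocks inside $\lif{X}_{\eta}$ that replace $(\rho, A, B, \zeta)$ are $(\rho, A, B+1, \zeta)$ and $(\rho, B, B, \zeta)$, so the inductive hypothesis gives vanishing of $\Jac^{\rho}_{\zeta x'}$ only for $x' \in [B+2, A]$; it says nothing at $x' = B+1$. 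And indeed $\Jac^{\rho}_{\zeta(B+1)}\lif{X}_{\eta} \neq 0$. The paper's proof therefore splits into two cases: for $x \in [B+2,A]$ it runs the quick pairing $\lif{X}_{C} \leftrightarrow \lif{X}_{C-1}$ (the alternating signs do cancel, essentially as you describe); but for $x = B+1$ it must expand \emph{both} $\lr^{\Sigma_0}_{M}(\lq', \e', (\rho, A, B+2, \zeta; \eta_0))$ inside $\lif{X}_{C}$ (producing the auxiliary $\lif{X}'_{C'}, \lif{X}'_{\eta'}$ terms) \emph{and} $\lif{X}_{\eta}$ itself (producing $\lif{X}_{\eta, C}, \lif{X}_{\eta,\eta''}$), pair $(C,C')$ against $(C',C+1)$, match $\eta$ with $\eta'$, and verify an explicit sign identity. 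None of this is a routine consequence of the recursion; it is the bulk of the proof. Your outline collapses it to ``the signs should produce the required cancellation,'' which omits the genuinely nontrivial step.

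Your second route (restriction to $G^{\Sigma_0}_{-}(F)$, then lifting the vanishing) is a genuinely different approach from the paper's, and you correctly diagnose its weak point: M{\oe}glin's result kills $Z|_{G^{\Sigma_0}_{-}}$, but the restriction map on Grothendieck groups has a kernel, and to rule out $Z$ lying in it you would have to track the $\eta_{\rho}, \omega_{\rho}, \chi_{\tilde{\rho}}$ twists multiplicity-by-multiplicity. You explicitly acknowledge you have not done this bookkeeping. So as written, neither route is complete; what you have is a correct identification of the two available strategies and of where the difficulty concentrates, not a proof. The paper chooses the first route and does the $x=B+1$ bookkeeping in full.
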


\begin{proof}
First let us consider $x \in [B + 2, A]$. We will compute ${\rm Jac}^{\rho}_{\zeta x} \lif{X}_{C}$ as follows. If $C = x$, 
\[
{\rm Jac}^{\rho}_x \lif{X}_{C} = \langle \zeta B, \cdots, -\zeta 
(C-1) \rangle_{\rho} \rtimes  {\rm Jac}^{\rho}_{B+2, \cdots, C} \, \lr^{\Sigma_0}_{M}\Big(\lq', \e', (\rho, A, B+2, \zeta, \eta_0)\Big) \otimes \eta^{C}_{\rho} \omega^{\zeta \gamma_{B}(C)}_{\rho} \chi_{\tilde{\rho}} \cdot \eta_{\rho} \omega_{\rho}^{-\zeta C} 
\]
If $C = x - 1$,
\[
{\rm Jac}^{\rho}_x \lif{X}_{C} = \langle \zeta B, \cdots, -\zeta C \rangle_{\rho} \rtimes  {\rm Jac}^{\rho}_{B+2, \cdots, C, C + 1} \, \lr^{\Sigma_0}_{M}\Big(\lq', \e', (\rho, A, B+2, \zeta, \eta_0)\Big) \otimes \eta^{C}_{\rho} \omega^{\zeta \gamma_{B}(C)}_{\rho} \chi_{\tilde{\rho}} 
\]
If $C \neq x, x -1$, ${\rm Jac}^{\rho}_x \lif{X}_{C} = 0$. One checks that
\[
{\rm Jac}^{\rho}_{\zeta C} \lif{X}_{C} = {\rm Jac}^{\rho}_{\zeta C} \lif{X}_{C - 1}.
\]
Moreover, ${\rm Jac}^{\rho}_{\zeta x} \lif{X}_{\eta} = 0$. So ${\rm Jac}^{\rho}_{\zeta x} \lr^{\Sigma_0}_M(\lq, \e) = 0$ for $x \in [B+2, A]$. 

Next let us consider the case that $x = B + 1$. For $C > B + 1$, 
\[
{\rm Jac}^{\rho}_{\zeta (B+1)} \lif{X}_{C} = \langle \zeta B, \cdots, -\zeta 
C \rangle_{\rho} \rtimes  {\rm Jac}^{\rho}_{\zeta (B+1)}{\rm Jac}^{\rho}_{\zeta (B+2), \cdots, \zeta C} \, \lr^{\Sigma_0}_{M}\Big(\lq', \e', (\rho, A, B+2, \zeta, \eta_0)\Big) \otimes \eta^{C}_{\rho} \omega^{\zeta \gamma_{B}(C)}_{\rho} \chi_{\tilde{\rho}} 
\]
To simplify further, we need to expand $\lr^{\Sigma_0}_{M}\Big(\lq', \e', (\rho, A, B+2, \zeta, \eta_0)\Big)$. Let
\[
\lif{X}'_{C'} := \langle \zeta (B+2), \cdots, -\zeta C' \rangle_{\rho} \rtimes  {\rm Jac}_{\zeta(B+4), \cdots, \zeta C'} \, \lr^{\Sigma_0}_{M}\Big(\lq', \e', (\rho, A, B+4, \zeta, \eta_0)\Big) \otimes \eta^{C'}_{\rho} \omega^{\zeta \gamma_{B+2}(C')}_{\rho} \chi_{\tilde{\rho}}
\]
\[
\lif{X}'_{\eta'} := \lr^{\Sigma_0}_M \Big(\lq', \e', (\rho, A, B+3, \zeta, \eta'), (\rho, B+2, B+2, \zeta, \eta'\eta_0) \Big)
\]
If $\lif{X}'_{+} \neq \lif{X}'_{-}$, then
\begin{align*}
\lr^{\Sigma_0}_{M}\Big(\lq', \e', (\rho, A, B+2, \zeta, \eta_0)\Big) = \oplus_{C' \in ]B + 2, A]} (-1)^{A-C'} \lif{X}'_{C'} \oplus_{\eta' = \pm} (-1)^{[(A - B +1)/2] + 1} \eta'^{A-B+1} \eta^{A-B}_0 \lif{X}'_{\eta'} 
\end{align*}
If $\lif{X}'_{+} = \lif{X}'_{-}$, then 
\[
\lr^{\Sigma_0}_{M}\Big(\lq', \e', (\rho, A, B+2, \zeta, \eta_0)\Big) = \oplus_{C' \in ]B + 2, A]} (-1)^{A-C'} \lif{X}'_{C'} \oplus (-1)^{(A - B +1)/2 + 1} \eta_0 \lif{X}'_{+} 
\]
Consider $\lif{X}'_{C'}$ and the corresponding contribution to $\lr^{\Sigma_0}_M(\lq, \e)$. If $C < C'$, we get
\begin{align*}
& (-1)^{C + C'} \langle \zeta B, \cdots, -\zeta C \rangle_{\rho} \times \langle \zeta B, \cdots, -\zeta C' \rangle_{\rho} \rtimes {\rm Jac}^{\rho}_{\zeta (B+3), \cdots, \zeta C} {\rm Jac}^{\rho}_{\zeta (B+4), \cdots, \zeta C'} \\
& \lr^{\Sigma_0}_M \Big(\lq', \e', (\rho, A, B+4, \zeta, \eta_0) \Big) \otimes \eta_{\rho}^{C + C'} \omega^{\zeta (\gamma_{B}(C) + \gamma_{B+2}(C'))}_{\rho} \chi^{2}_{\tilde{\rho}} 
\end{align*}
If $C \geqslant C'$, we get
\begin{align*}
& (-1)^{C + C'} \langle \zeta B, \cdots, -\zeta C \rangle_{\rho} \times \langle \zeta B, \cdots, -\zeta (C'-1) \rangle_{\rho} \rtimes {\rm Jac}^{\rho}_{\zeta (B+3), \cdots, \zeta (C' -1)} {\rm Jac}^{\rho}_{\zeta (B+4), \cdots, \zeta C} \\
& \lr^{\Sigma_0}_M \Big(\lq', \e', (\rho, A, B+4, \zeta, \eta_0) \Big) \otimes \eta_{\rho}^{C + C' - 1} \omega^{\zeta (\gamma_{B}(C) + \gamma_{B+2}(C') - C')}_{\rho} \chi^{2}_{\tilde{\rho}} 
\end{align*}
Note $C > B + 1$, $C' > B+2$. We can pair $(C, C')$ with $(C', C+1)$ for $C < C'$, and their contributions cancel each other. The contribution of $\lif{X}'_{\eta'}$ to $\lr^{\Sigma_0}_M(\lq, \e)$ is
\begin{align}
\label{eq: vanishing}
& (-1)^{[(A - B +1)/2] + 1} \eta'^{A-B+1} \eta^{A-B}_0 \cdot 
 (-1)^{A -C}  \langle \zeta B, \cdots, -\zeta 
C \rangle_{\rho} \rtimes  {\rm Jac}^{\rho}_{\zeta(B+3), \cdots, \zeta C} \\
&  \lr^{\Sigma_0}_{M}\Big(\lq', \e', (\rho, A, B+3, \zeta, \eta'), (\rho, B, B, \zeta, \eta'\eta_0)\Big) \otimes \eta^{C}_{\rho} \omega^{\zeta (\gamma_{B}(C) - (B+2)/2 - (B+1)/2)}_{\rho} \chi_{\tilde{\rho}} 
\end{align}
For $C = B +1$, we also get contribution from
\begin{align}
\label{eq: vanishing 1}
{\rm Jac}^{\rho}_{\zeta (B+1)} \lif{X}_{B+1} = \langle \zeta B, \cdots, -\zeta 
B \rangle_{\rho} \rtimes \lr^{\Sigma_0}_{M}\Big(\lq', \e', (\rho, A, B+2, \zeta, \eta_0)\Big) \otimes \eta^{B+1}_{\rho} \omega^{\zeta \gamma_{B}(B+1)}_{\rho} \chi_{\tilde{\rho}} \cdot \eta_{\rho} \omega_{\rho}^{-\zeta (B+1)}
\end{align}
with sign $(-1)^{A - B - 1}$. To cancel these terms, we need to further expand
\[
\lif{X}_{\eta} := \lr^{\Sigma_0}_M \Big(\lq', \e', (\rho, A, B+1, \zeta, \eta), (\rho, B, B, \zeta, \eta\eta_0) \Big).
\]
Let
\[
\lif{X}_{\eta, C} := \langle \zeta (B+1), \cdots, -\zeta C \rangle_{\rho} \rtimes  {\rm Jac}^{\rho}_{\zeta(B+3), \cdots, \zeta C} \, \lr^{\Sigma_0}_{M}\Big(\lq', \e', (\rho, A, B+3, \zeta, \eta), (\rho, B, B, \zeta, \eta\eta_0)\Big) \otimes \eta^{C}_{\rho} \omega^{\zeta \gamma_{B+1}(C)}_{\rho} \chi_{\tilde{\rho}}
\]
\[
\lif{X}_{\eta, \eta''} := \lr^{\Sigma_0}_M \Big(\lq', \e', (\rho, A, B+2, \zeta, \eta''), (\rho, B+1, B+1, \zeta, \eta''\eta), (\rho, B, B, \zeta, \eta\eta_0) \Big)
\]
It is necessary that $\lif{X}_{\eta, +} \neq \lif{X}_{\eta, -}$. Then
\begin{align*}
\lif{X}_{\eta} = \oplus_{C \in ]B + 1, A]} (-1)^{A-C} \lif{X}_{\eta, C} \oplus_{\eta'' = \pm} (-1)^{[(A - B)/2]} \eta''^{A-B} \eta^{A-B -1} \lif{X}_{\eta, \eta''}.
\end{align*}
Since $\lif{X}_{+} = \lif{X}_{-}$ if and only if $\lif{X}'_{+} = \lif{X}'_{-}$, then one checks that the contributions from $\lif{X}_{\eta, C}$ for $C > B+1$ cancel \eqref{eq: vanishing} after matching $\eta, \eta'$. Also note 
\[
{\rm Jac}^{\rho}_{\zeta (B+1)} \lif{X}_{\eta, \eta''} \neq 0
\]
if and only if $\eta'' = \eta_0$, in which case
\[
{\rm Jac}^{\rho}_{\zeta (B+1)} \lif{X}_{+, \eta_0} \text{ if  $\lif{X}_+ = \lif{X}_{-}$ \, or \, }  \sum_{\eta} {\rm Jac}^{\rho}_{\zeta (B+1)} \lif{X}_{\eta, \eta_0}  \text{ if  $\lif{X}_+ \neq \lif{X}_{-}$ }
\]
is equal to
\[
\langle \zeta B, \cdots, -\zeta 
B \rangle_{\rho} \rtimes \lr^{\Sigma_0}_{M}\Big(\lq', \e', (\rho, A, B+2, \zeta, \eta_0)\Big) \otimes \eta^{B}_{\rho} \omega^{-\zeta (B+1)/2}_{\rho} \chi_{\tilde{\rho}} 
\]
which is exactly \eqref{eq: vanishing 1}. To see the cancellation, we still need to check the sign in front of ${\rm Jac}^{\rho}_{\zeta (B+1)} \lif{X}_{\eta, \eta_0}$
\[
(-1)^{[(A - B +1)/2]} \eta^{A-B+1} \eta^{A-B}_0 \cdot (-1)^{[(A - B)/2]} \eta_0^{A-B} \eta^{A-B -1} = (-1)^{A - B}
\]
which is opposite to that of \eqref{eq: vanishing 1}.
\end{proof}

\begin{lemma}
\label{lemma: special case}
Assume all $(\rho, A', B', \zeta') \in Jord_{\rho}(\q)$ such that $B' > B$ satisfy $\zeta' = \zeta$. If $\lr^{\Sigma_0}$ is in $\lif{X}_{C}$ or $\lif{X}_{\eta}$ such that $\lr^{\Sigma_0} \otimes \eta_{\rho} \ncong \lr^{\Sigma_0}$ and ${\rm Jac}^{\rho}_{\zeta x} \lr^{\Sigma_0} = 0$ for $x \in [B+1, A]$, then
\(
{\rm Jac}^{\rho}_{\zeta B, \cdots, -\zeta A} \lr^{\Sigma_0} \neq 0.
\)
\end{lemma}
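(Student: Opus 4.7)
The target $\mathrm{Jac}^{\rho}_{\zeta B,\cdots,-\zeta A}\lr^{\Sigma_0}\neq 0$ is equivalent, by Frobenius reciprocity, to $\lr^{\Sigma_0}$ embedding as a subrepresentation of an induced representation of the form $\langle \zeta B,\cdots,-\zeta A\rangle_{\rho}\rtimes\tilde{\sigma}$ for some irreducible $\tilde{\sigma}$. I would establish this by induction on $A-B$ (and simultaneously on $\sum_{(\rho,A,B,\zeta)\in Jord(\q)}(A-B)$), splitting into cases according to whether $\lr^{\Sigma_0}$ is a constituent of some $\lif{X}_{C}$ with $C\in ]B,A]$ or of some $\lif{X}_{\eta}$.

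Case of $\lif{X}_{C}$. By construction, any constituent of $\lif{X}_{C}$ is a subquotient of $\langle \zeta B,\cdots,-\zeta C\rangle_{\rho}\rtimes\tau_{C}$ (tensored with the twist $\eta_{\rho}^{C}\omega_{\rho}^{\zeta\gamma_{B}(C)}\chi_{\tilde\rho}$), where $\tau_{C}:=\mathrm{Jac}^{\rho}_{\zeta(B+2),\cdots,\zeta C}\,\lr^{\Sigma_0}_{M}(\lq',\e',(\rho,A,B+2,\zeta;\eta_{0}))$. The sign hypothesis on $Jord_{\rho}(\q)$ (all higher Jordan blocks share the sign $\zeta$) together with Lemma~\ref{lemma: shift} allows me to rewrite $\tau_{C}$ as a twist of $\lr^{\Sigma_0}_{M}(\lq'',\e'',(\rho,A-(C-B-1),B+1,\zeta;\eta_{0}))$, whose combinatorial structure is controlled by the inductive hypothesis. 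I then apply Lemma~\ref{lemma: Jacquet module} iteratively to compute $\mathrm{Jac}^{\rho}_{\zeta x}$ ($x\in[C+1,A]$) on this subquotient. The assumption $\mathrm{Jac}^{\rho}_{\zeta x}\lr^{\Sigma_0}=0$ then forces the only surviving summands of $\lr^{\Sigma_0}$ inside $\langle \zeta B,\cdots,-\zeta C\rangle_{\rho}\rtimes\tau_{C}$ to be those for which further characters of the form $\rho|\cdot|^{-\zeta(C+1)},\cdots,\rho|\cdot|^{-\zeta A}$ can be produced from $\tau_{C}$ itself; this lengthens the segment and promotes $\lr^{\Sigma_0}$ to a subrepresentation of $\langle \zeta B,\cdots,-\zeta A\rangle_{\rho}\rtimes\tilde{\sigma}$.

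Case of $\lif{X}_{\eta}$. Here $\lif{X}_{\eta}=\lr^{\Sigma_0}_{M}(\lq',\e',(\rho,A,B+1,\zeta;\eta),(\rho,B,B,\zeta;\eta\eta_{0}))$. Since $A-(B+1)<A-B$, the inductive hypothesis of Theorem~\ref{thm: discrete diagonal restriction} applies to $\lq'$, so I can unfold $\lif{X}_{\eta}$ into the standard subrepresentation structure attached to the Jordan block $(\rho,A,B+1,\zeta)$. The extra elementary Jordan block $(\rho,B,B,\zeta;\eta\eta_{0})$ contributes via Proposition~\ref{prop: parabolic reduction full orthogonal group}, producing an embedding into $\langle \zeta B,\cdots,-\zeta B\rangle_{\rho}\rtimes(-)$ commuting with the segment from $(\rho,A,B+1,\zeta)$. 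Composing and collapsing the two segments (using the exchange identity $\rho|\cdot|^{\zeta B}\times\langle \zeta(B+1),\cdots,-\zeta A\rangle_{\rho}$ and the vanishing of $\mathrm{Jac}^{\rho}_{\zeta x}\lr^{\Sigma_0}$ for $x\in[B+1,A]$) realises $\lr^{\Sigma_0}$ as a subrepresentation of $\langle \zeta B,\cdots,-\zeta A\rangle_{\rho}\rtimes\tilde{\sigma}$; the hypothesis $\lr^{\Sigma_0}\otimes\eta_{\rho}\ncong\lr^{\Sigma_0}$ removes the only scenario (where the two possible extensions of a segment coalesce under the $\eta_{\rho}$-twist) in which this embedding could fail to be unique or nonzero.

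The main obstacle is the bookkeeping in Case $\lif{X}_{C}$: when applying Lemma~\ref{lemma: Jacquet module} to $\langle \zeta B,\cdots,-\zeta C\rangle_{\rho}\rtimes\tau_{C}$, each step produces two summands, and I must use the vanishing hypothesis together with the sign condition on $Jord_{\rho}(\q)$ to show that only one summand is compatible with the cuspidal support of $\lr^{\Sigma_0}$. This requires a careful comparison of central characters (tracking the factors $\eta_{\rho}^{C}\omega_{\rho}^{\zeta\gamma_{B}(C)}\chi_{\tilde{\rho}}$) parallel to the computation in Lemma~\ref{lemma: cancellation of Jacquet module}, where the non-$\eta_{\rho}$-invariance hypothesis is crucially used to distinguish $\lr^{\Sigma_0}$ from its twist.
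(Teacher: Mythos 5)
Your proposal takes a genuinely different route from the paper, and as stated it has gaps that I do not think can be filled without reorganizing the argument.

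The paper's proof is a structural cuspidal-support argument. It uses the hypothesis $\lr^{\Sigma_0}\otimes\eta_{\rho}\ncong\lr^{\Sigma_0}$ \emph{at the very start}: this forces the entire $\rho$-cuspidal support of $\lr^{\Sigma_0}$ to live in GL-segments, giving a concrete embedding
\(
\lr^{\Sigma_0}\hookrightarrow \times_{x'\geqslant y'}\,\langle\zeta x',\cdots,\zeta y'\rangle_{\rho}\rtimes\tilde{\sigma}^{\Sigma_0}
\)
into a Langlands-type standard module whose inducing data on $\lG^{\Sigma_0}_{-}$ contributes no $\rho$-support (for $\zeta=+$ this comes from the standard-module realization; for $\zeta=-$ the paper passes through the cohomological/Aubert dual $\mathbb{D}((\cdot)^{\vee})$, an exact-functor trick your proposal does not address at all). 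Then a parity count on the multiplicities of $\rho||^{\pm(A+1)}$ and $\rho||^{\pm A}$ in the cuspidal support, combined with the vanishing hypothesis and the sign condition on the higher Jordan blocks, forces the existence of a segment in that embedding that runs exactly from $\zeta B$ to $-\zeta A$, which is what gives the nonvanishing of ${\rm Jac}^{\rho}_{\zeta B,\cdots,-\zeta A}$.

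Concretely, the gaps in your proposal are:
(i) In the $\lif{X}_{C}$ case, the claim that the vanishing ${\rm Jac}^{\rho}_{\zeta x}\lr^{\Sigma_0}=0$ for $x\in[B{+}1,A]$ ``lengthens the segment'' and promotes $\lr^{\Sigma_0}$ to a subrepresentation of $\langle\zeta B,\cdots,-\zeta A\rangle_{\rho}\rtimes\tilde{\sigma}$ is the whole point of the lemma, and it is asserted rather than proved. Being a constituent of $\langle\zeta B,\cdots,-\zeta C\rangle_{\rho}\rtimes\tau_{C}$ with ${\rm Jac}^{\rho}_{\zeta(C+1)}$ vanishing does not by itself yield an embedding into a longer-segment induced module; one needs the global cuspidal-support picture that the paper's standard-module realization provides.
(ii) The non-$\eta_{\rho}$-invariance is deployed in the wrong place: you invoke it only at the end as a disambiguation device, whereas in the paper it is precisely what makes the Langlands-type embedding into $\rho$-segments available in the first place. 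Without it, some $\rho$-support could sit in $\tilde{\sigma}^{\Sigma_0}$ and the counting argument breaks.
(iii) The stated parallel with Lemma~\ref{lemma: cancellation of Jacquet module} is off: that lemma shows ${\rm Jac}^{\rho}_{\zeta x}\lr^{\Sigma_0}_{M}(\lq,\e)=0$ for $x\in[B{+}1,A]$ and does not use the non-$\eta_{\rho}$-invariance hypothesis anywhere.
(iv) You never use the parity of the $\rho$-cuspidal support (even multiplicity of $\rho||^{\pm(A+1)}$, hence odd multiplicity of $\rho||^{\pm A}$), which is the mechanism by which the paper produces a segment starting at $\zeta B$ and ending at $-\zeta A$ rather than terminating earlier; your proposal has no substitute for this step.
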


\begin{proof}
Since $\lr^{\Sigma_0} \otimes \eta_{\rho} \ncong \lr^{\Sigma_0}$, we claim that   
\begin{align}
\label{eq: standard}
\lr^{\Sigma_0} \hookrightarrow \times_{x' \geqslant y'} \, \langle \zeta x', \cdots, \zeta y' \rangle_{\rho} \rtimes \tilde{\sigma}^{\Sigma_0},
\end{align}
where $x' + y' < 0$ and is increasing, and 
\[
2\sum_{x' \geqslant y'} (x' - y' + 1) = \sum_{(\rho, a, b) \in Jord_{\rho}(\q)} ab,
\]
and $\tilde{\sigma}^{\Sigma_0}$ is an irreducible representation of $\lG^{\Sigma_0}_{-}(F)$. In case $\zeta = +$, this can be achieved by first considering the standard representation containing $\lr$ as the unique irreducible subrepresentation and the fact that the inducing representation can not be invariant under twist by $\eta_{\rho}$. It follows that 
\begin{align}
\label{eq: standard 1}
\lr^{\theta} \hookrightarrow \times_{x' \geqslant y'} \, \langle  x', \cdots,  y' \rangle_{\rho} \rtimes \tilde{\sigma}
\end{align}
for some $\theta \in \Sigma_0$, where $x' + y' < 0$ and is increasing, and 
\[
2\sum_{x' \geqslant y'} (x' - y' + 1) = \sum_{(\rho, a, b) \in Jord_{\rho}(\q)} ab,
\]
and $\tilde{\sigma}$ is an irreducible representation of $\lG_{-}(F)$. Then \eqref{eq: standard} is clear. In case $\zeta = -$, we can apply the same argument to $|{\rm inv} \, \lr|$, the Aubert dual of $\lr$ forgetting the sign. Note ${\rm inv} \, (\lr^{\theta} \otimes \omega) = ({\rm inv} \, \lr)^{\theta} \otimes \omega$ for $\theta \in \Sigma_0, \omega \in X$, and hence $|{\rm inv} \, \lr|^{\Sigma_0} \otimes \eta_{\rho} \ncong |{\rm inv} \, \lr|^{\Sigma_0}$. Suppose
\begin{align*}
|{\rm inv} \, \lr^{\theta}| \hookrightarrow \times_{x' \geqslant y'} \, \langle  x', \cdots,  y' \rangle_{\rho} \rtimes \tilde{\sigma}
\end{align*}
for some $\theta \in \Sigma_0$ as in \eqref{eq: standard 1}. After applying the composition of the cohomological dual $\mathbb{D}$ (cf. \cite{Bernstein:1992}) and the smooth dual, which are both exact contravariant functors, we get
\begin{align*}
\mathbb{D}(|{\rm inv} \, \lr^{\theta}|^{\vee})^{\theta'} \hookrightarrow \times_{x' \geqslant y'} \, \mathbb{D}(\langle  x', \cdots,  y' \rangle^{\vee}_{\rho})^{\vee} \rtimes (\mathbb{D}(\tilde{\sigma}^{\vee}) \otimes \omega')
\end{align*}
for some character $\omega'$ and $\theta' \in \Sigma_0$. Since $\mathbb{D}( (\cdot)^{\vee} )$ induces the same involution as $|{\rm inv} ( \cdot )|$ on irreducible representations (cf. \cite{SS:1997}), then we get
\begin{align*}
(\lr^{\theta})^{\theta'} \hookrightarrow \times_{x' \geqslant y'} \, |{\rm inv} \, \langle  x', \cdots,  y' \rangle_{\rho}|^{\vee} \rtimes (|{\rm inv} \,\tilde{\sigma}| \otimes \omega')  \cong \times_{x' \geqslant y'} \, \langle  -x', \cdots,  -y' \rangle_{\rho} \rtimes (|{\rm inv} \,\tilde{\sigma}| \otimes \omega').
\end{align*}
Then \eqref{eq: standard} is also clear.

Next we would like to consider those $\langle \zeta x', \cdots, \zeta y' \rangle_{\rho}$ in \eqref{eq: standard} containing $\rho||^{\pm(A+1)}$ in its cuspidal support. If it contains $\rho||^{\zeta(A+1)}$, then 
\[
x' \geqslant A + 1, \quad y' < -(A + 1).
\]
So it also contains $\rho||^{-\zeta(A+1)}$. If it contains $\rho||^{-\zeta(A+1)}$ but not $\rho||^{\zeta(A+1)}$, then 
\[
x' < A+1, \quad y' \leqslant -(A+1).
\]
We will first show the second case never occurs. The representation $\langle \zeta x'', \cdots, \zeta y'' \rangle_{\rho}$ in front of $\langle \zeta x', \cdots, \zeta y' \rangle_{\rho}$ in \eqref{eq: standard} satisfies $x'' \leqslant x'$ or $y'' \leqslant y'$. In particular, if the conditions $x'' \leqslant x'$ and $y'' \leqslant y'$ are not both satisfied, then we can interchange $\langle \zeta x', \cdots, \zeta y' \rangle_{\rho}$ with $\langle \zeta x'', \cdots, \zeta y'' \rangle_{\rho}$. As a result, if the second case occurs, then 
\begin{align}
\label{eq: special case}
{\rm Jac}^{\rho}_{\zeta x'', \cdots, \zeta y''} \lr^{\Sigma_0} \neq 0
\end{align}
for some $x'' < A+1$ and $y'' \leqslant -(A+1)$. Since ${\rm Jac}^{\rho}_{\zeta x} \lr^{\Sigma_0} = 0$ for $x \in [B+1, A]$, then $x'' < B + 1$. Since all $(\rho, A', B', \zeta') \in Jord(\q)$ such that $B' > B$ satisfy $\zeta' = \zeta$, we can further show
\begin{align*}
& {\rm Jac}^{\rho}_{\zeta x'', \cdots, \zeta y''} \, \lif{X}_{C} = 0 \\
& {\rm Jac}^{\rho}_{\zeta x'', \cdots, \zeta y''} \, \lif{X}_{\eta} = 0
\end{align*}
for any $x'' \leqslant B$ and $y'' \leqslant -(A+1)$, which contradicts to \eqref{eq: special case}.

Since only the first case occurs, then the number of $\rho||^{\pm (A+1)}$ is even in the cuspidal support of $\lr^{\Sigma_0}$, hence 
\[
\sum_{\substack{(\rho, A', B', \zeta') \in Jord_{\rho}(\q) \\ B' > A}} A' - B' +1
\] 
is even. It follows that the number of $\rho||^{\pm A}$ is odd in the cuspidal support. By the same argument, we see if $\langle \zeta x', \cdots, \zeta y' \rangle_{\rho}$ in \eqref{eq: standard} contains $\rho||^{\zeta A}$, then it must also contain $\rho||^{-\zeta A}$. So there exists $\langle \zeta x', \cdots, \zeta y' \rangle_{\rho}$ such that it only contains $\rho||^{-\zeta A}$, but not $\rho||^{A}$, i.e., $x' < A, y' \leqslant -A$. In the same way, one can show
\[
{\rm Jac}^{\rho}_{\zeta x'', \cdots, \zeta y''} \lr^{\Sigma_0} \neq 0
\]
for some $x'' < B+1$ and $y'' \leqslant -A$. This is only possible when $x'' = B, y'' = -A$. 
\end{proof}

\begin{proposition}
\label{prop: special case} 
Theorem~\ref{thm: discrete diagonal restriction} holds if for any $\rho' \in J(\q)$, either $A' = B'$ for all $(\rho', A', B', \zeta') \in Jord_{\rho'}(\q)$ or there exists $(\rho', A', B', \zeta') \in Jord_{\rho'}(\q)$ with $A' > B'$ such that all $(\rho', A'', B'', \zeta'') \in Jord_{\rho'}(\q)$ with $B'' > B'$ satisfy $\zeta'' = \zeta'$.
\end{proposition}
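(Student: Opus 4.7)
The plan is to prove this by induction on $\sum_{(\rho, A, B, \zeta) \in Jord(\q)} (A-B)$, with base case $0$ (the elementary case) supplied by Theorem~\ref{thm: special case}. For the inductive step, I would use the hypothesis to select a Jordan block $(\rho, A, B, \zeta) \in Jord(\q)$ with $A > B$ such that every $(\rho, A'', B'', \zeta'') \in Jord_{\rho}(\q)$ with $B'' > B$ satisfies $\zeta'' = \zeta$ (for those $\rho'$ whose Jordan blocks are all elementary, there is nothing to do at this level). I then expand $\lr^{\Sigma_0}_M(\lq, \e)$ via the recursive formula \eqref{eq: case 1} or \eqref{eq: case 2} with respect to this block. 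By the induction hypothesis applied to the parameters governing $\lif{X}_\eta$, and to the parameters inside the Jacquet modules used to form $\lif{X}_C$, every summand on the right-hand side is a genuine representation.

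By Proposition~\ref{prop: existence}, $\lr^{\Sigma_0}_M(\lq, \e)_{\rm main}$ and each $\lr^{\Sigma_0}_M(\lq, \e)_{{\rm com}, \eta}$ occur as direct summands of $\lr^{\Sigma_0}_M(\lq, \e)$ with the correct coefficient. What remains is to show that every other irreducible constituent appearing in some $\lif{X}_C$ or $\lif{X}_\eta$ is cancelled on the right-hand side. By Proposition~\ref{prop: cancellation}, any such $\lr^{\Sigma_0}$ with $\Jac^{\rho}_{\zeta x} \lr^{\Sigma_0} \neq 0$ for some $x \in [B+1, A]$ is already killed, so the remaining task is to handle the $\lr^{\Sigma_0}$ with $\Jac^{\rho}_{\zeta x} \lr^{\Sigma_0} = 0$ for all $x \in [B+1, A]$.

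For these surviving constituents I would split into two cases according to whether $\lr^{\Sigma_0} \otimes \eta_{\rho} \cong \lr^{\Sigma_0}$. In the non-isomorphic case, Lemma~\ref{lemma: special case} (whose hypothesis is exactly the condition we have arranged on $(\rho, A, B, \zeta)$) yields $\Jac^{\rho}_{\zeta B, \cdots, -\zeta A} \lr^{\Sigma_0} \neq 0$. Since Lemma~\ref{lemma: shift} and the vanishing arguments from Lemma~\ref{lemma: cancellation of Jacquet module} show $\Jac^{\rho}_{\zeta B, \cdots, -\zeta A}$ kills $\lif{X}_C$ for $C \neq A$ and kills every $\lif{X}_\eta$, such an $\lr^{\Sigma_0}$ can only be the socle of $\lif{X}_A$, i.e., $\lr^{\Sigma_0}_M(\lq, \e)_{\rm main}$.

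The delicate case is $\lr^{\Sigma_0} \otimes \eta_{\rho} \cong \lr^{\Sigma_0}$, and this is where I expect the main obstacle. The strategy is to restrict to $G^{\Sigma_0}(F)$ and invoke M{\oe}glin's theorem \cite[Theorem 4.1]{Moeglin:2009}, which gives the analogous decomposition $\r^{\Sigma_0}_M(\q, \e) = \r^{\Sigma_0}_M(\q, \e)_{\rm main} \oplus_{\eta} \r^{\Sigma_0}_M(\q, \e)_{{\rm com}, \eta}$ on the classical side, so that all unwanted constituents cancel in $\r^{\Sigma_0}_M(\q, \e) = \lr^{\Sigma_0}_M(\lq, \e)|_{G^{\Sigma_0}(F)}$ modulo the $(\S{\q}^{\Sigma_0}/\S{\lq}^{\Sigma_0})^\wedge$-summation of Lemma~\ref{lemma: restriction}. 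The condition $\eta_{\rho} \in X(\lr^{\Sigma_0})$ forces $X(\lr^{\Sigma_0})$ to contain a sizable piece of $X^{\Sigma_0}(\tilde{\lambda})$; combined with the fact that both the recursive formula and $\lr^{\Sigma_0}_M(\lq, \e)_{{\rm com}, \eta}$ are built from parameters compatible with our fixed infinitesimal character (via Proposition~\ref{prop: induction preserves infinitesimal character} and Corollary~\ref{cor: cuspidal support}), one can argue via Corollary~\ref{cor: determination} that no two inequivalent $\lG^{\Sigma_0}$-lifts of the same $G^{\Sigma_0}$-constituent can appear so as to spoil the bookkeeping; this pushes M{\oe}glin's cancellation back up to $\lG^{\Sigma_0}(F)$ and completes the proof.
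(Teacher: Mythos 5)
Your argument has a genuine gap in the ``delicate case,'' and it stems from choosing the wrong dichotomy. You split on whether $\lr^{\Sigma_0} \otimes \eta_{\rho} \cong \lr^{\Sigma_0}$ for the \emph{single} $\rho$ of the chosen Jordan block. In the unfavourable case you then want to conclude that $\lr^{\Sigma_0}$ is pinned down by its restriction to $G^{\Sigma_0}(F)$ and its $\Sigma_0$-infinitesimal character, so that M{\oe}glin's cancellation lifts. But $\eta_{\rho} \in X(\lr^{\Sigma_0})$ gives you only $\langle \eta_{\rho} \rangle \subseteq X(\lr^{\Sigma_0})$, not $X(\lr^{\Sigma_0}) = X^{\Sigma_0}(\tilde{\lambda})$: since $X^{\Sigma_0}(\tilde{\lambda}) = X^{\Sigma_0}(\lq_d) = \a(\S{\q_d}^{\Sigma_0})$ is generated by \emph{all} of the $\eta_{\rho'}$ for $\rho' \in J(\q)$ by \eqref{eq: combinatorial description}, it is perfectly possible that $\lr^{\Sigma_0} \otimes \eta_{\rho} \cong \lr^{\Sigma_0}$ while $\lr^{\Sigma_0} \otimes \eta_{\rho'} \ncong \lr^{\Sigma_0}$ for a different $\rho' \in J(\q)$. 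Such an $\lr^{\Sigma_0}$ has $X(\lr^{\Sigma_0}) \subsetneq X^{\Sigma_0}(\tilde{\lambda})$, its restriction to $G^{\Sigma_0}(F)$ does \emph{not} determine it, Corollary~\ref{cor: determination} does not apply, and M{\oe}glin's cancellation on the classical side does not propagate. These constituents fall through your case analysis.

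The missing ingredient is the independence of the recursive formula on the choice of Jordan block, i.e.\ Lemma~\ref{lemma: independence} and Lemma~\ref{lemma: independence resolved}. The correct dichotomy is on whether $X(\lr^{\Sigma_0}) = X^{\Sigma_0}(\tilde{\lambda})$. If yes, restrict to $G^{\Sigma_0}(F)$ and use \cite[Theorem 4.1]{Moeglin:2009} exactly as you intend. If no, there exists some $\rho' \in J(\q)$ with $\lr^{\Sigma_0} \otimes \eta_{\rho'} \ncong \lr^{\Sigma_0}$; this $\rho'$ need not be the $\rho$ you started with, but then $Jord_{\rho'}(\q)$ is non-elementary, so the hypothesis of the proposition provides a block $(\rho', A', B', \zeta')$ with $A' > B'$ and the ``all higher blocks have the same $\zeta'$'' property. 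Lemma~\ref{lemma: independence resolved} lets you reorganize the right-hand side of \eqref{eq: case 1 resolved}/\eqref{eq: case 2 resolved} around that $\rho'$-block, and only then does Lemma~\ref{lemma: special case} (whose hypothesis $\lr^{\Sigma_0} \otimes \eta_{\rho'} \ncong \lr^{\Sigma_0}$ is now satisfied) force $\lr^{\Sigma_0}$ into the main piece. Without this change of Jordan block, your case (a) and case (b) together do not exhaust the possibilities.
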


\begin{proof}
Let $\lr^{\Sigma_0}$ be an irreducible constituent in $\lif{X}_{C}, \lif{X}_{\eta}$ excluded from $\lr^{\Sigma_0}_{M}(\lq, \e)_{\rm main}, \lr^{\Sigma_0}_{M}(\lq, \e)_{{\rm com}, \eta}$. If $X(\lr^{\Sigma_0}) = X^{\Sigma_0}(\tilde{\lambda})$, then $\lr^{\Sigma_0}$ is determined by its restriction to $G^{\Sigma_0}(F)$ together with its $\Sigma_0$-infinitesimal character. In this case, the cancellation of $\lr^{\Sigma_0}$ follows from that of $\lr^{\Sigma_0}|_{G^{\Sigma_0}(F)}$ (cf. \cite[Theorem 4.1]{Moeglin:2009}). Suppose $X(\lr^{\Sigma_0}) \neq X^{\Sigma_0}(\tilde{\lambda})$. Since $X^{\Sigma_0}(\tilde{\lambda}) = X^{\Sigma_0}(\lq_d)$, then $\lr^{\Sigma_0} \otimes \eta_{\rho'} \ncong \lr^{\Sigma_0}$ for some $\rho' \in J(\q)$. It implies that $Jord_{\rho'}(\q)$ can not be elementary. By Lemma~\ref{lemma: independence}, it is enough to consider the case that $\lr^{\Sigma_0}$ is also contained in $\lif{X}'_{C'}$ or $\lif{X}'_{\eta'}$ in the recursive formula with respect some $(\rho', A', B', \zeta') \in Jord(\q)$ with $A' > B'$ satisfying the assumption in the proposition. Then the result follows from Lemma~\ref{lemma: special case} and Lemma~\ref{lemma: independence resolved}. 
\end{proof}

To complete the proof of Theorem~\ref{thm: discrete diagonal restriction}, we still need to remove the assumption in Proposition~\ref{prop: special case}. Let us choose an admissible order $>_{\q}$ such that for any $\rho' \in J(\q)$, either $A' = B'$ for all $(\rho', A', B', \zeta') \in Jord_{\rho'}(\q)$  or there exists $(\rho', A', B', \zeta') \in Jord_{\rho'}(\q)$ with $A' > B'$ and all $(\rho', A'', B'', \zeta'') \in Jord_{\rho'}(\q)$ with $(\rho', A'', B'', \zeta'') >_{\q} (\rho', A', B', \zeta')$ satisfy $\zeta'' = \zeta'$. Let $\q_{\gg}$ be obtained from $\q$ by shifting certain Jordan blocks such that it has discrete diagonal restriction and $>_{\q}$ induces the natural order, then Theorem~\ref{thm: discrete diagonal restriction} holds for $\lq_{\gg}$. In particular, $\lr^{\Sigma_0}_{M}(\lq_{\gg}, \e_{\gg})$ is a representation, where $\e_{\gg}$ is related to $\e$ by the change of order formula, cf. \cite[Theorem 6.3]{Xu:Comb}. By Lemma~\ref{lemma: shift},
\begin{align}
\label{eq: shift 1}
\circ_{(\rho', A', B', \zeta)} {\rm Jac}_{(\rho', A'_{\gg}, B'_{\gg}, \zeta') \mapsto (\rho, A', B', \zeta')} \, \lr^{\Sigma_0}_{M}(\lq_{\gg}, \e_{\gg}) = \lr^{\Sigma_0}_{M}(\lq, \e) \otimes \prod_{(\rho', A', B', \zeta')}\omega^{-|X^{A'_{\gg} - A'}_{(\rho', A', B', \zeta')}|/2}_{\rho'},
\end{align}
where the composition is taken in the decreasing order. As a consequence, $\lr^{\Sigma_0}_{M}(\lq, \e)$ is a representation. Then Theorem~\ref{thm: discrete diagonal restriction} follows from Proposition~\ref{prop: existence} and \eqref{eq: restriction}.

\subsubsection{Stability and character relations}
\label{subsubsec: character relation}

For $[\q] \in \cQ{G}$ with discrete diagonal restriction, we define $\Pkt{\lq}^{\Sigma_0}$ to be the set of irreducible constituents of
\[
\bigoplus_{\tilde{\e} \in \D{\S{\lq}^{\Sigma_0}}} \lr^{\Sigma_0}_{M}(\lq, \e).
\]
Let $\lr^{\Sigma_0}_{W}(\lq, \e) := \lr^{\Sigma_0}_{W}(\lq, \tilde{\e})$ be the direct sum of the preimages of $\tilde{\e}$ in $\Pkt{\lq}^{\Sigma_0}$ under \eqref{eq: Whittaker pairing} and $\lr^{\Sigma_0}_{MW}(\lq, \e) := \lr^{\Sigma_0}_{W}(\lq, \e\e^{MW/W}_{\q})$. It follows from $\r^{\Sigma_0}_{M}(\q, \e) = \r^{\Sigma_0}_{MW}(\q, \e\e^{M/MW}_{\q})$ (cf. \cite[Theorem 7.5]{Xu:Apacket}) that $\lr^{\Sigma_0}_{M}(\lq, \e) = \lr^{\Sigma_0}_{MW}(\lq, \e\e^{M/MW}_{\q})$. For $s \in \S{\q}^{\theta}$ with $\omega = \alpha(s)$, let
\[
(\lr_{M}(\lq, \bar{\e}), A(\theta, \omega)_{M}) := \sum_{\substack{[\lr] \in \cPkt{\lq} \\ \langle \cdot, \lr \rangle_{M} = \tilde{\bar{\e}}}} (\lr, \bar{\e}(s_{\q})A_{\lr}(\theta, \omega)_{M}) \in K\overline{{\rm Rep}}(\lG(F), \theta, \omega)
\]
and
\[
\cPkt{M, s}(\lq) = \sum_{\tilde{\bar{\e}} \in \D{\S{\lq}}} \bar{\e}(s_{\q})(\lr_{M}(\lq, \bar{\e}), A(\theta, \omega)_{M}) \in K\overline{{\rm Rep}}(\lG(F), \theta, \omega),
\]
where $A_{\lr}(\theta, \x)_{M}$ is an intertwining operator between $\lr \otimes \x$ and $\lr^{\theta}$, which is normalized in a way so that if $f$ is the restriction of $\lf$ to $G(F)$, then 
\begin{align*}
(\lf|_{\lif{Z}_{F}G})_{\lG^{\theta}, M}(\lr, \x) = \sum_{\r \subseteq \lr|_{G(F)}} \langle s_{\q}s, \r^+ \rangle_{M} \,  f_{G^{\theta}}(\r), \quad \lf \in \sH(\lG(F), \tilde{\chi}).
\end{align*}
where $\r^{+}$ is an extension of $\r$ to $G^{+}(F) = G(F) \rtimes \langle \theta \rangle$. We also define
\[
(\lr^{\Sigma_0}_{M}(\lq, \e), A(\omega)_{M}) := \sum_{\substack{\lr^{\Sigma_0} \in \Pkt{\lq}^{\Sigma_0}\\ \langle \cdot, \lr^{\Sigma_0} \rangle_{M} = \tilde{\e}}} (\lr^{\Sigma_0}, \e(s_{\q})A_{\lr^{\Sigma_0}}(\omega)_{M}) \in K{\rm Rep}(\lG^{\Sigma_0}(F), \omega).
\]
and
\[
\Pkt{M, s}^{\Sigma_0}(\lq) = \sum_{\tilde{\e} \in \D{\S{\lq}^{\Sigma_0}}} \e(s_{\q})(\lr^{\Sigma_0}_{M}(\lq, \e), A(\omega)_{M}) \in K{\rm Rep}(\lG^{\Sigma_0}(F), \omega).
\]
where $A_{\lr^{\Sigma_0}}(\omega)_{M}$ is an intertwining operator between $\lr^{\Sigma_0} \otimes \x$ and $\lr^{\Sigma_0}$ given by 
\[
\begin{cases}
\lr^{\Sigma_0}(\theta) \circ A_{\lr}(\theta, \x)_{M} \quad & \text{ if $\lr^{\Sigma_0}|_{\lG} = \lr$ irreducible, } \\
\lr^{\Sigma_0}(\theta) \circ (A_{\lr}(\theta, \x)_{M} \oplus A_{\lr^{\theta_0}}(\theta, \x)_{M}) \quad & \text{ if $\lr^{\Sigma_0}|_{\lG} = \lr \oplus \lr^{\theta_0}$.}
\end{cases}
\] 
If $G$ is special even orthogonal, then
\[
(\lf_{\theta})_{\lG^{\Sigma_0}}(\Pkt{M, s}^{\Sigma_0}(\lq)) = 2 \lf_{\lG^{\theta}}(\cPkt{M, s}(\lq)),  \quad \lf \in \sH(\lG(F), \tilde{\chi}),
\]
where $\lf_{\theta} (g \rtimes \theta) : = \lf(g)$ is supported on $\lG(F) \rtimes \theta$. Similarly we can define $\cPkt{MW, s}(\lq)$, $\Pkt{MW, s}^{\Sigma_0}(\lq)$. One can check that
\[
\Pkt{M, s}^{\Sigma_0}(\lq) = \e^{M/MW}_{\q}(s) \, \Pkt{MW, s}^{\Sigma_0}(\lq), \quad \cPkt{M, s}(\lq) = \e^{M/MW}_{\q}(s) \, \cPkt{MW, s}(\lq).
\]
We denote $\cPkt{MW}(\lq) = \cPkt{MW, 1}(\lq)$ and $\cPkt{M}(\lq) = \cPkt{M, 1}(\lq)$. Note $\cPkt{MW}(\lq) = \cPkt{M}(\lq)$. The next proposition shows that $\Pkt{M, s}^{\Sigma_0}(\lq)$ (resp. $\cPkt{M,s}(\lq)$) also satisfies a recursive formula.

\begin{proposition}
\label{prop: recursive formula for packet}
Suppose $[\q] \in \cQ{G}$ has discrete diagonal restriction and we fix $(\rho, A, B, \zeta) \in Jord(\q)$ such that $A > B$, then for any $s \in \S{\q}^{\Sigma_0}$ 
\begin{align*}
\Pkt{M, s}^{\Sigma_0}(\lq) = & \+_{C \in ]B, A]} (-1)^{A-C} \langle \zeta B, \cdots, -\zeta C \rangle_{\rho} \rtimes \Jac^{\rho}_{\zeta (B+2), \cdots, \zeta C} \Pkt{M, s}^{\Sigma_0}(\lq^{1}) \otimes \eta^{C}_{\rho} \omega^{\zeta \gamma_{B}(C)}_{\rho} \chi_{\tilde{\rho}}  \\
& \+ (-1)^{[(A-B+1)/2]} \Pkt{M, s}^{\Sigma_0}(\lq^{2})
\end{align*}
and 
\begin{align*}
\cPkt{M, s}(\lq) = & \+_{C \in ]B, A]} (-1)^{A-C} \langle \zeta B, \cdots, -\zeta C \rangle_{\rho} \rtimes \overline{\Jac}^{\rho}_{\zeta (B+2), \cdots, \zeta C} \cPkt{M, s}(\lq^{1}) \otimes \eta^{C}_{\rho} \omega^{\zeta \gamma_{B}(C)}_{\rho} \chi_{\tilde{\rho}}\\
& \+ (-1)^{[(A-B+1)/2]} \cPkt{M, s}(\lq^{2}).
\end{align*}
\end{proposition}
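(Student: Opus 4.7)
The plan is to substitute the recursive formulas \eqref{eq: case 1} and \eqref{eq: case 2} for $\lr^{\Sigma_0}_{M}(\lq, \e)$ into the definition
\[
\Pkt{M, s}^{\Sigma_0}(\lq) = \sum_{\tilde{\e} \in \D{\S{\lq}^{\Sigma_0}}} \e(s_{\q}) (\lr^{\Sigma_0}_{M}(\lq, \e), A(\omega)_{M})
\]
and to reorganize the resulting double sum, over $\tilde{\e}$ and over the $\lif{X}_{C}, \lif{X}_{\eta}$ components, so that it matches the claimed recursion. Throughout, the intertwining operators $A(\omega)_{M}$ will be carried along automatically: they are normalized by the central character pairing $\e(s_{\q} s)$, and this normalization is preserved by parabolic induction, Jacquet module, and the identifications of centralizer groups described below.

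For the $\lif{X}_{C}$ contribution, observe that $\lif{X}_{C}$ depends on $\e$ only through $\e' \in \D{\S{\q'}^{\Sigma_0}}$ and $\eta_{0} = \e(\rho, A, B, \zeta)$, and this pair corresponds bijectively to a character $\e^{1} \in \D{\S{\q^{1}}^{\Sigma_0}}$ under the isomorphism $\S{\q}^{\Sigma_0} \cong \S{\q^{1}}^{\Sigma_0}$ identifying $(\rho, A, B, \zeta)$ with $(\rho, A, B+2, \zeta)$. A direct computation will show that $s_{\q}$ and $s_{\q^{1}}$ agree under this identification, since the $b$-coordinate of the relevant Jordan block shifts by $\pm 2$, preserving the parity of $b - 1$ and hence the value $(-1)^{b-1}$. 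Consequently $\e(s_{\q}) = \e^{1}(s_{\q^{1}})$, and summing over $\tilde{\e}$ pulls $\Pkt{M, s}^{\Sigma_0}(\lq^{1})$ inside the parabolic induction and Jacquet module, yielding the first group of terms.

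For the $\lif{X}_{\eta}$ contribution, each $\lif{X}_{\eta}$ equals $\lr^{\Sigma_0}_{M}(\lq^{2}, \e^{2}_{\eta})$, where $\e^{2}_{\eta} \in \D{\S{\q^{2}}^{\Sigma_0}}$ is obtained from $(\e', \eta_{0}, \eta)$ by assigning the values $\eta$ and $\eta \eta_{0}$ to the new Jordan blocks $(\rho, A, B+1, \zeta), (\rho, B, B, \zeta)$. The key sign identity to verify is
\[
\e(s_{\q}) \cdot \eta^{A-B+1} \eta_{0}^{A-B} = \e^{2}_{\eta}(s_{\q^{2}}),
\]
which I will prove by a case analysis on the parity of $A - B$: using the explicit values of $s_{\q}$ and $s_{\q^{2}}$ on the three relevant blocks together with $\eta^{2} = \eta_{0}^{2} = 1$, both sides differ from $\e'(s_{\q'})$ by the same factor (namely $\eta$ when $A - B$ is even, and $\eta_{0}$ when $A - B$ is odd). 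Given this identity, the triple $(\e', \eta_{0}, \eta)$ parametrizes $\D{\S{\q^{2}}^{\Sigma_0}}$ bijectively in case \eqref{eq: case 1}, yielding the second summand $(-1)^{[(A-B+1)/2]} \Pkt{M, s}^{\Sigma_0}(\lq^{2})$. In the degenerate case \eqref{eq: case 2} where $\lif{X}_{+} = \lif{X}_{-}$, one has $A - B$ odd, so $\eta^{A-B+1} = 1$ and the sole summand $(-1)^{(A-B+1)/2} \eta_{0} \lif{X}_{+}$ produces the same total.

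Finally, the formula for $\cPkt{M, s}(\lq)$ will follow from that for $\Pkt{M, s}^{\Sigma_0}(\lq)$ by applying the restriction from $K\Rep(\lG^{\Sigma_0}(F), \omega)$ to $K\overline{\Rep}(\lG(F), \theta, \omega)$ via the maps $r_{3}, r_{4}$ of Section~\ref{sec: Jac}, which convert $\Jac$ into $\overline{\Jac}$ (by averaging over $\theta_{0}$ when $\lM^{\theta_{0}} \neq \lM$). The hardest part will be the sign bookkeeping underlying the identity for $\e^{2}_{\eta}(s_{\q^{2}})$, together with the verification that the intertwining operators glue compatibly across the recursion; the latter ultimately reduces to the uniqueness, up to sign, of the normalization of $A_{\lr^{\Sigma_0}}(\omega)_{M}$ prescribed by the central character formula.
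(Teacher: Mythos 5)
Your reorganization of the double sum, the identification of $\lif{X}_C$ with a term coming from $\lq^{1}$ and $\lif{X}_{\eta}$ with a term coming from $\lq^{2}$, the equality $\e(s_{\q}) = \e^{1}(s_{\q^{1}})$, the sign identity $\e(s_{\q})\,\eta^{A-B+1}\eta_{0}^{A-B} = \e^{2}_{\eta}(s_{\q^{2}})$, and the deduction of the $\cPkt{M,s}(\lq)$ formula by applying $r_3$ all agree with the paper. The gap is in the step you declare automatic: the compatibility of the intertwining operators $A(\omega)_{M}$. The recursive formulas \eqref{eq: case 1}, \eqref{eq: case 2} are identities in $K{\rm Rep}(\lG^{\Sigma_0}(F))$ --- i.e., after \emph{forgetting} intertwining operators --- and they only yield genuine representations because of the cancellations established in the proof of Theorem~\ref{thm: discrete diagonal restriction} (Proposition~\ref{prop: cancellation}, Lemma~\ref{lemma: cancellation of Jacquet module}, Lemma~\ref{lemma: special case}, Proposition~\ref{prop: special case}). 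After lifting to $K{\rm Rep}(\lG^{\Sigma_0}(F), \omega)$, the left-hand side carries the specific operators $A_{\lr^{\Sigma_0}}(\omega)_{M}$ normalized by \eqref{eq: theta twisted intertwining operator}, while the right-hand side carries the operators induced functorially from $\Pkt^{\Sigma_0}_{M,s}(\lq^{1})$ and $\Pkt^{\Sigma_0}_{M,s}(\lq^{2})$. Since a simple object $(\lr^{\Sigma_0}, A)$ with $\lr^{\Sigma_0}$ irreducible is not isomorphic to $(\lr^{\Sigma_0}, -A)$, one must check that every cancelling pair of constituents has matching (not opposite) intertwining operators, and that the surviving constituents inherit the prescribed normalization. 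Your appeal to ``uniqueness up to sign of the normalization'' is exactly backwards: the undetermined sign is what has to be pinned down, and nothing in your substitution step does so.

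The paper closes this gap as follows: for $s \in \S{\lq}^{\Sigma_0}$ the operator $A(\omega)_{M}$ acts by the scalar $\e(s)$ and the check is free; for general $s \in \S{\q}^{\Sigma_0}$ it re-examines each cancellation with the intertwining operators retained, and for the constituents with $X(\lr^{\Sigma_0}) = X^{\Sigma_0}(\tilde\lambda)$ it invokes Corollary~\ref{cor: determination}, reducing the problem to cancellation under $p_1$ (M{\oe}glin's argument on $G^{\Sigma_0}(F)$) and under $p_2$ (a twisted-character cancellation from \cite{Xu:Apacket}). Your proposal needs to supply this layer of verification; without it the identity is proved only in $K{\rm Rep}(\lG^{\Sigma_0}(F))$, which is weaker than the claim.
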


\begin{proof}
The second equality follows from the first one by taking the map
\begin{align*}
K{\rm Rep}(\lG^{\Sigma_0}(F), \omega) & \longrightarrow K\overline{\rm Rep}(\lG(F), \theta, \omega) \\
(\lr^{\Sigma_0}, A) & \mapsto (\lr^{\Sigma_0}|_{\lG}, \, \lr^{\Sigma_0}(\theta) \circ A).
\end{align*}
Hence it is enough to prove the first one. From the proof of \cite[Lemma 7.6]{Xu:Apacket}, we have $\bar{\e}_{1}(s_{\q^{1}}) = \bar{\e}(s_{\q})$ and 
\[
\bar{\e}_{2}(s_{\q^{2}}) / \bar{\e}(s_{\q}) =  \e_{2}(\rho, A, B+1, \zeta)^{A - B + 1} \e(\rho, A, B, \zeta)^{A - B}.
\]
Then it suffices to show that
\begin{align*}
(\lr^{\Sigma_0}_{M}(\lq, \e), A(\omega)) = & \+_{C \in ]B, A]} (-1)^{A-C} \langle \zeta B, \cdots, -\zeta C \rangle_{\rho} \rtimes \Jac^{\rho}_{\zeta (B+2), \cdots, \zeta C} (\lr^{\Sigma_0}_{M}(\lq^1, \e_1), A(\omega)) \otimes \eta^{C}_{\rho} \omega^{\zeta \gamma_{B}(C)}_{\rho} \chi_{\tilde{\rho}}  \\
& \+_{\tilde{\e} \leftarrow \tilde{\e}_{2} \in \D{\S{\lq^{2}}^{\Sigma_0}}} (-1)^{[(A-B+1)/2]} \e_{2}(\rho, A, B+1, \zeta)^{A - B + 1} \e(\rho, A, B, \zeta)^{A - B} (\lr^{\Sigma_0}_{M}(\lq^2, \e_2), A(\omega)).
\end{align*}
If we forget the intertwining operator $A(\omega)_{M}$, this is a direct consequence of our original definition of $\lr^{\Sigma_0}_{M}(\lq, \e)$ and Theorem~\ref{thm: discrete diagonal restriction}. When $s \in \S{\lq}^{\Sigma_0}$, $A(\omega)_{M}$ acts by multiplying scalar $\e(s) = \e_1(s) = \e_2(s)$, then the equality still holds. In general, we need to show the cancellations in the proof of Theorem~\ref{thm: discrete diagonal restriction} still hold after taking count of the intertwining operators. First, we can extend Proposition~\ref{prop: cancellation} to ${\rm Rep}(\lG^{\Sigma_0}(F), \omega)$. To extend Lemma~\ref{lemma: cancellation of Jacquet module}, one just needs to keep check of the intertwining operators. For the application of Lemma~\ref{lemma: special case}, we do not need to consider the intertwining operators. At last, for those $\lr^{\Sigma_0}$ such that $X(\lr^{\Sigma_0}) = X^{\Sigma_0}(\tilde{\lambda})$ considered in the proof of Proposition~\ref{prop: special case}, it suffices to show their cancellations under $p_1, p_2$ in the set up of Corollary~\ref{cor: determination}. The cancellation under $p_1$ follows from the proof of \cite[Theorem 4.1]{Moeglin:2009}. The cancellation under $p_2$ follows from \cite[Lemma 7.6 and the proof of Theorem 7.14]{Xu:Apacket}.
\end{proof}

Let $(H, \q_{H}) \rightarrow (\q, s)$ and suppose $\q_{H} = \q_{I} \times \q_{II}$. Let $\cPkt{\lq_{H}} = \cPkt{\lq_{I}} \tilde{\otimes} \, \cPkt{\lq_{II}}$ and $c_{I}, c_{II}$ be the $1$-cochains for defining $\cPkt{\lq_{I, d}}, \cPkt{\lq_{II,d}}$ respectively. We choose $1$-cochain $c_s$ to get the twisted endoscopic embedding for $\lif{H}$.  Let ${}'\!c_{\q} = c_{I}c_{II}c_{s}$, then we can define a homomorphism
\(
\chi^{c_{\q}, {}'\!c_{\q}}: W_{F} \rightarrow \mathbb{C}^{\times}
\)
(cf. \cite[Lemma 8.6]{Xu:Lpacket}). We also denote by $\chi^{c_{\q}, {}'\!c_{\q}}$ the corresponding character of $F^{\times}$ by local class field theory.

\begin{theorem}
\label{thm: discrete endoscopy}\,
\begin{enumerate}
\item
\[
\lf_{MW}(\lq) := \sum_{[\lr] \in \cPkt{\lq}} \langle s_{\lq}, \lr \rangle_{MW} \, \lf_{\lG}(\lr), \quad \lf \in \sH(\lG(F), \tilde{\chi})
\]
is stable.

\item For $s \in \S{\q}^{\theta}$ with $\x = \a(s)$ and $(H, \q_{H}) \rightarrow (\q, s)$, the associated twisted character of $\cPkt{MW, s}(\lq)\otimes \chi^{c_{\q}, {}'\!c_{\q}}$ is the twisted endoscopic transfer of that of $\cPkt{MW}(\lq_{H})$.

\end{enumerate}
\end{theorem}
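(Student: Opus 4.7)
The plan is to prove both (1) and (2) simultaneously by induction on $K(\q) := \sum_{(\rho, A, B, \zeta) \in Jord(\q)} (A-B)$. The base case $K(\q) = 0$ is the elementary case, where $\cPkt{\lq}$ was constructed in Subsection~\ref{subsec: elementary case} from the Whittaker-normalized packet $\cPkt{\lq_d}$ of the discrete parameter $\lq_d$ via Theorem~\ref{thm: special case} and Theorem~\ref{thm: special case 1}. Since $\lr_{M}(\lq, \bar{\e}) = \lr_{W}(\lq, \bar{\e}\bar{\e}^{MW/W}_{\q}\bar{\e}^{M/MW}_{\q})$ and these auxiliary characters only permute the summands of the packet, the stability of $\lf_{W}(\lq_d)$ and the twisted endoscopic character identity for $\lq_d$, both established in Theorems~\ref{thm: special case} and \ref{thm: special case 1} (the latter requiring verification that $X^{\Sigma_0}(\tilde{\lambda}) = X^{\Sigma_0}(\lq_d)$, which holds for elementary parameters by \eqref{eq: combinatorial description} and the remark in Subsection~\ref{subsec: elementary case}), imply the elementary version of both (1) and (2).

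For the inductive step, I fix $(\rho, A, B, \zeta) \in Jord(\q)$ with $A > B$ and apply the recursive formula of Proposition~\ref{prop: recursive formula for packet}, which expresses $\cPkt{MW, s}(\lq)$ as an alternating sum of the parabolic induction of $\overline{\Jac}^{\rho}_{\zeta(B+2),\cdots,\zeta C}\,\cPkt{MW, s}(\lq^{1})$ (appropriately twisted) and $\cPkt{MW, s}(\lq^{2})$. Both $\lq^1$ and $\lq^2$ satisfy $K(\lq^i) < K(\lq)$, so the induction hypothesis applies. For part (1), taking $s = 1$ and passing to stable characters: stability is preserved by parabolic induction (through constant term computations), by Jacquet module operations (via the geometric lemma applied to stable characters), and by twisting by unitary characters $\eta_{\rho}^{C}\omega_{\rho}^{\zeta\gamma_B(C)}\chi_{\tilde{\rho}}$. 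Thus the stability of $\lf_{MW}(\lq^1)$ and $\lf_{MW}(\lq^2)$ yields that of $\lf_{MW}(\lq)$.

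For part (2), the parameter $\lq_H = \lq_I \times \lq_{II}$ inherits the Jordan block $(\rho, A, B, \zeta)$ in exactly one factor $\lq_I$ or $\lq_{II}$ (according to $s(\rho, A, B, \zeta) = \pm 1$); thus Proposition~\ref{prop: recursive formula for packet} applied to that factor gives a parallel recursive decomposition of $\cPkt{MW}(\lq_H)$ in terms of $\cPkt{MW}(\lq^1_H)$ and $\cPkt{MW}(\lq^2_H)$, where $(\lq^i)_H$ corresponds to $(\lq^i, s)$ via the same twisted endoscopic datum. By the induction hypothesis, the twisted characters of $\cPkt{MW, s}(\lq^i) \otimes \chi^{c_{\q^i}, {}'\!c_{\q^i}}$ are the twisted endoscopic transfers of those of $\cPkt{MW}(\lq^i_H)$. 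I would then invoke the compatibility of Langlands–Shelstad transfer with parabolic induction (standard) and with Jacquet modules (Appendix~C of \cite{Xu:cusp}), together with the fact that twisting by $\eta_{\rho}^{C} \omega_{\rho}^{\zeta\gamma_B(C)}\chi_{\tilde{\rho}}$ corresponds to an analogous twist on the endoscopic side, to assemble these into the required identity for $\lq$.

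The main obstacle is tracking the $1$-cochains and verifying that the character $\chi^{c_{\q}, {}'\!c_{\q}}$ behaves coherently under the recursion: one must check that the product of cochains $c_{\q^i}$ and the shift by the transfer factor for $\lif{H}$ at each stage combines, via the recursive identity, to reproduce $\chi^{c_{\q}, {}'\!c_{\q}}$, rather than some twisted version. This is essentially a cocycle calculation, which I would handle using Lemma~8.6 of \cite{Xu:Lpacket} together with the multiplicativity of these cochains across the reduction steps, and the observation (Remark~\ref{rk: elementary}) that the $1$-cochains used in defining the elementary packets are chosen compatibly with those used throughout the reduction of $\cPkt{\lq_d}$. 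Once this bookkeeping is in place, parts (1) and (2) follow directly from the recursive formula and the base case.
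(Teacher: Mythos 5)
Your plan matches the paper's proof in outline: induction on $\sum (A-B)$, base case from the elementary case, and for the inductive step using the recursive formula in Proposition~\ref{prop: recursive formula for packet} together with compatibility of transfer with parabolic induction and Jacquet modules. Two technical verifications that the paper carries out are absent from your sketch, and both are needed for the argument to close.

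First, after invoking the induction hypothesis and the compatibility of transfer with parabolic induction and Jacquet modules, one lands on the transfer of
\[
\+_{C} (-1)^{A-C} \e^{M/MW}_{\q^{1}}(s)\,\langle \zeta B, \cdots, -\zeta C \rangle_{\rho} \rtimes \overline{\Jac}^{\rho}_{\zeta (B+2), \cdots, \zeta C}\bigl(\cPkt{MW}(\lq^{1}_{I}) \tilde{\otimes}\cPkt{MW}(\lq^{1}_{II})\bigr) \otimes \eta^{C}_{\rho} \omega^{\zeta \gamma_{B}(C)}_{\rho} \chi_{\tilde{\rho}}  \+ \cdots .
\]
To re-package this into a single recursive formula on the $H$-side — which is what you claim gives a ``parallel recursive decomposition of $\cPkt{MW}(\lq_{H})$'' — one must know that the Jacquet module hits only the factor carrying $(\rho, A, B, \zeta)$, i.e.\ that $\Jac^{\rho}_{\zeta D}\,\cPkt{MW}(\lq^{1}_{I}) = 0$ for $B+2 \leqslant D \leqslant A$. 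Without this vanishing, $\overline{\Jac}$ does not distribute the way your outline assumes, and you cannot factor $\cPkt{MW}(\lq^1_H)$ as $\cPkt{MW}(\lq_I) \tilde{\otimes} (\cdots)$. Second, the recursion is stated in the $M$-normalization (the signs in Proposition~\ref{prop: recursive formula for packet} come from $\bar{\e}(s_{\q})$, $\bar{\e}_1(s_{\q^1})$, etc.), but the theorem is about the $MW$-normalization; passing between them requires the identity $\e^{M/MW}_{\q}(s) = \e^{M/MW}_{\q^{1}}(s) = \e^{M/MW}_{\q^{2}}(s)$ of \eqref{eq: M/MW discrete 1}. Your sketch does not check this, and if it failed the alternating signs would not match across the recursion.

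On the cochain issue that you flag as ``the main obstacle'': your instinct to do a cocycle computation is workable but is more labor than needed. The paper arranges things so that no such computation arises: the recursion keeps $J(\q)$, hence $c_{\q}$, fixed, and the 1-cochain $c_s$ giving the endoscopic embedding for $\lif{H}$ simultaneously serves for $\lif{H}^1$ and $\lif{H}^2$; Proposition 8.7 of \cite{Xu:Lpacket} (not Lemma 8.6) is then what lets one formulate the induction hypothesis directly with $\chi^{c_{\q}, {}'\!c_{\q}}$ rather than $\chi^{c_{\q^i}, {}'\!c_{\q^i}}$. Setting up the inductive hypothesis with a cochain already normalized to $c_{\q}$ removes the ``cocycle bookkeeping'' you would otherwise have to do at each stage.
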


\begin{proof}
Part (1) follows from the recursive formula of $\cPkt{MW}(\lq)$ in Proposition~\ref{prop: recursive formula for packet} and the fact that parabolic induction and Jacquet module preserve stability. The proof of part (2) follows exactly the same line of that of \cite[Theorem 7.5]{Xu:Apacket}. Let us suppose $\q_{s} := \q_{H} = \q_{I} \times \q_{II}$. We can assume $(\rho, A, B, \zeta) \in Jord(\q_{II})$ for the other case is similar. Let $\q^{1}_{s} = \q^{1}_{I} \times \q^{1}_{II} \in \Q{H^{1}}$ and $\q^{2}_{s} = \q^{2}_{I} \times \q^{2}_{II} \in \Q{H^{2}}$. In particular, $\q_{I} = \q^{1}_{I} = \q^{2}_{I}$. The $1$-cochain $c_{s}$ gives rise to twisted endoscopic embeddings for both $\lif{H}^{1}$ and $\lif{H}^{2}$. By induction and \cite[Proposition 8.7]{Xu:Lpacket}, we can assume that the associated twisted character $\cPkt{MW, s}(\lq^{1})\otimes \chi^{c_{\q}, {}'c_{\q}}$ (resp. $\cPkt{MW, s}(\lq^{2})\otimes \chi^{c_{\q}, {}'c_{\q}}$) is the twisted endoscopic transfer of that of $\cPkt{MW}(\lq^{1}_{I}) \,\tilde{\otimes} \, \cPkt{MW}(\lq^{1}_{II})$ (resp. $\cPkt{MW}(\lq^{2}_{I}) \, \tilde{\otimes} \, \cPkt{MW}(\lq^{2}_{II})$). By the compatibility of twisted endoscopic transfer with Jacquet module and parabolic induction, we can conclude that the associated twisted character $\cPkt{M, s}(\lq) \otimes \chi^{c_{\q}, {}'c_{\q}}$ is the twisted endoscopic transfer of that of 
\begin{align*}
& \+_{C \in ]B, A]} (-1)^{A-C} \e^{M/MW}_{\q^{1}}(s) \langle \zeta B, \cdots, -\zeta C \rangle_{\rho} \rtimes \overline{\Jac}^{\rho}_{\zeta (B+2), \cdots, \zeta C} \,  (\cPkt{MW}(\lq^{1}_{I}) \, \tilde{\otimes} \,\cPkt{MW}(\lq^{1}_{II})) \otimes \eta^{C}_{\rho} \omega^{\zeta \gamma_{B}(C)}_{\rho} \chi_{\tilde{\rho}}   \\
&\+ (-1)^{[(A-B+1)/2]}  \e^{M/MW}_{\q^{2}}(s) \, \cPkt{MW}(\lq^{2}_{I}) \, \tilde{\otimes} \, \cPkt{MW}(\lq^{2}_{II}).
\end{align*}
Note $\Jac^{\rho}_{\zeta D} \cPkt{MW}(\lq^{1}_{I}) = 0$ for any $B+2 \leqslant D \leqslant A$. Then we can rewrite it as 
\begin{align*}
& \+_{C \in ]B, A]} (-1)^{A-C} \e^{M/MW}_{\q^{1}}(s) \, \cPkt{MW}(\lq^{1}_{I}) \, \tilde{\otimes} \, \begin{pmatrix} \langle \zeta B, \cdots, -\zeta C \rangle_{\rho} \rtimes \overline{\Jac}^{\rho}_{\zeta (B+2), \cdots, \zeta C} \cPkt{MW}(\lq^{1}_{II}) \otimes \eta^{C}_{\rho} \omega^{\zeta \gamma_{B}(C)}_{\rho} \chi_{\tilde{\rho}} \end{pmatrix} \\
&\+ (-1)^{[(A-B+1)/2]}  \e^{M/MW}_{\q^{2}}(s) \, \cPkt{MW}(\lq^{2}_{I}) \, \tilde{\otimes} \, \cPkt{MW}(\lq^{2}_{II}).
\end{align*}
Since
\begin{align}
\label{eq: M/MW discrete 1}
\e^{M/MW}_{\q}(s) = \e^{M/MW}_{\q^{1}}(s) = \e^{M/MW}_{\q^{2}}(s),
\end{align}
then the associated twisted character $\cPkt{M, s}(\lq) \otimes \chi^{c_{\q}, {}'c_{\q}}$ is the twisted endoscopic transfer of that of $\e^{M/MW}_{\q}(s) \, \cPkt{MW}(\lq_{I}) \, \tilde{\otimes} \, \cPkt{MW}(\lq_{II})$. 
\end{proof}

\subsection{General case}
\label{subsec: general case}

We first assume $\q = \q_{p}$ and fix an admissible order $>_{\q}$ on $Jord(\q)$. We say $\q_{\gg}$ with order $>_{\q_{\gg}}$ dominates $\q$ with respect to $>_{\q}$, if there is an order-preserving bijection between $Jord(\q_{\gg})$ and $Jord(\q)$, which sends $(\rho, A_{\gg}, B_{\gg}, \zeta_{\gg})$ to $(\rho, A, B, \zeta)$ such that $A_{\gg} - A = B_{\gg} - B \geqslant 0$ and $\zeta_{\gg} = \zeta$. Let us choose a dominating parameter $\q_{\gg}$ of $\q$ with discrete diagonal restriction and the natural order. Identify $\S{\q^{>}}^{\Sigma_{0}} \cong \S{\q_{\gg}}^{\Sigma_{0}}$. For $\e \in \D{\S{\q^{>}}^{\Sigma_0}}$, we define
\begin{align*}
\lr^{\Sigma_0}_{M, >_{\q}}(\lq, \e) := & \circ _{(\rho, A, B, \zeta) \in Jord(\q)} \Jac_{(\rho, A_{\gg}, B_{\gg}, \zeta) \mapsto (\rho, A, B, \zeta)} \lr^{\Sigma_0}_{M}(\lq_{\gg}, \e) \\ 
& \otimes_{(\rho, A, B, \zeta) \in Jord(\q)} \omega^{|X^{A_{\gg} - A}_{(\rho, A, B, \zeta)}|/2}_{\rho},
\end{align*}
and
\begin{align*}
\lr_{M, >_{\q}}(\lq, \bar{\e}) := & \circ _{(\rho, A, B, \zeta) \in Jord(\q)} \overline{\Jac}_{(\rho, A_{\gg}, B_{\gg}, \zeta) \mapsto (\rho, A, B, \zeta)} \lr_{M}(\lq_{\gg}, \bar{\e}) \\
& \otimes_{(\rho, A, B, \zeta) \in Jord(\q)} \omega^{|X^{A_{\gg} - A}_{(\rho, A, B, \zeta)}|/2}_{\rho},
\end{align*}
where the compositions are taken in the decreasing order respectively. 

\begin{lemma}
$\lr^{\Sigma_0}_{M, >_{\q}}(\lq, \e)$ (resp. $\lr_{M, >_{\q}}(\lq, \bar{\e})$) is independent of the choice of $\q_{\gg}$.
\end{lemma}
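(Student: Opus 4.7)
The plan is to prove independence by comparing any two dominators $\q_{\gg, 1}$ and $\q_{\gg, 2}$ through Lemma~\ref{lemma: shift} applied one Jordan block at a time. First I would reduce to the case where $\q_{\gg, 2}$ further dominates $\q_{\gg, 1}$, i.e.\ $T_{\gg, 2, i} \geqslant T_{\gg, 1, i}$ for every Jordan block index $i$ (writing $T_{\gg, j, i}$ for the shift of the $i$-th block of $\q$, in the natural order, inside $\q_{\gg, j}$). The general case then follows by comparing both $\q_{\gg, 1}$ and $\q_{\gg, 2}$ to any common upper bound, which exists since one may enlarge the shifts freely while preserving DDR and the natural order.

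Next I would introduce an intermediate sequence $\q_{\gg, 1} = \q^{(0)}, \q^{(1)}, \ldots, \q^{(k)} = \q_{\gg, 2}$, where $\q^{(i)}$ agrees with $\q_{\gg, 2}$ on the first $i$ Jordan blocks (in the natural order) and with $\q_{\gg, 1}$ on the remaining ones. Each step from $\q^{(i-1)}$ to $\q^{(i)}$ shifts exactly one Jordan block, and each $\q^{(i)}$ has DDR (from the natural order together with DDR of the two endpoints). At the step shifting the $i$-th block, blocks with smaller natural-order index sit at their $\q_{\gg, 2}$-position, strictly below the swept interval by DDR, while blocks with larger index sit at their $\q_{\gg, 1}$-position, strictly above by DDR of $\q_{\gg, 1}$. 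Thus no other Jordan block of the same $\rho$-type lies inside the swept interval, so the sign hypothesis of Lemma~\ref{lemma: shift} is vacuous and the lemma applies.

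Iterating Lemma~\ref{lemma: shift} and using that Jacquet modules attached to distinct Jordan blocks commute (their cuspidal supports being disjoint under DDR), I obtain
\[
\lr^{\Sigma_0}_M(\lq_{\gg, 1}, \e) = \Bigl(\circ_{(\rho, A, B, \zeta)} \, \Jac_{(\rho, A + T_{\gg, 2}, B + T_{\gg, 2}, \zeta) \mapsto (\rho, A + T_{\gg, 1}, B + T_{\gg, 1}, \zeta)}\Bigr) \, \lr^{\Sigma_0}_M(\lq_{\gg, 2}, \e) \otimes \prod_{(\rho, A, B, \zeta)} \omega_\rho^{-|X^{T_{\gg, 2} - T_{\gg, 1}}_{(\rho, A + T_{\gg, 1}, B + T_{\gg, 1}, \zeta)}|/2}.
\]
Substituting into the defining expression of $\lr^{\Sigma_0}_{M, >_\q}(\lq, \e)$ computed via $\q_{\gg, 1}$ and invoking the compositional identity
\[
\Jac_{(\rho, A + T_{\gg, 2}, B + T_{\gg, 2}, \zeta) \mapsto (\rho, A, B, \zeta)} = \Jac_{(\rho, A + T_{\gg, 1}, B + T_{\gg, 1}, \zeta) \mapsto (\rho, A, B, \zeta)} \circ \Jac_{(\rho, A + T_{\gg, 2}, B + T_{\gg, 2}, \zeta) \mapsto (\rho, A + T_{\gg, 1}, B + T_{\gg, 1}, \zeta)},
\]
together with the additivity $|X^{T_{\gg, 2}}_{(\rho, A, B, \zeta)}| = |X^{T_{\gg, 1}}_{(\rho, A, B, \zeta)}| + |X^{T_{\gg, 2} - T_{\gg, 1}}_{(\rho, A + T_{\gg, 1}, B + T_{\gg, 1}, \zeta)}|$ of entry-sums for the twist exponents, shows that the constructions via $\q_{\gg, 1}$ and via $\q_{\gg, 2}$ coincide. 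The case of $\lr_{M, >_\q}(\lq, \bar\e)$ then follows by restricting to $\lG(F)$, using compatibility of $\overline{\Jac}$ with $\Jac$; individual $\bar\e$-isotypic summands are isolated by their pairings with characters of $\S{\lq}$.

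The main obstacle is to ensure the sign hypothesis of Lemma~\ref{lemma: shift} at each intermediate step. This is precisely why I process the Jordan blocks in increasing order of the natural order in the intermediate sequence: at each single-block shift, all other blocks of the same $\rho$-type sit entirely outside the swept interval (by DDR of whichever endpoint they currently occupy), making the hypothesis vacuous and allowing an uninterrupted iteration.
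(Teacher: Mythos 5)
Your proposal has the same overall skeleton as the paper's argument — compare to a common upper bound $\q_{\gg}$ dominating both $\q_{\gg,1}$ and $\q_{\gg,2}$, iterate Lemma~\ref{lemma: shift} to relate the $\lr^{\Sigma_0}_{M}(\cdot)$'s for the various dominating parameters, and then conclude by compositionality of Jacquet functors and additivity of the twist exponents — and you make the iteration explicit through an intermediate sequence, which the paper leaves implicit. However, there is a genuine error in how you justify that Lemma~\ref{lemma: shift} applies at each intermediate step: you process the blocks in \emph{increasing} natural order when shifting up from $\q_{\gg,1}$ to $\q_{\gg,2}$, and with that order your two key claims — that each $\q^{(i)}$ has discrete diagonal restriction, and that the sign hypothesis of Lemma~\ref{lemma: shift} is vacuous — both fail in general.

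To see the problem concretely, take $Jord_{\rho}(\q_{\gg,1}) = \{(\rho,0,0,+),(\rho,1,1,+)\}$ and $Jord_{\rho}(\q_{\gg,2}) = \{(\rho,5,5,+),(\rho,10,10,+)\}$. Processing the smaller block first, $\q^{(1)}$ has the first block at $[5,5]$ and the second at $[1,1]$: the natural order is reversed, and in slightly closer examples (shifts of $1$) the intervals actually touch, so $\q^{(1)}$ need not be DDR at all. Moreover, while shifting the first block through the swept interval $[0,5]$, the second block (still at $[1,1]$) lies strictly inside it, so the hypothesis of Lemma~\ref{lemma: shift} about Jordan blocks ``between the two'' is not vacuous — and when both blocks share the sign $\zeta$, that hypothesis actively fails. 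The issue is precisely that, in increasing order, the swept interval of block $i$ overtakes the not-yet-shifted blocks of larger index. The fix is to process in the \emph{decreasing} natural order (shift the block with largest $A$-value first): then at every step the already-shifted blocks (larger index, at $\q_{\gg,2}$-position) lie strictly above the swept interval by DDR of $\q_{\gg,2}$, the not-yet-shifted blocks (smaller index, at $\q_{\gg,1}$-position) lie strictly below by DDR of $\q_{\gg,1}$, the intermediate parameters retain DDR with the natural order, and the sign hypothesis is indeed vacuous, which is what you want. With this correction, the rest of your argument — the compositional identity for ${\rm Jac}$, the additivity of $|X^{T}_{(\rho,A,B,\zeta)}|$, and the passage to $\lr_{M,>_{\q}}(\lq,\bar{\e})$ by restriction — is sound and coincides with the paper's approach.
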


\begin{proof}
Let $\lr^{\Sigma_0}_{M, >_{\q}}(\lq, \e)^{i}$ be defined with respect to $\q^{i}_{\gg}$ for $i = 1, 2$. We can choose $\q_{\gg}$ that dominates both $\q^{i}_{\gg}$ with discrete diagonal restriction and the natural order. By Lemma~\ref{lemma: shift},
\begin{align*}
\lr^{\Sigma_0}_{M}(\lq^{i}_{\gg}, \e) = &  \circ _{(\rho, A, B, \zeta) \in Jord(\q)} \Jac_{(\rho, A_{\gg}, B_{\gg}, \zeta) \mapsto (\rho, A^{i}_{\gg}, B^{i}_{\gg}, \zeta)} \lr^{\Sigma_0}_{M}(\lq_{\gg}, \e) \\
& \otimes_{(\rho, A, B, \zeta) \in Jord(\q)} \omega^{|X^{A_{\gg} - A^{i}_{\gg}}_{(\rho, A^{i}_{\gg}, B^{i}_{\gg}, \zeta)}|/2}_{\rho}.
\end{align*}
Since
\begin{align*}
& \circ _{(\rho, A, B, \zeta) \in Jord(\q)} \Jac_{(\rho, A^{i}_{\gg}, B^{i}_{\gg}, \zeta) \mapsto (\rho, A, B, \zeta)} \, \circ _{(\rho, A, B, \zeta) \in Jord(\q)} \Jac_{(\rho, A_{\gg}, B_{\gg}, \zeta) \mapsto (\rho, A^{i}_{\gg}, B^{i}_{\gg}, \zeta)} \\ 
= &   \circ _{(\rho, A, B, \zeta) \in Jord(\q)} \Jac_{(\rho, A_{\gg}, B_{\gg}, \zeta) \mapsto (\rho, A, B, \zeta)},
\end{align*}
then
\begin{align*}
\lr^{\Sigma_0}_{M, >_{\q}}(\lq, \e)^{i} = &  \circ _{(\rho, A, B, \zeta) \in Jord(\q)} \Jac_{(\rho, A^{i}_{\gg}, B^{i}_{\gg}, \zeta) \mapsto (\rho, A, B, \zeta)} \lr^{\Sigma_0}_{M}(\lq^{i}_{\gg}, \e) \\
& \otimes_{(\rho, A, B, \zeta) \in Jord(\q)} \omega^{|X^{A^{i}_{\gg} - A}_{(\rho, A, B, \zeta)}|/2}_{\rho} \\
= & \circ _{(\rho, A, B, \zeta) \in Jord(\q)} \Jac_{(\rho, A_{\gg}, B_{\gg}, \zeta) \mapsto (\rho, A, B, \zeta)} \lr^{\Sigma_0}_{M}(\lq_{\gg}, \e) \\
& \otimes_{(\rho, A, B, \zeta) \in Jord(\q)} \omega^{|X^{A_{\gg} - A}_{(\rho, A, B, \zeta)}|/2}_{\rho}.
\end{align*}
So $\lr^{\Sigma_0}_{M, >_{\q}}(\lq, \e)^{1} = \lr^{\Sigma_0}_{M, >_{\q}}(\lq, \e)^{2}$. The case of $\lr_{M, >_{\q}}(\lq, \bar{\e})$ also follows from this.
\end{proof}

For functions $\ul(\rho, A, B, \zeta) \in [0, [(A-B+1)/2]]$ and $\ueta(\rho, A, B, \zeta) \in \Two$ on $Jord(\q)$ such that
\[
\e_{\ul, \ueta}(\rho, A, B, \zeta) := \ueta(\rho, A, B, \zeta)^{A - B + 1} (-1)^{[(A-B+1)/2] + \ul(\rho, A, B, \zeta)}
\]
defines a character $\e_{\ul, \ueta}$ of $\S{\q^{>}}^{\Sigma_{0}}$, we define
\begin{align*}
\lr_{M, >_{\q}}^{\Sigma_{0}}(\q, \ul, \ueta) := & \circ _{(\rho, A, B, \zeta) \in Jord(\q)} \Jac_{(\rho, A_{\gg}, B_{\gg}, \zeta) \mapsto (\rho, A, B, \zeta)} \lr_{M}^{\Sigma_{0}}(\q_{\gg}, \ul, \ueta) \\
& \otimes_{(\rho, A, B, \zeta) \in Jord(\q)} \omega^{|X^{A_{\gg} - A}_{(\rho, A, B, \zeta)}|/2}_{\rho} \\
\lr_{M, >_{\q}}(\q, \ul, \ueta) := & \circ _{(\rho, A, B, \zeta) \in Jord(\q)} \Jac_{(\rho, A_{\gg}, B_{\gg}, \zeta) \mapsto (\rho, A, B, \zeta)} \lr_{M}(\q_{\gg}, \ul, \ueta) \\
& \otimes_{(\rho, A, B, \zeta) \in Jord(\q)} \omega^{|X^{A_{\gg} - A}_{(\rho, A, B, \zeta)}|/2}_{\rho},
\end{align*}
where the composition is taken in the decreasing order,  
\[
\ul(\rho, A, B, \zeta) = \ul(\rho, A_{\gg}, B_{\gg}, \zeta) \text{ and } \ueta(\rho, A, B, \zeta) = \ueta(\rho, A_{\gg}, B_{\gg}, \zeta).
\] 
Note
\begin{align}
\label{eq: general restriction}
\lr_{M, >_{\q}}^{\Sigma_{0}}(\lq, \ul, \ueta)|_{G^{\Sigma_0}(F)} = \bigoplus_{(\ul', \ueta') \sim_{\lG^{\Sigma_0}} (\ul, \ueta) / \sim_{\Sigma_0}} \r_{M, >_{\q}}^{\Sigma_{0}}(\q, \ul', \ueta').
\end{align}
So $\lr_{M, >_{\q}}^{\Sigma_{0}}(\lq, \ul, \ueta)$ is irreducible or zero. The same is true of $\lr_{M, >_{\q}}(\q, \ul, \ueta)$.

In general, we define
\[
\lr^{\Sigma_0}_{M, >_{\q}}(\lq, \e) := \Big(\times_{(\rho, a, b) \in Jord(\q_{np})}Sp(St(\rho, a), b) \Big) \rtimes \lr^{\Sigma_0}_{M}(\lq_{p}, \e)
\]
and
\[
\lr_{M, >_{\q}}(\lq, \bar{\e}) := \Big(\times_{(\rho, a, b) \in Jord(\q_{np})}Sp(St(\rho, a), b) \Big) \rtimes \lr_{M}(\lq_{p}, \bar{\e}).
\]
From the definitions it is clear that Lemma~\ref{lemma: restriction} extends to the general case. We define $\Pkt{\lq}^{\Sigma_0}$ (resp. $\cPkt{\lq}$) to be the set of irreducible constituents of
\[
\bigoplus_{\tilde{\e} \in \D{\S{\lq^{>}}^{\Sigma_0}}} \lr^{\Sigma_0}_{M, >_{\q}}(\lq, \e) \quad \Big(\text{resp. $\bigoplus_{\tilde{\bar{\e}} \in \D{\S{\lq^{>}}}} \lr_{M, >_{\q}}(\lq, \bar{\e})$} \Big)
\]
which will be shown to be independent of the order $>_{\q}$. Let $\lr^{\Sigma_0}_{W}(\lq, \e) := \lr^{\Sigma_0}_{W}(\lq, \tilde{\e})$ be the the direct sum of the preimages of $\tilde{\e}$ in $\Pkt{\lq}^{\Sigma_0}$ under \eqref{eq: Whittaker pairing} and $\lr^{\Sigma_0}_{MW, >_{\q}}(\lq, \e) := \lr^{\Sigma_0}_{W}(\lq, \e\e^{MW/W}_{\q})$. Since $\r^{\Sigma_0}_{M, >_{\q}}(\q, \e) = \r^{\Sigma_0}_{MW, >_{\q}}(\q, \e\e^{M/MW}_{\q})$ (cf. \cite[Proposition 8.2]{Xu:Apacket}), then $\lr^{\Sigma_0}_{M, >_{\q}}(\lq, \e) = \lr^{\Sigma_0}_{MW, >_{\q}}(\lq, \e\e^{M/MW}_{\q})$. Hence,
\[
\lr^{\Sigma_{0}}_{M, >_{\q}}(\lq, \e) = \begin{cases}
                           \lr^{\Sigma_{0}}_{W}(\lq, \e \e^{M/MW}_{\q} \e^{MW/W}_{\q}), & \text{ if $\e \e^{M/MW}_{\q} \e^{MW/W}_{\q} \in \D{\S{\q}^{\Sigma_{0}}}$,} \\
                            0, & \text{ otherwise.}
                          \end{cases}
\]
For $\ul(\rho, A, B, \zeta) \in [0, [(A-B+1)/2]]$ and $\ueta(\rho, A, B, \zeta) \in \Two$ on $Jord(\q_{p})$ such that $\e_{\ul, \ueta} \in \D{\S{\q^{>}}^{\Sigma_{0}}}$, we also define
\[
\lr^{\Sigma_{0}}_{M}(\lq, \ul, \ueta) = \Big(\times_{(\rho, a, b) \in Jord(\q_{np})}Sp(St(\rho, a), b) \Big) \rtimes \lr^{\Sigma_{0}}_{M}(\lq_{p}, \ul, \ueta)
\]
and
\[
\lr_{M}(\lq, \ul, \ueta) = \Big(\times_{(\rho, a, b) \in Jord(\q_{np})}Sp(St(\rho, a), b) \Big) \rtimes \lr_{M}(\lq_{p}, \ul, \ueta).
\]
We still have \eqref{eq: general restriction} and $\lr_{M, >_{\q}}^{\Sigma_{0}}(\lq, \ul, \ueta)$ is irreducible or zero by the similar result for $G^{\Sigma_0}(F)$ (cf. \cite[Theorem 6]{Moeglin1:2006}).

For $s \in \S{\q^{>}}^{\theta}$ with $\omega = \alpha(s)$, we can define $\cPkt{M, s}(\lq), \Pkt{M, s}^{\Sigma_{0}}(\lq), \cPkt{MW, s}(\lq), \Pkt{MW, s}^{\Sigma_{0}}(\lq)$ as in the case of discrete diagonal restriction. In particular,
\[
\Pkt{M, s}^{\Sigma_{0}}(\lq) = \Big(\times_{(\rho, a, b) \in Jord(\q_{np})}Sp(St(\rho, a), b) \Big) \rtimes \Pkt{M, s}^{\Sigma_0}(\lq_{p}),
\]
where
\begin{align*}
\Pkt{M, s}^{\Sigma_0}(\lq_{p}) = & \circ _{(\rho, A, B, \zeta) \in Jord(\q_p)} \Jac_{(\rho, A_{\gg}, B_{\gg}, \zeta) \mapsto (\rho, A, B, \zeta)} \, \Pkt{M, s}^{\Sigma_0}(\lq_{p, \gg}) \\
& \otimes_{(\rho, A, B, \zeta) \in Jord(\q)} \omega^{|X^{A_{\gg} - A}_{(\rho, A, B, \zeta)}|/2}_{\rho}.
\end{align*}

\begin{theorem}
\label{thm: general endoscopy}
\begin{enumerate}
\,
\item
\[
\lf_{MW}(\lq) := \sum_{[\lr] \in \cPkt{\lq}} \langle s_{\lq}, \lr \rangle_{MW} \, \lf_{\lG}(\lr), \quad \lf \in \sH(\lG(F), \tilde{\chi})
\]
is stable.

\item For $s \in \S{\q^{>}}^{\theta}$ with $\x = \a(s)$ and $(H, \q_{H}) \rightarrow (\q, s)$, $\cPkt{MW, s}(\lq)\otimes \chi^{c_{\q_{p, \gg}}, {}'c_{\q_{p, \gg}}}$ is the twisted endoscopic transfer of $\cPkt{MW}(\lq_{H})$.

\end{enumerate}
\end{theorem}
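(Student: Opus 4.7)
The plan is to reduce to Theorem~\ref{thm: discrete endoscopy} (discrete diagonal restriction case) through the defining construction of $\cPkt{\lq}$ from $\cPkt{\lq_{\gg}}$ via Jacquet modules. First I would reduce to the case $\q = \q_{p}$. By definition,
\[
\lr^{\Sigma_0}_{M, >_{\q}}(\lq, \e) = \Big(\times_{(\rho, a, b) \in Jord(\q_{np})}Sp(St(\rho, a), b)\Big) \rtimes \lr^{\Sigma_0}_{M}(\lq_{p}, \e),
\]
so the full packet $\cPkt{\lq}$ is obtained by parabolically inducing $\cPkt{\lq_p}$ by a tempered representation of a product of general linear groups. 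Since Steinberg-Speh representations for $GL$ are stable and the endoscopic datum for $H$ factors through the corresponding parabolic on the dual side, both stability (1) and the twisted endoscopic identity (2) for $\lq$ will follow from the corresponding statements for $\lq_p$, because parabolic induction and the Langlands--Shelstad transfer are compatible.

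Now assume $\q = \q_p$ and fix a dominating parameter $\q_{\gg}$ of $\q$ with discrete diagonal restriction. On the spectral side, the recursive formula of Proposition~\ref{prop: recursive formula for packet} iterated over $Jord(\q)$ combined with Lemma~\ref{lemma: shift} gives
\[
\cPkt{M, s}(\lq) = \circ_{(\rho, A, B, \zeta) \in Jord(\q)} \overline{\Jac}_{(\rho, A_{\gg}, B_{\gg}, \zeta) \mapsto (\rho, A, B, \zeta)} \cPkt{M, s}(\lq_{\gg}) \otimes_{(\rho,A,B,\zeta)} \omega^{|X^{A_{\gg}-A}_{(\rho,A,B,\zeta)}|/2}_{\rho},
\]
and similarly for $\Pkt{M,s}^{\Sigma_0}(\lq)$. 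On the endoscopic side, the same shift applied to $H = G_I \times G_{II}$ presents $\cPkt{MW}(\lq_H)$ as the analogous composition of Jacquet modules applied to $\cPkt{MW}(\lq_{H,\gg})$ on the classical factor (where the $GL$ part is absorbed into the $Sp(St(\rho,a),b)$ factors as in the first reduction). By Theorem~\ref{thm: discrete endoscopy}(2) applied to $\lq_{\gg}$, the distribution $\cPkt{MW, s}(\lq_{\gg}) \otimes \chi^{c_{\q_{\gg}}, {}'c_{\q_{\gg}}}$ is the twisted endoscopic transfer of $\cPkt{MW}(\lq_{H,\gg})$. Since the twisted endoscopic transfer commutes with Jacquet modules in the similitude setting (using the compatibility in \cite[Appendix~C]{Xu:cusp} together with Remark after the lemma, applied via the auxiliary $\lM^{+}$ trick), applying the same string of shifts $\overline{\Jac}_{(\rho, A_{\gg}, B_{\gg}, \zeta) \mapsto (\rho, A, B, \zeta)}$ on both sides and tracking the $\omega_\rho$ twists gives the identity in (2) for $\lq$, provided $\chi^{c_{\q_{p,\gg}}, {}'c_{\q_{p,\gg}}}$ is unchanged under the shift (which it is, since the $1$-cochains depend only on $J(\q)$ and the associated $\eta_\rho$, not on the shifted parameters).

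Part (1) follows by the same argument: Theorem~\ref{thm: discrete endoscopy}(1) gives stability of $\lf_{MW}(\lq_{\gg})$, and both parabolic induction and the Jacquet functor $\overline{\Jac}$ send stable distributions on $\lG_+(F)$ to stable distributions on $\lG(F)$ and vice versa (the twist by $\omega^{|X^{A_{\gg}-A}_{(\rho,A,B,\zeta)}|/2}_{\rho}$ is a central character twist, hence stable-preserving). Since the composition of these operations produces $\lf_{MW}(\lq)$ up to scalar, stability is preserved.

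The main obstacle will be the bookkeeping for the twisted endoscopic identity under the shift: one must verify that the character $\chi^{c_{\q_{p,\gg}}, {}'c_{\q_{p,\gg}}}$ and the cochain-dependent twists commute correctly with the iterated Jacquet operators on both the spectral and endoscopic sides, and that the $\omega_\rho$ normalizations on the two sides match after transfer. A secondary technical point is checking that $\cPkt{\lq}$ is genuinely independent of the choice of dominating $\q_{\gg}$ and of the order $>_{\q}$, which requires combining the independence lemma for $\lr^{\Sigma_0}_{M,>_\q}$ from $\q_{\gg}$ with the change-of-order formula \cite[Theorem~6.3]{Xu:Comb}; only once this is settled do Parts (1) and (2) become unambiguous statements about $\cPkt{\lq}$.
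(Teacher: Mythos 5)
Your proposal matches the paper's proof: stability (1) and the twisted character relation (2) both follow by reducing to the discrete diagonal restriction case (Theorem~\ref{thm: discrete endoscopy}) via the defining parabolic induction (for the $Sp(St(\rho,a),b)$ factors from $\q_{np}$) and shifted Jacquet-module operations (from $\lq_{p,\gg}$ to $\lq_p$), using that these operations preserve stability and are compatible with twisted endoscopic transfer. One clarification on your final paragraph: independence of $\cPkt{\lq}$ from the choice of dominating parameter $\q_{\gg}$ is established in a lemma \emph{before} the theorem, whereas independence from the admissible order $>_{\q}$ is not a prerequisite for stating the theorem — the theorem is proved for a fixed admissible order, and order-independence is then derived \emph{afterward} as a corollary of the theorem itself together with the change-of-order formula.
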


\begin{proof}
Part (1) follows from the stability of $\cPkt{MW}(\lq_{p, \gg})$ and the fact that parabolic induction and Jacquet restriction preserve stability. Part (2) follows from the result for $\lq_{p, \gg}$ and the fact that the twisted endoscopic transfer is compatible with parabolic induction and Jacquet restriction.
\end{proof}

\begin{corollary}
\begin{enumerate}
\item
$\Pkt{\lq}^{\Sigma_0}, \cPkt{\lq}$ are independent of the choice of admissible order $>_{\q}$ on $Jord(\q_p)$.

\item If $[\q] \in \cPbd{G}$, then $\cPkt{\lq}$ is the tempered $L$-packets constructed in \cite[Theorem 4.6]{Xu:2018}.
\end{enumerate}
\end{corollary}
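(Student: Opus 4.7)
The plan is to reduce part (1) to the analogous order-independence statement for the classical group $G$, which is due to M{\oe}glin and is recorded in \cite[Theorem 8.8]{Xu:Apacket}. First I would extend Lemma~\ref{lemma: restriction} to the general case by tracking the restriction to $G^{\Sigma_0}(F)$ through the definitions: the reduction to a dominating parameter $\q_{\gg}$ with discrete diagonal restriction (via the operators $\Jac_{(\rho, A_{\gg}, B_{\gg}, \zeta) \mapsto (\rho, A, B, \zeta)}$ and the accompanying central twists by $\omega_{\rho}^{|X^{A_{\gg} - A}_{(\rho,A,B,\zeta)}|/2}$) and the parabolic induction by the $\q_{np}$-factors both commute with restriction in the appropriate sense. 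This would yield
\[
\lr^{\Sigma_0}_{M, >_{\q}}(\lq, \e)\big|_{G^{\Sigma_0}(F)} = \bigoplus_{\e' \in (\S{\q^{>}}^{\Sigma_0}/\S{\lq^{>}}^{\Sigma_0})^{\wedge}} \r^{\Sigma_0}_{M, >_{\q}}(\q, \e \e'),
\]
so the union over $\tilde{\e}$ of the sets of irreducible constituents of the restrictions is exactly $\Pkt{\q}^{\Sigma_0}$, which is order-independent.

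To upgrade this to order-independence of $\Pkt{\lq}^{\Sigma_0}$, I would argue that every element of $\Pkt{\lq}^{\Sigma_0}$ is an extension to $\lG^{\Sigma_0}(F)$ of an element of $\Pkt{\q}^{\Sigma_0}$ carrying the prescribed central character $\tilde{\zeta}$ on $\lZ(F)$. Any two such extensions differ by a twist by some $\omega \in X$, and since any nontrivial element of $X$ alters the central character on $\lZ(F)$, the condition $\tilde\zeta$ pins down the extension uniquely. Hence $\Pkt{\lq}^{\Sigma_0}$ is determined by the pair $(\Pkt{\q}^{\Sigma_0}, \tilde{\zeta})$, both of which are order-independent. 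The independence of $\cPkt{\lq}$ then follows by restriction to $\lG(F)$ and taking $\theta_0$-orbits of irreducible constituents.

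For part (2), I would invoke Theorem~\ref{thm: general endoscopy}, which asserts that $\cPkt{\lq}$ is stable on $\lG(F)$ and that for every $s \in \S{\q^{>}}^{\theta}$ and every $(H, \q_H) \to (\q, s)$ the twisted character associated to $\cPkt{MW, s}(\lq) \otimes \chi^{c_{\q_{p,\gg}}, \,{}'c_{\q_{p,\gg}}}$ is the twisted endoscopic transfer of that of $\cPkt{MW}(\lq_H)$. When $[\q] = [\p] \in \cPbd{G}$ is tempered, these are precisely the defining identities of the tempered $L$-packet constructed in \cite[Theorem 4.6]{Xu:2018}, which is characterized uniquely up to twist by $X$. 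Since our construction feeds the same discrete $L$-packets $\cPkt{\lq_d}$ (as in \cite{Xu:Lpacket}) into the same sequence of twisted endoscopic transfers and fixes the central character on $\lZ(F)$ to be $\tilde{\zeta}$, the residual twist collapses to the identity and the two packets coincide.

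The main obstacle will be the first step: verifying cleanly that restriction to $G^{\Sigma_0}(F)$ commutes with the shift-Jacquet operations, the parabolic inductions by the $\q_{np}$-factors, and the associated central twists, so that the formula of Lemma~\ref{lemma: restriction} extends verbatim to the general case. The subtlety is the bookkeeping of the quotients $\S{\q^{>}}^{\Sigma_0}/\S{\lq^{>}}^{\Sigma_0}$ under the identifications induced by passage to the dominating parameter $\q_{\gg}$ and by the decomposition $\q = \q_p \oplus (\q_{np} \oplus \q_{np}^{\vee})$. Once this is established, both (1) and (2) reduce to uniqueness statements already available in the literature.
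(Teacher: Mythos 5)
Your proposal has a genuine gap in the argument for part (1), and it propagates to part (2). The claim ``since any nontrivial element of $X$ alters the central character on $\lZ(F)$, the condition $\tilde\zeta$ pins down the extension uniquely'' contradicts the paper's definition $X = \Hom(\lG(F)/G(F)\lZ(F),\C^{\times})$: elements of $X$ are by construction trivial on $G(F)$ and on $\lZ(F)$, hence preserve the central character of any extension. Two extensions to $\lG^{\Sigma_0}(F)$ of the same element of $\Pkt{\q}^{\Sigma_0}$ that differ by a twist in $X$ are therefore indistinguishable by the condition that the central character equals $\tilde\zeta$. So the data $(\Pkt{\q}^{\Sigma_0},\tilde\zeta)$ only determine $\Pkt{\lq}^{\Sigma_0}$ up to twist by $X$; that is exactly the ambiguity that Theorem~\ref{thm: Apacket} leaves open, and it is exactly the ambiguity you need to rule out to prove the corollary. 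The same error undermines the sentence ``the residual twist collapses to the identity'' in your argument for (2). Your reduction to the known order-independence of $\Pkt{\q}^{\Sigma_0}$ and your extension of Lemma~\ref{lemma: restriction} are both fine, but they stop short of what must be shown.

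What actually resolves the $X$-ambiguity is the explicit bookkeeping of $1$-cochains in the twisted endoscopic transfers, which your argument never touches. The paper proceeds by induction on the number of Jordan blocks: one assumes $\cPkt{MW}(\lq_H)$ is independent of the admissible order, observes that $c_{\q_{p,\gg}}$ is order-independent by its very definition, and then makes a compatible choice of the cochain $c_s$ so that both the twisted endoscopic embedding and ${}'c_{\q_{p,\gg}}$ agree across different orders. Theorem~\ref{thm: general endoscopy} then shows that $\cPkt{MW,s}(\lq)$ itself is order-independent (not merely up to twist), which in turn determines $\cPkt{\lq}$ and hence $\Pkt{\lq}^{\Sigma_0}$. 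For part (2) the paper reduces to $\lq_p$ via the factorization $\cPkt{\lq} = (\times_{(\rho,a,1)} St(\rho,a)) \rtimes \cPkt{\lq_p}$, which also holds for the tempered $L$-packet by construction, and then runs the same induction on Jordan blocks matching the twisted endoscopic character relations of Theorem~\ref{thm: general endoscopy} against those of \cite[Theorem 4.6]{Xu:2018}. If you want to salvage your outline, you would need to replace the central-character step with this cochain argument; the restriction formula alone cannot close the gap.
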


\begin{proof}
By induction on the number of Jordan blocks we can assume that $\cPkt{MW}(\lq_{H})$ is independent of the admissible order. By definition, $c_{\q_{p, \gg}}$ is also independent of the order. We can further choose the $1$-cochains $c_{s}$ to be the same, then both the corresponding twisted endoscopic embedding and ${}'c_{\q_{p, \gg}}$ are the same. By Theorem~\ref{thm: general endoscopy}, $\cPkt{MW, s}(\lq)$ is the same, and determines $\cPkt{\lq}$ uniquely. At last, $\Pkt{\lq}^{\Sigma_0}$ is determined by $\cPkt{\lq}$. This proves (1). For (2), note
\[
\cPkt{\lq} = \Big(\times_{(\rho, a, 1) \in Jord(\q_{np})} St(\rho, a) \Big) \rtimes \cPkt{\lq_p},
\]
which also applies to the tempered $L$-packets by our construction in \cite{Xu:2018}. So it suffices to consider $\lq_{p}$. To apply Theorem~\ref{thm: general endoscopy} to $\lq_p$, we first assume $\cPkt{\q_{H}}$ is the tempered $L$-packet by induction on the number of Jordan blocks. Then the result follows from the twisted endoscopic character relation for the tempered $L$-packets (cf. \cite[Theorem 4.6]{Xu:2018}) and $\cPkt{\lq_p}$ (cf. Theorem~\ref{thm: general endoscopy}).
 
\end{proof}


\begin{theorem}
\label{thm: involution good parity}
For $[\q] \in \cQ{G}$ such that $\q = \q_{p}$, and $Jord_{\rho}(\q) \neq \emptyset$,
\begin{align*}
|{\rm inv}^{\rho}_{\infty}| \lr^{\Sigma_0}_{M, >_{\q}}(\lq, \ul, \ueta) & = \lr^{\Sigma_0}_{M, >_{\q}}(\lq^{\sharp}, \ul, \ueta) \\
|\overline{\rm inv}^{\rho}_{\infty}| \lr_{M, >_{\q}}(\lq, \ul, \ueta) & = \lr^{\Sigma_0}_{M, >_{\q}}(\lq^{\sharp}, \ul, \ueta), 
\end{align*}
where $\q^{\sharp}$ is obtained from $\q$ by changing $(\rho, a, b) \in Jord(\q)$ to $(\rho, b, a)$.
\end{theorem}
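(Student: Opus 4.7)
The plan is to prove both identities simultaneously by induction on
\[
K(\q) := \sum_{(\rho', A', B', \zeta') \in Jord(\q)} (A' - B'),
\]
reducing first from the general case to parameters with discrete diagonal restriction, and then from those to the elementary case, where formula \eqref{eq: involution} applies directly. Throughout, the identity for $\lr_{M, >_{\q}}$ follows from that for $\lr^{\Sigma_0}_{M, >_{\q}}$ by restriction to $\lG(F)$, using \eqref{eq: general restriction} together with \eqref{eq: involution and Jac 1}, so we focus on the $\Sigma_0$-equivariant version.

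For a general $\q$ of good parity, I would choose a dominating parameter $\q_{\gg}$ with discrete diagonal restriction, so that $\lr^{\Sigma_0}_{M, >_{\q}}(\lq, \ul, \ueta)$ is obtained from $\lr^{\Sigma_0}_{M}(\lq_{\gg}, \ul, \ueta)$ by the composite Jacquet module $\circ_{(\rho', A', B', \zeta')}\Jac_{(\rho', A'_{\gg}, B'_{\gg}, \zeta') \mapsto (\rho', A', B', \zeta')}$ and twist by $\prod_{\rho'} \omega_{\rho'}^{|X^{A'_{\gg} - A'}_{(\rho', A', B', \zeta')}|/2}$. Since the shift depends only on $(A', B')$ and not on $\zeta'$, we have $(\q^{\sharp})_{\gg} = (\q_{\gg})^{\sharp}$, and $\ul, \ueta$ transfer naturally through the bijection $Jord(\q) \leftrightarrow Jord(\q^{\sharp})$. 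The commutation relation \eqref{eq: involution and Jac} allows $|{\rm inv}^{\rho}_{\infty}|$ to pass through the composite Jacquet module: for Jordan blocks with $\rho' \neq \rho$ it commutes freely, while for $\rho' = \rho$ it flips $\Jac^{\rho}_{\zeta x}$ into $\Jac^{\rho}_{-\zeta x}$ modulo a twist by $\omega_{\rho}^{-\zeta x}\eta_{\rho}$. The resulting expression recombines into the definition of $\lr^{\Sigma_0}_{M, >_{\q}}(\lq^{\sharp}, \ul, \ueta)$, reducing the statement to the case of discrete diagonal restriction.

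For $\q$ with discrete diagonal restriction, I would induct on $K(\q)$. If $K(\q) > 0$, pick $(\rho_0, A, B, \zeta) \in Jord(\q)$ with $A > B$ and apply the recursive formula \eqref{eq: case 1 resolved}--\eqref{eq: case 2 resolved}. When $\rho_0 \neq \rho$, $|{\rm inv}^{\rho}_{\infty}|$ commutes with all parabolic inductions and Jacquet modules appearing in the formula by \eqref{eq: involution and Jac}, and the induction hypothesis handles each summand. When $\rho_0 = \rho$, commuting $|{\rm inv}^{\rho}_{\infty}|$ through the formula flips $\zeta \mapsto -\zeta$ in every $\rho$-Jacquet module, converts $\langle \zeta B, \cdots, -\zeta C \rangle_{\rho} \rtimes (-)$ into $\langle -\zeta B, \cdots, \zeta C \rangle_{\rho} \rtimes (-)$, and produces twists by powers of $\omega_{\rho}$ and $\eta_{\rho}$ which must match the prefactors $\eta_{\rho}^{C} \omega_{\rho}^{\zeta \gamma_{B}(C)} \chi_{\tilde{\rho}}$ after the sign flip, yielding the recursive expansion for $\q^{\sharp}$ relative to $(\rho, A, B, -\zeta) \in Jord(\q^{\sharp})$. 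This reduces to $K(\q) = 0$, the elementary case, where $\q^{\sharp}$ differs from $\q$ by flipping $\delta_\alpha \mapsto -\delta_\alpha$ on every $(\rho, \alpha, \delta_\alpha) \in Jord_{\rho}(\q)$, while $\q_{d} = \q^{\sharp}_{d}$. Applying $|{\rm inv}^{\rho}_{\infty}|$ to \eqref{eq: involution} and using the factorization
\[
|{\rm inv}^{\rho}_{\infty}| = \circ_{a \in Jord_{\rho}(\q_d)} \bigl(|{\rm inv}^{\rho}_{<a}| \circ |{\rm inv}^{\rho}_{\leqslant a}|\bigr)
\]
(which follows by applying \eqref{eq: involution} to the parameter with all signs $\delta_a = -1$) makes the involutions indexed by $\delta_a = -1$ cancel and promotes the product to one indexed by $\delta_a = +1$, producing precisely $\lr^{\Sigma_0}_{M}(\lq^{\sharp}, \e)$.

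The main technical obstacle lies in the discrete-diagonal-restriction step: tracking the character twists $\omega_{\rho}, \eta_{\rho}, \chi_{\tilde{\rho}}$ and the signs $(-1)^{A-C}, (-1)^{[(A-B+1)/2]}$ through the commutation of $|{\rm inv}^{\rho}_{\infty}|$ with the Jacquet modules and parabolic inductions in the recursive formula, and verifying that they reassemble consistently with the sign flip $\zeta \mapsto -\zeta$ to yield the recursive formula for $\q^{\sharp}$. The auxiliary identity involving $\gamma_{B}(C)$ together with the combinatorial bookkeeping already performed in Proposition~\ref{prop: existence} and Lemma~\ref{lemma: cancellation of Jacquet module} should furnish the detailed sign matching required.
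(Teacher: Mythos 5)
Your overall structure---reduce general good parity to discrete diagonal restriction (DDR) to elementary---matches the paper, and the general-to-DDR reduction via the shift Jacquet modules and \eqref{eq: involution and Jac} is essentially what the paper does in its final sentence. The elementary base case is also fine, though the factorization
$|{\rm inv}^{\rho}_{\infty}| = \circ_{a}(|{\rm inv}^{\rho}_{<a}|\circ|{\rm inv}^{\rho}_{\leqslant a}|)$
you invoke is not established anywhere and is unnecessary: Lemma~\ref{lemma: involution} applied directly with $X_0$ larger than all $a \in Jord_\rho(\q_d)$ already yields $|{\rm inv}^{\rho}_{\infty}|\lr^{\Sigma_0}_M(\lq,\e)=\lr^{\Sigma_0}_M(\lq^\sharp,\e)$, and in the elementary setting the packet map is a bijection so this is the statement for each individual constituent.

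The genuine gap is in your DDR step. The theorem is a statement about a single irreducible constituent $\lr^{\Sigma_0}_{M,>_\q}(\lq,\ul,\ueta)$, but the recursive formulas \eqref{eq: case 1 resolved}--\eqref{eq: case 2 resolved} decompose the full representation $\lr^{\Sigma_0}_M(\lq,\e)=\oplus_{(\ul,\ueta)}\lr^{\Sigma_0}_{M,>_\q}(\lq,\ul,\ueta)$, and the ``main'' summand is defined as a \emph{socle} $\langle\langle\zeta B,\cdots,-\zeta A\rangle_\rho\rtimes\lr^{\Sigma_0}_M(\lq',\e',\cdots)\rangle$. The Aubert-type involution does not commute with the socle functor---it swaps socles with cosocles---so commuting $|{\rm inv}^{\rho}_{\infty}|$ ``through the formula'' does not directly produce the analogous socle for $\q^\sharp$, nor does it tell you which $(\ul,\ueta)$ each constituent lands in on the other side. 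Pushing through the recursive formula would at best give an identity of packet sums, not the constituent-by-constituent identity the theorem asserts.

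The paper sidesteps this by working directly with the constituent and its defining Jacquet-module characterization. Given $(\ul,\ueta)$ with $\ul(\rho',A,B,\zeta)>0$, one has $\Jac^{\rho'}_{\zeta B,\cdots,-\zeta A}\lr^{\Sigma_0}_{M,>_\q}(\q,\ul,\ueta)=\lr^{\Sigma_0}_{M,>_\q}(\q',\ul',\ueta')\otimes(\cdots)$ with $\q'$ strictly smaller; \eqref{eq: involution and Jac} converts this into a formula for $\Jac^{\rho'}_{\pm\zeta B,\cdots,\mp\zeta A}|{\rm inv}^{\rho}_{\infty}|\lr^{\Sigma_0}_{M,>_\q}(\q,\ul,\ueta)$; the induction hypothesis identifies it as $\lr^{\Sigma_0}_{M,>_\q}(\q'^\sharp,\ul',\ueta')\otimes(\cdots)$; and Frobenius reciprocity plus the unique-irreducible-subrepresentation property of $\lr^{\Sigma_0}_{M,>_\q}(\q^\sharp,\ul,\ueta)$ pin down the answer. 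The base case $\ul\equiv 0$ reduces to an elementary parameter and hence to Lemma~\ref{lemma: involution}. If you want to salvage the recursive-formula approach you would in effect have to re-derive this Jacquet-module characterization and the unique-subrepresentation argument anyway, at which point you have reproduced the paper's proof.
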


\begin{proof}
It suffices to show the first equality. When $\q$ is elementary, this is a consequence of Lemma~\ref{lemma: involution}. Suppose $\q$ has discrete diagonal restriction. For $(\ul, \ueta)$, let us assume $\ul(\rho', A, B, \zeta) > 0$ for some $(\rho', A, B, \zeta) \in Jord(\q)$, then
\[
\Jac^{\rho'}_{\zeta B, \cdots, -\zeta A} \, \r^{\Sigma_0}_{M, >_{\q}}(\q, \ul, \ueta) = \r^{\Sigma_0}_{M, >_{\q}}(\q', \ul', \ueta') \otimes \eta^{A}_{\rho} \omega^{\zeta((B+1) + \cdots + A)/2}_{\rho} \chi_{\tilde{\rho}},
\]
where $\q'$ is obtained from $\q$ by changing $(\rho', A, B, \zeta)$ to $(\rho', A-1, B+1, \zeta)$, and $\ul'(\rho', A-1,B+1,\zeta) = \ul(\rho', A, B, \zeta) - 1$. By induction on $\sum_{(\rho', A', B', \zeta') \in Jord(\q)} (A' - B')$, we can assume 
\[
|{\rm inv}^{\rho}_{\infty}| \lr^{\Sigma_0}_{M, >_{\q}}(\q', \ul', \ueta') = \lr^{\Sigma_0}_{M, >_{\q}}(\q'^{\sharp}, \ul', \ueta').
\]
If $\rho' = \rho$, then by \eqref{eq: involution and Jac}
\begin{align*}
& \Jac^{\rho}_{-\zeta B, \cdots, \zeta A} |{\rm inv}^{\rho}_{\infty}| \lr^{\Sigma_0}_{M, >_{\q}}(\q, \ul, \ueta)  = |{\rm inv}^{\rho}_{\infty}| \Jac^{\rho}_{\zeta B, \cdots, -\zeta A} \lr^{\Sigma_0}_{M, >_{\q}}(\q, \ul, \ueta) \otimes \eta^{A - B +1}_{\rho} \omega^{-\zeta((B+1) + \cdots A)}_{\rho} \\
& = |{\rm inv}^{\rho}_{\infty}| \lr^{\Sigma_0}_{M, >_{\q}}(\q', \ul', \ueta') \otimes \eta^{A + (A- B+1)}_{\rho} \omega^{-\zeta((B+1) + \cdots A)/2}_{\rho} \chi_{\tilde{\rho}} = \lr^{\Sigma_0}_{M, >_{\q}}(\q'^{\sharp}, \ul', \ueta') \otimes \eta^{A}_{\rho} \omega^{-\zeta((B+1) + \cdots A)/2}_{\rho} \chi_{\tilde{\rho}},
\end{align*}
where we have used the fact that  $\lr^{\Sigma_0}_{M, >_{\q}}(\q'^{\sharp}, \ul', \ueta')$ is invariant under twist by $\eta_{\rho}$ when $A - B + 1$ is odd. Then
\[
|{\rm inv}^{\rho}_{\infty}| \lr^{\Sigma_0}_{M, >_{\q}}(\q, \ul, \ueta) \hookrightarrow \rho||^{-\zeta B} \times \cdots \times \rho||^{\zeta A} \rtimes \r^{\Sigma_0}_{M, >_{\q}}(\q'^{\sharp}, \ul', \ueta') \otimes \eta^{A}_{\rho} \omega^{-\zeta((B+1) + \cdots A)/2}_{\rho} \chi_{\tilde{\rho}}.
\]
Since $\lr^{\Sigma_0}_{M, >_{\q}}(\q^{\sharp}, \ul, \ueta)$ is the unique irreducible subrepresentation of the right hand side, then 
\[
|{\rm inv}^{\rho}_{\infty}| \lr^{\Sigma_0}_{M, >_{\q}}(\q, \ul, \ueta) = \lr^{\Sigma_0}_{M, >_{\q}}(\q^{\sharp}, \ul, \ueta).
\] 
If $\rho' \neq \rho$, then by \eqref{eq: involution and Jac} again
\begin{align*}
& \Jac^{\rho'}_{\zeta B, \cdots, -\zeta A} |{\rm inv}^{\rho}_{\infty}| \lr^{\Sigma_0}_{M, >_{\q}}(\q, \ul, \ueta) = |{\rm inv}^{\rho}_{\infty}| \Jac^{\rho'}_{\zeta B, \cdots, -\zeta A} \lr^{\Sigma_0}_{M, >_{\q}}(\q, \ul, \ueta) \\
& = |{\rm inv}^{\rho}_{\infty}| \lr^{\Sigma_0}_{M, >_{\q}}(\q', \ul', \ueta') \otimes \eta^{A}_{\rho} \omega^{\zeta((B+1) + \cdots + A)/2}_{\rho} \chi_{\tilde{\rho}} = \lr^{\Sigma_0}_{M, >_{\q}}(\q'^{\sharp}, \ul', \ueta') \otimes \eta^{A}_{\rho} \omega^{\zeta((B+1) + \cdots + A)/2}_{\rho} \chi_{\tilde{\rho}}.
\end{align*}
Then
\[
|{\rm inv}^{\rho}_{\infty}| \lr^{\Sigma_0}_{M, >_{\q}}(\q, \ul, \ueta) \hookrightarrow \rho'||^{\zeta B} \times \cdots \times \rho'||^{-\zeta A} \rtimes \r^{\Sigma_0}_{M, >_{\q}}(\q'^{\sharp}, \ul', \ueta') \otimes \eta^{A}_{\rho} \omega^{\zeta((B+1) + \cdots + A)/2}_{\rho} \chi_{\tilde{\rho}}.
\]
Since $\lr^{\Sigma_0}_{M, >_{\q}}(\q^{\sharp}, \ul, \ueta)$ is the unique irreducible subrepresentation of the right hand side, then 
\[
|{\rm inv}^{\rho}_{\infty}| \lr^{\Sigma_0}_{M, >_{\q}}(\q, \ul, \ueta) = \lr^{\Sigma_0}_{M, >_{\q}}(\q^{\sharp}, \ul, \ueta).
\] 
At last, the case of good parity follows from the definition and \eqref{eq: involution and Jac}.
\end{proof}

So far we have constructed $\cPkt{\lq}$ and shown that it satisfies the properties (1), (2) in Theorem~\ref{thm: Apacket}. We have also proved (3) when $H$ is elliptic (cf. the end of Subsection~\ref{subsec: Arthur parameters}). To treat the remaining cases, we need to show

\begin{proposition}
\label{prop: induction of A-packet}
For $\q = \q_p \in \Q{G}$, if $\q$ factors through 
\[
\q_{M} := (\prod_{i = 1}^{l} \rho_{i} \otimes \nu_{a_i} \otimes \nu_{b_i}) \times \q_{-} \in \Q{M}
\]
for $M = \prod_{i = 1}^{l} GL(n_i) \times G_{-}$ and $\rho_{i} \otimes \nu_{a_i} \otimes \nu_{b_i} \in \Q{GL(n_i)}$, $\q_{-} \in \Q{G_{-}}$, then
\begin{align}
\label{eq: induction of A-packet}
\cPkt{W}(\lq) = \prod_{i} Sp(St(\rho_i, a_i), b_i) \rtimes \cPkt{W}(\lq_{-}) \, \otimes_{i} \chi^{A_i - B_i + 1}_{\tilde{\rho_i}}
\end{align}
\end{proposition}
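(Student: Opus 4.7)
The plan is to characterize $\cPkt{W}(\lq)$ via the uniqueness statement in Theorem~\ref{thm: Apacket}. Denote the right-hand side of \eqref{eq: induction of A-packet} by $\widetilde{\Pi}$. I will verify that $\widetilde{\Pi}$, regarded as a multi-set in $\clPkt{\q, \tilde{\zeta}}$, satisfies properties (1)--(3) of Theorem~\ref{thm: Apacket}; the uniqueness (up to twist by $X$) will then identify $\widetilde{\Pi}$ with $\cPkt{W}(\lq)$, with the explicit twist $\otimes_i \chi^{A_i-B_i+1}_{\tilde{\rho}_i}$ pinning down the correct $X$-twist. By iterating parabolic induction through intermediate Levi subgroups and noting that the twists accumulate multiplicatively, one may reduce at the outset to the case $l=1$, so that $M = GL(n_{\rho}) \times G_-$ with GL parameter $\rho \otimes \nu_a \otimes \nu_b$; then $Jord(\q) = Jord(\q_-) \sqcup \{(\rho,a,b),(\rho,a,b)\}$, and there is a natural inclusion $\S{\lq_-} \hookrightarrow \S{\lq}$.

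Property (1), that the restriction $\widetilde{\Pi}|_{G(F)}$ equals $\cPkt{\q}$ with matching Whittaker parametrizations, is the classical-group parabolic reduction formula for Arthur packets, cf.~\cite[Proposition 8.2]{Xu:Apacket}. Indeed, restriction commutes with parabolic induction from a $\theta_0$-stable Levi, and the Whittaker datum on $\lG(F)$ restricts to one on $\lM(F)$ under which the induction preserves the Whittaker parametrization. Property (2), the stability of $\widetilde{f}_W(\lq)$, is deduced from the inductively available stability of the analogous expression for $\cPkt{W}(\lq_-)$ together with the standard fact that parabolic induction of a stable distribution paired against $Sp(St(\rho,a),b)$ on the GL factor remains stable; the central-character twist by $\chi^{A-B+1}_{\tilde{\rho}}$ factors through $\lambda_{\lG}$ and is harmless. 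For property (3), given $s \in \cS{\q}^\theta$ with $\x = \alpha(s)$ and $(H, \q_H) \to (\q, s)$, the endoscopic datum factors as $H \cong GL(n_{\rho}) \times H_-$ with a corresponding $(\lif{H}_-, \lq_{H,-}) \to (\lq_-, s_-)$ on the similitude side. Using the compatibility of the twisted Langlands--Shelstad transfer with parabolic induction (cf.~\cite[Section 6]{Xu:Lpacket}) together with the twisted character identity already established for $\lq_-$, one matches the two sides of \eqref{eq: theta twisted character relation}.

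The delicate point, and the main obstacle, is to pin down the precise twist $\otimes_i \chi^{A_i-B_i+1}_{\tilde{\rho}_i}$. This twist reflects the mismatch between the central character of $Sp(St(\rho_i, a_i), b_i) \otimes \cPkt{W}(\lq_-)$ and the central character $\tilde{\zeta}$ required on $\lG(F)$, compounded with the central character factors introduced by the twisted endoscopic embeddings lifting from $G$ to $\lG$. Concretely, in the recursive construction \eqref{eq: case 1}, each reduction step for a Jordan block $(\rho, A, B, \zeta)$ contributes a factor of $\chi_{\tilde{\rho}}$, and after fully resolving the paired block $(\rho, a, b) \sqcup (\rho, a, b)$ through the domination construction of Subsection~\ref{subsec: general case} followed by the Jacquet-module passage, the total accumulated power of $\chi_{\tilde{\rho}}$ equals $A - B + 1 = \min(a, b)$. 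Tracking this accumulation carefully, while simultaneously keeping track of the central character contributions via $\chi^{c_\q, {}'\!c_\q}$ appearing in Theorem~\ref{thm: general endoscopy}, constitutes the bulk of the combinatorial bookkeeping.
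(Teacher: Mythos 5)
Your plan is circular. You propose to pin down $\widetilde{\Pi}$ via ``the uniqueness statement in Theorem~\ref{thm: Apacket}.'' But in the paper that uniqueness is established only in the corollary following Theorem~\ref{thm: uniqueness}, whose proof reads ``by property (3) of Theorem~\ref{thm: Apacket}, it suffices to show the uniqueness in the case that $\q = \rho \otimes \nu_a \otimes \nu_b$'' — and property (3) is established in Corollary~\ref{cor: character relation general}, whose proof invokes Proposition~\ref{prop: induction of A-packet}. So the very statement you want to prove sits upstream of the uniqueness you want to invoke. The same circularity infects your verification of property (3) for $\widetilde{\Pi}$: when $s$ gives a non-elliptic endoscopic datum, the reduction to the elliptic case is exactly what Proposition~\ref{prop: induction of A-packet} supplies, so you cannot assume it. Even retreating to Theorem~\ref{thm: uniqueness} directly would not help, since that theorem only applies to parameters satisfying the ladder condition \eqref{eq: ladder}, and the $\q = \q_p$ in the proposition need not be of ladder type.

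The structural problem with your approach to the twist is related. You try to compute $\chi^{A_i - B_i + 1}_{\tilde{\rho}_i}$ by following the accumulation of $\chi_{\tilde{\rho}}$ through the recursive construction \eqref{eq: case 1}, the domination step, and the Jacquet descent — a global bookkeeping argument. The paper avoids this entirely by isolating the twist in a single computation. After reducing to $l = 1$, it first establishes the special case $\q_1 = 2\rho_1 \otimes \nu_{a_1} \otimes \nu_{b_1} \in \Q{G_1}$ (which \emph{is} of ladder type, so Theorem~\ref{thm: uniqueness} applies); any stable distribution supported on the corresponding $\clPkt{\q_1, \tilde{\zeta}}$ is then forced to be a multiple of some $\lf_W(\lq_1 \otimes \omega)$, and a Jacquet-module computation (via $\overline{\rm Jac}^{\rho}_{X} \circ \overline{\rm Jac}^{\rho}_{X}$) excludes the twist by $\eta_\rho$, yielding $\cPkt{W}(\lq_1) = Sp(St(\rho_1, a_1), b_1) \rtimes \chi^{A_1 - B_1 + 1}_{\tilde{\rho}_1}$. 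It then observes that $M$ is a Levi of the elliptic endoscopic group $G_1 \times G_-$, so the product $Sp(St(\rho_1,a_1),b_1) \rtimes \cPkt{W}(\lq_-) \otimes \chi^{A_1-B_1+1}_{\tilde{\rho}_1}$ is the transfer of $\cPkt{W}(\lq_1) \, \tilde{\otimes} \, \cPkt{W}(\lq_-)$, and Theorem~\ref{thm: general endoscopy} (the already-proven elliptic case) identifies this with $\cPkt{W}(\lq)$ once one checks the $1$-cochains can be taken so that $c_{\q,\gg} = {}'c_{\q,\gg}$. In short: the paper routes the non-elliptic transfer through an elliptic one, rather than trying to verify the full Theorem~\ref{thm: Apacket} for $\widetilde{\Pi}$ from scratch.
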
 

This will be proved in the next section.

\begin{corollary}
\label{cor: character relation general}
Part (3) of Theorem~\ref{thm: Apacket} holds for $\cPkt{\lq}$.
\end{corollary}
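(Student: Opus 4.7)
The plan is to reduce to the elliptic case (already handled by Theorem~\ref{thm: general endoscopy}) via parabolic descent, invoking Proposition~\ref{prop: induction of A-packet} to pass between $\lG$ and a Levi subgroup $\lM$.

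First I would translate Part (3) of Theorem~\ref{thm: Apacket} into M{\oe}glin--Waldspurger normalization using the identities $\lr^{\Sigma_0}_{MW}(\lq,\e) = \lr^{\Sigma_0}_W(\lq, \e\e^{MW/W}_\q)$ and $\Pkt{M,s}^{\Sigma_0}(\lq) = \e^{M/MW}_\q(s)\,\Pkt{MW,s}^{\Sigma_0}(\lq)$, together with the parallel relations for $\lq_H$. When $H$ is elliptic, i.e., when $H = G'$ in the notation of \eqref{eq: product}, the assertion then follows directly from Theorem~\ref{thm: general endoscopy}(2) after absorbing the character $\chi^{c_{\q_{p,\gg}}, {}'c_{\q_{p,\gg}}}$ through a compatible choice of $1$-cochains defining the twisted endoscopic embedding.

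In the non-elliptic case, $H \cong \prod_i GL(n_i) \times G'$, and $\q$ factors through the Levi $\lM = \prod_i GL(n_i) \times \lG_-$ of $\lG$ via $\q_M = \prod_i(\rho_i \otimes \nu_{a_i} \otimes \nu_{b_i}) \times \q_-$, where $(G',\q_{H'}) \to (\q_-, s')$ is an elliptic endoscopic datum for $G_-$. Proposition~\ref{prop: induction of A-packet} gives
\[
\cPkt{W}(\lq) = \Big(\prod_i Sp(St(\rho_i,a_i),b_i)\Big) \rtimes \cPkt{W}(\lq_-) \otimes_i \chi^{A_i - B_i + 1}_{\tilde{\rho_i}},
\]
and on the endoscopic side $\cPkt{W}(\lq_H) = \otimes_i \Pkt{W}(\q_i) \otimes \cPkt{W}(\lq_{H'})$. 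The standard compatibility of Langlands--Shelstad transfer with parabolic descent expresses $\lf^{\tilde H}_W(\lq_H)$ as parabolic induction from $\lM$ of the $\lG_-$-transfer $\lf_{-}^{\tilde{H}'}_W(\lq_{H'})$, with the $GL$ blocks contributing the same $Sp(St(\rho_i,a_i),b_i)$ on both sides. By induction on rank we may assume Part (3) holds for $(\lG_-, \lq_-)$, giving the twisted character identity on $\lG_-$ for $(\lif H', \lq_{H'})$; parabolic induction from $\lM$ to $\lG$ then delivers the required identity for $(\lG, \lq)$.

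The main obstacle will be the Whittaker-normalized twisted intertwining operators $A_\lr(\theta,\x)$ under parabolic induction: the induced operator on $\lr = \langle \prod_i Sp(St(\rho_i,a_i),b_i) \rtimes \lr_- \rangle$ built from $A_{\lr_-}(\theta,\x)$ together with the natural $\theta$-intertwining on the self-dual $GL$ factors must satisfy the normalization condition \eqref{eq: theta twisted intertwining operator} relative to the Whittaker pairings on $\lG$. This is a nontempered analogue of Shahidi's normalization for similitude groups; it should follow from the tempered case established in \cite{Xu:Lpacket} together with the multiplicativity of Whittaker pairings compatible with \eqref{eq: induction of A-packet}, and the fact that all the reduction steps inside $\cPkt{\lq_-}$ (Jacquet modules and parabolic inductions appearing in Section~\ref{sec: construction}) preserve the Whittaker normalization by construction.
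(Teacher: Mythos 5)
You take a genuinely different route from the paper. The paper handles non-elliptic $H$ by viewing $\lif{H}$ as a Levi of a $(\theta,\x)$-twisted \emph{elliptic} endoscopic group $\lG''$ of $\lG$ and then applying Proposition~\ref{prop: induction of A-packet} inside $G''$ to obtain the single identity of stable characters $\lf^{\lif{H}}_{W}(\lq_H) = \lf^{\lG''}_{W}(\lq'')$; after that, Theorem~\ref{thm: general endoscopy} for the elliptic pair $(G'', \q'') \to (\q,s)$ finishes. This is a pure reduction on the endoscopic side: the $(\theta,\x)$-twisted character on $\lG$ only appears once, at the very end, where Theorem~\ref{thm: general endoscopy} already controls it. You instead descend on the $\lG$ side to a Levi $\lM$ and try to recognize \emph{both} sides of \eqref{eq: theta twisted character relation} as parabolic inductions from $\lM$, invoking Part~(3) for the classical factor $\lG_-$ by induction on rank. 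A small point in passing: that rank induction is superfluous, since once $(G',\q_{H'}) \to (\q_-,s')$ is elliptic for $G_-$, Theorem~\ref{thm: general endoscopy} applies directly to $\lG_-$ without the inductive hypothesis.

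The obstacle you flag is genuine and is precisely what the paper's route is designed to avoid. Your left-hand side is an (untwisted) stable character on $\lif{H}$, for which Proposition~\ref{prop: induction of A-packet} together with compatibility of transfer with parabolic descent suffices. But your right-hand side is the twisted Harish-Chandra character of $\cPkt{W,s}(\lq)$ built from the Whittaker-normalized operators $A_{\lr}(\theta,\x)$, and to write it as a parabolic induction from $\lM$ you would need the normalization condition \eqref{eq: theta twisted intertwining operator} to be multiplicative under parabolic induction of nontempered $\lr$. Proposition~\ref{prop: induction of A-packet} concerns only the untwisted stable distribution $\lf_W(\lq)$, so this compatibility is not available from it; it would need a separate argument (e.g.\ via the Langlands classification, or a nontempered extension of Shahidi's multiplicativity) which you assert ``should follow'' but do not supply. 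The paper never has to confront this: throughout the reduction it manipulates stable characters on the auxiliary endoscopic groups $\lif{H}$ and $\lG''$, and the only twisted intertwining operators on $\lG$ ever used are those already normalized in the proof of Theorem~\ref{thm: general endoscopy}.
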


\begin{proof}
For $\theta \in \Sigma_0$, $s \in \cS{\q}^{\theta}$ with $\x = \a(s)$ and $(H, \q_{H}) \rightarrow (\q, s)$, the result follows from Theorem~\ref{thm: general endoscopy} when $H$ is elliptic. If $H$ is not elliptic, then it is a Levi subgroup of some $(\theta, \x)$-twisted elliptic endoscopic group $G'$ of $G$ and $\q_{H}$ induces $\q' \in \Q{G'}$. By Proposition~\ref{prop: induction of A-packet}, 
\(
\lf^{\lif{H}}_{W}(\lq_H) = \lf^{\lG'}_{W}(\lq')
\)
for some choices of $\cPkt{\lq_{H}}$ and $\cPkt{\lq'}$. So it reduces to the elliptic case.
\end{proof}


\section{Uniqueness}
\label{sec: uniqueness}

To complete the proof of Theorem~\ref{thm: Apacket}, we still need to show Proposition~\ref{prop: induction of A-packet} and the uniqueness part of the theorem. First, we would like to show in many cases $\cPkt{\lq}$ can be uniquely characterized by properties (1), (2) of Theorem~\ref{thm: Apacket} up to twists by $X$.

\begin{theorem}
\label{thm: uniqueness}
Suppose $\q = \q_p \in \Q{G}$ and $Jord_{\rho}(\q)$ is indexed for each $\rho$ satisfying
\begin{align}
\label{eq: ladder}
\zeta_{\rho} := \zeta_{i+1} = \zeta_{i}, \quad A_{i+1} \geqslant A_i, \quad B_{i+1} \geqslant B_i.
\end{align}
We fix the order $>_{\q}$ on $Jord_{\rho}(\q)$ so that
\[
(\rho, A_{i}, B_{i}, \zeta_{i}) >_{\q} (\rho, A_{i-1}, B_{i-1}, \zeta_{i-1}).
\]

\begin{enumerate}

\item The packet $\cPkt{\lq}$ is the unique subset of $\clPkt{\q, \tilde{\zeta}}$ up to twists by $X$ such that the properties (1), (2) of Theorem~\ref{thm: Apacket} are satisfied. 

\item If there exists a stable distribution supported on $\clPkt{\q, \tilde{\zeta}}$, i.e.,
\[
S(\lf) = \sum_{\lr \in \clPkt{\q, \tilde{\zeta}}} c_{\lr} \, \lf_{\lG}(\lr), \quad \lf \in \sH(\lG(F), \tilde{\chi}),
\]
then it must be of the form
\[
S(\lf) = \sum_{\omega \in X/\a(\S{\q}^{\Sigma_0})} c_{\omega} \, \lf_{W}(\lq \otimes \omega), \quad \lf \in \sH(\lG(F), \tilde{\chi}).
\]

\end{enumerate}
\end{theorem}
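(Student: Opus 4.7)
My plan is to prove both parts together by analyzing the space $V$ of stable distributions on $\lG(F)$ supported on $\clPkt{\q, \tilde{\zeta}}$, showing that it is spanned by $\{\lf_W(\lq \otimes \omega) : \omega \in X/\a(\S{\q}^{\Sigma_0})\}$, and then deducing part~(1) from part~(2). First I verify that the candidate spanning set lies in $V$ and is linearly independent: stability of each $\lf_W(\lq \otimes \omega)$ is Theorem~\ref{thm: general endoscopy}(1); by Lemma~\ref{lemma: twist invariant} the distinct twists produce genuinely different packets $\cPkt{\lq \otimes \omega}$ parameterized by $X/X^{\Sigma_0}(\lq) = X/\a(\S{\q}^{\Sigma_0})$, and these packets have disjoint supports, so the resulting distributions are linearly independent.

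For the matching upper bound, given $S \in V$ I consider its restriction $S|_{G(F)}$ to $\sH(\lif{Z}_F G(F), \tilde{\chi})$, viewed as an element of $\sH(G(F), \chi)$. This is a stable distribution on $G(F)$ supported on $\cPkt{\q}$, hence by the classification of~\cite{Arthur:2013} equals a scalar $c$ times $f_W(\q)$. A direct calculation from property~(1) of Theorem~\ref{thm: Apacket}, combined with the identity $\langle s_\lq, \lr \rangle_W = \langle s_\q, \r \rangle_W$ for $\r \subset \lr|_{G(F)}$, gives $\lf_W(\lq)|_{G(F)} = f_W(\q)$, so subtracting reduces to the case $S|_{G(F)} = 0$. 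The heart of the proof is then to show that any such $S$ is a linear combination of the differences $\lf_W(\lq \otimes \omega) - \lf_W(\lq)$. I argue by induction on $\sum_{(\rho, A, B, \zeta) \in Jord(\q)} (A - B)$. Using the ladder hypothesis~\eqref{eq: ladder}, I choose a dominating parameter $\q_{\gg}$ with discrete diagonal restriction and natural order, and invoke Lemma~\ref{lemma: shift} iteratively to obtain the transport identity
\[
\overline{\Jac}_{(\rho, A_{\gg}, B_{\gg}, \zeta) \mapsto (\rho, A, B, \zeta)} \, \lf_W(\lq_{\gg}) \otimes (\text{twist}) = \lf_W(\lq),
\]
which together with the compatibility of parabolic induction and Jacquet module with endoscopic transfer (Theorem~\ref{thm: general endoscopy}) lets me reduce the dimension bound on $V$ to the corresponding bound at $\q_{\gg}$. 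At the discrete diagonal base case, a further application of the involution (Theorem~\ref{thm: involution good parity}) reduces to the elementary setting, in which the ladder hypothesis forces $X^{\Sigma_0}(\tilde{\lambda}) = X^{\Sigma_0}(\lq)$ and Theorem~\ref{thm: special case} gives the bound directly.

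For part~(1), let $\cPkt{\lq}'$ be any subset satisfying properties~(1) and~(2) of Theorem~\ref{thm: Apacket}. Its associated distribution $\lf_W^{\prime}(\lq) = \sum_{\lr \in \cPkt{\lq}'} \langle s_\lq, \lr \rangle_W \lf_\lG(\lr)$ lies in $V$, so by part~(2) equals $\sum_\omega c_\omega \lf_W(\lq \otimes \omega)$. Property~(1) forces the support of $\lf_W^{\prime}(\lq)$ to form a single section of the projection $\clPkt{\q, \tilde{\zeta}} \to \cPkt{\q}$ (one lift per element), and since the packets $\cPkt{\lq \otimes \omega}$ for distinct cosets of $X/\a(\S{\q}^{\Sigma_0})$ are disjoint sections of this projection, all but one $c_\omega$ must vanish; hence $\cPkt{\lq}' = \cPkt{\lq \otimes \omega}$ for a unique $\omega \in X/\a(\S{\q}^{\Sigma_0})$. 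The main obstacle is the inductive step above: a general stable distribution on $\clPkt{\q, \tilde{\zeta}}$ need not a priori arise as a Jacquet module of one on $\clPkt{\q_{\gg}, \tilde{\zeta}}$, so verifying the matching dimension count on the subspace of stable distributions with vanishing $G$-restriction requires careful bookkeeping of signs, central-character twists, and the action of $X^{\Sigma_0}(\lq_{\gg})/X^{\Sigma_0}(\lq)$ via the explicit recursive description of Section~\ref{sec: construction}.
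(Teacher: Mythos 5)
Your high-level strategy matches the paper in several respects --- deducing~(1) from~(2), reducing to $\zeta_\rho = +$ via the involution, and inducting on $\sum_{(\rho, A, B, \zeta) \in Jord(\q)} (A - B)$ --- but the inductive step you outline does not match what the paper does and has a genuine gap, which you yourself flag.

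The paper does \emph{not} transport the dimension-bound problem from $\q$ to a dominating parameter $\q_{\gg}$ with discrete diagonal restriction. Lemma~\ref{lemma: shift} runs in the wrong direction for your purpose: it tells you how to produce $\lr^{\Sigma_0}_M(\lq, \e)$ from $\lr^{\Sigma_0}_M(\lq_{\gg}, \e_{\gg})$ by Jacquet modules, but gives no way to lift an arbitrary stable distribution supported on $\clPkt{\q, \tilde{\zeta}}$ back to one supported on $\clPkt{\q_{\gg}, \tilde{\zeta}}$. Without such a lift, the ``dimension bound at $\q_{\gg}$'' gives you nothing about $\q$. Instead, the paper works within $\q$ itself: it writes a candidate stable distribution $R$ in the $\lr_{M, >_{\q}}(\lq, \ul, \ueta) \otimes \omega$ basis, normalizes the top coefficients (those with $\ul$ maximal and $\ueta = 1$, corresponding to the $L$-packet inside, via Lemma~\ref{lemma: L-packet in A-packet}) to zero by subtracting multiples of $\lf_W(\lq \otimes \omega)$, and then derives a contradiction from $R \neq 0$. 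When some $\ul(\rho, A, B, +) \neq 0$, it applies a shrinking Jacquet module that is a bijection onto a smaller packet (cf.\ \cite[Lemma 4.3]{Xu:Non}), which preserves both stability and the normalization and lets the induction run. Your normalization via $S|_{G(F)} = 0$ is coarser and does not track the refined $(\ul, \ueta)$-decomposition, so it does not set up this bijection argument.

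The more serious omission is the $\ul \equiv 0$ case, which your proposal does not address at all. After the Jacquet-module reductions exhaust the Jordan blocks with $\ul \neq 0$, one is left with a potential stable distribution supported on $\{\lr_{M,>_\q}(\lq, 0, \ueta)\}$. The paper handles this by a delicate combinatorial maneuver: choosing $\rho$ and a segment $t \leqslant i \leqslant s$, breaking a Jordan block $(\rho, A_s, B_s, +)$ into two, using the change-of-order formula (\cite[Theorem 6.1]{Xu:Comb}) and Lemma~\ref{lemma: L-packet in A-packet} to relate the $\ul = 0$ subset to an $L$-packet inside a different Arthur packet, applying Jacquet modules once more, and then running a parity count on $\ueta''$ to show the image cannot be all of $\cPkt{\lq''}$ --- yielding the contradiction. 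This is exactly where the ladder hypothesis~\eqref{eq: ladder} is needed in full force, and it is not reachable by the involution-plus-transport reduction you propose. Finally, your base-case claim that ``the ladder hypothesis forces $X^{\Sigma_0}(\tilde{\lambda}) = X^{\Sigma_0}(\lq)$ and Theorem~\ref{thm: special case} gives the bound directly'' conflates existence with uniqueness: Theorem~\ref{thm: special case} constructs a packet satisfying properties~(1),~(2) when the twist groups coincide, but says nothing about whether that packet is the \emph{only} one. The paper's actual base case is the tempered one, where uniqueness is \cite[Corollary 4.8]{Xu:2018}.
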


We will postpone its proof until the end of this section.

\begin{corollary}
Suppose $[\q] \in \cQ{G}$ satisfies \eqref{eq: ladder}. Then any stable distribution supported on $\cPkt{\lq}$ (resp. $\cPkt{\q}$) is a scalar multiple of $\lf_{W}(\lq)$ (resp. $f_{W}(\q)$).
\end{corollary}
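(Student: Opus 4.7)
The corollary is a direct application of Theorem~\ref{thm: uniqueness}(2). We focus on the similitude case; the classical case follows analogously.

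Let $S$ be a stable distribution on $\lG(F)$ supported on $\cPkt{\lq}$. Since $\cPkt{\lq} \subseteq \clPkt{\q, \tilde{\zeta}}$, Theorem~\ref{thm: uniqueness}(2) gives
\[
S(\lf) \;=\; \sum_{[\omega]\, \in\, X/\alpha(\S{\q}^{\Sigma_0})} c_{\omega}\, \lf_{W}(\lq \otimes \omega),
\]
and by Lemma~\ref{lemma: twist invariant} the index set is $X/X^{\Sigma_0}(\lq)$, with each summand supported on $\cPkt{\lq \otimes \omega} = \cPkt{\lq} \otimes \omega$. The plan is to isolate each coefficient $c_{\omega_0}$ (for $[\omega_0] \neq [1]$) by evaluating on a ``generic'' representation.

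Fix $[\omega_0] \neq [1]$ in $X/X^{\Sigma_0}(\lq)$, choose $[\lr'] \in \cPkt{\lq}$ whose twist-stabilizer $X(\lr'^{\Sigma_0}) \subseteq X$ attains its minimum value $X^{\Sigma_0}(\lq)$ (see below), and put $[\lr_0] := [\lr' \otimes \omega_0]$. Since $\omega_0 \notin X(\lr'^{\Sigma_0})$, the representations $[\lr']$ and $[\lr_0]$ are distinct. By the minimality of the stabilizer, the fiber of $\cPkt{\lq} \to \cPkt{\q}$ over $[\lr'|_{G(F)}]$ reduces to the singleton $\{[\lr']\}$, so $[\lr_0] \notin \cPkt{\lq}$. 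Similarly, $[\lr_0] \in \cPkt{\lq \otimes \tau}$ holds iff $[\lr' \otimes \omega_0 \tau^{-1}] \in \cPkt{\lq}$, which by the same genericity occurs only for $[\tau] = [\omega_0]$. Extracting the coefficient of $\lf_{\lG}(\lr_0)$ in $S$ therefore yields
\[
0 \;=\; c_{\omega_0}\, \langle s_{\lq \otimes \omega_0},\, \lr_0 \rangle_{W}.
\]
The Whittaker pairing is $\pm 1$, so $c_{\omega_0} = 0$, and we conclude $S = c_{[1]}\, \lf_{W}(\lq)$.

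The main obstacle is producing the generic $[\lr']$ with $X(\lr'^{\Sigma_0}) = X^{\Sigma_0}(\lq)$. By Lemma~\ref{lemma: twist} we always have $X^{\Sigma_0}(\lq) \subseteq X(\lr^{\Sigma_0})$, so the task is to achieve equality for at least one element of $\cPkt{\lq}$. Under the ladder hypothesis \eqref{eq: ladder}, this can be verified through the combinatorial parametrization of $\cPkt{\lq}$ by pairs $(\ul, \ueta)$ in Section~\ref{subsubsec: construction}: taking $\ul \equiv 0$ and $\ueta$ so that the associated character $\e_{\ul, \ueta} \in \D{\S{\q^{>}}^{\Sigma_0}}$ is a generic character, the stabilizer of the resulting representation $\lr_{M, >_\q}^{\Sigma_0}(\lq, \ul, \ueta)$ under twisting by $X$ is exactly the image $\alpha(\S{\q}^{\Sigma_0}) = X^{\Sigma_0}(\lq)$.

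The $\cPkt{\q}$ statement follows from the analogous classical-group uniqueness, which is implicit in Arthur's parametrization \cite[Theorem~1.5.1]{Arthur:2013} together with M{\oe}glin's multiplicity-freeness and the fact that the representations in $\cPkt{\q}$ are distinguished by their Whittaker characters $\langle \cdot, \r \rangle_{W} \in \D{\S{\q}}$. Alternatively, one may deduce it by pulling back: a stable distribution on $G(F)$ supported on $\cPkt{\q}$ induces a stable distribution on $\lG(F)$ supported on $\clPkt{\q, \tilde{\zeta}}$ via restriction of test functions, and the similitude argument above identifies it as a scalar multiple of $\lf_{W}(\lq)$, whose restriction to $G(F)$ is then a scalar multiple of $f_{W}(\q)$ by Theorem~\ref{thm: Apacket}(1).
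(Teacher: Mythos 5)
Your overall route for the similitude case is the paper's: apply Theorem~\ref{thm: uniqueness}(2), then argue that the support condition kills the coefficients $c_{\omega}$ for $[\omega]\neq[1]$. But the way you carry this out has a real gap, and moreover uses more than is needed.

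The gap is the claim that some $[\lr']\in\cPkt{\lq}$ has $X(\lr'^{\Sigma_0})=X^{\Sigma_0}(\lq)$. You assert this for the $\ul\equiv 0$ elements with ``generic $\ueta$'', but it is not verified, and it is far from obvious: by Lemma~\ref{lemma: twist} and Lemma~\ref{lemma: twist invariant} one has $X([\lr])=\alpha(\S{\p_{\r}}^{\Sigma_0})$, and comparing this to $\alpha(\S{\q}^{\Sigma_0})$ via \eqref{eq: combinatorial description} amounts to a nontrivial comparison of the Jordan structures of $\q$ and of the $L$-parameter of $\r$, which for $\ul\equiv 0$ is governed by the elementary-packet combinatorics and has no reason in general to reproduce the $\eta_i$'s of $\q$ exactly. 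More to the point, the minimality is not needed. Writing $\Omega = X/\alpha(\S{\q}^{\Sigma_0})$ and, for $[\lr]\in\cPkt{\lq}$, $H_{\lr}=X([\lr])/\alpha(\S{\q}^{\Sigma_0})\subseteq\Omega$, the support condition forces $\sum_{\bar\omega\in\bar\tau H_{\lr}} c_{\bar\omega}=0$ for every $\bar\tau\notin H_{\lr}$ (since $[\lr\otimes\tau]\notin\cPkt{\lq}$ by multiplicity-freeness of restriction, Theorem~\ref{thm: Apacket}(1)). Summing these over nontrivial cosets of $H_{\lr}$ gives $\sum_{\bar\omega\in H_{\lr}} c_{\bar\omega}=\sum_{\Omega} c_{\bar\omega}$, so the coefficient of $\lf_{\lG}(\lr)$ in $S$ equals $\langle s_{\lq},\lr\rangle_{W}\sum_{\Omega} c$ for every $[\lr]\in\cPkt{\lq}$, i.e.\ $S=(\sum_{\Omega} c)\,\lf_{W}(\lq)$. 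This is the ``direct'' consequence the paper has in mind, and it requires no choice of a distinguished element of the packet.

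Your first proposed deduction of the $\cPkt{\q}$ case is also wrong: members of an Arthur packet are \emph{not} distinguished by their Whittaker characters $\langle\cdot,\r\rangle_{W}\in\D{\S{\q}}$ (see \eqref{eq: refined decomposition classical}: several $(\ul,\ueta)$ can give the same $\e$), and Arthur's parametrization together with M{\oe}glin's multiplicity one do not by themselves say that stable distributions on a nontempered packet are multiples of $f_{W}(\q)$ — indeed the ladder hypothesis in Theorem~\ref{thm: uniqueness} is exactly what is making this work. Your ``alternatively'' paragraph is the correct deduction and is precisely what the paper does: a stable distribution on $G(F)$ is $\lG(F)$-conjugation invariant, hence pairing it with $\lf|_{\lif{Z}_{F}G(F)}$ for $\lf\in\sH(\lG(F),\tilde\chi)$ gives a stable distribution on $\lG$ supported on $\cPkt{\lq}$, and one applies the $\cPkt{\lq}$ case and restricts back.
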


\begin{proof}
For $\cPkt{\lq}$, it follows directly from part (2) of Theorem~\ref{thm: uniqueness}. Any stable distribution supported on $G(F)$ is invariant under the conjugation of $\lG(F)$, so the case of $\cPkt{\q}$ follows from that for $\cPkt{\lq}$ by restricting $\lf$ to $\lif{Z}_{F}G(F)$.
\end{proof}

\begin{corollary}
The uniqueness part of Theorem~\ref{thm: Apacket} holds.
\end{corollary}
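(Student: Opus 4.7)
The plan is to reduce the general uniqueness claim to Theorem~\ref{thm: uniqueness} via a sequence of reductions using parabolic induction and twisted endoscopic transfer.

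First I would reduce to parameters of good parity. Writing $\q = \q_{np} \oplus \q_p \oplus \q_{np}^{\vee}$, Proposition~\ref{prop: induction of A-packet} ensures that any subset of $\clPkt{\q, \tilde{\zeta}}$ satisfying conditions (1) and (2) of Theorem~\ref{thm: Apacket} must arise as the parabolic induction of a corresponding subset of $\clPkt{\q_p, \tilde{\zeta}}$ with the Steinberg-type representations $Sp(St(\rho_i, a_i), b_i)$ on the general linear factors. Consequently uniqueness of $\cPkt{\lq}$ up to $X$-twist follows from the same statement for $\cPkt{\lq_p}$, and it suffices to handle the case $\q = \q_p$.

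Next, for $\q = \q_p$, I would use condition (3) to transfer the problem to endoscopic groups to which Theorem~\ref{thm: uniqueness} applies. If some $Jord_\rho(\q)$ has Jordan blocks of both signs $\zeta = \pm$, then the element $s \in \S{\q}^{\Sigma_0}$ encoding the partition by sign yields an elliptic twisted endoscopic datum $(H, \q_H) \to (\q, s)$ with $H = G_I \times G_{II}$ and $\q_H = \q_I \times \q_{II}$, each of whose $Jord_\rho$-parts has a single uniform sign. Passing, if needed, to a dominating parameter with discrete diagonal restriction as in Section~\ref{subsec: general case} and descending by the Jacquet-module construction there, I can arrange that each of $\q_I, \q_{II}$ satisfies~\eqref{eq: ladder}. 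Theorem~\ref{thm: uniqueness}(1) then pins down $\cPkt{\lq_I}$ and $\cPkt{\lq_{II}}$, hence $\cPkt{\lq_H}$, up to $X$-twists.

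Finally, suppose $\cPkt{\lq}'$ is another subset of $\clPkt{\q, \tilde{\zeta}}$ satisfying (1)--(3). By (1), $\cPkt{\lq}' = \{[\lr \otimes \omega_{\lr}] : [\lr] \in \cPkt{\lq}\}$ for characters $\omega_{\lr} \in X$, and the goal is to show all $\omega_{\lr}$ coincide. Applying (3) for varying semisimple $s \in \cS{\lq}^{\Sigma_0}$ gives the family of twisted endoscopic identities
\[
\lf^{\lif{H}}(\lq_H) = \sum_{[\lr] \in \cPkt{\lq}} \langle s s_{\lq}, \lr \otimes \omega_{\lr} \rangle \, \lf_{\lG}(\lr \otimes \omega_{\lr}),
\]
whose left-hand side is, by the previous step, determined up to a single $X$-twist. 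Fourier inversion over the finite abelian group $\S{\lq}^{\Sigma_0}$, together with the identification $\S{\q}^{\Sigma_0}/\S{\lq}^{\Sigma_0} \hookrightarrow X$ supplied by Lemma~\ref{lemma: twist invariant}, then forces $\omega_\lr$ to be independent of $[\lr]$, giving $\cPkt{\lq}' = \cPkt{\lq} \otimes \omega$ for a single $\omega \in X$. The hard part will be coherently tracking the $X$-twist freedom across the family of endoscopic packets $\cPkt{\lq_H}$ so that a single global twist emerges on $\cPkt{\lq}$; this is where the exact sequence~\eqref{eq: local twisted endoscopic sequence} and the nondegeneracy of the character pairing on $\S{\lq}^{\Sigma_0}$ are essential.
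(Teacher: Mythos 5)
Your overall strategy---use the endoscopic character relations of property (3) to reduce uniqueness to parameters where Theorem~\ref{thm: uniqueness} applies---is the same as the paper's, but the specific reduction you describe has a gap, and the paper's reduction is both simpler and sharper.

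The paper's proof is one line: by property (3) of Theorem~\ref{thm: Apacket}, iterating the endoscopic decomposition all the way down, the packet $\cPkt{\lq}$ is determined (up to twist) once it is known for the \emph{simple} parameters $\q = \rho \otimes \nu_a \otimes \nu_b$. For a simple parameter, $Jord_\rho(\q)$ consists of a single Jordan block, so condition~\eqref{eq: ladder} holds vacuously and Theorem~\ref{thm: uniqueness} applies directly.

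Your middle step is where the trouble lies. You split $\q$ by the sign $\zeta$, producing $\q_I, \q_{II}$ whose $Jord_\rho$-parts each carry a uniform sign, and you then assert that after passing to a dominating DDR parameter and descending by Jacquet modules one can arrange that each of $\q_I, \q_{II}$ satisfies~\eqref{eq: ladder}. That is not true: uniform sign is strictly weaker than~\eqref{eq: ladder}, which further requires that the sequences $A_i$ and $B_i$ can be made \emph{simultaneously} non-decreasing. For instance $Jord_\rho(\q) = \{(\rho, 3, 1, +), (\rho, 5, 0, +)\}$ has uniform sign, but any indexing has either $A_1 < A_2$ and $B_1 > B_2$, or $A_1 > A_2$ and $B_1 < B_2$, so~\eqref{eq: ladder} fails. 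Moreover, dominating with DDR enforces \emph{disjointness} of the intervals $[B_i, A_i]$, not the nesting that~\eqref{eq: ladder} asks for, and the Jacquet descent is a device for \emph{constructing} the packet, not for transporting a uniqueness statement---you would have to argue separately that a putative second packet $\cPkt{\lq}'$ dominates and descends in the same way. None of these are fatal to the underlying idea, but they do require repair; the clean fix is exactly the paper's: iterate the endoscopic splitting until each factor is $\rho \otimes \nu_a \otimes \nu_b$, which trivially satisfies~\eqref{eq: ladder}, and then the final Fourier-inversion step you sketch goes through as intended.
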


\begin{proof}
By property (3) of Theorem~\ref{thm: Apacket}, it suffices to show the uniqueness in the case that $\q = \rho \otimes \nu_a \otimes \nu_b$, to which Theorem~\ref{thm: uniqueness} applies.
\end{proof}

We will deduce Proposition~\ref{prop: induction of A-packet} from Theorem~\ref{thm: uniqueness}. The critical step is to show the following special case.

\begin{lemma}
For $\q = \q_p = 2 \rho \otimes \nu_a \otimes \nu_b \in \Q{G}$, 
\[
\cPkt{W}(\lq) =  Sp(St(\rho, a), b) \rtimes \chi^{A - B + 1}_{\tilde{\rho}}.
\]
\end{lemma}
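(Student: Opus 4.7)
The parameter $\q = 2\rho \otimes \nu_a \otimes \nu_b$ has $Jord(\q)_p$ consisting of the single block $(\rho,A,B,\zeta)$ with multiplicity $l=2$, so the hypothesis \eqref{eq: ladder} of Theorem~\ref{thm: uniqueness} is trivially satisfied with $r = 1$. Crucially, $\q$ factors through a Siegel-type maximal parabolic of $\lG$ whose Levi has the form $\lif{M} \cong GL(d_\rho ab) \times \lif{Z}$, with the $GL$-factor carrying the parameter $\rho\otimes\nu_a\otimes\nu_b$ and the central factor $\lif{Z}$ carrying the character whose $L$-parameter corresponds to $\chi^{A-B+1}_{\tilde{\rho}}$ (the choice of power $A-B+1$ being forced by the construction of $\cPkt{\lp_a}$ for the simple parameters, combined with the central character of the Speh representation $Sp(St(\rho,a),b)$). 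The plan is to show that the irreducible induced representation
\[
\tilde{\pi} := Sp(St(\rho,a),b) \rtimes \chi^{A-B+1}_{\tilde{\rho}}
\]
is the unique element of $\cPkt{\lq}$, carries trivial pairing $\langle s_{\lq}, \tilde{\pi}\rangle_W = 1$, and hence coincides with $\cPkt{W}(\lq)$.

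First I would verify that $\tilde{\pi}$ is irreducible and belongs to $\clPkt{\q,\tilde{\zeta}}$. On the classical side, the induction $\pi := Sp(St(\rho,a),b) \rtimes 1$ is irreducible by standard Jacquet-module/Zelevinsky arguments, because both copies of the Jordan block $(\rho,a,b)$ are absorbed into a single Speh representation on the $GL$-factor, leaving no residual Jordan blocks for $G_{-}$. Moreover, Moeglin's construction (cf. the recipe in Section~\ref{sec: construction} applied to the elementary-with-multiplicity-2 case) gives $\cPkt{\q} = \{\pi\}$. Since $\tilde{\pi}|_{G(F)} = \pi$, irreducibility on the similitude side is preserved and $\tilde{\pi} \in \clPkt{\q, \tilde{\zeta}}$, once one checks that the central character of $\tilde{\pi}$ matches $\tilde{\zeta}$ (this is exactly where the exponent $A-B+1$ enters).

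Next I would show that the distribution $\lf \mapsto \lf_{\lG}(\tilde{\pi})$ on $\sH(\lG(F),\tilde{\chi})$ is stable. This is because $\tilde{\pi}$ is a parabolic induction from a Levi of the form $GL \times \lif{Z}$; the Harish-Chandra character of $Sp(St(\rho,a),b)$ on the $GL$-factor is trivially stable (stability on $GL$ is automatic), the character on $\lif{Z}$ is a character, and parabolic induction commutes with Langlands-Shelstad transfer, so the induced character on $\lG(F)$ is stable.

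Finally, applying Theorem~\ref{thm: uniqueness}(2) to this stable distribution (supported on $\clPkt{\q,\tilde{\zeta}}$), it must take the form $\sum_{\omega \in X/\a(\S{\q}^{\Sigma_0})} c_\omega \lf_W(\lq \otimes \omega)$; the central character constraint forces all $c_\omega = 0$ except $\omega = 1$, so $\lf_{\lG}(\tilde{\pi}) = c \cdot \lf_W(\lq)$. Restricting both sides to $\lif{Z}_F G(F)$ and using $\cPkt{\q} = \{\pi\}$ with $\langle s_\q, \pi \rangle_W = 1$, one gets $f_G(\pi) = c \cdot f_W(\q) = c \cdot f_G(\pi)$, whence $c = 1$. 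This gives $\cPkt{W}(\lq) = \tilde{\pi}$.

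The main obstacle is the irreducibility of $\tilde{\pi}$ (and identifying $\cPkt{\q}$ as a singleton): the multiplicity-$2$ structure is essential, and the argument must rule out additional constituents on the similitude side that could potentially arise from the central torus. The secondary subtlety is pinning down the correct power of $\chi_{\tilde{\rho}}$, which should follow by inductively computing the central character of $\lp_a$ from the construction in Section~\ref{sec: LLC} and then Speh-ing.
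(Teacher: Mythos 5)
Your overall strategy (stability of the induced character, apply Theorem~\ref{thm: uniqueness}(2), normalize by restriction to the classical group) matches the paper's, but the final step has a genuine gap. You claim that "the central character constraint forces all $c_\omega = 0$ except $\omega = 1$." This cannot work: the sum in Theorem~\ref{thm: uniqueness}(2) runs over $\omega \in X/\a(\S{\q}^{\Sigma_0})$ with $X = \Hom(\lG(F)/G(F)\lZ(F), \C^{\times})$, and every such $\omega$ is by definition trivial on $\lZ(F)$, so \emph{all} the twists $\cPkt{W}(\lq)\otimes\omega$ have the same central character. Central character is not a constraint at all within this sum.

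What the paper actually does at this point is compare $\Sigma_0$-infinitesimal characters, which cuts the sum down to $\omega \in X^{\Sigma_0}(\tilde\lambda)/\a(\S{\q}^{\Sigma_0})$ where $X^{\Sigma_0}(\tilde\lambda) = \langle\eta_\rho\rangle$. This still leaves two possibilities: $\cPkt{W}(\lq)$ or $\cPkt{W}(\lq)\otimes\eta_\rho$. When $\rho$ is orthogonal with $\eta_\rho\neq 1$ and $a,b$ are both even, these two are genuinely distinct, and ruling out the $\eta_\rho$-twist is the hard part of the lemma. The paper resolves it by exhibiting the explicit constituent $\lr_{M,>_\q}(\lq,\ul,1)$ with $\ul = (A-B+1)/2$, embedding it into a controlled standard module, and computing $\overline{\Jac}^{\rho}_{X}\circ\overline{\Jac}^{\rho}_{X}$ to show that $\lr_{M,>_\q}(\lq,\ul,1)\otimes\eta_\rho$ does not occur in $Sp(St(\rho,a),b)\rtimes\chi^{A-B+1}_{\tilde\rho}$. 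Your proposal has no replacement for this Jacquet-module computation, and without it the ambiguity by $\eta_\rho$ remains unresolved. (You correctly flag "constituents arising from the central torus" as a worry, but the mechanism you offer to dismiss it is the wrong one.)

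The remaining steps you outline — irreducibility of $\pi = Sp(St(\rho,a),b)\rtimes 1$, $\cPkt{\q}=\{\pi\}$, stability of the induced character via GL-stability and commutation of induction with transfer, and the final normalization $c=1$ by restriction — are all correct and in line with the paper.
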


\begin{proof}
It is known that $\cPkt{W}(\q) =  Sp(St(\rho, a), b) \rtimes  1$, so the right hand side is supported on $\clPkt{\q, \tilde{\zeta}}$, where $\tilde{\zeta}$ is the central character of $\cPkt{W}(\lq)$. Since the right hand side is also stable, then by Theorem~\ref{thm: uniqueness} it is equal to
\[
\sum_{\x \in X/\a(\S{\q}^{\Sigma_0})} c_{\x} \, \cPkt{W}(\lq) \otimes \x.
\]
By comparing the $\Sigma_0$-infinitesimal characters, it suffices to sum over $X^{\Sigma_0}(\tilde{\lambda})/\a(\S{\q}^{\Sigma_0})$. Note $X^{\Sigma_0}(\tilde{\lambda}) = \langle \eta_\rho \rangle$. Then it is equal to
\(
\cPkt{W}(\lq) \text{ or } \cPkt{W}(\lq) \otimes \eta_{\rho}.
\)

Suppose $\cPkt{W}(\lq) \neq \cPkt{W}(\lq) \otimes \eta_{\rho}$, then it is necessary that $a, b$ are even. Let
\[
X = \begin{bmatrix}
    \zeta B & \cdots & -\zeta A  \\
\vdots & & \vdots \\
\zeta (A + B - 1)/2 & \cdots & -\zeta (A + B + 1)/2
  \end{bmatrix}.
\]
For $\ul(\rho, A, B, \zeta) = (A - B + 1)/2$, we can show 
\begin{align*}
\lr_{M, >_{\q}}(\lq, \ul, 1) & \hookrightarrow \begin{pmatrix} \zeta B & \cdots & -\zeta A  \\
\vdots & & \vdots \\
\zeta (A + B - 1)/2 & \cdots & -\zeta (A + B + 1)/2
\end{pmatrix} \times \begin{pmatrix} \zeta B & \cdots & -\zeta A  \\
\vdots & & \vdots \\
\zeta (A + B - 1)/2 & \cdots & -\zeta (A + B + 1)/2
\end{pmatrix}
\\
& \rtimes \omega_{\rho}^{-|X|} \chi_{\tilde{\rho}}^{(A - B + 1)}
\end{align*}
as the unique irreducible subrepresentation following similar argument in \cite[Theorem 4.1]{Xu:Non}. 
It suffices to show that $\lr_{M, >_{\q}}(\lq, \ul, 1) \otimes \eta_{\rho}$ is not contained in $Sp(St(\rho, a), b) \rtimes \chi^{A - B + 1}_{\tilde{\rho}}$. This can be checked by computing
\(
\overline{\Jac}^{\rho}_{X} \circ \overline{\Jac}^{\rho}_{X}.
\)
In particular, 
\[
\overline{\Jac}^{\rho}_{X} \circ \overline{\Jac}^{\rho}_{X} \, (Sp(St(\rho, a), b) \rtimes \chi^{A - B + 1}_{\tilde{\rho}}) = 2^{(A - B + 1)/2} \cdot \omega_{\rho}^{-|X|} \chi_{\tilde{\rho}}^{(A - B + 1)},
\]
whereas $\overline{\Jac}^{\rho}_{X} \circ \overline{\Jac}^{\rho}_{X} \, \lr_{M, >_{\q}}(\lq, \ul, 1) \otimes \eta_{\rho}$ contains $\eta_{\rho} \omega_{\rho}^{-|X|} \chi_{\tilde{\rho}}^{(A - B + 1)}$.
\end{proof}

Now we can give the proof of Proposition~\ref{prop: induction of A-packet}.

\begin{proof}
By induction on the number of general linear factors in $M$, it reduces to show the case that $M = GL(n_1) \times G_{-}$. From the above lemma,
\[
\cPkt{W}(\lq_{1}) = Sp(St(\rho_1, a_1), b_1) \rtimes \chi^{A_1 - B_1 + 1}_{\tilde{\rho}_1}
\]
for $\q_{1} := 2 \rho_1 \otimes \nu_{a_1} \otimes \nu_{b_1} \in \Q{G_1}$. Since $M$ is also a Levi subgroup of the elliptic endoscopic group $G_{1} \times G_{-}$ of $G$, then 
\[
Sp(St(\rho_1, a_1), b_1) \rtimes \cPkt{W}(\lq_{-}) \otimes \chi^{A_1 - B_1 + 1}_{\tilde{\rho}_1}
\]
is equal to the endoscopic transfer of
\(
\cPkt{W}(\lq_{1}) \, \tilde{\otimes} \, \cPkt{W}(\lq_{-}),
\)
where the endoscopic embedding is defined with respect to the trivial $1$-cochain $c_{s}$. Moreover, $c_{\q, \gg} = {}'c_{\q, \gg}$. So this is equal to $\cPkt{W}(\lq)$ by Theorem~\ref{thm: general endoscopy}.
\end{proof}

Before starting the proof of Theorem~\ref{thm: uniqueness}, let us look at some of the implications of the condition \eqref{eq: ladder}. 

\begin{lemma}
\label{lemma: L-packet in A-packet}
Suppose $\q = \q_p \in \Q{G}$ satisfies \eqref{eq: ladder} and $\zeta_{\rho} = +$ for all $\rho$. Then
\[
\clPkt{\p_{\q}, \tilde{\zeta}} \cap \cPkt{\lq}= \Big\{\lr_{M, >_{\q}}(\lq, \ul, \ueta) \in \cPkt{\lq} \, | \, \ul(\rho, A, B, \zeta) = [(A - B + 1)/2] \Big\}.
\]
\end{lemma}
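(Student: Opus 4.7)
The plan is to compute the Langlands parameter $\p_{\r_{M,>_\q}(\q, \ul, \ueta)}$ directly from the socle embedding, and verify that it equals $\p_\q$ precisely when $\ul(\rho, A, B, +) = [(A - B + 1)/2]$ for every Jordan block.

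\textbf{Reduction to $G(F)$ and Langlands data.} The restriction $\lr_{M,>_\q}(\lq, \ul, \ueta)|_{G(F)}$ is a direct sum of $\Sigma_0$-conjugates of $\r_{M,>_\q}(\q, \ul, \ueta)$ (an analog of \eqref{eq: general restriction} for the general case), which share a common Langlands parameter. Since $\clPkt{\p_\q, \tilde\zeta}$ is the set of $[\lr]$ whose restriction lies in the L-packet $\cPkt{\p_\q}$, and the L-packet is a fiber of the local Langlands map, it suffices to show that $\p_{\r_{M,>_\q}(\q, \ul, \ueta)} = \p_\q$ iff $\ul$ is maximal on every block. Writing $a = A+B+1$ and $b = A-B+1$, row $k$ of the matrix in the socle embedding equals $\langle B+k, \cdots, -(A-k)\rangle_\rho = St(\rho, a)\,|\cdot|^{k - (b-1)/2}$, so
\[
\r_{M,>_\q}(\q, \ul, \ueta) \hookrightarrow \bigtimes_{(\rho, A, B, +) \in Jord(\q)} \bigtimes_{k = 0}^{\ul - 1} St(\rho, a)\,|\cdot|^{k - (b-1)/2} \rtimes \r_M(\q''),
\]
where $\q'' = \cup_{(\rho, A, B, +)} \cup_{C \in [B+\ul, A-\ul]}(\rho, C, C, +; \pm)$ is elementary; consequently $\r_M(\q'')$ is tempered with Langlands parameter $\oplus_{(\rho, A, B, +)}\oplus_{C}\rho \otimes \nu_{2C+1}$. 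All exponents $k - (b-1)/2$ are non-positive, so this is a Langlands-sub presentation (opposite Weyl chamber); via the classical-group isomorphism $\tau \rtimes \sigma \cong \tau^\vee \rtimes \sigma$ (the opposite-parabolic automorphism acts on the $GL$ factor of the Levi by contragredient), $\r_{M,>_\q}(\q, \ul, \ueta)$ is equivalently the Langlands quotient of
\[
\bigtimes_{(\rho, A, B, +)} \bigtimes_{k = 0}^{\ul - 1} St(\rho, a)\,|\cdot|^{(b-1)/2 - k} \rtimes \r_M(\q''),
\]
a bona fide standard module with strictly positive exponents, ordered across blocks by hypothesis \eqref{eq: ladder}.

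\textbf{Parameter comparison.} Reading off the Langlands parameter from the quotient presentation,
\[
\p_{\r_{M,>_\q}(\q, \ul, \ueta)} = \bigoplus_{(\rho, A, B, +)}\bigoplus_{k=0}^{\ul - 1}\Big(\rho \otimes \nu_a \otimes |\cdot|^{(b-1)/2 - k} \oplus \rho \otimes \nu_a \otimes |\cdot|^{-(b-1)/2 + k}\Big) \oplus \bigoplus_{(\rho, A, B, +)} \bigoplus_{C \in [B+\ul, A-\ul]} \rho \otimes \nu_{2C + 1}.
\]
Matching this against $\p_\q = \oplus_{(\rho, A, B, +)} \oplus_{j=0}^{b-1} \rho \otimes \nu_a \otimes |\cdot|^{(b-1)/2 - j}$ block by block, equality forces $\ul = [b/2]$: in that case the $\ul$ GL-induced rows contribute the exponents $\pm(b-1)/2, \pm(b-3)/2, \ldots, \pm 1/2$ (when $b$ is even) or $\pm(b-1)/2, \ldots, \pm 1$ (when $b$ is odd), and the residual interval $[B+\ul, A-\ul]$ is either empty ($b$ even) or the singleton $\{(a-1)/2\}$ (supplying the central $\rho \otimes \nu_a$ summand when $b$ is odd). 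If $\ul < [b/2]$ for some block, the residual contains some $C \neq (a-1)/2$, introducing a summand $\rho \otimes \nu_{2C+1}$ with $2C+1 \neq a$, which does not appear in $\p_\q$; the parameters cannot agree.

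\textbf{Main obstacle.} The delicate step is the conversion of the socle embedding (all exponents $\le 0$) into a Langlands-quotient presentation (all exponents $> 0$), which rests on the classical-group identity $\tau \rtimes \sigma \cong \tau^\vee \rtimes \sigma$ and on \eqref{eq: ladder} to arrange the segments in a valid Langlands-datum order across the Jordan blocks; the remaining parameter bookkeeping is purely combinatorial.
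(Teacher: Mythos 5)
Your strategy — compute the Langlands parameter of $\r_{M,>_\q}(\q, \ul, \ueta)$ directly from the socle embedding and match it against $\p_\q$ — is exactly the content of \cite[Theorem 4.1]{Xu:Non}, which the paper cites as a black box. So you are taking the same route, but unpacking the citation. Your parameter bookkeeping in the comparison step is correct (the identities $\{(b-1)/2 - k\} \cup \{-(b-1)/2 + k\}$ exhausting the exponents when $\ul = [b/2]$, the residual interval being empty or a singleton $\{(a-1)/2\}$ according to the parity of $b$, and the appearance of an alien $\nu_{2C+1}$ when $\ul$ is not maximal are all right).

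There are two places that need tightening. First, the socle embedding you write down is the one stated in the paper only when $\q$ has discrete diagonal restriction (Section~\ref{subsubsec: construction}). For a general $\q$ satisfying \eqref{eq: ladder}, the intervals $[B,A]$ may overlap, and $\r_{M,>_\q}(\q, \ul, \ueta)$ is defined by applying a composition of Jacquet functors $\Jac_{(\rho, A_\gg, B_\gg, \zeta) \mapsto (\rho, A, B, \zeta)}$ to $\r_M(\q_\gg, \ul, \ueta)$ for a dominating DDR parameter $\q_\gg$. Establishing that the Jacquet module still has the socle embedding you use (and in particular that the segments reorganize into a valid Langlands datum across blocks, which is where \eqref{eq: ladder} is used) is precisely the nontrivial step that \cite[Theorem 4.1]{Xu:Non} supplies; without it your argument only covers the DDR case. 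Second, the phrase ``the classical-group isomorphism $\tau \rtimes \sigma \cong \tau^\vee \rtimes \sigma$'' is stated too strongly: these two inductions are from opposite parabolics with the same Levi datum and have the same composition factors, but they need not be isomorphic as representations when the exponents are nonunitary. What you actually need is the Langlands subrepresentation/quotient duality: the unique irreducible subrepresentation of $I(\bar P, \sigma, \nu)$ with $\nu$ strictly positive is the Langlands quotient $J(P, \sigma, \nu)$, so a socle embedding with all negative central exponents does identify the representation as a Langlands quotient with positive exponents. With that reformulation and the citation for the general-case socle embedding, the argument closes.
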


\begin{proof}
It suffices to show that
\[
\cPkt{\p_{\q}} = \Big\{\r_{M, >_{\q}}(\q, \ul, \ueta) \in \cPkt{\q} \, | \, \ul(\rho, A, B, \zeta) = [(A - B + 1)/2]  \Big\}.
\] 
Note the Langlands parameters of elements on the right hand side have been given in \cite[Theorem 4.1]{Xu:Non}. So the result follows directly from there.
\end{proof}

\begin{remark}
We will see that $\clPkt{\p_{\q}, \tilde{\zeta}} \cap \cPkt{\lq} = \cPkt{\p_{\lq}}$ in the next section.
\end{remark}

\begin{corollary}
\label{cor: tempered}

Suppose $\q = \q_p \in \Q{G}$ satisfies \eqref{eq: ladder} and $\zeta_{\rho} = +$ for all $\rho$.

\begin{enumerate}
\item
\(
\Big\{\lr_{M, >_{\q}}(\lq, \ul, \ueta) \in \cPkt{\lq} \, | \, \ul = 0 \Big\} = \cPkt{\lq}
\)
if and only if $\q$ is trivial on the second $SL(2, \mathbb{C})$.

\item If $\q$ is not trivial on the second $SL(2, \mathbb{C})$, then there are no nontrivial stable distributions supported on 
\[
\bigcup_{\omega \in X / \a(\S{\q}^{\Sigma_0})} \Big\{\lr_{M, >_{\q}}(\lq, \ul, \ueta) \in \cPkt{\lq} \, | \, \ul = 0 \Big\} \otimes \omega. 
\]
\end{enumerate}
\end{corollary}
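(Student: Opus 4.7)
The plan is to deal with (1) by a direct inspection of the range of $\ul$ and to reduce (2) to Theorem~\ref{thm: uniqueness}(2) by probing a hypothetical stable distribution against representations in the Langlands quotient $L$-packet furnished by Lemma~\ref{lemma: L-packet in A-packet}.

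For (1), the ``if'' direction is immediate: when $\q$ is trivial on the second $SL(2, \mathbb{C})$, every $(\rho, a, b) \in Jord(\q)$ has $b = 1$, so $A = B$ for every Jordan block and $[(A - B + 1)/2] = 0$, forcing $\ul \equiv 0$ throughout the parametrization of $\cPkt{\lq}$. For the ``only if'' direction, if some block has $A > B$, then $[(A - B + 1)/2] \geqslant 1$, and Lemma~\ref{lemma: L-packet in A-packet} exhibits representations in the Langlands quotient $L$-packet inside $\cPkt{\lq}$ whose parameter satisfies $\ul(\rho, A, B, \zeta) = [(A - B + 1)/2] \neq 0$; since $\sim_{\lG}$ preserves $\ul$, these are distinct from any representation with $\ul \equiv 0$, yielding the strict inclusion.

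For (2), observe first that $X = \Hom(\lG(F)/G(F)\lZ(F), \mathbb{C}^{\times})$ is trivial on $\lZ(F)$, so every $\cPkt{\lq \otimes \omega}$ shares the central character $\tilde\zeta$ and the support of any hypothetical stable distribution $S(\lf)$ is contained in $\clPkt{\q, \tilde\zeta}$. Theorem~\ref{thm: uniqueness}(2) then yields an expansion
\[
S(\lf) = \sum_{\omega \in X / \a(\S{\q}^{\Sigma_{0}})} c_{\omega} \, \lf_{W}(\lq \otimes \omega).
\]
For each coset representative $\omega_{0}$, apply (1) to $\lq \otimes \omega_{0}$ together with Lemma~\ref{lemma: L-packet in A-packet} to pick $\lr_{0}$ in the Langlands quotient $L$-packet inside $\cPkt{\lq \otimes \omega_{0}}$; then $\lr_{0}$ carries $\ul \equiv [(A - B + 1)/2] \neq 0$, so $\lr_{0}$ lies outside the support set of $S$. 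The coefficient of $\lr_{0}$ in the expansion equals $\Big(\sum_{\omega : [\lr_{0}] \in \cPkt{\lq \otimes \omega}} c_{\omega}\Big) \langle s_{\lq}, \lr_{0} \rangle_{W}$, which must vanish. The key step is to show that only $\omega \equiv \omega_{0} \pmod{\a(\S{\q}^{\Sigma_{0}})}$ contributes; this reduces to the fact that the $L$-parameter of $\lr_{0}$, being that of a Langlands quotient, is $\lp_{\lq \otimes \omega_{0}}$, and the explicit formulas of Subsection~\ref{subsec: general case} show that $\lr_{M, >_{\q}}(\lq \otimes \omega, \ul', \ueta')$ can match this $L$-parameter only when $\ul' \equiv [(A - B + 1)/2]$ and $\lp_{\lq \otimes \omega} \sim_{\Sigma_{0}} \lp_{\lq \otimes \omega_{0}}$, the latter being equivalent (via Lemma~\ref{lemma: twist invariant} and the identity $X^{\Sigma_{0}}(\lp_{\lq}) = X^{\Sigma_{0}}(\lq)$ from Section~\ref{sec: infinitesimal character}) to $\omega \equiv \omega_{0} \pmod{\a(\S{\q}^{\Sigma_{0}})}$. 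Since $\langle s_{\lq}, \lr_{0} \rangle_{W} \in \{\pm 1\}$, this forces $c_{\omega_{0}} = 0$, and varying $\omega_{0}$ gives $S \equiv 0$.

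The main obstacle is the rigidity step: verifying that $\lr_{0}$, which sits as the Langlands quotient of $\cPkt{\lq \otimes \omega_{0}}$, cannot also be realised as $\lr_{M, >_{\q}}(\lq \otimes \omega, \ul', \ueta')$ for some other $\omega$ in a different coset or for $\ul' \not\equiv [(A - B + 1)/2]$. This rests on the $L$-parameter changing nontrivially with $\ul$, which can be read off from the explicit subrepresentation embedding of $\lr_{M, >_{\q}}$ given in Subsection~\ref{subsec: general case}, combined with the stabilizer identity for $X^{\Sigma_{0}}(\lq)$ already established.
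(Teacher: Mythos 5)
Your overall approach matches the paper's: part (1) reduces to the disjointness of the set $\{\ul = 0\}$ from the $L$-packet intersection furnished by Lemma~\ref{lemma: L-packet in A-packet}, and part (2) rests on the expansion from Theorem~\ref{thm: uniqueness}(2) plus a rigidity step showing that distinct cosets $\omega$ cannot contribute to the coefficient of a chosen $\lr_0$ in the Langlands-quotient part. Both directions of (1) are fine; note that the implication ``some block has $b > 1$ implies $A > B$'' only holds because $\zeta_\rho = +$ forces $a \geqslant b$ (or is arbitrary when $a = b$), so $A - B = \min(a,b) - 1$ is positive exactly when $b > 1$ --- worth flagging since this is where the hypothesis on $\zeta_\rho$ enters.

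The rigidity step in (2) has a small anachronism. You assert that the $L$-parameter of $\lr_0$ is $\lp_{\lq \otimes \omega_0}$ on the grounds that $\lr_0$ is a Langlands quotient and ``the explicit formulas of Subsection~\ref{subsec: general case}'' show that $\lr_{M,>_\q}(\lq \otimes \omega, \ul', \ueta')$ can match this $L$-parameter only when $\ul'$ is maximal and $\lp_{\lq \otimes \omega} \sim_{\Sigma_0} \lp_{\lq \otimes \omega_0}$. At this point in the paper, Lemma~\ref{lemma: L-packet in A-packet} only tells you that $\lr_0|_{G(F)} \subseteq \cPkt{\p_\q}$, so $\lp_{\lr_0}$ is \emph{some} lift $\p_{\lq} \otimes \chi$ of $\p_\q$; the identification with the specific lift $\p_{\lq \otimes \omega_0}$ is exactly the content of the remark after that lemma and of Section~\ref{sec: Arthur LLC}, which is proved later. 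The gap is harmless and can be closed without it: if $\lr_0 \in \cPkt{\lq \otimes \omega_0} \cap \cPkt{\lq \otimes \omega'}$, then $\lr_0 \otimes \omega_0^{-1}$ and $\lr_0 \otimes \omega'^{-1}$ both lie in $\cPkt{\lq}$ with the same restriction to $G(F)$, hence coincide by the disjointness of restrictions in Theorem~\ref{thm: Apacket}(1); thus $\omega_0/\omega' \in X([\lr_0]) = X^{\Sigma_0}(\lp_{\lr_0})$, and since $\lp_{\lr_0}$ lifts $\p_\q$ the right-hand side equals $X^{\Sigma_0}(\p_{\lq}) = X^{\Sigma_0}(\lq) = \a(\S{\q}^{\Sigma_0})$ independently of $\chi$. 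This replaces the appeal to explicit formulas by the structural fact that $X^{\Sigma_0}$ is constant along the fiber over $\p_\q$, and is closer in spirit to the paper's citation of Lemma~\ref{lemma: twist}.
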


\begin{proof}
If $\q$ is not trivial on the second $SL(2, \mathbb{C})$, then $\clPkt{\p_{\q}, \tilde{\zeta}} \cap \cPkt{\lq}$ is nonempty and disjoint from $\Big\{\lr_{M, >_{\q}}(\lq, \ul, \ueta) \in \cPkt{\lq} \, | \, \ul = 0 \Big\}$. So (1) is clear. Part (2) follows from Theorem~\ref{thm: uniqueness} and Lemma~\ref{lemma: twist}.
\end{proof}

Now we can start the proof of Theorem~\ref{thm: uniqueness}. 

\begin{proof}
It suffices to show part (2). Let $\q^{\sharp}$ be obtained from $\q$ by changing $\zeta_{\rho}$ to $-\zeta_{\rho}$ for all $\rho$ such that $\zeta_{\rho} = -$. By Theorem~\ref{thm: involution good parity}, 
\[
\circ_{\rho: \zeta_{\rho} = -} \, |\overline{\rm inv}^{\rho}_{\infty}| \,\, \lr_{M, >_{\q}}(\lq, \ul, \ueta) = \lr_{M, >_{\q}}(\lq^{\sharp}, \ul, \ueta).
\]
So it suffices to consider the case that $\zeta_{\rho} = +$ for all $\rho$. We will prove it by induction on $\sum_{(\rho, A, B, \zeta) \in Jord(\q)} (A - B)$. If $\sum_{(\rho, A, B, \zeta) \in Jord(\q)} (A - B) = 0$, then it follows from the tempered case (cf. \cite[Corollary 4.8]{Xu:2018}). So let us assume $\sum_{(\rho, A, B, \zeta) \in Jord(\q)} (A - B) > 0$. Suppose 
\[
R := \sum_{\omega \in X / \a(\S{\q}^{\Sigma_0})} \, \sum_{(\ul, \ueta)/\sim_{\lG^{\Sigma_0}}} c^{\omega}_{\ul, \ueta} \, \lr_{M, >_{\q}}(\lq, \ul, \ueta) \otimes \omega
\]
is stable. By subtracting linear combinations of $\lf_W(\lq \otimes \omega)$ for $\omega \in X/\a(\S{\q}^{\Sigma_0})$, we can assume $c^{\omega}_{\ul, \ueta} = 0$ if $\ul(\rho, A, B, \zeta) = [(A - B + 1)/2]$ and $\ueta = 1$. Then it is enough to show $R = 0$. 

Suppose $R \neq 0$, then there exists $(\ul, \ueta)$ such that 
\[
\sum_{\x \in X / \a(\S{\q}^{\Sigma_0})} c^{\omega}_{\ul, \ueta} \, \lr_{M, >_{\q}}(\lq, \ul, \ueta) \otimes \omega \neq 0.
\]
If $\ul(\rho, A, B, +) \neq 0$ for some $(\rho, A, B, +)$, then over $Jord_{\rho}(\q) = \{(\rho, A_i, B_i, +)\}^{n}_{i= 1}$ we can choose $t \leqslant s$ such that 
\(
A = A_{s} = \cdots = A_{t}, B = B_{s} = \cdots = B_{t}
\)
with 
\[
B_{s + 1} > B_{s} \text{ or } s = n
\]
and
\[
A_{t} > A_{t-1} \text{ or } t = 1.
\] 
Consider the map
\[
\frac{1}{(s - t + 1)!} \circ_{i = t}^{s} \, (\overline{{\rm Jac}}^{\rho}_{B_i, \cdots, -A_i}(\cdot) \otimes \eta^{-A_i}_{\rho} \x^{-((B_i + 1) + \cdots + A_i)/2}_{\rho} \chi^{-1}_{\tilde{\rho}}): \clPkt{\q, \tilde{\zeta}} \rightarrow \clPkt{\q', \tilde{\zeta}'} \cup \{0\},
\]
where $\q'$ is obtained from $\q$ by replacing $(\rho, A_i, B_i, +)$ by $(\rho, A_{i} - 1, B_{i} + 1, +)$ for $t \leqslant i \leqslant s$. It is a bijection on the preimage of $\clPkt{\q', \tilde{\zeta}'}$ (cf. \cite[Lemma 4.3]{Xu:Non}). Since the image of $R$ is stable and can not be a nontrivial linear combination of $\lf_W(\lq' \otimes \omega)$ for $\x \in X$, then it must be zero by the induction assumption. But the image of $\lr_{M, >_{\q}}(\lq, \ul, \ueta)$ is nonzero, so we get a contradiction. 

It remains to consider the case that
\[
R = \sum_{\omega \in X / \a(\S{\q}^{\Sigma_0})} \, \sum_{(0, \ueta)/\sim_{\lG^{\Sigma_0}}} c^{\omega}_{0, \ueta} \, \lr_{M, >_{\q}}(\lq, 0, \ueta) \otimes \omega.
\]
Let us choose $\rho$ and minimal $t$ over the index set of $Jord_{\rho}(\q)$ such that $A_t - B_t \neq 0$. Then we divide it into the following two cases.

\begin{itemize}

\item $B_{t} \neq B_{t+1}$: We can break $(\rho, A_t, B_t, +)$ into $(\rho, A_t, B_t + 1, +)$ and $(\rho, B_t, B_t, +)$, and denote the new parameter by $\q'$. It also satisfies \eqref{eq: ladder}. Note 
\[
\Big\{\lr_{M, >_{\q}}(\lq, \ul, \ueta) \in \cPkt{\lq} \, | \, \ul = 0 \Big\} \subseteq \Big\{\lr_{M, >_{\q'}}(\lq', \ul', \ueta') \in \cPkt{\lq'} \, | \, \ul' = 0 \Big\}.
\]
If $\q'$ is not trivial on the second $SL(2, \mathbb{C})$, then $R = 0$ by the induction assumption, cf. Corollary~\ref{cor: tempered}. If $\q'$ is trivial on the second $SL(2, \mathbb{C})$, then it is necessary that $A_t = B_t + 1$. In this case, 
\[
\Big\{\lr_{M, >_{\q}}(\lq, \ul, \ueta) \in \cPkt{\lq} \, | \, \ul = 0 \Big\} \subsetneq \cPkt{\lq'}.
\]
Hence $R = 0$.

\item $B_t = B_{t + 1} = \cdots = B_s < B_{s +1}$: We break $(\rho, A_s, B_s, +)$ into $(\rho, A_s, B_s + 1, +)$ and $(\rho, B_s, B_s, +)$, and move $(\rho, B_s, B_s, +)$ after $(\rho, A_t, B_t, +)$ with respect to the order $>_{\q}$. Denote the new parameter by $\q'$. It also satisfies \eqref{eq: ladder}. Since we have assumed $R \neq 0$ stable, then we must have
\begin{align*}
\clPkt{\p_{\q'}, \tilde{\zeta}'} \cap \cPkt{\lq'} \subseteq \Big\{\lr_{M, >_{\q}}(\lq, \ul, \ueta) \in \cPkt{\lq} \, | \, \ul = 0 \Big\} \subseteq \cPkt{\lq'}
\end{align*}
by the induction assumption. So we can assume $A_i - B_i = 1$ for $t \leqslant i \leqslant s$ and it follows from the change of order formula (cf. \cite[Theorem 6.1]{Xu:Comb}) and Lemma~\ref{lemma: L-packet in A-packet} that 
\[
\clPkt{\p_{\q'}, \tilde{\zeta}'} \cap \cPkt{\lq'} \supseteq \Big\{\lr_{M, >_{\q}}(\lq, \ul, \ueta) \in \cPkt{\lq} \, | \, \ul = 0 \Big\}.
\]
Take
\begin{align*}
\frac{1}{(s - t)!}\circ_{i = t}^{s - 1} \, (\overline{{\rm Jac}}^{\rho}_{B_i, \cdots, -A_i}(\cdot) \otimes \eta^{-A_i}_{\rho} \x^{-((B_i + 1) + \cdots + A_i)/2}_{\rho} \chi^{-1}_{\tilde{\rho}}): \cPkt{\lq'} \rightarrow \cPkt{\lq''} \cup \{0\},
\end{align*}
where $\q''$ is obtained from $\q'$ by removing $(\rho, A_i, B_i, +)$ for $t \leqslant i \leqslant s -1$. It induces a bijection from $\clPkt{\p_{\q'}, \tilde{\zeta}'} \cap \cPkt{\lq'}$ to $\cPkt{\lq''}$. So the stability of $R \neq 0$ implies that the image of 
\[
\Big\{\lr_{M, >_{\q}}(\lq, \ul, \ueta) \in \cPkt{\lq} \, | \, \ul = 0 \Big\} 
\]
should be all of $\cPkt{\lq''}$. On the other hand, the image consists of $\lr_{M, >_{\q}}(\lq'', 0, \ueta'')$ with 
\[
\ueta''(\rho, B_s, B_s, +) = (-1)^{s - t + 1}\ueta''(\rho, A_s, A_s, +)
\] 
by direct computation.
\begin{enumerate}
\item If $s - t +1$ is odd, then the image can not be all of $\cPkt{\lq''}$. So we get a contradiction. 
\item If $s - t + 1$ is even, then we can further assume that $(\rho, B_s, B_s, +)$ and $(\rho, A_s, A_s, +)$ both have even multiplicities in
\(
Jord(\q) \backslash \{(\rho, A_i, B_i, +)\}_{i = t}^{s},
\)
otherwise the image can not be all of $\cPkt{\lq''}$. As a consequence,
\[
\Big\{\lr_{M, >_{\q'}}(\lq', \ul', \ueta') \in \cPkt{\lq'} \, | \, \ul' = 0 \Big\} \neq \emptyset.
\]
Note $\alpha(\S{\q'}^{\Sigma_0}) = \alpha(\S{\q'_d}^{\Sigma_0})$. Then $R$ can not be stable by the induction assumption. So we get a contradiction again.
\end{enumerate}
\end{itemize}
\end{proof}


\section{$L$-packets inside Arthur packets}
\label{sec: Arthur LLC}

For $[\q] \in \cQ{G}$, it follows from the discussion in \cite[Section 2.2]{Arthur:2013} that $\cPkt{\p_{\q}} \subseteq \cPkt{\q}$. This result is also obtained in \cite[Proposition 6.0.3]{Moeglin1:2011} by different methods. In this section, we would like to extend this result to $\lG(F)$ (see Lemma~\ref{lemma: stable expansion}). As an application, we will extend the local Langlands correspondence constructed in Section~\ref{sec: LLC} to the case of Arthur parameters and Arthur packets.

First we will deduce $\cPkt{\p_{\q}} \subseteq \cPkt{\q}$ following Arthur. By \cite[(2.2.12)]{Arthur:2013},
\begin{align}
\label{eq: stable expansion}
f_{W}(\q) = \sum_{[\p] \in \cP{G}_{\lambda}} m([\q], [\p]) f(\p), \quad f \in \sH(G(F), \chi),
\end{align}
where $\p$ factors through $\p_{M} \otimes \xi$ for $[\p_{M}] \in \cPbd{M}$ and ${\rm Re} \, \xi \in (\mathfrak{a}^{*}_{P})^{+}$,
\begin{align}
\label{eq: standard stable character}
f(\p) = \sum_{[\sigma] \in \cPkt{\p_M}} f_{G}( {\rm Ind}^{G(F)}_{P(F)}\, \sigma \otimes \xi ).
\end{align}
We fix the twisted endoscopic embedding
\(
\iota_{G}: \L{G} \rightarrow GL(N, \mathbb{C})
\)
and $\D{\theta}_N$-stable $\Gamma_{F}$-splitting $(\mathcal{B}_{N}, \mathcal{T}_{N}, \{\mathcal{X}_{\alpha_N}\})$ of $GL(N, \mathbb{C})$ and $\D{\theta}_0$-stable $\Gamma_{F}$-splitting $(\mathcal{B}_{G}, \mathcal{T}_{G}, \{\mathcal{X}_{\alpha_G}\})$ of $\D{G}$, such that $\iota_{G}(\mathcal{T}_{G}) = (\mathcal{T}_{N}^{\D{\theta}_{N}})^0$ and $\iota_{G}(\mathcal{B}_{G}) \subseteq \mathcal{B}_{N}$. Let $A_{\D{P}} := (Z(\D{M})^{\Gamma_F})^0$ and $\mathfrak{a}_{\D{P}} := X_{*}(A_{\D{P}}) \otimes \mathbb{R}$. There is a commutative diagram 
\[
\xymatrix{\mathfrak{a}^{*}_{P} \ar@{^{(}->}[r] \ar[d]_{\cong} & \mathfrak{a}^{*}_{B} \ar[d]^{\cong}  \\
\mathfrak{a}_{\D{P}} \ar@{^{(}->}[r] & \mathfrak{a}_{\mathcal{B}_{G}}
}
\]
where the vertical arrows send roots to coroots. So we can also identify the positive Weyl chambers $(\mathfrak{a}^{*}_{B})^{+} \cong (\mathfrak{a}_{\mathcal{B}_{G}})^{+}$, and the cones ${}^{+}(\mathfrak{a}^{*}_{B}) \cong {}^{+}(\mathfrak{a}_{\mathcal{B}_{G}})$ spanned by positive roots (coroots). Via $\iota_{G}$ we also have an inclusion $\mathfrak{a}_{\mathcal{B}_{G}} \subseteq \mathfrak{a}^{\D{\theta}_{N}}_{\mathcal{B}_{N}} \cong \mathbb{R}^{n}$. We define partial orders on $\mathfrak{a}_{\mathcal{B}_{G}}$ by $\Xi \leqslant_{G} \Xi'$ if $\Xi' - \Xi \in \overline{^{+}(\mathfrak{a}_{\mathcal{B}_{G}})}$ (resp. $\Xi \leqslant_{N} \Xi'$ if $\Xi' - \Xi \in \overline{^{+}(\mathfrak{a}_{\mathcal{B}_N})}$). Since $\overline{^{+}(\mathfrak{a}_{\mathcal{B}_{G}})} \subseteq \overline{^{+}(\mathfrak{a}_{\mathcal{B}_N})}$, then 
\(
\Xi \leqslant_{G} \Xi' \Rightarrow \Xi \leqslant_{N} \Xi'.
\) 
For $\Xi \in \overline{(\mathfrak{a}_{\mathcal{B}_{G}})^{+}}$, there exists a unique element in $\{\Xi, \Xi^{\D{\theta}_0}\} \cap \overline{(\mathfrak{a}_{\mathcal{B}_{N}})^{+}}$, denoted by $\Xi^{+}$. Let $\Xi_{[\p]} := ({\rm Re} \, \xi )^{+}$ and $\Xi_{[\q]} := \Xi_{[\p_{\q}]}$. It follows from \cite[Section 2.2]{Arthur:2013} that
\begin{enumerate}
\item $m([\q], [\p_{\q}]) = 1$,
\item if $m([\q], [\p]) \neq 0$, then $\Xi_{[\p]} \leqslant_{N} \Xi_{[\q]}$ and the equality holds only when $[\p] = [\p_{\q}]$.
\end{enumerate}

\begin{lemma}
\label{lemma: order}
Suppose $\Xi, \Xi' \in \overline{(\mathfrak{a}_{\mathcal{B}_{G}})^{+}}$ such that $\Xi \leqslant_{G} \Xi'$, then $\Xi^{+} \leqslant_{G} \Xi'^{+}$. Moreover, $\Xi^{+} = \Xi'^{+} \Rightarrow \Xi = \Xi'$.
\end{lemma}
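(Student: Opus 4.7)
The plan is to reduce to an explicit coordinate calculation in type $D_n$. First I would dispose of the symplectic case: when $G = Sp(2n)$ the involution $\D{\theta}_0$ is trivial, so $\Xi^+ = \Xi$ identically and both assertions are vacuous. The substantive content is when $G$ is special even orthogonal of rank $n$, so $\D{G} = SO(2n)$ is of type $D_n$ and $\D{\theta}_0$ is the outer involution induced from the full orthogonal group. I would then pick the standard coordinates identifying $\mathfrak{a}_{\mathcal{B}_G}$ with $\mathbb{R}^n$ (with roots $\pm e_i \pm e_j$, $i<j$), in which $\D{\theta}_0$ acts by $(\xi_1,\ldots,\xi_{n-1},\xi_n) \mapsto (\xi_1,\ldots,\xi_{n-1},-\xi_n)$, and
\[
\overline{(\mathfrak{a}_{\mathcal{B}_G})^+} = \{\xi_1 \geqslant \cdots \geqslant \xi_{n-1} \geqslant |\xi_n|\}, \qquad \overline{(\mathfrak{a}_{\mathcal{B}_N})^+} \cap \mathfrak{a}_{\mathcal{B}_G} = \{\xi_1 \geqslant \cdots \geqslant \xi_n \geqslant 0\}.
\]
Under these coordinates, $\Xi^+ = (\xi_1, \ldots, \xi_{n-1}, |\xi_n|)$ for every $\Xi \in \overline{(\mathfrak{a}_{\mathcal{B}_G})^+}$.

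Next I would record the description of the closed positive-root cone $\overline{{}^+(\mathfrak{a}_{\mathcal{B}_G})}$: since $D_n$ is simply laced, positive roots and positive coroots span the same cone, and pairing against the fundamental weights $\omega_k = e_1 + \cdots + e_k$ ($k \leqslant n-2$), $\omega_{n-1} = (e_1 + \cdots + e_{n-1} - e_n)/2$, $\omega_n = (e_1 + \cdots + e_n)/2$ yields
\[
v = (v_1, \ldots, v_n) \in \overline{{}^+(\mathfrak{a}_{\mathcal{B}_G})} \iff v_1 + \cdots + v_k \geqslant 0 \ (k \leqslant n-2) \text{ and } v_1 + \cdots + v_{n-1} \geqslant |v_n|.
\]

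The main step is then a short calculation. Given $\Xi \leqslant_G \Xi'$, set $v = \Xi' - \Xi$ (which lies in the cone above) and $v' = \Xi'^+ - \Xi^+$; by the formula for $\Xi^+$ we have $v_i' = v_i$ for $i < n$ and $v_n' = |\xi_n'| - |\xi_n|$, so the partial-sum inequalities for $k \leqslant n-2$ are unchanged. The remaining condition $v_1 + \cdots + v_{n-1} \geqslant |v_n'|$ follows from the reverse triangle inequality
\[
|v_n'| = \bigl|\,|\xi_n'| - |\xi_n|\,\bigr| \leqslant |\xi_n' - \xi_n| = |v_n|
\]
combined with $v_1 + \cdots + v_{n-1} \geqslant |v_n|$ for the original $v$. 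This gives $\Xi^+ \leqslant_G \Xi'^+$.

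For the moreover statement I would note that $\Xi^+ = \Xi'^+$ forces $\xi_i = \xi_i'$ for $i < n$ and $|\xi_n| = |\xi_n'|$, so $v = (0, \ldots, 0, \xi_n' - \xi_n)$; the cone description then demands $|\xi_n' - \xi_n| \leqslant v_1 + \cdots + v_{n-1} = 0$, forcing $\xi_n = \xi_n'$ and hence $\Xi = \Xi'$. There is no real obstacle in the argument; the only point requiring some care is pinning down the explicit description of the positive-root cone for type $D_n$, but this is a standard computation from the fundamental weights above.
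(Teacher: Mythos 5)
Your approach matches the paper's: reduce to explicit coordinates for $\D{G} = SO(2n)$ and check cone inequalities directly (the paper declares the rest ``a direct verification'' and omits it). One small omission: you dispose only of $Sp(2n)$, but the quasi-split non-split case $G = SO(2n, \eta)$ with $\eta \neq 1$ also needs a word --- there $\mathfrak{a}_{\mathcal{B}_G} = X_*((\mathcal{T}_G^{\Gamma_F})^0)\otimes\mathbb{R}$ is already the $\D{\theta}_0$-fixed subspace, so $\Xi^+ = \Xi$ and the claim is trivial exactly as for $Sp(2n)$; your identification $\mathfrak{a}_{\mathcal{B}_G}\cong\mathbb{R}^n$ only holds in the split case. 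Your substantive contribution --- the characterization of $\overline{{}^{+}(\mathfrak{a}_{\mathcal{B}_{G}})}$ by pairing against the $D_n$ fundamental weights, and the use of the reverse triangle inequality $\bigl|\,|\xi_n'|-|\xi_n|\,\bigr| \leqslant |\xi_n'-\xi_n|$ to control the last coordinate, together with the derivation of $\Xi=\Xi'$ from $\Xi^+=\Xi'^+$ --- is correct and supplies exactly the verification the paper leaves implicit.
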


\begin{proof}
If $G$ is $Sp(2n)$ or $SO(2n, \eta)$ for $\eta \neq 1$, then $\Xi^{+} = \Xi$. So there is nothing to prove. When $G = SO(2n)$, we can write $\Xi = (\mu_{i})_{i=1}^{n} \in \mathbb{R}^{n}$, where $\mu_{1} \geqslant \cdots \geqslant \mu_{n-1} \geqslant |\mu_{n}|$. Then $\Xi^{+} = (\mu_1, \cdots, \mu_{n-1}, |\mu_n|)$. The rest is a direct verification, so we omit it here.
\end{proof}

For any irreducible representation $\r$ of $G(F)$, we realize it as the Langlands quotient $J(P, \sigma, \xi)$ of ${\rm Ind}^{G(F)}_{P(F)}\, \sigma \otimes \xi$ and denote $\Xi_{[\r]} := ({\rm Re} \, \xi)^{+}$. It follows from \cite[Lemma 2.14]{BorelWallach:2000} and Lemma~\ref{lemma: order} that the irreducible constituents $\r'$ of ${\rm Ind}^{G(F)}_{P(F)}\, \sigma \otimes \xi$ satisfies $\Xi_{[\r']} \leqslant_{G} \Xi_{[\r]}$ and the equality holds if and only if $\r' = J(P, \sigma, \xi)$ . 

\begin{lemma}
\label{lemma: estimate of exponent} \,
\begin{enumerate}
\item $\cPkt{\p_{\q}} \subseteq \cPkt{\q}$.
\item For any $[\r] \in \cPkt{\q}$, $\Xi_{[\r]} \leqslant_{N} \Xi_{[\q]}$ and the equality holds only when $[\r] \in \cPkt{\p_{\q}}$.
\item If $m([\q], [\p]) \neq 0$ and $[\p] \neq [\p_{\q}]$, the character $f^{G}(\p)$ does not have contributions from $\cPkt{\p_{\q}}$.
\end{enumerate}
\end{lemma}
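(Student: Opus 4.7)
The three parts fit together as a bootstrapping argument exploiting the stable expansion \eqref{eq: stable expansion}. My plan is to first prove (3), then use it together with exponent tracking to establish (2), and finally to deduce (1) by counting multiplicities at the top of the exponent filtration.

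Part (3) is a direct exponent comparison. If $m([\q],[\p]) \neq 0$ and $[\p] \neq [\p_{\q}]$, then by property (2) stated just before the lemma, $\Xi_{[\p]} <_{N} \Xi_{[\q]}$. Every irreducible constituent $\r''$ of the inductions appearing in $f(\p)$ via \eqref{eq: standard stable character} satisfies $\Xi_{[\r'']} \leqslant_{G} \Xi_{[\p]}$ by the observation immediately preceding the lemma; Lemma~\ref{lemma: order} together with the inclusion $\overline{^{+}(\mathfrak{a}_{\mathcal{B}_{G}})} \subseteq \overline{^{+}(\mathfrak{a}_{\mathcal{B}_{N}})}$ then yields $\Xi_{[\r'']} \leqslant_{N} \Xi_{[\p]} <_{N} \Xi_{[\q]}$. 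On the other hand, any $[\r'] \in \cPkt{\p_{\q}}$ is itself a Langlands quotient with $\Xi_{[\r']} = \Xi_{[\p_{\q}]} = \Xi_{[\q]}$, so $[\r']$ cannot appear in $f(\p)$, which establishes (3).

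For (2), any $[\r] \in \cPkt{\q}$ has $\langle s_{\q},\r\rangle_{W} \neq 0$, hence enters $f_{W}(\q)$ with nonzero coefficient. Expanding via \eqref{eq: stable expansion} and \eqref{eq: standard stable character}, $\r$ must therefore be a constituent of some $\Ind^{G(F)}_{P(F)} \sigma \otimes \xi$ with $m([\q],[\p]) \neq 0$ and $[\sigma] \in \cPkt{\p_{M}}$. The same exponent chain as in (3) gives $\Xi_{[\r]} \leqslant_{N} \Xi_{[\p]} \leqslant_{N} \Xi_{[\q]}$; equality at the outer bound forces $[\p] = [\p_{\q}]$, and equality at the inner step, combined with the injectivity clause of Lemma~\ref{lemma: order} and the Langlands classification, forces $\r$ to be the Langlands quotient of its inducing representation, i.e., $[\r] \in \cPkt{\p_{\q}}$. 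For (1) I would then invoke $m([\q],[\p_{\q}]) = 1$: each element of $\cPkt{\p_{\q}}$ appears with multiplicity one in $f(\p_{\q})$ as a Langlands quotient, and by (3) appears in no other $f(\p)$ of the expansion, so it occurs with nonzero coefficient in $f_{W}(\q)$ and therefore belongs to $\cPkt{\q}$.

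The only real subtlety is the careful bookkeeping of the two partial orderings $\leqslant_{G}$ on $\mathfrak{a}_{\mathcal{B}_{G}}$ and $\leqslant_{N}$ on $\mathfrak{a}_{\mathcal{B}_{N}}^{\D{\theta}_{N}}$, together with the interaction of the $+$-normalization with them; Lemma~\ref{lemma: order} supplies precisely what is needed, and no further obstacle is anticipated.
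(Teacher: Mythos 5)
Your proposal is correct and follows essentially the same route as the paper's own proof: both rest entirely on the exponent chain $\Xi_{[\r]} \leqslant_{G} \Xi_{[\p]} \leqslant_{N} \Xi_{[\q]}$, the implication $\leqslant_{G} \Rightarrow \leqslant_{N}$, the injectivity clause of Lemma~\ref{lemma: order}, and the two bulleted properties of $m([\q],[\p])$ recorded just before the lemma. The only difference is cosmetic — you prove the parts in the order (3), (2), (1) and spell out the multiplicity bookkeeping for (1) that the paper leaves implicit in the phrase ``(1) follows from (3) and \eqref{eq: stable expansion}.''
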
 

\begin{proof}
Suppose $m([\q], [\p]) \neq 0$. Let $\r$ be any irreducible representation contributing to $f^{G}(\p)$. From the above discussion, $\Xi_{[\r]} \leqslant_{G} \Xi_{[\p]} \leqslant_{N} \Xi_{[\q]}$. Then $\Xi_{[\r]} \leqslant_{N} \Xi_{[\p]} \leqslant_{N} \Xi_{[\q]}$. If the equality holds, then $[\r] \in \cPkt{\p}$ and $[\p] = [\p_{\q}]$. This proves (2). If $[\p] \neq [\p_{\q}]$, then $\Xi_{[\p]} <_{N} \Xi_{[\q]}$ and hence $\Xi_{[\r]} <_{N} \Xi_{[\q]}$. So $[\r] \notin \cPkt{\p_{\q}}$. This proves (3). At last, (1) follows from (3) and \eqref{eq: stable expansion}.
\end{proof}

Now let us fix a local Langlands correspondence 
\begin{align}
\label{eq: LLC}
\cPkt{}(\lG(F)) \rightarrow \cP{\lG}
\end{align}
as in Section~\ref{sec: LLC}. We would like to extend \eqref{eq: stable expansion} to $\lG(F)$. For $\lq \in \Q{\lG}$, let $\tilde{\lambda} = \lambda_{\lq}$. For $\lp \in \P{\lG}_{\tilde{\lambda}}$ factoring through $\lp_{M} \otimes \tilde{\xi}$ for $[\lp_{M}] \in \cPbd{\lif{M}}$ and ${\rm Re} \, \tilde{\xi} \in (\mathfrak{a}^{*}_{\lif{P}})^{+} \subseteq \overline{(\mathfrak{a}^{*}_{\lif{B}})^{+}}$, we define
\[
\lf(\lp) = \sum_{[\sigma] \in \cPkt{\lp_M}} \lf_{\lG}( {\rm Ind}^{\lG(F)}_{\lif{P}(F)}\, \tilde{\sigma} \otimes \tilde{\xi} ).
\]
Fix $\Gamma_{F}$-splitting $(\mathcal{B}_{\lG}, \mathcal{T}_{\lG}, \{\mathcal{X}_{\alpha_{\lG}}\})$ of $\D{\lG}$ projecting onto that of $\D{G}$. We have a commutative diagram
\[
\xymatrix{0 \ar[r] & \mathfrak{a}^{*}_{\lG} \ar[r] \ar[d]_{\cong} & \mathfrak{a}^{*}_{\lif{B}} \ar[d]_{\cong} \ar[r] & \mathfrak{a}^{*}_{B} \ar[r] \ar[d]_{\cong} & 0 \\
0 \ar[r] & \mathfrak{a}_{\D{\lG}} \ar[r] & \mathfrak{a}_{\mathcal{B}_{\lG}} \ar[r] & \mathfrak{a}_{\mathcal{B}_{G}} \ar[r] & 0
}
\]
where there are natural splittings of the short exact sequences such that
\[
\mathfrak{a}^{*}_{\lif{B}} \cong \mathfrak{a}^{*}_{B} \oplus \mathfrak{a}^{*}_{\lG}, \quad \mathfrak{a}_{\mathcal{B}_{\lG}} \cong \mathfrak{a}_{\mathcal{B}_{G}} \oplus \mathfrak{a}_{\D{\lG}}.
\]
By our assumption on $\tilde{\lambda}$, we have ${\rm Re} \, \tilde{\xi} \in \mathfrak{a}^{*}_{B} \cong \mathfrak{a}_{\mathcal{B}_{G}}$. Similarly for $[\lr] \in \cPkt{}(\lG(F))_{\tilde{\lambda}}$ realized as the Langlands quotient $J(\lif{P}, \tilde{\sigma}, \tilde{\xi})$, we have ${\rm Re} \, \tilde{\xi} \in \mathfrak{a}^{*}_{B} \cong \mathfrak{a}_{\mathcal{B}_{G}}$. So we can define $\Xi_{[\lp]}$, $\Xi_{[\lq]}$ and $\Xi_{[\lr]}$ as before.


\begin{lemma}
\label{lemma: stable expansion}
For $[\lq] \in \cQ{\lG}$, there exists a unique choice of $\cPkt{\lq}$ such that $\cPkt{\p_{\lq}} \subseteq \cPkt{\lq}$. Moreover, there exists a decomposition
\[
\lf_{W}(\lq) = \sum_{[\lp] \in \cP{\lG}_{\tilde{\lambda}}} m([\lq], [\lp]) \lf(\lp), \quad \lf \in \sH(\lG(F), \tilde{\chi}),
\]
satisfying 
\begin{enumerate}
\item $m([\lq], [\p_{\lq}]) = 1$,
\item if $m([\lq], [\lp]) \neq 0$, then $\Xi_{[\lp]} \leqslant_{N} \Xi_{[\lq]}$ and the equality holds only when $[\lp] = [\p_{\lq}]$.
\end{enumerate}
\end{lemma}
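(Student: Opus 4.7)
The plan is to mirror Arthur's derivation of the expansion \eqref{eq: stable expansion} by working directly with $\lf_W(\lq)$ on $\lG(F)$: I would expand irreducible characters in terms of standard characters $\lf(\lp)$ and use the leading-exponent estimate to pin down the correct $X$-twist of $\cPkt{\lq}$.

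First, I observe that $\lf_W(\lq)$ restricts to $f_W(\q)$ on $\lif{Z}_F G(F)$: using property (1) of Theorem~\ref{thm: Apacket} together with $s_\lq = s_\q$,
\[
\lf_W(\lq)|_{\lif{Z}_F G(F)} = \sum_{\bar{\e} \in \D{\S{\q}}} \bar{\e}(s_\q)\, f_G(\r_W(\q,\bar{\e})) = f_W(\q).
\]
By Proposition~\ref{prop: induction preserves infinitesimal character}, every irreducible constituent of $\lf_W(\lq)$ lies in $\cPkt{}(\lG(F))_{\tilde{\lambda}}$, hence is the Langlands quotient of a unique standard module $\Ind^{\lG(F)}_{\lif{P}(F)}(\tilde\sigma\otimes\tilde\xi)$ with $[\tilde\sigma] \in \cPkt{\lp_M}$ and $[\lp] \in \cP{\lG}_{\tilde{\lambda}}$. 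The triangular change of basis between irreducible and standard characters then yields a decomposition
\[
\lf_W(\lq) = \sum_{[\lp] \in \cP{\lG}_{\tilde{\lambda}}} m([\lq],[\lp])\, \lf(\lp).
\]
Restricting back to $\lif{Z}_F G(F)$ and comparing with \eqref{eq: stable expansion}, the fiber sums of $m([\lq],[\lp])$ over the projection $\cP{\lG}_{\tilde{\lambda}} \to \cP{G}_\lambda$ must recover $m([\q],[\p])$. In particular, nonvanishing of $m([\lq],[\lp])$ forces $\Xi_{[\lp]} = \Xi_{[\p]} \leqslant_N \Xi_{[\q]} = \Xi_{[\lq]}$, with equality only at $[\p] = [\p_\q]$.

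To pin down the leading term, note that the fiber over $[\p_\q]$ consists of $X^{\Sigma_0}(\tilde{\lambda})$-twists of $[\p_\lq]$; since such twists are trivial on central characters and the condition $\lf \in \sH(\lG(F),\tilde\chi)$ selects a specific central character, only $[\p_\lq]$ itself contributes. The Langlands quotients of $\lf(\p_\lq)$ form the $L$-packet $\cPkt{\p_\lq}$, each appearing with multiplicity one. By the $\lG$-analog of Lemma~\ref{lemma: estimate of exponent}, these are the only irreducible constituents of $\lf_W(\lq)$ with leading exponent $\Xi_{[\lq]}$. Since $\cPkt{\lq}$ is determined only up to $X$-twist, I would choose the unique representative for which the leading-exponent constituents coincide with $\cPkt{\p_\lq}$, giving $\cPkt{\p_\lq} \subseteq \cPkt{\lq}$; the Whittaker pairing $\langle s_\lq, \lr \rangle_W = 1$ on $[\lr] \in \cPkt{\p_\lq}$ (where $s_\lq$ acts trivially through the tempered $L$-packet structure) then forces $m([\lq],[\p_\lq]) = 1$.

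The main obstacle is rigorously justifying that this leading-term argument produces a unique $X$-twist of $\cPkt{\lq}$ consistent with the construction of Section~\ref{sec: construction}. Uniqueness follows because distinct $X$-twists of $\cPkt{\lq}$ yield disjoint $X$-twists of the leading-exponent $L$-packet content, so at most one twist of $\cPkt{\lq}$ can contain $\cPkt{\p_\lq}$. Compatibility with the Section~\ref{sec: construction} construction is automatic: the Arthur packet $\cPkt{\lq}$ is defined up to $X$-twist by the stability and twisted endoscopic relations of Theorem~\ref{thm: Apacket}, and the additional condition $\cPkt{\p_\lq} \subseteq \cPkt{\lq}$ selects the canonical representative within this orbit, completing both the existence and uniqueness assertions.
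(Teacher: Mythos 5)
Your overall outline tracks Arthur's strategy, but there is a genuine gap at the most delicate step. You assert that ``the triangular change of basis between irreducible and standard characters then yields a decomposition $\lf_W(\lq) = \sum_{[\lp]} m([\lq],[\lp])\,\lf(\lp)$.'' What the triangular change of basis actually gives is a decomposition in terms of \emph{individual} standard characters $\lf_\lG(M(\lp, \tilde{\bar{\e}}))$, not in terms of the stable $L$-packet sums $\lf(\lp)=\sum_{\tilde\sigma\in\cPkt{\lp_M}} \lf_\lG(\Ind \tilde\sigma\otimes\tilde\xi)$. To conclude that the coefficient depends only on $[\lp]$ (equivalently, that $\lf_W(\lq)$ lies in the span of the stable $\lf(\lp)$'s) one must use that $\lf_W(\lq)$ is stable \emph{and} invoke the endoscopic inversion formula for standard characters together with Arthur's spectral characterization of the image of $\sH(\lG(F),\tilde\chi)$ under endoscopic transfer: the paper rewrites the difference as a sum of transfers $\lf'(\lp')$ from $\lG'\neq\lG$ where $\lp'$ factors through no common Levi, and concludes both sides vanish. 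Your ``restrict to $\lif{Z}_F G(F)$ and compare with \eqref{eq: stable expansion}'' step cannot substitute for this: the restricted standard characters on $G(F)$ are not sufficiently independent to force equalization of the fiber coefficients, and in any case you need the coefficient-independence on $\lG(F)$, not just its image under restriction.

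A second, smaller issue is the way you single out the leading term. You argue that twists by $X^{\Sigma_0}(\tilde\lambda)$ ``are trivial on central characters'' so the $\sH(\lG(F),\tilde\chi)$ constraint selects only $[\p_\lq]$ from the fiber over $[\p_\q]$. But $X^{\Sigma_0}(\tilde\lambda)\subseteq\Hom(\lG(F)/G(F),\mathbb C^\times)$ is not trivial on $\lZ(F)$ in general, and $\tilde\chi$ only fixes $\tilde\zeta|_{\lif{Z}_F}$, not the full central character. The paper instead uses the restriction formula \eqref{eq: restriction 1} to conclude that only a single $\omega'\in X^{\Sigma_0}(\tilde\lambda)/X^{\Sigma_0}(\lq)$ appears in the leading term, and then twists $\cPkt{\lq}$ by $\omega'^{-1}$; the multiplicity one is obtained by comparing to \eqref{eq: stable expansion} after restriction, not by a direct Whittaker pairing computation. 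Your uniqueness reasoning at the end is sound given $X^{\Sigma_0}(\lq)=X^{\Sigma_0}(\p_\lq)$, which is indeed how the paper finishes, but the two earlier steps need the missing arguments to be repaired.
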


\begin{proof}
For any choice of $\cPkt{\lq}$, we have
\[
\lf_{W}(\lq) = \sum_{[\lr] \in \cPkt{\lq}} \langle s_{\lq}, \lr \rangle_{W} \, \lf_{\lG}(\lr), \quad \lf \in \sH(\lG(F), \tilde{\chi}).
\]
We express $\lf_{\lG}(\lr)$ in terms of standard representations
\[
\lf_{\lG}(\lr) = \sum_{[\lp] \in \cP{\lG}_{\tilde{\lambda}}} \, \sum_{\tilde{\bar{\e}} \in \D{\S{\lp}}} m^{[\lr]}_{([\lp], \tilde{\bar{\e}})} \lf_{\lG}(M(\lp, \tilde{\bar{\e}})).
\]
where $M(\lp, \tilde{\bar{\e}})$ is the standard representation containing $\lr(\lp, \tilde{\bar{\e}})$ as the Langlands quotient. If $m^{[\lr]}_{([\lp], \tilde{\bar{\e}})} \neq 0$, then $\Xi_{[\lp]} \leqslant_{G} \Xi_{[\lr]}$ and the equality holds only when $[\lr] = \lr(\lp, \tilde{\bar{\e}})$ (cf. \cite[Lemma 2.14]{BorelWallach:2000} and Lemma~\ref{lemma: order}). So
\begin{align*}
\lf_{W}(\lq) & = \sum_{[\lr] \in \cPkt{\lq}} \langle s_{\lq}, \lr \rangle_{W} \, \sum_{[\lp] \in \cP{\lG}_{\tilde{\lambda}}} \sum_{\tilde{\bar{\e}} \in \D{\S{\lp}}} m^{[\lr]}_{([\lp], \tilde{\bar{\e}})}  \lf_{\lG}(M(\lp, \tilde{\bar{\e}})) \\
& = \sum_{[\lp] \in \cP{\lG}_{\tilde{\lambda}}} \sum_{\tilde{\bar{\e}} \in \D{\S{\lp}}} \sum_{[\lr] \in \cPkt{\lq}} m^{[\lr]}_{([\lp], \tilde{\bar{\e}})} \langle s_{\lq}, \lr \rangle_{W} \lf_{\lG}(M(\lp, \tilde{\bar{\e}})).
\end{align*}
By the inversion formula of endoscopic character relation,
\[
\lf_{\lG}(M(\lp, \tilde{\bar{\e}})) = \sum_{(\lG', \lp')} m^{(\lp, \tilde{\bar{\e}})}_{\lp'} \lf'(\lp'),
\]
where $\lp$ factors through $\lp' \in \P{\lG'}$ for an elliptic endoscopic group $\lG'$ of $\lG$ and $\lp'$ does not factor through any common Levi subgroup of $\L{\lG'}$ and $\L{\lG}$ unless $\lp' = \lp$, in which case $m^{(\lp, \tilde{\bar{\e}})}_{\lp} = |\S{\lp}|^{-1}$. Then
\[
\lf_{W}(\lq) = \sum_{\lG'} \sum_{\lp'} \Big( \sum_{\tilde{\bar{\e}} \in \D{\S{\lp}}} \sum_{[\lr] \in \cPkt{\lq}} m^{(\lp, \tilde{\bar{\e}})}_{\lp'} m^{[\lr]}_{([\lp], \tilde{\bar{\e}})} \langle s_{\lq}, \lr \rangle_{W} \Big) \lf'(\lp').
\]
We rewrite it as
\[
\lf_{W}(\lq) - \sum_{[\lp] \in \cP{\lG}_{\tilde{\lambda}}} |\S{\lp}|^{-1} \Big(\sum_{\tilde{\bar{\e}} \in \D{\S{\lp}}} \sum_{[\lr] \in \cPkt{\lq}} m^{[\lr]}_{([\lp], \tilde{\bar{\e}})} \langle s_{\lq}, \lr \rangle_{W} \Big) \lf(\lp) = \sum_{\lG' \neq \lG} \sum_{\lp'} \Big( \sum_{\tilde{\bar{\e}} \in \D{\S{\lp}}} \sum_{[\lr] \in \cPkt{\lq}} m^{(\lp, \tilde{\bar{\e}})}_{\lp'}  m^{[\lr]}_{([\lp], \tilde{\bar{\e}})} \langle s_{\lq}, \lr \rangle_{W} \Big) \lf'(\lp').
\]
Since $\lp'$ does not factor through the Levi subgroups of $\L{\lG}$, it follows from Arthur's spectral characterization of the image of $\sH(\lG(F), \tilde{\chi})$ under the endoscopic transfers (cf. \cite{Arthur:1996}) that both sides must be zero. Hence 
\[
\lf_{W}(\lq) = \sum_{[\lp] \in \cP{\lG}_{\tilde{\lambda}}} |\S{\lp}|^{-1} \Big(\sum_{\tilde{\bar{\e}} \in \D{\S{\lp}}} \sum_{[\lr] \in \cPkt{\lq}} m^{[\lr]}_{([\lp], \tilde{\bar{\e}})} \langle s_{\lq}, \lr \rangle_{W} \Big) \lf(\lp).
\]
Let
\[
m([\lq], [\lp]) := |\S{\lp}|^{-1} \sum_{\tilde{\bar{\e}} \in \D{\S{\lp}}} \sum_{[\lr] \in \cPkt{\lq}} m^{[\lr]}_{([\lp], \tilde{\bar{\e}})} \langle s_{\lq}, \lr \rangle_{W}.
\]
If $m([\lq], [\lp]) \neq 0$, then there exists $\tilde{\bar{\e}} \in \D{\S{\lp}}$ and $[\lr] \in \cPkt{\lq}$ such that $m^{[\lr]}_{([\lp], \tilde{\bar{\e}})} \neq 0$. So $\Xi_{[\lp]} \leqslant_{G} \Xi_{[\lr]}$. Since $\Xi_{[\lr]} \leqslant_{N} \Xi_{[\lq]}$ by Lemma~\ref{lemma: estimate of exponent}, then $\Xi_{[\lp]} \leqslant_{N} \Xi_{[\lq]}$. If we further assume $\Xi_{[\lp]} = \Xi_{[\lq]}$, then $\Xi_{[\lp]} = \Xi_{[\lr]} = \Xi_{[\lq]}$, hence $[\lr] = \lr(\lp, \tilde{\bar{\e}}) \in \cPkt{\p_{\lq}} \otimes \omega$ for some $\omega \in X^{\Sigma_0}(\tilde{\lambda})$. It follows that $[\lp] = [\p_{\lq} \otimes \omega]$. In sum, we have
\[
\lf_{W}(\lq) = \sum_{\omega \in X^{\Sigma_0}(\tilde{\lambda})/X^{\Sigma_0}(\lq)} m([\lq], [\p_{\lq} \otimes \omega]) \lf(\p_{\lq} \otimes \omega) + \sum_{\substack{[\lp] \in \cP{\lG}_{\tilde{\lambda}} \\ \Xi_{[\lp]} <_N \Xi_{[\lq]}}} m([\lq], [\lp]) \lf(\lp), \quad \lf \in \sH(\lG(F), \tilde{\chi}).
\]
By \eqref{eq: restriction 1}, 
there can only be one term in the first sum, say $\p_{\lq} \otimes \omega'$. Comparing with \eqref{eq: stable expansion} by restriction, we see $m(\lq, \p_{\lq} \otimes \omega') = 1$. At last, it suffices to change $\cPkt{\lq}$ to $\cPkt{\lq} \otimes \omega'^{-1}$. Then it is also clear that $\cPkt{\p_{\lq}} \subseteq \cPkt{\lq}$. The uniqueness follows from $X^{\Sigma_0}(\lq) = X^{\Sigma_0}(\p_{\lq})$.
\end{proof}

From now on, we associate $[\lq] \in \cQ{\lG}$ with the unique choice of $\cPkt{\lq}$ such that $\cPkt{\p_{\lq}} \subseteq \cPkt{\lq}$. We would like to show this association is compatible with parabolic induction and twisted endoscopic transfer. Let $M \cong GL(n_1) \times G_{-}$ be a standard Levi subgroup of $G$. Via $\iota_{G}$ we embed $\D{M}$ into a $\D{\theta}_{N}$-stable standard Levi subgroup $\D{L}$ of $GL(N, \mathbb{C})$.
\[
\xymatrix{
\L{M} \ar[r] \ar[d] &  \D{L} := GL(n_{1}, \mathbb{C}) \times GL(N_{-}, \mathbb{C}) \times GL(n_1, \mathbb{C})  \ar[d]\\
\L{G} \ar[r]^{\iota_G} & GL(N, \mathbb{C}).
}
\] 
Then $\mathfrak{a}_{\mathcal{B}_{M}} \cong \mathfrak{a}_{\mathcal{B}_{n_1}} \oplus \mathfrak{a}_{\mathcal{B}_{G_{-}}} \subseteq \mathfrak{a}^{\D{\theta}_{N}}_{\mathcal{B}_L} \cong \mathbb{R}^{n_1 + n_2}$. 
We define orders on $\mathfrak{a}_{\mathcal{B}_{M}}$ by $\Xi_{M} \leqslant_{M} \Xi'_{M}$ (resp. $\Xi_{M} \leqslant_{L} \Xi'_{M}$) if $\Xi'_{M} - \Xi_{M} \in \overline{{}^{+}\mathfrak{a}^{\D{M}}_{\mathcal{B}_{M}}}$ (resp. $\Xi'_{M} - \Xi_{M} \in \overline{{}^{+}\mathfrak{a}^{\D{L}}_{\mathcal{B}_{L}}}$). Since $\overline{{}^{+}\mathfrak{a}^{\D{M}}_{\mathcal{B}_{M}}} \subseteq \overline{{}^{+}\mathfrak{a}^{\D{L}}_{\mathcal{B}_{L}}}$, then $\Xi_{M} \leqslant_{M} \Xi'_{M} \Rightarrow \Xi_{M} \leqslant_{L} \Xi'_{M}$. For $\Xi_{M} \in \mathfrak{a}_{\mathcal{B}_{M}} \cap \overline{(\mathfrak{a}_{\mathcal{B}_{L}})^{+}}$, there exists a unique element in $\{w(\Xi_{M}) \in \mathfrak{a}_{\mathcal{B}_{G}} \cap \overline{(\mathfrak{a}_{\mathcal{B}_{N}})^{+}} \, | \, w \in W(\D{G}^{\Sigma_0}, \mathcal{T}_{G})\}$, denoted by $\Xi$.

\begin{lemma}
\label{lemma: combinatorics 1}
Suppose $\Xi_{M}, \Xi'_{M} \in \mathfrak{a}_{\mathcal{B}_{M}} \cap \overline{(\mathfrak{a}_{\mathcal{B}_{L}})^{+}}$ such that $\Xi_{M} \leqslant_{L} \Xi'_{M}$. Then $\Xi \leqslant_N \Xi'$. Moreover, $\Xi = \Xi' \Rightarrow \Xi_{M} = \Xi'_{M}$.
\end{lemma}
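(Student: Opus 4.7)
The plan is to set up explicit coordinates and reduce the lemma to a combinatorial statement about the dominance order in type $A_{N-1}$, respecting the $\D{\theta}_N$-equivariance built into the embedding $\mathfrak{a}_{\mathcal{B}_G} \hookrightarrow \mathfrak{a}_{\mathcal{B}_N}$. I identify $\mathfrak{a}_{\mathcal{B}_N} \cong \mathbb{R}^N$ with its standard positive cone and $\D{\theta}_N$-action $(x_1, \ldots, x_N) \mapsto (-x_N, \ldots, -x_1)$. Under $\iota_G$, the subspace $\mathfrak{a}_{\mathcal{B}_G}$ is the $\D{\theta}_N$-fixed part, consisting of vectors $(y_1, \ldots, y_n, -y_n, \ldots, -y_1)$ (with an additional zero in the middle when $N = 2n+1$), and $\mathfrak{a}_{\mathcal{B}_M}$ sits inside in block form $(\mu_1, \ldots, \mu_{n_1}, \vec{\nu}, -\mu_{n_1}, \ldots, -\mu_1)$. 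The $\mathcal{B}_L$-dominance condition becomes $\mu_1 \geqslant \cdots \geqslant \mu_{n_1}$ together with $\vec{\nu}$ being $GL(N_-)$-dominant. The Weyl group $W(\D{G}^{\Sigma_0}, \mathcal{T}_G)$ realizes all signed permutations of $\mathbb{R}^n$, so $\Xi$ is characterized as the unique $W(\D{G}^{\Sigma_0}, \mathcal{T}_G)$-translate of $\Xi_M$ whose image in $\mathbb{R}^N$ is weakly decreasing; in particular $\Xi$ coincides with the full $GL(N)$-dominant rearrangement $\Xi_M^{\downarrow}$ of the image, and similarly $\Xi' = \Xi'^{\downarrow}_M$.

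For the first assertion, every positive coroot of $\D{L}$ is already a positive coroot of $GL(N)$, so $\overline{{}^{+}\mathfrak{a}^{\D{L}}_{\mathcal{B}_L}} \subseteq \overline{{}^{+}\mathfrak{a}_{\mathcal{B}_N}}$ under the natural inclusion, and the hypothesis $\Xi_M \leqslant_L \Xi'_M$ gives $\Xi_M \leqslant_N \Xi'_M$ inside $\mathfrak{a}_{\mathcal{B}_N}$. The key combinatorial input is that for any two block-dominant $v, v' \in \mathfrak{a}_{\mathcal{B}_L}$ with $v' - v \in \overline{{}^{+}\mathfrak{a}^{\D{L}}_{\mathcal{B}_L}}$, the full $GL(N)$-dominant rearrangements satisfy $v^{\downarrow} \leqslant_N v'^{\downarrow}$. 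This follows from the top-$k$ partial-sum formula
\[
S_k(v) \,=\, \max_{k_1 + k_2 + k_3 = k} \Big( S_{k_1}(v^{(1)}) + S_{k_2}(v^{(2)}) + S_{k_3}(v^{(3)}) \Big),
\]
where $S_k$ denotes the sum of the top $k$ entries and $v^{(j)}$ the $j$-th block: the $\D{L}$-positivity hypothesis translates into the block-wise inequalities $S_{k_j}(v^{(j)}) \leqslant S_{k_j}(v'^{(j)})$ for all $j, k_j$, with equality when $k_j$ equals the block size, so summing over any partition and taking maxima yields $S_k(v) \leqslant S_k(v')$ for every $k$ with equality at $k = N$; this is exactly $v^{\downarrow} \leqslant_N v'^{\downarrow}$, hence $\Xi \leqslant_N \Xi'$.

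For the uniqueness, $\Xi = \Xi'$ means $\Xi_M$ and $\Xi'_M$ lie in the same $W(\D{G}^{\Sigma_0}, \mathcal{T}_G)$-orbit, so there is a signed permutation $w$ of $\mathbb{R}^n$ with $\Xi'_M = w \cdot \Xi_M$. I then analyze which moves the $\D{L}$-positive cone accommodates between two $\mathcal{B}_L$-dominant vectors: the cone consists of sums of positive coroots acting within the $\mu$-block and within the middle block, and in particular it neither permits swaps crossing block boundaries nor realizes any sign change of a nonzero coordinate while staying inside $\mathcal{B}_L$-dominance (a sign-flip on a $\vec{\nu}$-coordinate would push it out of the $\mathcal{B}_L$-dominant chamber, and a sign-flip on a $\mu$-coordinate is not in the $\D{L}$-positive cone at all, because the required combination $e_i - e_{N+1-i}$ crosses the block boundary). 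Hence $w$ must preserve the block decomposition and act by pure permutations within each block; since each block is already weakly decreasing by $\mathcal{B}_L$-dominance, these permutations must be identities, so $w = 1$ and $\Xi_M = \Xi'_M$.

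The main obstacle I anticipate is the case analysis in the uniqueness step, in which one must verify that the $\D{L}$-positive cone, intersected with differences of $\mathcal{B}_L$-dominant vectors in $\mathfrak{a}_{\mathcal{B}_M}$, truly does not accommodate any nontrivial permutation or sign-change of coordinates. This requires tracking the $\D{\theta}_N$-symmetry that ties the first and third blocks together and handling the internal structure of the middle block separately according to whether $G_-$ is of symplectic or orthogonal type; these are elementary but case-sensitive coordinate computations.
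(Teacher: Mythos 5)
Your identification of $\Xi$ with the $GL(N)$-dominant rearrangement $\Xi_M^{\downarrow}$ of the image of $\Xi_M$ in $\mathbb{R}^N$ is correct, and the top-$k$ partial-sum (merge) argument for the first assertion $\Xi \leqslant_N \Xi'$ is a valid and clean rigorization of what the paper leaves as ``a direct verification.'' The paper works instead in $\mathbb{R}^{n_1+n_2}$, expressing $\Xi$ via a multiset description with a maximization over $t+s=n_1$; your description is equivalent but more direct, and the merge formula gives the majorization cleanly.

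The uniqueness step, however, has a genuine gap. You conclude that the signed permutation $w$ with $\Xi'_M = w\Xi_M$ ``must preserve the block decomposition and act by pure permutations within each block,'' on the grounds that the $\D{L}$-positive cone is generated by within-block coroots and that $\mathcal{B}_L$-dominance forbids sign-flips of $\nu$-coordinates. But the cone condition alone only forces the block \emph{sums} of $\Xi'_M - \Xi_M$ to vanish, and this, together with both vectors being $\mathcal{B}_L$-dominant and lying in the same $W(\D{G}^{\Sigma_0},\mathcal{T}_G)$-orbit, does \emph{not} force $\Xi_M = \Xi'_M$. Concretely, take $n_1 = 2$, $n_2 = 1$, $N_- = 2$, $N = 6$, and in $\mathbb{R}^6 = \mathbb{R}^2 \oplus \mathbb{R}^2 \oplus \mathbb{R}^2$ set
\[
\Xi_M = (2, -1 \mid 3, -3 \mid 1, -2), \qquad \Xi'_M = (3, -2 \mid 1, -1 \mid 2, -3).
\]
Both are $\mathcal{B}_L$-dominant, both have the same $\mathbb{R}^6$-multiset $\{3, 2, 1, -1, -2, -3\}$ (so $\Xi = \Xi'$), and the three block sums $1, 0, -1$ agree, yet $\Xi_M \neq \Xi'_M$. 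Here $w$ does move entries across block boundaries and does flip signs of $\mu$-coordinates, and neither of your two structural objections rules it out. The lemma is of course unharmed because $\Xi_M \not\leqslant_L \Xi'_M$ --- the first partial sum in the middle block drops from $3$ to $1$ --- but this means the uniqueness genuinely requires the full block-wise \emph{partial-sum} comparison rather than just block-sum conservation, and a purely structural analysis of $w$ cannot detect that distinction.

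The correct way to close the gap is to re-run your own first-part machinery one step further. Write $v^{(j)}, v'^{(j)}$ for the three blocks (sorted decreasing) of $\Xi_M, \Xi'_M$ in $\mathbb{R}^N$. The hypothesis gives $S_{k_j}(v^{(j)}) \leqslant S_{k_j}(v'^{(j)})$ for all $j, k_j$, and $\Xi = \Xi'$ gives $S_k(\Xi) = S_k(\Xi')$ for all $k$. For any $(k_1, k_2, k_3)$ achieving the maximum $S_k(\Xi) = \sum_j S_{k_j}(v^{(j)})$, the sandwich
\[
\sum_j S_{k_j}(v^{(j)}) \,\leqslant\, \sum_j S_{k_j}(v'^{(j)}) \,\leqslant\, S_k(\Xi') = S_k(\Xi) = \sum_j S_{k_j}(v^{(j)})
\]
forces $S_{k_j}(v^{(j)}) = S_{k_j}(v'^{(j)})$ for each $j$. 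Finally, for every $j$ and every $k_j$ between $0$ and the size of block $j$, one can choose the remaining components of the triple so that $(k_1,k_2,k_3)$ is such a maximizer (take $k_{j'}$ to be the number of entries of $v^{(j')}$ that exceed $v^{(j)}_{k_j}$, with a consistent tie-breaking); hence all block partial sums coincide, so $v^{(j)} = v'^{(j)}$ for every $j$, and $\Xi_M = \Xi'_M$.
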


\begin{proof}
Let $\D{\theta}_{L}$ be the automorphism of $\D{L}$ permuting the two $GL(n_1, \mathbb{C})$ and fixing $GL(N_{-}, \mathbb{C})$. Then $\D{\theta}_{L}$ acts on $\mathfrak{a}_{\mathcal{B}_{M}} \cap \overline{(\mathfrak{a}_{\mathcal{B}_{L}})^{+}}$ and $\overline{{}^{+}\mathfrak{a}_{\mathcal{B}_{L}}}$. For $\Xi_{M} \in \mathfrak{a}_{\mathcal{B}_{M}} \cap \overline{(\mathfrak{a}_{\mathcal{B}_{L}})^{+}}$, we can write $\Xi_{M} = ({}^{I}\!\mu, {}^{II}\!\mu) \in \mathbb{R}^{n_{1} + n_2}$, where ${}^{I}\!\mu = ({}^{I}\!\mu_i)^{n_1}_{i = 1}, {}^{II}\!\mu = ({}^{II}\!\mu_i)^{n_{2}}_{i=1}$, satisfying
\[
{}^{I}\!\mu_{1} \geqslant \cdots \geqslant {}^{I}\!\mu_{n_1}, \quad {}^{II}\!\mu_1\geqslant \cdots \geqslant {}^{II}\!\mu_{n_{2}} \geqslant 0.
\]
Then $\Xi^{\D{\theta}_{L}}_{M} = ({}^{I}\bar{\mu}, {}^{II}\!\mu)$, where ${}^{I}\bar{\mu} = ({}^{I}\bar{\mu}_i)_{i = 1}^{n_1}$ such that ${}^{I}\!\mu_i + {}^{I}\bar{\mu}_{n_1 - i + 1} = 0$, and $\Xi = (\mu_{i})_{i = 1}^{n}$, where 
\(
\mu_{1} \geqslant \cdots \geqslant \mu_{n} \geqslant 0
\) 
and
\[
\{\mu_1, \cdots, \mu_n\} =  \{{}^{I}\!\mu_1, \cdots, {}^{I}\!\mu_{t}, {}^{I}\bar{\mu}_1, \cdots, {}^{I}\bar{\mu}_{s}, {}^{II}\!\mu_1, \cdots, {}^{II}\!\mu_{n_{2}} \}
\]
as multisets for some nonnegative integers $t, s$ such that $t + s = n_1$ and $\sum_{i = 1}^{t} {}^{I}\!\mu_i + \sum_{i = 1}^{s} {}^{I}\bar{\mu}_i$ is maximal. The rest is a direct verification, so we omit it here.
\end{proof}

For $\lq_{M} \in \Q{\lif{M}}$, let $\tilde{\lambda}_{M} = \lambda_{\lq_{M}}$. For $\lp_{M} = \p_{1} \times \lp_{-} \in \P{\lif{M}}_{\tilde{\lambda}_{M}}$, let $\Xi_{[\lp_{M}]} := \Xi_{\p_1} + \Xi_{[\lp_{-}]} \in \mathfrak{a}_{\mathcal{B}_{M}} \cap \overline{(\mathfrak{a}_{\mathcal{B}_{L}})^{+}}$ and $\Xi_{[\lq_{M}]} := \Xi_{\q_1} + \Xi_{[\lq_{-}]}$.

\begin{proposition}
\label{prop: compatible with parabolic induction}
Suppose $\lq \in \Q{\lG}$ factors through $\lq_M = \q_1 \times \lq_{-} \in \Q{\lif{M}}$ for $\lif{M} \cong GL(n_1) \times \lG_{-}$, then 
\(
\r_{\q_1} \rtimes \cPkt{\lq_{-}} = \cPkt{\lq}.
\)
\end{proposition}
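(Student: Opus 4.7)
The strategy is to invoke the uniqueness in Lemma~\ref{lemma: stable expansion}: among all subsets of $\clPkt{\q, \tilde{\zeta}}$ that differ from $\cPkt{\lq}$ by a twist in $X$, exactly one contains the $L$-packet $\cPkt{\p_{\lq}}$. Let $\Pi$ denote the set of irreducible constituents of $\r_{\q_1} \rtimes \cPkt{\lq_-}$. I will verify (i) that $\Pi = \cPkt{\lq} \otimes \omega$ for some $\omega \in X$, and (ii) that $\cPkt{\p_{\lq}} \subseteq \Pi$. Together these force $\omega = 1$ and give the claim.

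For (i), restricting to $G(F)$ and using that parabolic induction from $GL(n_1) \times G_-$ is compatible with M{\oe}glin's construction of Arthur packets for $G(F)$ (the classical-group analogue of the definition in Subsection~\ref{subsec: general case}), we obtain $\Pi|_{G(F)} \subseteq \r_{\q_1} \rtimes \cPkt{\q_-} = \cPkt{\q}$, so $\Pi \subseteq \clPkt{\q, \tilde{\zeta}}$ for the central character $\tilde{\zeta}$ determined by $\r_{\q_1}$ and $\cPkt{\lq_-}$. On the other hand, Proposition~\ref{prop: induction of A-packet} expresses the Whittaker-normalized stable distribution $\lf_W(\lq)$ as a parabolic induction of $\lf_W(\lq_-)$, up to the central character twist $\chi^{A_1-B_1+1}_{\tilde{\rho}_1}$. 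Combined with the uniqueness of Arthur packets up to twist by $X$ (Theorem~\ref{thm: Apacket}), this identifies $\Pi$ with $\cPkt{\lq} \otimes \omega$ for some $\omega \in X$.

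For (ii), since $\lq$ factors through $\lq_M = \q_1 \times \lq_-$, the associated $L$-parameter $\p_{\lq}$ factors through $\p_{\lq_M} = \p_{\q_1} \times \p_{\lq_-}$ with the real part of $\tilde{\xi}$ in the closure of the positive chamber. Compatibility of our local Langlands correspondence for $\lG$ (Section~\ref{sec: LLC}) with parabolic induction, together with the identification of the Speh representation $\r_{\q_1}$ with the representation attached to $\p_{\q_1}$ under the Langlands correspondence for $GL(n_1, F)$, shows that every element of $\cPkt{\p_{\lq}}$ is an irreducible constituent of $\r_{\q_1} \rtimes \lr_-$ for some $\lr_- \in \cPkt{\p_{\lq_-}}$. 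By Lemma~\ref{lemma: stable expansion} applied to $\lq_-$ we have $\cPkt{\p_{\lq_-}} \subseteq \cPkt{\lq_-}$, hence $\cPkt{\p_{\lq}} \subseteq \Pi$.

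The main obstacle is the central character bookkeeping: the twist $\chi^{A_1-B_1+1}_{\tilde{\rho}_1}$ appearing in Proposition~\ref{prop: induction of A-packet} must be reconciled with the specific normalization of $\cPkt{\lq_-}$ singled out by Lemma~\ref{lemma: stable expansion}, so that the identification $\Pi = \cPkt{\lq}$ carries no spurious twist. This reduces to a direct computation with the central characters of the discrete $L$-packets constructed in Section~\ref{sec: LLC}, where the characters $\omega_\rho$ and $\chi_{\tilde{\rho}}$ were introduced precisely to make the defining inclusions $\cPkt{\lp_a} \hookrightarrow \rho||^{(a-1)/2} \rtimes \cPkt{\lp_{a-2}}$ compatible with central characters.
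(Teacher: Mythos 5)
Your overall strategy agrees with the paper's: both reduce to showing that the $L$-packet $\cPkt{\p_{\lq}}$ contributes to $\r_{\q_1} \rtimes \cPkt{\lq_-}$, and both invoke the normalization from Lemma~\ref{lemma: stable expansion} (uniqueness of the twist singled out by containing $\cPkt{\p_{\lq}}$) to pin down the identification. Step (i) of your argument is fine: Proposition~\ref{prop: induction of A-packet} plus the uniqueness up to $X$-twist identifies $\Pi := \r_{\q_1}\rtimes\cPkt{\lq_-}$ with $\cPkt{\lq}\otimes\omega$ for some $\omega\in X$.

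The gap is in step (ii). You invoke ``compatibility of our local Langlands correspondence for $\lG$ with parabolic induction'' to conclude that every element of $\cPkt{\p_{\lq}}$ is a constituent of $\r_{\q_1}\rtimes\lr_-$. But Section~\ref{sec: LLC} only establishes compatibility at the level of infinitesimal characters and cuspidal supports (Proposition~\ref{prop: induction preserves infinitesimal character}) and at the level of $L$-packets for \emph{tempered} parameters. For the non-tempered parameter $\p_{\lq}$, the packet $\cPkt{\p_{\lq}}$ consists of Langlands quotients, and the claim that these appear in $\r_{\q_1}\rtimes\lr_-$ requires comparing the Langlands data for $\p_{\lq}$ with the factorization through $\lif{M}$ --- in particular, one must control the exponents after conjugating into the dominant chamber for $\lG$. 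This is not automatic: the Langlands exponents of $\p_{\q_1}$ and $\p_{\lq_-}$ can interlace, and the dominant representative for $\lG$ need not a priori factor through $\lif{M}$. The paper handles this with the careful $\Xi$-estimates, via Lemma~\ref{lemma: combinatorics 1}, which shows that the $\leqslant_L$-order on $\lif{M}$-exponents transfers to the $\leqslant_N$-order on $\lG$-exponents and that equality at the top occurs only for $\p_{\lq_M}$; combined with Lemma~\ref{lemma: estimate of exponent}(2), this isolates the $\cPkt{\p_{\lq}}$-contribution in the stable expansion of $\lf^{\lif{M}}_W(\lq_M)$ as coming only from the term $\lf(\p_{\lq})$. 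Your assertion of ``compatibility with parabolic induction'' is essentially a reformulation of what needs to be proved, and without the exponent argument (or an equivalent), step (ii) is circular. Relatedly, your identification of the ``main obstacle'' as central character bookkeeping is off: after Lemma~\ref{lemma: stable expansion} the normalization is already fixed, and the real work is the exponent comparison. Also note the paper only establishes that the stable distribution $\lf^{\lif{M}}_W(\lq_M)$ has \emph{nonzero} contribution from $\cPkt{\p_{\lq}}$; you aim for the stronger containment $\cPkt{\p_{\lq}}\subseteq\Pi$, which is not needed and is harder to justify directly.
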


\begin{proof}
By Lemma~\ref{lemma: stable expansion} and analogous result for general linear groups,
\[
\lf^{\lif{M}}_{W}(\lq_{M}) = \sum_{[\lp_{M}] \in \cP{\lif{M}}_{\tilde{\lambda}_{M}}} m([\lq_{M}], [\lp_{M}]) \lf^{\lif{M}}(\lp_{M}), \quad \lf \in \sH(\lG(F), \tilde{\chi})
\]
where $\lf^{\lif{M}}$ is the descent of $\lf$. Suppose $m([\lq_{M}], [\lp_{M}]) \neq 0$, then $\Xi_{[\lp_{M}]} \leqslant_L \Xi_{[\lq_{M}]}$ and the equality holds only when $[\lp_{M}] = [\p_{\lq_{M}}]$. Let $\lp = \iota_{\lif{M}} \circ \lp_{M}$. It follows from Lemma~\ref{lemma: combinatorics 1} that $\Xi_{[\lp]} \leqslant_{N} \Xi_{[\lq]}$ and the equality holds only when $\Xi_{[\lp_{M}]} = \Xi_{[\lq_{M}]}$. By the compatibility of \eqref{eq: LLC} with parabolic induction in the tempered case, we have
\(
\lf^{\lif{M}}(\lp_{M}) = \lf(\lp).
\)
Suppose $[\lp_{M}] \neq [\p_{\lq_{M}}]$, then for any $\lr$ contributing to $\lf(\lp)$, we have
\(
\Xi_{[\lr]} \leqslant_{N} \Xi_{[\lp]} <_{N} \Xi_{[\lq]}.
\)
So $\Xi_{[\lr]} \neq \Xi_{[\lq]}$ and hence $[\lr] \notin \cPkt{\p_{\lq}}$. This shows that $\lf(\lp)$ does not have contributions from $\cPkt{\p_{\lq}}$. On the other hand, 
$\lf^{\lif{M}}(\p_{\lq_{M}}) = \lf(\p_{\lq})$. So $\lf^{\lif{M}}_{W}(\lq_{M})$ has contribution from $\cPkt{\p_{\lq}}$. By Proposition~\ref{prop: induction of A-packet}, we have $\r_{\q_1} \rtimes \cPkt{\lq_{-}} = \cPkt{\lq}$.
\end{proof}

Let $\theta \in \Sigma_0$ and $(H, s_{H}, \iota_{H})$ be a $\theta$-twisted elliptic endoscopic datum of $G$ with twisted endoscopic embedding
\(
\iota_{H}: \L{H} \rightarrow \L{G}.
\)
We fix a $\Gamma_{F}$-splitting $(\mathcal{B}_{H}, \mathcal{T}_{H}, \{\mathcal{X}_{\alpha_H}\})$ of $\D{H}$, such that $\iota_{H}(\mathcal{T}_{H}) = (\mathcal{T}_{G}^{\D{\theta}})^0$ and $\iota_{H}(\mathcal{B}_{H}) \subseteq \mathcal{B}_{G}$. Let $H = G_{I} \times G_{II}$. Via $\iota_{G}$ we embed $\D{H}$ into a $\D{\theta}_{N}$-stable nonstandard Levi subgroup $\D{L} := Z_{GL(N, \mathbb{C})}(\iota_{G}(s_{H}))$ of $GL(N, \mathbb{C})$.  
\[
\xymatrix{ \L{H} \ar[r] \ar[d]_{\iota_H} & \D{L} := GL(N_1, \mathbb{C}) \times GL(N_{2}, \mathbb{C}) \ar[d] \\
\L{G} \ar[r]^{\iota_{G} \quad \quad} & GL(N, \mathbb{C}).
}
\]
If $s_{H} \in \D{G}$, let $\mathcal{B}_{L} := \mathcal{B}_{N} \cap \D{L}$. If $s_{H} \notin \D{G}$, then we are in the case that $G$ is even orthogonal and $H = Sp(2n_1) \times Sp(2n_2)$. Since
\[
\iota_{G}(s_{H}) = \begin{pmatrix}
-I_{n_2}&&&&& \\
&I_{n_1}&&&& \\
&&0&1&&\\
&&1&0&& \\
&&&&I_{n_1}&\\
&&&&&-I_{n_2}
\end{pmatrix} \notin \mathcal{T}_{N},
\]
let $\mathcal{B}_{L} := u^{-1}\mathcal{B}_{N}u \cap \D{L}$ for 
\[
u =\begin{pmatrix}
I_{n_1+n_2}&&& \\
&1&-1&\\
&1&1& \\
&&&I_{n_1+n_2}\\
\end{pmatrix}.
\]
Then $\mathfrak{a}_{\mathcal{B}_{H}} = \mathfrak{a}_{\mathcal{B}_{G_{I}}} \oplus \mathfrak{a}_{\mathcal{B}_{G_{II}}} \subseteq \mathfrak{a}^{\D{\theta}_{N}}_{\mathcal{B}_{L}} \cong \mathbb{R}^{n_1 + n_2}$. We define orders on $\mathfrak{a}_{\mathcal{B}_{H}}$ by $\Xi_{H} \leqslant_{H} \Xi'_{H}$ (resp. $\Xi_{H} \leqslant_{L} \Xi'_{H}$) if $\Xi_{H} - \Xi'_{H} \in \overline{{}^{+}\mathfrak{a}_{\mathcal{B}_{H}}}$ (resp. $\Xi_{H} - \Xi'_{H} \in \overline{{}^{+}\mathfrak{a}_{\mathcal{B}_{L}}}$). Since $\overline{{}^{+}\mathfrak{a}_{\mathcal{B}_{H}}} \subseteq \overline{{}^{+}\mathfrak{a}_{\mathcal{B}_{L}}}$, then $\Xi_{H} \leqslant_{H} \Xi'_{H} \Rightarrow \Xi_{H} \leqslant_{L} \Xi'_{H}$. For $\Xi_{H} \in \mathfrak{a}_{\mathcal{B}_{H}} \cap \overline{(\mathfrak{a}_{\mathcal{B}_{L}})^{+}}$, there exists a unique element in $\{w(\Xi^{H}) \in \overline{(\mathfrak{a}_{\mathcal{B}_{N}})^{+}} \, | \, w \in W(\D{G}^{\Sigma_0}, \mathcal{T}_{G})\}$, denoted by $\Xi$.

\begin{lemma}
\label{lemma: combinatorics}
Suppose $\Xi_{H}, \Xi'_{H} \in \mathfrak{a}_{\mathcal{B}_{H}} \cap \overline{(\mathfrak{a}_{\mathcal{B}_{L}})^{+}}$ such that $\Xi_{H} \leqslant_{L} \Xi'_{H}$. Then $\Xi \leqslant_{N} \Xi'$. Moreover, $\Xi = \Xi' \Rightarrow \Xi_{H} = \Xi'_{H}$.
\end{lemma}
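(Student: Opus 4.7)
The plan is to adapt the combinatorial argument of Lemma~\ref{lemma: combinatorics 1} directly to the twisted endoscopic setting. The two lemmas are structurally parallel: in both, $\D{L}$ is a $\D{\theta}_N$-stable Levi of $GL(N, \mathbb{C})$ (after conjugation by $u$ in the case $s_H \notin \D{G}$), and the embedding $\mathfrak{a}_{\mathcal{B}_H} \hookrightarrow \mathfrak{a}^{\D{\theta}_N}_{\mathcal{B}_L}$ places the $\mathcal{B}_H$-dominant chamber inside the $\mathcal{B}_L$-dominant chamber. The only structural difference is that here $\D{L} \cong GL(N_1, \mathbb{C}) \times GL(N_2, \mathbb{C})$ has two $GL$-blocks rather than three.

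First I would coordinatize. After identifying $\mathfrak{a}^{\D{\theta}_N}_{\mathcal{B}_L} \cong \mathbb{R}^{n_1} \oplus \mathbb{R}^{n_2}$, any $\Xi_H \in \mathfrak{a}_{\mathcal{B}_H} \cap \overline{(\mathfrak{a}_{\mathcal{B}_L})^+}$ takes the form $(\mu^I, \mu^{II})$ with
\[
\mu^I_1 \geqslant \cdots \geqslant \mu^I_{n_1} \geqslant 0, \qquad \mu^{II}_1 \geqslant \cdots \geqslant \mu^{II}_{n_2} \geqslant 0,
\]
where nonnegativity of the tail entries follows from the combined $\mathcal{B}_H$- and $\mathcal{B}_L$-dominance. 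The associated $\Xi \in \overline{(\mathfrak{a}_{\mathcal{B}_N})^+}$ is then the weakly decreasing rearrangement of the multiset $\{\mu^I_i\} \cup \{\mu^{II}_j\}$. The relation $\Xi_H \leqslant_L \Xi'_H$ unfolds to the block-wise dominance inequalities $\sum_{i=1}^{k} \mu^\alpha_i \leqslant \sum_{i=1}^{k} {\mu'}^\alpha_i$ for each $\alpha \in \{I, II\}$ and $1 \leqslant k \leqslant n_\alpha$, with equality at $k = n_\alpha$.

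To derive $\Xi \leqslant_N \Xi'$, for each $k$ let the top $k$ entries of $\Xi$ consist of the top $a$ of $\mu^I$ and top $b$ of $\mu^{II}$ with $a + b = k$. Then
\[
\sum_{i=1}^{k} \Xi_i \;=\; \sum_{i=1}^{a} \mu^I_i + \sum_{i=1}^{b} \mu^{II}_i \;\leqslant\; \sum_{i=1}^{a} {\mu'}^I_i + \sum_{i=1}^{b} {\mu'}^{II}_i \;\leqslant\; \sum_{i=1}^{k} \Xi'_i,
\]
the final step using that the top $k$ entries of $\Xi'$ maximize such split sums over choices of $(a,b)$ with $a + b = k$. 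The equality statement is then obtained by running the chains backwards: if $\Xi = \Xi'$, each intermediate inequality must be an equality, forcing $\sum_{i=1}^{a} \mu^I_i = \sum_{i=1}^{a} {\mu'}^I_i$ at the extremal splits realizing the top-$k$ entries; iterating over $k$ recovers $\mu^I_i = {\mu'}^I_i$ for all $i$, and symmetrically $\mu^{II}_j = {\mu'}^{II}_j$, so $\Xi_H = \Xi'_H$.

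The main obstacle, modest as it is, lies in carefully handling the non-standard Borel $\mathcal{B}_L$ in the case $s_H \notin \D{G}$: one must verify that conjugation by $u$ sends the positive coroot cone of $\mathcal{B}_L$ onto the standard positive cone on $\mathbb{R}^{n_1+n_2}$, so that the block-wise partial-sum reformulation of $\leqslant_L$ remains valid. Once this is checked, the combinatorial argument proceeds uniformly in both cases.
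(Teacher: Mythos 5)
Your proof follows exactly the coordinatization the paper uses — writing $\Xi_{H} = ({}^{I}\!\mu, {}^{II}\!\mu)$ with each block weakly decreasing and nonnegative, and taking $\Xi$ to be the sorted merge of the two multisets — and then supplies the partial-sum verification that the paper's proof labels ``a direct verification'' and omits. One small imprecision: the condition $\Xi_{H} \leqslant_{L} \Xi'_{H}$, unfolded through the $\D{\theta}_{N}$-fixed half-coordinates, gives $\sum_{i \leqslant k}(\mu'^{\alpha}_{i} - \mu^{\alpha}_{i}) \geqslant 0$ for $1 \leqslant k \leqslant n_{\alpha}$ but does \emph{not} impose equality at $k = n_{\alpha}$ (the automatic zero-sum constraint lives in the full $\mathbb{R}^{N_{\alpha}}$, where the reflected entries cancel, not in $\mathbb{R}^{n_{\alpha}}$); since your chain of inequalities and the equality-forcing step never actually invoke that spurious equality, the argument stands as written.
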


\begin{proof}
Let us write $\Xi_{H} = ({}^{I}\!\mu, {}^{II}\!\mu) \in \mathbb{R}^{n_1 + n_2}$, where ${}^{I}\!\mu = ({}^{I}\!\mu_{i})_{i = 1}^{n_{1}}$ and ${}^{II}\!\mu = ({}^{II}\!\mu_{i})_{i = 1}^{n_{2}}$ satisfying
\[
{}^{I}\!\mu_{1} \geqslant \cdots \geqslant {}^{I}\!\mu_{n_{1}} \geqslant 0, \quad {}^{II}\!\mu_{1} \geqslant \cdots \geqslant {}^{II}\!\mu_{n_{2}} \geqslant 0.
\]
Then $\Xi = (\mu_{i})_{i = 1}^{n_{1} + n_{2}}$, where $\mu_{1} \geqslant \cdots \geqslant \mu_{n_{1} + n_{2}} \geqslant 0$ and 
\[
\{\mu_{1}, \cdots, \mu_{n_1 + n_{2}}\} = \{{}^{I}\!\mu_1, \cdots, {}^{I}\!\mu_{n_{1}}, {}^{II}\!\mu_1, \cdots, {}^{II}\!\mu_{n_{2}} \}
\]
as multisets. The rest is a direct verification, so we omit it here.
\end{proof}

Let $\lif{H}$ be the corresponding $(\theta, \omega)$-twisted elliptic endoscopic group of $\lG$. For $\lq_{H} \in \Q{\lif{H}}$, let $\tilde{\lambda}_{H} = \lambda_{\lq_{H}}$. Write $\lq_{H} = {\bf p} \circ (\lq_1 \times \lq_2)$, where $\lq_1 \in \Q{\lG_{I}}$ and $\lq_{2} \in \Q{\lG_{II}}$. Let $\Xi_{[\lq_{H}]} = \Xi_{[\lq_1]} + \Xi_{[\lq_2]} \in \mathfrak{a}_{\mathcal{B}_{H}} \cap \overline{(\mathfrak{a}_{\mathcal{B}_{L}})^{+}}$. Similarly for $\lp_{H} = {\bf p} \circ (\lp_{1} \times \lp_{2}) \in \P{\lif{H}}_{\tilde{\lambda}_{H}}$, let $\Xi_{[\lp_{H}]} = \Xi_{[\lp_1]} + \Xi_{[\lp_{2}]}$.

\begin{proposition}
\label{prop: compatible with endoscopic transfer}
Suppose $\lq \in \Q{\lG}$ factors through $\lq_H \in \Q{\lif{H}}$ for an elliptic $(\theta, \omega)$-twisted endoscopic group $\lif{H}$ of $\lG$, then ${\rm Tran} \, \cPkt{\lq_{H}} = \cPkt{\lq}$.
\end{proposition}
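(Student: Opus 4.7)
The plan is to mirror the strategy of Proposition~\ref{prop: compatible with parabolic induction}, replacing parabolic induction by twisted endoscopic transfer and using Lemma~\ref{lemma: combinatorics} in place of Lemma~\ref{lemma: combinatorics 1}. The starting point is Corollary~\ref{cor: character relation general}, which tells us that any packet satisfying the twisted character relation is unique up to a twist by $X$; concretely, ${\rm Tran}\,\cPkt{\lq_H} = \cPkt{\lq}\otimes \omega'$ for some $\omega' \in X^{\Sigma_0}(\tilde{\lambda})/X^{\Sigma_0}(\lq)$. So the content of the proposition is the identification $\omega' = 1$, and this will be done by tracking the ``top term'' in the stable expansion of $\lf_W(\lq_H)$.

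More precisely, I would first apply Lemma~\ref{lemma: stable expansion} to $\lq_H \in \cQ{\lif{H}}$ to write
\[
\lf^{\lif H}_W(\lq_H) = \sum_{[\lp_H]\in \cP{\lif H}_{\tilde{\lambda}_H}} m([\lq_H],[\lp_H])\, \lf^{\lif H}(\lp_H),
\]
where $m([\lq_H],[\p_{\lq_H}]) = 1$ and any other contributing $[\lp_H]$ satisfies $\Xi_{[\lp_H]} <_N \Xi_{[\lq_H]}$ in the $L$-ordering coming from the embedding $\D{\lif H} \hookrightarrow \D{L}\subseteq GL(N,\mathbb{C})$. Next, I apply the endoscopic transfer and use the compatibility of the local Langlands correspondence \eqref{eq: LLC} with the twisted endoscopic transfer in the tempered case (which is built into the construction of $\cPkt{\lp}$ in Section~\ref{sec: LLC}, cf. also \cite[Theorem 4.6]{Xu:2018}): the standard character $\lf^{\lif H}(\lp_H)$ transfers to $\lf(\iota_{\lif H}\circ \lp_H)$ up to the usual splitting-dependent sign, and in particular $\lf^{\lif H}(\p_{\lq_H})$ transfers to $\lf(\p_{\lq})$. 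The key geometric input, namely Lemma~\ref{lemma: combinatorics}, then guarantees that $\Xi_{[\iota_{\lif H}\circ \lp_H]} \leqslant_N \Xi_{[\lq]}$ with equality only when $[\lp_H] = [\p_{\lq_H}]$.

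With these ingredients the argument proceeds exactly as in the proof of Proposition~\ref{prop: compatible with parabolic induction}. For any $[\lp_H]\neq [\p_{\lq_H}]$ contributing to the expansion, every $[\lr]$ appearing in $\lf(\iota_{\lif H}\circ \lp_H)$ satisfies $\Xi_{[\lr]}\leqslant_N \Xi_{[\iota_{\lif H}\circ \lp_H]} <_N \Xi_{[\lq]}$, so by Lemma~\ref{lemma: estimate of exponent}(2) such $[\lr]$ cannot lie in $\cPkt{\p_{\lq}}$. Hence the only contribution of the transferred distribution to $\cPkt{\p_{\lq}}$ comes from the leading term $\lf(\p_{\lq})$, which is nonzero with coefficient one. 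On the other hand, Lemma~\ref{lemma: stable expansion} applied to $\lq$ itself shows that $\cPkt{\lq}$ is characterized (among its $X^{\Sigma_0}(\tilde{\lambda})$-twists) as the unique lift containing $\cPkt{\p_{\lq}}$. This forces $\omega' = 1$ in the identification ${\rm Tran}\,\cPkt{\lq_H} = \cPkt{\lq}\otimes\omega'$, completing the proof.

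The main obstacle I anticipate is verifying the bookkeeping of splitting data (the $1$-cochains $c_{\q}, {}'c_{\q}$ and the character $\chi^{c_{\q},{}'c_{\q}}$ from Theorem~\ref{thm: general endoscopy}) so that the compatibility with LLC in the tempered case produces $\lf(\p_{\lq})$ exactly, rather than $\lf(\p_{\lq}\otimes \omega)$ for some nontrivial $\omega$; this must be reconciled with the conventions used to define $\cPkt{\lq}$ in Section~\ref{sec: construction}. Once this normalization is tracked carefully, everything else is forced by the exponent estimates of Lemma~\ref{lemma: combinatorics} and the uniqueness statement in Lemma~\ref{lemma: stable expansion}.
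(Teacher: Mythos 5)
Your proposal is correct and follows essentially the same route as the paper: expand $\lf^{\lif H}_W(\lq_H)$ via Lemma~\ref{lemma: stable expansion}, use Lemma~\ref{lemma: combinatorics} to control exponents under $\iota_{\lif H}$, observe that only the leading term can contribute to $\cPkt{\p_{\lq}}$, and conclude by the uniqueness from Corollary~\ref{cor: character relation general}. One small imprecision: the transfer of $\lf^{\lif H}(\lp_H)$ is not the untwisted $\lf(\iota_{\lif H}\circ\lp_H)$ but the twisted character sum $\lf(\lp,x)=\sum_{\tilde{\bar\e}\in\S{\lp}}\lf_{\lG^\theta}(M(\lp,\tilde{\bar\e}),\omega)$ with $x$ the image of $\tilde s$; however, since the same set of standard modules occurs in $\lf(\lp,x)$ as in $\lf(\lp)$, the exponent argument that rules out contributions to $\cPkt{\p_{\lq}}$ goes through unchanged, so this does not affect the validity of your proof.
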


\begin{proof}
By Lemma~\ref{lemma: stable expansion}, 
\[
\lf^{\lif{H}}_{W}(\lq_{H}) = \sum_{[\lp_{H}] \in \cP{\lif{H}}_{\tilde{\lambda}_{H}}} m([\lq_{H}], [\lp_{H}]) \lf^{\lif{H}}(\lp_{H}), \quad \lf \in \sH(\lG(F), \tilde{\chi}),
\]
where $\lf^{\lif{H}}$ is the transfer of $\lf$. Suppose $m([\lq_{H}], [\lp_{H}]) \neq 0$, then $\Xi_{[\lp_{H}]} \leqslant_{L} \Xi_{[\lq_{H}]}$ and the equality holds only when $[\lp_{H}] = [\p_{\lq_{H}}]$. Let $\lp = \iota_{\lif{H}} \circ \lp_{H}$. It follows from Lemma~\ref{lemma: combinatorics} that $\Xi_{\lp} \leqslant_{N} \Xi_{\lq}$ and the equality holds only when $\Xi_{[\lp_{H}]} = \Xi_{[\lq_{H}]}$. By the compatibility of \eqref{eq: LLC} with the twisted endoscopic transfer in the tempered case, we have
\[
\lf^{\lif{H}}(\lp_{H}) =  \lf(\lp, x) := \sum_{\tilde{\bar{\e}} \in \S{\lp}} \lf_{\lG^{\theta}}(M(\lp, \tilde{\bar{\e}}), \omega),
\]
where $x \in \S{\p}^{\theta}$ is the image of $\tilde{s}$ from the twisted endoscopic datum $(\lif{H}, \tilde{s}, \iota_{\lif{H}})$ and $M(\lp, \tilde{\bar{\e}})$ is the standard representation containing $\lr(\lp, \tilde{\bar{\e}})$ as the Langlands quotient. Suppose $[\lp_{H}] \neq [\p_{\lq_{H}}]$, then for any $\lr$ contributing to $\lf(\lp, x)$, we have
\(
\Xi_{[\lr]} \leqslant_{N} \Xi_{[\lp]} <_{N} \Xi_{[\lq]}.
\)
So $\Xi_{[\lr]} \neq \Xi_{[\lq]}$ and hence $[\lr] \notin \cPkt{\p_{\lq}}$. This shows that $\lf(\lp, x)$ does not have contribution from $\cPkt{\p_{\lq}}$. On the other hand, 
$\lf^{\lif{H}}(\p_{\lq_{H}}) = \lf(\p_{\lq}, x)$. So $\lf^{\lif{H}}_{W}(\lq_{H})$ has contribution from $\cPkt{\p_{\lq}}$. In view of Corollary~\ref{cor: character relation general}, we have ${\rm Tran} \, \cPkt{\lq_{H}} = \cPkt{\lq}$.
\end{proof}

\begin{corollary}
The association $[\lq] \mapsto \cPkt{\lq}$ is compatible with parabolic induction and twisted endoscopic transfer.
\end{corollary}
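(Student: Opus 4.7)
The plan is to deduce the corollary directly from the two propositions immediately preceding it, each of which already handles a restricted case: Proposition~\ref{prop: compatible with parabolic induction} treats parabolic induction from a Levi with a single general-linear factor, and Proposition~\ref{prop: compatible with endoscopic transfer} treats the elliptic twisted endoscopic transfer. The remaining two steps are formal reductions, so I do not expect any substantive obstacle; the hard analytic content (the uniqueness of the normalization $\cPkt{\p_{\lq}} \subseteq \cPkt{\lq}$ from Lemma~\ref{lemma: stable expansion}, together with the two propositions themselves) is already in place.

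First, for parabolic induction from a general Levi $\lif{M} \cong \prod_{i=1}^{k} GL(n_{i}) \times \lG_{-}$ with parameter $\lq_{M} = \prod_{i=1}^{k} \q_{i} \times \lq_{-}$, I would show by induction on $k$ that $\cPkt{\lq} = \prod_{i=1}^{k} \r_{\q_{i}} \rtimes \cPkt{\lq_{-}}$. The base case $k=1$ is Proposition~\ref{prop: compatible with parabolic induction}. For $k > 1$, I factor $\lif{M} \hookrightarrow \lG$ through an intermediate Levi $\lif{M}' \cong GL(n_{1}) \times \lG_{-}'$, where $\lG_{-}'$ is the similitude group of the same type as $\lG$ in which $\prod_{i=2}^{k} GL(n_{i}) \times \lG_{-}$ embeds as a Levi. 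Letting $\lq_{-}' \in \Q{\lG_{-}'}$ be the parameter induced from $\prod_{i=2}^{k} \q_{i} \times \lq_{-}$, the inductive hypothesis applied inside $\lG_{-}'$ gives $\cPkt{\lq_{-}'} = \prod_{i=2}^{k} \r_{\q_{i}} \rtimes \cPkt{\lq_{-}}$, and then the $k=1$ case applied to $\lif{M}' \hookrightarrow \lG$ closes the induction (by transitivity of parabolic induction). The fact that each step preserves the specific normalization $\cPkt{\p_{\lq}} \subseteq \cPkt{\lq}$ is exactly what Proposition~\ref{prop: compatible with parabolic induction} provides, so no twist by $X$ is introduced along the way.

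Second, for twisted endoscopic transfer: if the $(\theta, \omega)$-twisted endoscopic group $\lif{H}$ is already elliptic, Proposition~\ref{prop: compatible with endoscopic transfer} applies directly. Otherwise I realize $\lif{H}$ as a proper Levi subgroup of some $(\theta, \omega)$-twisted elliptic endoscopic group $\lif{H}'$ of $\lG$ and let $\lq_{H'} \in \Q{\lif{H}'}$ be the parameter through which $\lq_{H}$ factors. Transitivity of the twisted endoscopic transfer then reduces the desired identity ${\rm Tran}\, \cPkt{\lq_{H}} = \cPkt{\lq}$ to the composition of two facts already established: the parabolic induction statement $\Ind_{\lif{H}}^{\lif{H}'} \cPkt{\lq_{H}} = \cPkt{\lq_{H'}}$ (from the previous step, now performed inside $\lif{H}'$), and the elliptic endoscopic transfer ${\rm Tran}\, \cPkt{\lq_{H'}} = \cPkt{\lq}$. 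Because both normalizations are pinned down by the $L$-parameter containment condition of Lemma~\ref{lemma: stable expansion}, no $X$-ambiguity enters, and chaining these two identities yields the claim. The one point that requires some care, but is not an obstacle, is verifying that the elliptic endoscopic group $\lif{H}'$ can be chosen so that the composition $\L{\lif{H}} \hookrightarrow \L{\lif{H}'} \hookrightarrow \L{\lG}$ agrees with the original embedding $\L{\lif{H}} \hookrightarrow \L{\lG}$; this is a standard feature of twisted endoscopy and parallels the argument already used in Corollary~\ref{cor: character relation general}.
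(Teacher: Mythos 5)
Your proof is correct, and the endoscopic-transfer half of your argument coincides with the paper's. The parabolic-induction half, however, takes a genuinely different route. You peel off one $GL(n_1)$ factor at a time, inducting on the number $k$ of general-linear factors in $\lif{M}$ and viewing $\prod_{i\geq 2} GL(n_i)\times \lG_{-}$ as a Levi of a higher-rank similitude group $\lG_{-}'$, which forces you to invoke Proposition~\ref{prop: compatible with parabolic induction} a total of $k$ times along a chain of intermediate Levi subgroups. The paper instead jumps directly to the single intermediate Levi $\lif{M}' = GL(\sum_i n_i) \times \lG_{-}$, applies Proposition~\ref{prop: compatible with parabolic induction} exactly once to pass from $\lif{M}'$ to $\lG$, and disposes of the reduction from $\lif{M}$ to $\lif{M}'$ by invoking the classical fact $\times_{i}\, \r_{\q_i} = \r_{\q'}$ for $\q' = \oplus_i \q_i$ — that is, the compatibility of the Langlands/Speh construction for general linear groups with parabolic induction, which is part of the established GL theory rather than anything new. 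The paper's route is slightly more economical since it isolates the one genuinely new application of Proposition~\ref{prop: compatible with parabolic induction} and pushes the remaining combinatorics into known $GL$-side facts, whereas your route iterates the new proposition; both are clean and the conclusions agree because, as you correctly emphasize, Lemma~\ref{lemma: stable expansion} pins the normalization of every packet involved via $\cPkt{\p_{\lq}}\subseteq \cPkt{\lq}$, so no twist by $X$ accumulates along either chain.
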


\begin{proof}
We first show the compatibility with parabolic induction. Let $\lif{M} \cong \prod_{i = 1}^{l} GL(n_i) \times \lG_{-}$ be a standard Levi subgroup of $\lG$ and $\lq_{M} = \prod_{i}\q_{i} \times \lq_{-} \in \Q{\lif{M}}$. Let $\lq = \iota_{\lif{M}} \circ \lq_{M}$. It factors through $\lq_{M'} = \lq' \times \lq_{-} \in \Q{\lif{M}'}$ for $\lif{M}' = GL(\sum_{i = 1}^{l} n_i) \times \lG_{-}$ and $\q' = \oplus_{i = 1}^{l} \q_{i}$. Since $\times_{i = 1}^{l} \, \r_{\q_i} = \r_{\q'}$, it follows from Proposition~\ref{prop: compatible with parabolic induction} that
\[
(\times_{i = 1}^{l} \, \r_{\q_i}) \rtimes \cPkt{\lq_{-}} = \r_{\q'} \rtimes \cPkt{\lq_{-}} = \cPkt{\lq}.
\]

Next we show the compatibility with twisted endoscopic transfer. Let $\lif{H}$ be a $(\theta, \omega)$-twisted endoscopic group of $\lG$ and $\lq_{H} \in \Q{\lif{H}}$. We can view $\lif{H}$ as the Levi subgroup of some twisted elliptic endoscopic group $\lif{H}'$ of $\lG$. 
\[
\xymatrix{\L{\lif{H}'} \ar[r]^{\iota_{\lif{H}'}} & \L{\lG} \\
\L{\lif{H}} \ar[u]^{\iota^{\lif{H}'}_{\lif{H}}} \ar[ur]_{\iota_{\lif{H}}} &  
}
\]
Let $\lq_{H'} = \iota^{\lif{H}'}_{\lif{H}} \circ \lq_{H}$ and $\lq = \iota_{\lif{H}} \circ \lq_{H}$. Then
\[
{\rm Tran}_{\iota_{\lif{H}}} \, \cPkt{\lq_{H}} = {\rm Tran}_{\iota_{\lif{H}'}} \, {\rm Tran}_{\iota_{\lif{H}}^{\lif{H}'}} \, \cPkt{\lq_{H}}.
\]
where ${\rm Tran}_{\iota_{\lif{H}}^{\lif{H}'}}$ is given by the parabolic induction. By Proposition~\ref{prop: compatible with parabolic induction},
\[
{\rm Tran}_{\iota_{\lif{H}}^{\lif{H}'}} \, \cPkt{\lq_{H}} = \cPkt{\lq_{H'}}.
\]
By Proposition~\ref{prop: compatible with endoscopic transfer}, 
\[
{\rm Tran}_{\iota_{\lif{H}'}} \, \cPkt{\lq_{H'}} = \cPkt{\lq}.
\]
Hence ${\rm Tran}_{\iota_{\lif{H}}} \, \cPkt{\lq_{H}} = \cPkt{\lq}$.
\end{proof}

\bibliographystyle{amsalpha}

\bibliography{reps}

\end{document}